\documentclass{amsart}
\usepackage[utf8]{inputenc}
\usepackage{enumitem}
\usepackage{amsfonts}
\usepackage{amsmath,amsthm}
\usepackage{comment}
\usepackage{listings}
\usepackage{graphics}
\usepackage{bm}
\usepackage[inline]{asymptote}
\linespread{1}

\usepackage{graphicx,xcolor}
\graphicspath{{./figures/}}
\usepackage{hyperref,url}
\definecolor{darkblue}{rgb}{0,0,0.75}
\definecolor{darkred}{rgb}{0.75,0,0}
\definecolor{darkgreen}{rgb}{0,0.75,0}   
\hypersetup{colorlinks,
  filecolor=black,
  linkcolor=darkblue,
  citecolor=darkgreen,
  urlcolor=darkred,
  bookmarksopen=true}
\usepackage{amsthm}
\usepackage{graphicx}
\usepackage{amssymb}
\usepackage[section]{placeins}
\usepackage{cite}
\usepackage{appendix}
\usepackage{indentfirst}
\usepackage{extarrows}
\usepackage{tikz}

\begin{comment}
\usepackage{environ}
\NewEnviron{killcontents}{}

\end{comment}

\numberwithin{equation}{section}
\newtheorem*{question}{Question}

\newtheorem{thm}{Theorem}[section]
\newtheorem{lem}[thm]{Lemma}
\newtheorem{prop}[thm]{Proposition}
\newtheorem{cor}[thm]{Corollary}
\theoremstyle{definition}
\newtheorem{defi}[thm]{Definition}
\newtheorem{rmk}[thm]{Remark}
\newtheorem{clm}[thm]{Claim}

\newtheorem{notation}[thm]{Notation}

\swapnumbers

\numberwithin{equation}{section}

%s bar
%gamma bar

%forall positive time
\newcommand{\att}{(\cdot,t)}%at time t

\newcommand{\og}{\overline{\Gamma}}
\newcommand{\os}{{\bar{\sigma}}}

\newcommand{\tx}{{\tilde{x}}}
\newcommand{\ty}{{\tilde{y}}}
\newcommand{\tz}{{\tilde{z}}}

\newcommand{\ml}{\mathcal{L}}
\newcommand{\mh}{\mathcal{H}}
\newcommand{\mr}{\mathcal{P}}

\newcommand{\hu}{\hat{u}}

\newcommand{\taup}{{\tau^\prime}}
\newcommand{\taupp}{{\tau^{\prime\prime}}}

\title[Singularities of CSF with Convex Projections]{Singularities of Curve Shortening Flow with Convex Projections}
\author{Qi Sun}
\thanks{This work was partially supported by NSF grant DMS-2348305.}
\date{\today}
\address{Qi Sun, Department of Mathematics, University of Wisconsin-Madison}
\email{qsun79@wisc.edu}
\setlength{\parskip}{1ex plus 1ex}
% larger margin on one side to annotate by hand:
%\setlength{\oddsidemargin}{0pt}
%\setlength{\evensidemargin}{0pt}
%\setlength{\hoffset}{-0.5in}
\usetikzlibrary{calc}
\begin{document}

\begin{abstract}
We show that any smooth closed immersed curve in $\mathbb R^n$ with a one-to-one convex projection onto some $2$-plane develops a Type~I singularity and becomes asymptotically circular under Curve Shortening flow in $\mathbb R^n$.

As an application, we prove an analog of Huisken's conjecture for Curve Shortening flow in $\mathbb R^n$, showing that any smooth closed immersed curve in $\mathbb R^n$ can be smoothly perturbed to a closed immersed curve in $\mathbb R^{n+2}$ which shrinks to a round point under Curve Shortening flow.

Our proof relies on a novel contradiction argument in which Type~{II} singularities are excluded by proving both the uniqueness and non-uniqueness of the tangent flows at the singular point.
\end{abstract}
\subjclass[2020]{53E10}
\maketitle
\section{Introduction}
Let us consider the Curve Shortening flow (CSF) in higher codimensions:
\begin{equation}
    \gamma_t=\gamma_{ss}
\end{equation}
where $\gamma:S^1\times \left[0,T\right)\rightarrow\mathbb{R}^n$ is smooth ($S^1=\mathbb R/2\pi\mathbb Z$), $u\rightarrow \gamma(u,t)$ is an immersion and $\partial_s=\frac{\partial}{\partial s}$ is the derivative with respect to arc-length, defined by
\begin{equation}
\label{the equation defining the arc length}
    \frac{\partial}{\partial s}:=\frac{1}{|\gamma_u|}\frac{\partial}{\partial u}.
\end{equation}
When we want to emphasize that we are working in higher codimensions, we shall refer to the evolution as space CSF.
\subsection{Background}
For planar CSF, in \cite{GageHamilton} Gage and Hamilton proved that if the initial curve is convex, it shrinks to a point and becomes asymptotically circular. Their work built on earlier works of Gage\cite{Gage1,Gage2}. In \cite{Grayson} Grayson extended their results and proved that if the initial curve is embedded, it will become convex before developing any singularities. Since then, many other proofs of the Gage-Hamilton-Grayson theorem have been discovered, including \cite{hamilton1995isoperimetric,huisken1998distance,AndrewsBryan+2011+179+187,Andrewsnoncollapsing}. Beyond CSF, (mean) convexity has played a central role for evolution of hypersurfaces, see \cite{huisken1984flow,huisken1999convexity,white2000size,white2003nature,chow1985deforming,andrews1999gauss,brendle2017asymptotic}. See also \cite{wang2002long,andrews2010mean} for Mean Curvature flow (MCF) in higher codimensions.

For space CSF, in \cite{sun2024curve} the author shows that if the initial space curve has a one-to-one convex projection onto some $2$-plane, its space CSF retains this property and shrinks to a point. See previous works \cite{hättenschweiler2015curve,benes2020longterm} for space CSF with convex projections and \cite[Page 3-4]{sun2024curve} for a comparison with these works. See also \cite{AltschulerGrayson,altschuler1991singularities,altschuler2013zoo,wang2011lectures,smoczyk2011mean,bourni2023nonplanar,andrews2026high}.

One natural and fascinating question arises whether a curve with a one-to-one convex projection becomes asymptotically circular under space CSF. In \cite{sun2025huiskensdistancecomparisonprinciple}, the author establishes a variant of Huisken's distance comparison principle in higher codimensions for reflection symmetric space CSF and answers this question in a special case.

In this paper, we answer the above question affirmatively in full generality.
\subsection{Notation}
Let $P_{xy}:\mathbb{R}^n=\mathbb{R}^2\times\mathbb R^{n-2}\rightarrow\mathbb{R}^2$ be the orthogonal projection onto the first two coordinates, which we call $x$ and $y$. For a space curve $\gamma$, let $P_{xy}|_\gamma:\gamma\rightarrow xy$-plane be its restriction to $\gamma$.
\begin{defi}
We say that a smooth closed immersed curve $\gamma\subset\mathbb R^n$ has \emph{a one-to-one convex projection} (onto the $xy$-plane) if $P_{xy}|_\gamma$ is injective and the projection curve $P_{xy}(\gamma)$ is convex. 
\end{defi}
The class of curves with one-to-one convex projections includes planar convex curves as a special case. 

Recall the following terminology on singularity formation.
\begin{defi}
As $t\rightarrow T$, we say CSF $\gamma\att$ develops a \emph{Type~I singularity} if
\begin{equation*}
    \limsup_{t\rightarrow T}\sup_{u\in S^1}k^2(u,t)(T-t)<+\infty
\end{equation*}
and a \emph{Type~{II} singularity} otherwise, where $k(u,t)$ is the curvature at the point $\gamma(u,t)$.
\end{defi}
\begin{defi}
\label{the definition of Asymptotic circularity}
Let $\gamma\att$ be a space CSF that shrinks to the origin as $t\rightarrow T$. We say CSF $\gamma\att$ becomes \emph{asymptotically circular} as $t\rightarrow T$ if the rescaled CSF $\frac{\gamma\att}{\sqrt{2T-2t}}$ converges in $C^\infty$ to a unit circle of multiplicity one in some $2$-plane $P^2\subset \mathbb R^n$ as $t\rightarrow T$. 
\end{defi}

\subsection{Main result}
For a smooth space curve $\gamma_0: S^1\rightarrow\mathbb R^n$, let $\gamma:S^1\times \left[0,T\right)\rightarrow\mathbb{R}^n$ be the solution to the CSF with $\gamma(u,0)=\gamma_0(u)$. 

It is proved in \cite{sun2024curve} that if the initial curve $\gamma_0$ has a one-to-one convex projection, then CSF $\gamma\att$ has a one-to-one convex projection and shrinks to a point, which we may assume to be the origin, as $t\rightarrow T$.

Our main result is the following.
\begin{thm}
    \label{main theorem}
    If the initial curve $\gamma_0$ has a one-to-one convex projection onto the $xy$-plane, then CSF $\gamma\att$ develops a Type~I singularity and becomes asymptotically circular as $t\rightarrow T$. 
\end{thm}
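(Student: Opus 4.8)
The plan is to combine the known structural results from \cite{sun2024curve}—that the one-to-one convex projection is preserved and that $\gamma\att$ shrinks to the origin—with a monotonicity/rescaling argument adapted to the higher-codimension setting. First I would set up the rescaled flow $\tilde\gamma(\cdot,\tau) = \gamma\att/\sqrt{2T-2t}$, $\tau = -\tfrac12\log(T-t)$, which solves the normalized equation $\tilde\gamma_\tau = \tilde\gamma_{ss} + \tilde\gamma^\perp$ (gradient flow for the Gaussian-weighted length), and introduce Huisken's monotone quantity $\Theta(\tau) = \int |\tilde\gamma_s| e^{-|\tilde\gamma|^2/2}\,ds$. Since the projection $P_{xy}(\gamma)$ is convex and embedded, it is itself a planar CSF shrinking to the origin (projection commutes with CSF because $P_{xy}$ is linear), so by Gage–Hamilton the projected flow becomes asymptotically circular; this pins down the Gaussian density of the projection to $\Theta_{S^1} = \sqrt{2\pi}$ and, crucially, gives uniform control of the $x,y$-components of $\tilde\gamma$ and forces $\tilde\gamma$ to stay in a bounded region.

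The key analytic step is the Type~I estimate. Here I would exploit that the convex projection controls the geometry: if $\theta$ denotes the turning angle of the projected curve, then $\theta$ is a global coordinate on $\gamma$, and one can write the full curvature $k$ of $\gamma$ in terms of the planar curvature $\kappa$ of $P_{xy}(\gamma)$ and the "vertical" height function $h:S^1\times[0,T)\to\mathbb R^{n-2}$ giving the $\mathbb R^{n-2}$-components. The height function satisfies a linear (in the evolving metric) parabolic equation, a graph-type equation over the convex planar flow, with no zeroth-order term; by the maximum principle $\sup|h|$ is nonincreasing, and more importantly the gradient of $h$ with respect to arclength decays. Since the planar factor is already Type~I (convex planar CSF is Type~I by Gage–Hamilton, with $\kappa^2(T-t)$ bounded), and the vertical contribution to $k^2$ is controlled by $|h_{ss}|^2$ which I would bound via parabolic estimates for the height equation together with the $\sqrt{T-t}$-scaling, I obtain $\sup k^2(T-t) < \infty$.

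With Type~I in hand, the rescaled flow $\tilde\gamma(\cdot,\tau)$ has uniformly bounded curvature and, by standard Bernstein-type estimates, bounded derivatives of all orders; combined with the boundedness of $\Theta$ and Huisken's monotonicity, any sequential limit $\tau_j\to\infty$ subconverges smoothly to a self-shrinker, i.e.\ a solution of $\tilde\gamma_{ss} + \tilde\gamma^\perp = 0$. The remaining step is to classify this limit: it is a compact self-shrinking space curve that is a smooth limit of curves with one-to-one convex projections, hence its own projection is a (possibly degenerate) convex self-shrinker in the plane, which by Abresch–Langer / Gage–Hamilton must be the round circle traversed once. A short argument then shows the vertical part of the limit shrinker vanishes: the height function's Gaussian-weighted Dirichlet energy is dissipated along the rescaled flow and must tend to zero, so the limit lies in a $2$-plane and is the unit circle of multiplicity one. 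Finally, uniqueness of this limit (no dependence on the sequence $\tau_j$) follows either from a \L ojasiewicz–Simon argument near the circle—where the linearization is well understood and integrable—or, more elementarily in this setting, by bootstrapping the convergence of the projection (which is already known to be unconditional by Gage–Hamilton) together with the decay of the height function.

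I expect the Type~I estimate to be the main obstacle: in higher codimension there is no direct analog of the planar argument bounding $k$ via the support function, and one must genuinely use the structure of the height equation over the convex planar flow rather than an intrinsic curvature-pinching argument. A secondary difficulty is upgrading sequential subconvergence of the rescaled flow to full convergence; if a \L ojasiewicz inequality is unavailable or awkward here, establishing it—or finding a replacement exploiting that the projected flow converges unconditionally—will require care.
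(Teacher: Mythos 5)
Your proposal hinges on a claim that is false: the projection $P_{xy}(\gamma)$ does \emph{not} evolve by planar CSF. While $P_{xy}$ is indeed linear, so $P_{xy}(\gamma_t)=P_{xy}(\gamma_{ss})$, the right-hand side is the projection of the second arclength derivative with respect to the arclength $s$ of the \emph{space} curve, and this is not the second arclength derivative of $P_{xy}(\gamma)$ with respect to its own arclength $\bar{s}$. Concretely, $\partial_s = \frac{1}{|\gamma_u|}\partial_u$ while $\partial_{\bar s}=\frac{1}{|P_{xy}\gamma_u|}\partial_u$, and $|P_{xy}\gamma_u|\leq|\gamma_u|$ with equality only when the curve is already planar. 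What the projection actually satisfies is a degenerate weighted flow whose analysis (and the key quantitative lower bound $x_s^2+y_s^2\geq\delta$, cited here as Lemma \ref{lower bounds of c}) occupies much of \cite{sun2024curve}. Consequently you cannot invoke Gage--Hamilton to pin the projected flow's Gaussian density, deduce Type~I for the projection, or set up the height function as a graph over a known convex planar CSF. The rest of the argument collapses with this premise.

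Even setting this aside, your sketch of the Type~I estimate --- controlling the vertical contribution $|h_{ss}|^2$ via ``parabolic estimates for the height equation together with $\sqrt{T-t}$-scaling'' --- is precisely the missing content, and you correctly flag this as the main obstacle but offer no mechanism. The paper's route is fundamentally different: it argues by contradiction. Assuming a Type~II singularity, it upgrades White-style blow-up results to show every tangent flow is a smooth limit equal to a stationary line of multiplicity two (Theorem \ref{the flow version of type II blow up results for convex projections CSF}); it then constructs a viscosity-subsolution barrier to show the directions of these limit lines are \emph{non-unique} (Theorem \ref{the theorem on nonuniqueness of the tangent flows}), while an Allard--Almgren/\L{}ojasiewicz-type iteration at linear scales shows they are \emph{unique} (Theorem \ref{the theorem on uniqueness of the tangent flows}) --- a contradiction, forcing Type~I. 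Once Type~I is established, Lemma \ref{the lemma that circle with multiplicity one if AL with multiplicity m} and Schulze's uniqueness of compact tangent flows finish the circularity claim along lines broadly similar to the last paragraph of your sketch. Note also that a Huisken-distance-comparison style argument (closer in spirit to your ``direct'' approach) was attempted by the author in prior work but succeeded only under reflection symmetry, which is why the present paper takes the contradiction route.
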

\begin{figure}[ht!]
\centering
\begin{minipage}{0.3\textwidth}
\centering
\includegraphics[width=\linewidth]{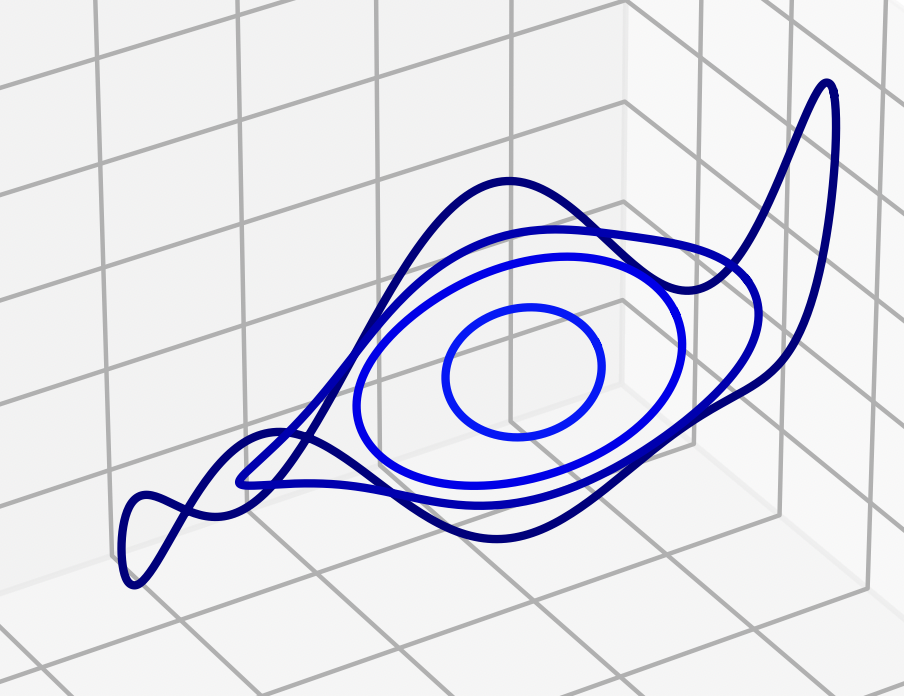}
        % Optional: Add a caption for the first image
        % \caption{Image 1}
\end{minipage}\hfill
\begin{minipage}{0.3\textwidth}
\centering
\includegraphics[width=\linewidth]{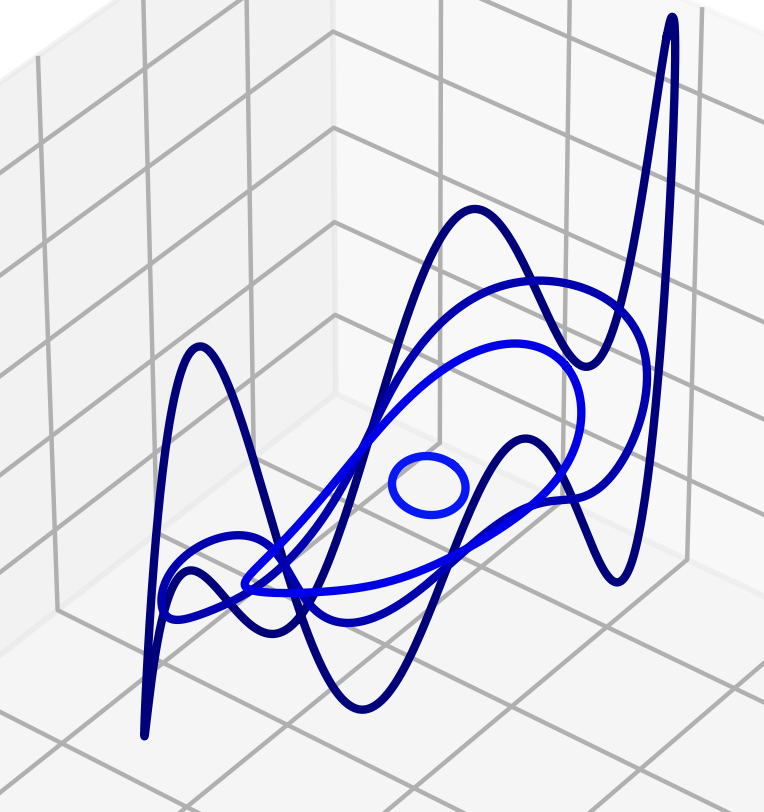}
\end{minipage}\hfill
\begin{minipage}{0.3\textwidth}
\centering
\includegraphics[width=\linewidth]{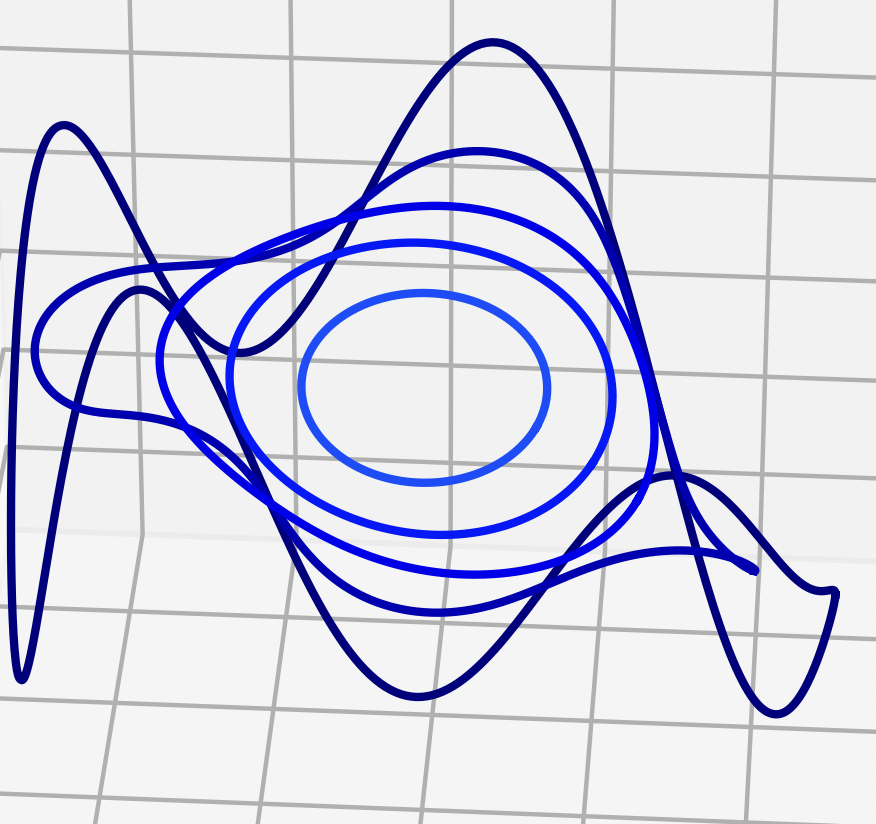}
\end{minipage}
    % Overall caption for the figure
\caption{Examples on CSF with a one-to-one convex projection}
\label{fig:convex_proj}
\end{figure}
See Figure \ref{fig:convex_proj} for numerical illustrations of Theorem \ref{main theorem}.

\subsection{Application}
\label{subsection on application on perturbations}
Huisken's generic singularities conjecture \cite[\# 8]{ilmanen2003problems} for embedded MCF of surfaces has been settled recently by remarkable works, particularly \cite{colding2012generic,chodosh2024mean,chodosh2023meancurvatureflowgeneric,bamler2024multiplicityconjecturemeancurvature} and \cite{sun2021initial,sun2025generic}; see also \cite{colding2019dynamics,colding2021wandering,chodosh2024revisiting}. 

It was pointed out by Altschuler \cite{altschuler1991singularities} that embedded space curves can evolve to have self-intersections under space CSF and as a corollary of \cite{angenent1991formation}, there exist\footnote{As a corollary of \cite{angenent1991formation}, any cardioid-like curve (i.e. a curve with positive curvature and exactly one transverse self-intersection) develops a Type~{II} singularity.
Any small enough smooth perturbation in $\mathbb R^2$ of a cardioid-like curve is still a cardioid-like curve and thus develops a Type~{II} singularity.} planar immersed curves that one cannot smoothly perturb Type~II singularities away in $\mathbb R^2$. 

However, it is not known whether a space CSF that starts at a generic smooth closed immersed curve in $\mathbb R^n$ $(n\geq3)$ remains smooth until it becomes asymptotically circular. This formulation, in the spirit of Huisken's conjecture, is often considered a folk conjecture.

As a corollary of our results, we confirm an extra-codimension version of this folk conjecture. By embedding $\mathbb R^n$ in $\mathbb R^{n+2}\cong \mathbb R^2\times\mathbb R^n$, we are able to perturb any smooth closed immersed curve in $\mathbb R^n$ to have a one-to-one convex projection as follows, so that the perturbed curve in $\mathbb R^{n+2}$ will shrink to a round point according to Theorem \ref{main theorem}.
\begin{cor}[Perturbing immersed closed curves]
\label{perturb general immersed curves with additional codimension two}
For any smooth closed immersed curve $\gamma_0:S^1\rightarrow\mathbb R^n$ with parameter $u$ ($|\gamma_{0u}|\neq0$), for any $\epsilon>0$, the perturbation\footnote{This perturbation, referred to as the wave approximation, has been used in \cite[\S 5.5]{hättenschweiler2015curve} to prove the existence of weak solutions, replacing the ramps used by \cite{AltschulerGrayson}.} $\gamma^{\epsilon}_0:S^1\rightarrow\mathbb R^2\times\mathbb R^n$
\begin{equation}
\label{wave approximation}
    \gamma^{\epsilon}_0(u):=(\epsilon\cos u, \epsilon\sin u,\gamma_0(u))
\end{equation}
has a one-to-one convex projection onto the $xy$-plane, hence develops a Type~I singularity and becomes asymptotically circular under space CSF.
\end{cor}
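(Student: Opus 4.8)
The plan is to reduce the corollary to Theorem~\ref{main theorem} by checking directly that $\gamma^{\epsilon}_0$ is a closed immersed curve in $\mathbb R^2\times\mathbb R^n$ with a one-to-one convex projection onto the $xy$-plane. The perturbation \eqref{wave approximation} is engineered precisely so that this projection is literally a round circle, so the verification is immediate and essentially the whole content of the statement is transported from Theorem~\ref{main theorem}.

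First I would confirm that $\gamma^{\epsilon}_0$ is an immersion. From \eqref{wave approximation},
\[
\partial_u\gamma^{\epsilon}_0(u)=(-\epsilon\sin u,\ \epsilon\cos u,\ \gamma_{0u}(u)),
\qquad
|\partial_u\gamma^{\epsilon}_0(u)|^2=\epsilon^2+|\gamma_{0u}(u)|^2>0
\]
for every $u\in S^1$ (using $\epsilon>0$, or equivalently the hypothesis $|\gamma_{0u}|\neq0$), so $\gamma^{\epsilon}_0:S^1\to\mathbb R^2\times\mathbb R^n$ is a smooth closed immersed curve. Next I would compute the $xy$-projection,
\[
P_{xy}(\gamma^{\epsilon}_0(u))=(\epsilon\cos u,\ \epsilon\sin u),
\]
which, as $u$ runs over $S^1=\mathbb R/2\pi\mathbb Z$, parametrizes the circle of radius $\epsilon$ about the origin exactly once; the map $u\mapsto(\epsilon\cos u,\epsilon\sin u)$ is a diffeomorphism of $S^1$ onto that circle, so $P_{xy}|_{\gamma^{\epsilon}_0}$ is injective. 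Since the image circle is the boundary of the disk $\{x^2+y^2\le\epsilon^2\}$, a convex region, the projection curve $P_{xy}(\gamma^{\epsilon}_0)$ is convex. Hence $\gamma^{\epsilon}_0$ has a one-to-one convex projection onto the $xy$-plane in the sense of the definition in \S1.2.

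Finally, invoking the result of \cite{sun2024curve} (a one-to-one convex projection is preserved by space CSF and the flow shrinks to a point) together with Theorem~\ref{main theorem} applied to the initial curve $\gamma^{\epsilon}_0$, the space CSF starting at $\gamma^{\epsilon}_0$ develops a Type~I singularity and becomes asymptotically circular. I do not expect a genuine obstacle here: all of the analytic work is in Theorem~\ref{main theorem}, and a round circle trivially satisfies both injectivity and convexity. The only points that deserve a word of care are the routine immersion check above and reading ``convex projection curve'' as ``$P_{xy}(\gamma^{\epsilon}_0)$ bounds a convex region'' (equivalently, has a consistently oriented nonnegative curvature), which a circle obviously does; one may also remark that $\epsilon$ quantifies the size of the perturbation, since $\sup_{u\in S^1}|\gamma^{\epsilon}_0(u)-(0,0,\gamma_0(u))|=\epsilon$.
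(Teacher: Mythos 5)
Your proposal is correct and is exactly the argument the paper intends: the $xy$-projection of $\gamma^\epsilon_0$ is the circle $u\mapsto(\epsilon\cos u,\epsilon\sin u)$, which is trivially a one-to-one convex projection, and then Theorem~\ref{main theorem} applies verbatim. The paper itself treats this verification as immediate (the ``hence'' in the statement), so there is no divergence in approach.
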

For curves described in \cite[Lemma 1.8]{sun2024curve}, it suffices to perturb the given curve in $\mathbb R^{n+1}$ rather than $\mathbb{R}^{n+2}$. Particularly, it applies to the planar figure-eight $(\cos u, 0,\sin2u)$ as follows. 
\begin{cor}[Perturbing a planar figure-eight]
\label{the corollary on perturbing symmetric planar figure-eights}
    For any $\epsilon>0$, the CSF $\gamma\att$ that starts with the initial curve
    \begin{equation*}
        \gamma^{\epsilon}_0(u)=(\cos u,\epsilon \sin u, \sin2u)
    \end{equation*}
develops a Type~I singularity and becomes asymptotically circular as it approaches the singularity.
\end{cor}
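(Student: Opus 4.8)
The plan is to deduce the statement from Theorem~\ref{main theorem} by checking that the perturbed curve $\gamma_0^\epsilon(u)=(\cos u,\epsilon\sin u,\sin 2u)$ has a one-to-one convex projection onto the $xy$-plane; once that is established, all of the analytic content is already contained in Theorem~\ref{main theorem} (together with the existence, convexity, and shrinking statements from \cite{sun2024curve} recalled above). First I would verify that $\gamma_0^\epsilon$ is a smooth immersed closed curve: from $(\gamma_0^\epsilon)_u=(-\sin u,\epsilon\cos u,2\cos 2u)$ and the fact that $\sin u$ and $\cos u$ never vanish simultaneously, $|(\gamma_0^\epsilon)_u|^2=\sin^2 u+\epsilon^2\cos^2 u+4\cos^2 2u>0$ for all $u\in S^1$.

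Next I would examine the projection $P_{xy}(\gamma_0^\epsilon(u))=(\cos u,\epsilon\sin u)$. This map parametrizes the ellipse $\{\,x^2+\epsilon^{-2}y^2=1\,\}$ exactly once as $u$ ranges over $S^1$, so $P_{xy}|_{\gamma_0^\epsilon}$ is injective. (It also follows that $\gamma_0^\epsilon$ itself is embedded, since $\cos u=\cos v$ together with $\sin u=\sin v$ forces $u=v$ in $S^1$, so the perturbation separates the crossing of the figure-eight at the origin; but only injectivity of the projection is used below.) For convexity I would compute the signed curvature of the projection curve as a plane curve: with $x(u)=\cos u$ and $y(u)=\epsilon\sin u$, one has $x'y''-y'x''=(-\sin u)(-\epsilon\sin u)-(\epsilon\cos u)(-\cos u)=\epsilon>0$, so the projection curve has positive curvature everywhere and is a strictly convex Jordan curve. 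Hence $\gamma_0^\epsilon$ has a one-to-one convex projection onto the $xy$-plane, and applying Theorem~\ref{main theorem} to the initial curve $\gamma_0^\epsilon$ yields that the corresponding space CSF develops a Type~I singularity and becomes asymptotically circular as $t\to T$.

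Finally I would include a remark on why a single extra coordinate suffices here, in contrast with Corollary~\ref{perturb general immersed curves with additional codimension two}: the planar figure-eight is written in the form $(\cos u,\sin 2u)$ whose first coordinate is already the angular function $\cos u$ --- this is the situation covered by \cite[Lemma~1.8]{sun2024curve} --- so it is enough to fill the vacant second slot with $\epsilon\sin u$ in order to turn the relevant $2$-plane projection into an ellipse, keeping the perturbed curve in $\mathbb R^3$, whereas the generic wave approximation \eqref{wave approximation} would prepend two fresh coordinates $(\epsilon\cos u,\epsilon\sin u)$ and produce a curve in $\mathbb R^4$. I do not expect any real difficulty in carrying this out: every step is an elementary computation, and the only points deserving a moment's attention --- that $\gamma_0^\epsilon$ is still an immersion, and that the ellipse is traversed exactly once so that the projection is genuinely one-to-one --- are both immediate, as indicated.
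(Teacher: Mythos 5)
Your proposal is correct and matches the paper's intended argument: the paper states this corollary without a separate proof precisely because the projection $P_{xy}\gamma_0^\epsilon(u)=(\cos u,\epsilon\sin u)$ is visibly an ellipse, giving a one-to-one convex projection so that Theorem~\ref{main theorem} applies directly. Your verification of the immersion condition and the strict convexity computation $x'y''-y'x''=\epsilon>0$ fills in exactly the elementary checks the paper leaves to the reader, and your closing remark about why one extra coordinate suffices (tying it to \cite[Lemma~1.8]{sun2024curve}) mirrors the paper's own explanatory sentence preceding the corollary.
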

Corollary \ref{the corollary on perturbing symmetric planar figure-eights} confirms the numerical observations mentioned in \cite[the last paragraph on Page 4]{sun2024curve}. See Figure \ref{fig:three_images}.

\begin{figure}[ht!]
\centering
\begin{minipage}{0.3\textwidth}
\centering
\includegraphics[width=\linewidth]{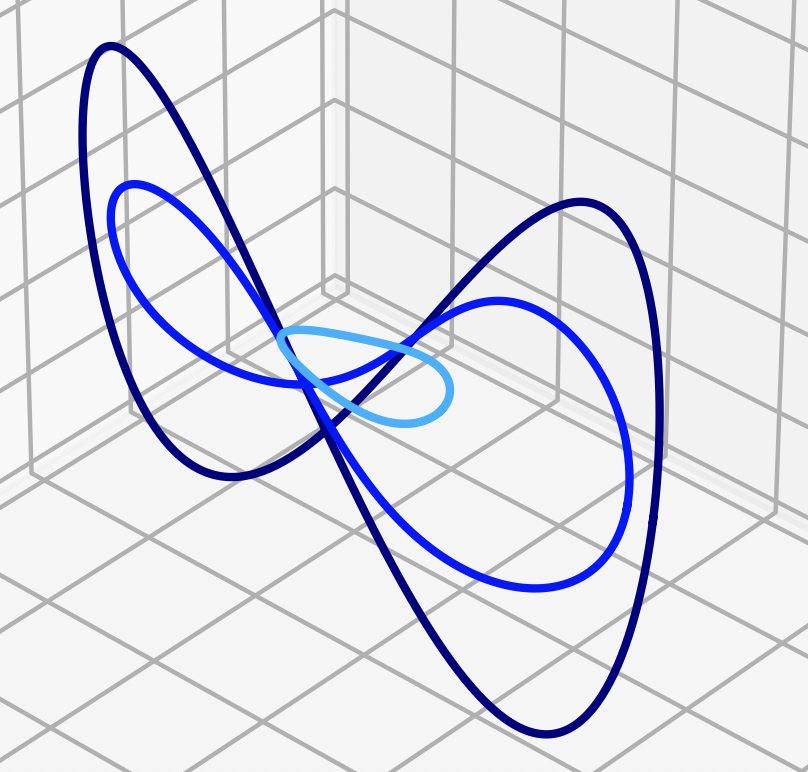}
        % Optional: Add a caption for the first image
        % \caption{Image 1}
\end{minipage}\hfill
\begin{minipage}{0.3\textwidth}
\centering
\includegraphics[width=\linewidth]{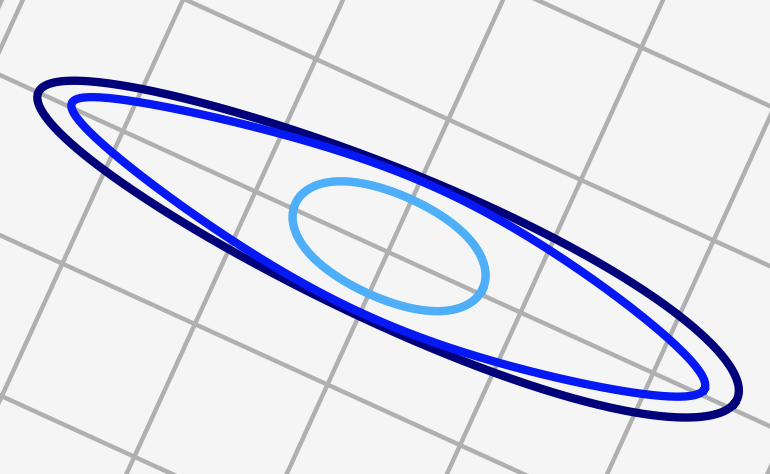}
\textbf{$xy$-projection}
\end{minipage}\hfill
\begin{minipage}{0.3\textwidth}
\centering
\includegraphics[width=\linewidth]{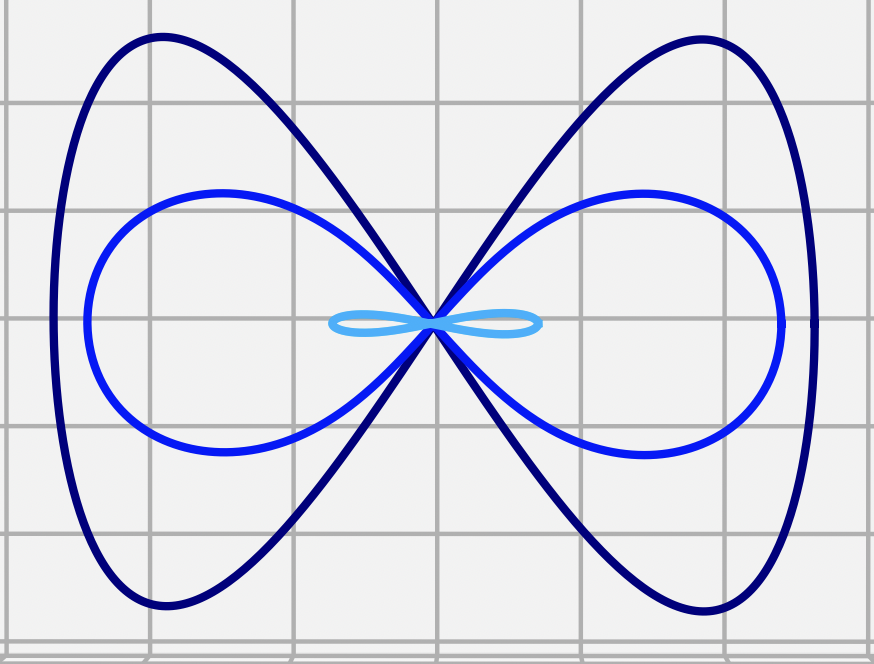}
\textbf{$xz$-projection}
\end{minipage}
    % Overall caption for the figure
\caption{Snapshots of the evolution of a perturbation of the planar figure eight curve from different angles. Previously appeared in \cite{sun2024curve}.}
\label{fig:three_images}
\end{figure}
As another application, we present a geometric-flow proof of the well-known Fenchel's theorem, which was originally proved in 1929 \cite{fenchel1929krummung}.
\begin{thm}[Fenchel's theorem]
    Any smooth closed immersed space curve has total curvature at least $2\pi$.
\end{thm}
\begin{proof}
    For a smooth closed curve $\gamma_0:S^1\rightarrow\mathbb R^n$, consider the perturbation $\gamma^{\epsilon}_0$ as defined in Corollary \ref{perturb general immersed curves with additional codimension two} and consider the corresponding CSF  $\gamma^\epsilon:S^1\times[0,T_\epsilon)\rightarrow\mathbb R^{n+2}$.

For every $\epsilon^\prime>0$, for small enough $\epsilon$, one has $\int_{\gamma_0} kds\geq  \int_{\gamma_0^\epsilon} kds-\epsilon^\prime$.

By \cite[Theorem 5.1]{altschuler1991singularities}, the total curvature is non-increasing along CSF, thus for any $t\in[0,T_\epsilon)$,
    \[
    \int_{\gamma_0^\epsilon} kds
    \geq  \int_{\gamma^\epsilon(\cdot,t)} kds.
    \]
By Corollary \ref{perturb general immersed curves with additional codimension two}, $\gamma^\epsilon$ becomes asymptotically circular as $t\rightarrow T_\epsilon$. Thus for large enough $t$, one has
\[
\int_{\gamma^\epsilon(\cdot,t)} kds\geq 2\pi-\epsilon^\prime.
\]
In summary, for every $\epsilon^\prime>0$, 
\[
\int_{\gamma_0} kds\geq2\pi-2\epsilon^\prime.
\]
By taking $\epsilon^\prime\rightarrow0$, Fenchel's theorem is proved. 
\end{proof}

\subsection{Strategy of our proof}
The principal part of the proof is devoted to ruling out Type~{II} singularities; the argument proceeds by contradiction. For CSF with convex projections, developing Type~{II} singularities, we first improve the known blow-up results in the literature, showing that every tangent flow at the singular point is a \emph{line of multiplicity two} (Theorem \ref{the flow version of type II blow up results for convex projections CSF}). Then we show the directions of the lines and thus the tangent flows are both \emph{non-unique} (Theorem \ref{the theorem on nonuniqueness of the tangent flows}) and \emph{unique} (Theorem \ref{the theorem on uniqueness of the tangent flows}). This gives a contradiction and hence only Type~I singularities can occur. To the best of the author's knowledge, this contradiction mechanism on excluding Type~{II} singularities is new in the literature.

Once Type~I is established, asymptotic circularity can be proved quickly, sketched as follows. A Type~I blow-up argument \cite{huisken1990asymptotic} implies that the singularity satisfies the shrinker equation. All one-dimensional shrinkers in $\mathbb R^n$ are planar (see for example \cite[Lemma 5.1]{altschuler2013zoo}), thus are classified as Abresch-Langer curves \cite{abresch1986normalized}, where lines are ruled out by Lemma \ref{bounedness of blow up assuming type I and shrinks to one point}. Because a circle of multiplicity one is the only Abresch-Langer curve with a one-to-one convex projection, it follows from \cite{schulze2014uniqueness} that CSF with a one-to-one convex projection becomes asymptotically circular; see Proposition \ref{Type I implies shrink to a round point} for more details. 

We emphasize that our argument does not provide an alternative proof that CSF with a one-to-one  convex projection shrinks to a point; rather, it heavily relies on this fact, which was proved in \cite{sun2024curve}.

In improving the known blow-up results, we rely on White's blow-up results in \cite[page 12-13]{chodoshmean}, \cite[Theorem 5.6, page 9-10]{edelenmean}. To prove non-uniqueness of tangent flows, we enhance the barrier in \cite{sun2024curve}, making use of the viscosity subsolutions; see \cite{crandall1992user}. To prove uniqueness of tangent flows, we extend the Allard-Almgren method \cite{allard1981radial} in \cite[\S 8]{choi2025uniqueness} based on estimates derived in a way different from \cite[\S 7]{choi2025uniqueness}.

For the structure of the rest of this subsection, we first introduce some terminology and then discuss our proof strategy and related works in more detail.
\subsection*{Terminology}
Recall that we have assumed CSF $\gamma\att$ shrinks to the origin as $t\rightarrow T$ based on \cite{sun2024curve}. 
\begin{defi}[Following Huisken \cite{huisken1990asymptotic}]
\label{definition of the rescaled solutions}
We define the \emph{rescaled CSF} to be:
\begin{equation}
    \Gamma(u,\tau):=\frac{\gamma(u,t)}{\sqrt{2T-2t}},\quad 
    \tau:=-\frac{1}{2}\log(T-t).
\end{equation}
In addition, we denote by $\sigma$ the \emph{arc-length parameter} of $\Gamma$, defined by
\begin{equation*}
    \frac{\partial}{\partial \sigma}
    =\frac{1}{|\Gamma_u|} \frac{\partial}{\partial u}
    =\sqrt{2T-2t}\frac{\partial}{\partial s}.
\end{equation*}
\end{defi}
\begin{defi}
\label{j-th rescaled CSF}
    For any sequence $\tau_j=-\frac{1}{2}\log(T-t_j)\rightarrow+\infty$, we define the \emph{$j$-th rescaled CSF} along the sequence $\{\tau_j\}$ to be
    \begin{equation}
        \Gamma_j(\sigma,\tau):=\Gamma(\sigma,\tau_j+\tau).
    \end{equation}
\end{defi}
Throughout this paper, when taking subsequences, we always keep the original labels. In addition, all convergence results arising from the blow-up analysis are understood up to reparameterization in the following sense.
\begin{defi}
\label{the notion of locally smoothly convergence}
    We say the $j$-th rescaled CSF $\Gamma_j$ \emph{locally smoothly converges} to a rescaled flow $\Gamma_\infty$ if for any real numbers $a<b$ and $R>0$, there is a finite union of closed intervals $I=\cup I_\alpha$, an integer $J$ and smooth time-dependent reparameterizations $\phi_j$ such that for $j\geq J$, 
    \begin{equation*}
        \Gamma_j(\phi_j(u,\tau),\tau),\quad u\in I
    \end{equation*}
    reparameterizes all the arcs $\Gamma_j(\cdot,\tau)\cap \overline{B_R(0)}$ for every $\tau\in[a,b]$ and
\begin{equation}
    \Gamma_j(\phi_j(u,\tau),\tau)\rightarrow\Gamma_\infty(u,\tau)
\end{equation}
as smooth functions on $I\times[a,b]$ as $j\rightarrow\infty$.
\end{defi}
\begin{defi}
   \label{the definition of the parameterized tangent flow}
   A rescaled flow $\Gamma_\infty$ is a \emph{parameterized tangent flow} if there exists a sequence $\tau_j\rightarrow+\infty$ such that the corresponding $j$-th rescaled CSF $\Gamma_j$ locally smoothly converges to $\Gamma_\infty$.
\end{defi}
In the literature, the notion of tangent flow refers to the Brakke flow obtained as the limit of the parabolic rescalings. See for example \cite[Page 165]{schulze2014uniqueness}. Our notion of parameterized tangent flow is more restrictive than tangent flow as it requires convergence in the smooth sense. And our notion of parameterized tangent flow is the limit of Huisken's rescaled CSF instead of the limit of the parabolic rescalings.

In contrast to mean curvature flow of surfaces in $\mathbb R^3$, in which case Bamler and Kleiner prove the Multiplicity One conjecture in \cite{bamler2024multiplicityconjecturemeancurvature}, tangent flows of higher multiplicity do appear for space CSF.
By solving ODE, \cite[\S3]{altschuler2013zoo} classifies space CSF with $S^1$ symmetry. As a corollary, based on explicit ODE solutions, a shrinking circle with any positive integer multiplicity can appear as a tangent flow of embedded CSF.
\begin{defi}
    For a unit vector $\vec{v}$,  the rescaled flow 
    \begin{align*}
        \Gamma_\infty:(\mathbb R\sqcup \mathbb R)\times \mathbb R&\rightarrow\mathbb R^n\\
        (\lambda,\tau)&\rightarrow \lambda\vec{v}
    \end{align*}
    is called a \emph{stationary line of multiplicity two}. 
\end{defi}
\begin{rmk}
    Here the term ``stationary" is different from the term ``static" introduced in \cite[\S5, page 676]{white2000size} because we reparameterize time as in Definition \ref{definition of the rescaled solutions} and Definition \ref{j-th rescaled CSF} instead of the parabolic rescalings. In our case, CSF with a one-to-one convex projection shrinks to one point and the tangent flows at the spacepoint $(\lim\limits_{t\rightarrow T}\gamma(\cdot,t),T)$ are actually quasi-static in the sense of \cite[\S5, page 676]{white2000size}.
\end{rmk}
Now we discuss our proof strategy and related works in more detail.
\subsection*{Improvement of blow-up results}
It was first established by Huisken\cite{huisken1990asymptotic} via his powerful monotonicity formula that MCF, developing a Type~I singularity, is asymptotic to a self-shrinker subsequentially. For space CSF, Altschuler showed in \cite{altschuler1991singularities} that when a Type~{II} singularity develops, Hamilton's Harnack inequality\cite{hamilton1989cbms,hamilton1995harnack} implies that regions of curve where the curvature is comparable to the maximum of curvature are asymptotic to Grim Reapers. For locally convex planar curves, see also \cite{angenent1991formation}. See \cite{Naff2022}\cite{nguyen2018cylindrical} for results of the same type as \cite{altschuler1991singularities} for MCF in higher codimensions.

It has been pointed out in \cite{Mantegazza2014blowup}\cite[page 12-13]{chodoshmean}, \cite[Theorem 5.6, page 9-10]{edelenmean} that for CSF, one can apply the Sobolev embedding to extract a $C^1$ convergent subsequence of curves (not flows) without assuming the Type~I condition. 

For CSF with convex projections, our improvement of the blow-up results when Type~{II} singularities occur is as follows. 
\begin{thm}
\label{the flow version of type II blow up results for convex projections CSF}
Assume the initial curve $\gamma_0$ has a one-to-one convex projection onto the $xy$-plane and its CSF $\gamma\att$ develops a Type~{II} singularity as $t\rightarrow T$. Then for any sequence $\tau_j\rightarrow+\infty$, there exists a subsequence and a line $L$ in $\mathbb R^n$ such that the $j$-th rescaled CSF $\Gamma_j$ locally smoothly converges to the stationary line $L$ of multiplicity two. Moreover, the line $L$ is not perpendicular to the $xy$-plane. 
\end{thm}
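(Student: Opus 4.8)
Since $\gamma\att$ shrinks to the origin, Huisken's monotonicity formula \cite{huisken1990asymptotic} applies and the Gaussian density $\Theta:=\lim_{\tau\to+\infty}\int_{\Gamma(\cdot,\tau)}e^{-|\Gamma|^2/2}\,d\sigma$ (up to normalization) exists and is finite. Consequently, for every $R>0$ the length of $\Gamma(\cdot,\tau)\cap\overline{B_R(0)}$ is bounded uniformly in $\tau$, and, because the monotonicity formula controls the space-time integral of $|\Gamma_{\sigma\sigma}+\Gamma^{\perp}|^2$ against the Gaussian weight and $|\Gamma^{\perp}|\le R$ on $\overline{B_R(0)}$, the increments of the Gaussian length tend to zero, so $\int_{a}^{b}\int_{\Gamma_j(\cdot,\tau)\cap\overline{B_R(0)}}k^2\,d\sigma\,d\tau$ is bounded uniformly in $j$. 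These are exactly the hypotheses of White's blow-up results in \cite[p.~12--13]{chodoshmean} and \cite[Thm.~5.6]{edelenmean}: after passing to a subsequence, $\Gamma_j$ converges locally smoothly, with integer multiplicity and away from a finite set of points, to a rescaled flow $\Gamma_\infty$ that is static, i.e. $\Gamma_\infty(\cdot,\tau)\equiv\Sigma$ for a self-shrinker $\Sigma$. Since every one-dimensional self-shrinker in $\mathbb R^n$ is planar (\cite[Lemma~5.1]{altschuler2013zoo}) and planar shrinkers are classified in \cite{abresch1986normalized}, $\Sigma$ is a finite union of lines through the origin and Abresch--Langer curves, each with a positive integer multiplicity, and its Gaussian area equals $\Theta$.

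\textbf{Step 2: the convex projection has enclosed area $\le\pi$, and $\Theta\ge2$.} Let $C_t:=P_{xy}(\gamma(\cdot,t))$, which is a \emph{simple} convex curve (simple because $P_{xy}|_\gamma$ is injective), bounding a convex region $K_t$. Parametrizing $\gamma$ by the arclength of $C_t$ and writing $\gamma=(C_t,f)$, a direct computation shows the inward normal speed of $C_t$ equals $\kappa_{C_t}/(1+|f'|^2)\le\kappa_{C_t}$; by the first variation of area and $\oint_{C_t}\kappa_{C_t}=2\pi$ this gives $-2\pi\le\tfrac{d}{dt}|K_t|\le0$, and since $|K_t|\to0$ as $t\to T$, integration yields $|K_t|\le2\pi(T-t)$. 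Hence the rescaled projection $P_{xy}(\Gamma(\cdot,\tau))$ encloses area at most $\pi$ for all $\tau$. On the other hand, by the regularity theory for blow-ups of CSF (\cite{chodoshmean,edelenmean,Mantegazza2014blowup}), if $\Theta<2$ then the flow is smooth near $(0,T)$, forcing a Type~I singularity and contradicting the hypothesis; therefore $\Theta\ge2$.

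\textbf{Step 3: $\Sigma$ is a single line of multiplicity two, not perpendicular to the $xy$-plane.} First, $\Sigma$ has no compact component. A non-circular Abresch--Langer curve is immersed with a transverse self-intersection, and so is its image under any orthogonal projection of rank two, which cannot sit inside the (possibly degenerate) convex limit of the simple convex curves $P_{xy}(\Gamma_j(\cdot,\tau))$; the degenerate (rank $\le 1$) case, in which such a component would lie in a plane projecting to a line or a point, is excluded by the enhanced barrier of \cite{sun2024curve} (upgraded via viscosity subsolutions, cf. \cite{crandall1992user}) together with the area bound of Step~2. The round circle of multiplicity one has Gaussian area $<2$, and the simple convexity of $P_{xy}(\Gamma_j)$ is incompatible with $\Gamma_j$ carrying, in addition to a near-circle, the further mass required by $\Theta\ge2$. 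Thus $\Sigma$ is a union of lines through the origin. Next, the rescaled projection $P_{xy}(\Gamma(\cdot,\tau))$ encloses area $\le\pi$ but, by the same barrier, does not collapse, so as $\tau\to+\infty$ it degenerates to a doubly covered segment in a single direction; since $P_{xy}|_\gamma$ is injective, distinct sheets of $\Sigma$ project to distinct arcs of a simple convex curve, and such a curve has at most two arcs nearly parallel to any fixed direction, so $\Sigma$ has total multiplicity at most two. With $\Theta\ge2$ this forces total multiplicity exactly two; finally the barrier, together with the connectedness of $\gamma$ (the two sheets must close up at infinity), forces the two sheets onto one and the same line $L$ through the origin and rules out $L\perp xy$-plane, since that would force the rescaled projection to collapse to the origin rather than to a segment. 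Hence $\Gamma_j$ locally smoothly converges to the stationary line $L$ of multiplicity two, with $L$ not perpendicular to the $xy$-plane.

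\textbf{Main obstacle.} Step~1 is a fairly direct application of known blow-up technology once the length and $L^2$-curvature bounds are in hand. The real difficulty is Step~3: upgrading ``$\Sigma$ is a union of lines through the origin'' to ``$\Sigma$ is a single line of multiplicity exactly two, not perpendicular to the $xy$-plane'' demands sharp asymptotic control of the simple convex projection curve $P_{xy}(\Gamma(\cdot,\tau))$ as $\tau\to+\infty$ — in particular a quantitative lower bound on its size preventing collapse to a point, and enough rigidity to conclude that its two nearly parallel long arcs lie over one common line in $\mathbb R^n$. This is precisely where the enhanced barrier of \cite{sun2024curve} is essential. A secondary technical point is that, under the Type~II hypothesis, the curvature blow-up escapes to spatial infinity in the rescaled picture, so one must verify that it never obstructs the local smooth convergence inside any $\overline{B_R(0)}$.
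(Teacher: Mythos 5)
You assert that after passing to a subsequence, ``$\Gamma_j$ converges locally smoothly \ldots to a rescaled flow $\Gamma_\infty$ that is static,'' citing White's blow-up results. But White's argument (as recalled in Proposition \ref{prop on blow-up of general immersed CSF}) yields only $C^1$ convergence of the \emph{curves} $\Gamma_j(\cdot,\tau)$, for \emph{almost every} fixed $\tau$, with the subsequence allowed to depend on $\tau$; it does not yield locally smooth convergence of the \emph{flows}. Upgrading from ``$C^1$ at a.e.\ $\tau$'' to ``locally smooth convergence of flows on $[a,b]$'' is precisely what Theorem \ref{the flow version of type II blow up results for convex projections CSF} is asserting, and it is the nontrivial content of \S\ref{Blow-up argument for curves with convex projections}. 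Because the limit has multiplicity two, Brakke's local regularity theorem is unavailable — you flag this as a ``secondary technical point,'' but it is the main obstruction, and your Step~1 simply posits it away. The paper's route instead fixes two times $a<b$ at which $C^1$ convergence holds, then uses the maximum principle together with the \emph{uniform convexity of the projection curve} to trap $P_{xy}\Gamma_j(\cdot,\tau)$ between translating tangent lines for all $\tau\in[a,b]$ (Lemmas \ref{bounded by lines not far away}, \ref{control the j-th rescaled CSF from up and down}, \ref{control the j-th rescaled CSF from the right}), from which graphicality and gradient bounds follow by convexity (Lemma \ref{gradient estimates of rescaled CSF}), and curvature bounds then follow from De~Giorgi--Nash--Moser and Schauder estimates for the graphical rescaled CSF (Lemma \ref{curvature estimates of rescaled CSF}). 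Only after this compactness is established does Huisken's monotonicity formula force the limit flow to be stationary. None of this is in your argument.

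\textbf{A secondary issue is the misattribution of tools in Step 3.} The ``barrier'' (Definition \ref{definition of the barrier}) is not used anywhere in the proof of Theorem \ref{the flow version of type II blow up results for convex projections CSF}; it enters only in \S\ref{the section on the barrier as a subsolution}--\S\ref{the section on the non-uniqueness of the tangent flows} to prove non-uniqueness of tangent flows. The ingredients the paper actually uses to deduce ``single line of multiplicity exactly two, not perpendicular to the $xy$-plane'' are: the bounded slope lemma (Lemma \ref{upper bound of slope of secant lines}) to exclude perpendicular directions and to exclude two distinct lines over the same $xy$-projection; White's local regularity \cite{white2005local} to show the total multiplicity is at least $2$; and the embeddedness/convexity of the projection to bound the multiplicity by $2$ and to exclude non-circular Abresch--Langer curves. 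Your Step 3 reaches similar conclusions but invokes the barrier for several of them, which is not what supplies the estimates. Your ``$\Theta\ge 2$'' claim in Step 2 (``if $\Theta<2$ then the flow is smooth near $(0,T)$'') also overstates White's local regularity, which gives a density gap near $1$, not near $2$; the correct deduction is that the multiplicity is not $1$ because a multiplicity-one line would be a regular point.
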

We show that the convergence is smooth even though the limit is of multiplicity two. In the multiplicity one case, smooth convergence follows from Brakke regularity theorem, which we are unable to apply to our case. 
\subsection*{Non-uniqueness of tangent flows}
\begin{defi}
A \emph{limit line} is a line $L$ obtained from Theorem \ref{the flow version of type II blow up results for convex projections CSF} for some sequence $\tau_j\rightarrow+\infty$.
\end{defi}
By enhancing the barrier in \cite{sun2024curve}, we are able to show that the directions of the limit lines are non-unique along different sequences $\{\tau_j\}$. 
\begin{thm}
\label{the theorem on nonuniqueness of the tangent flows}
Assume the same hypotheses as in Theorem \ref{the flow version of type II blow up results for convex projections CSF}. For any limit line $L_1$, there exists another limit line $L_2$ such that $P_{xy}L_1\perp P_{xy}L_2$. Thus the tangent flows are non-unique.
\end{thm}
\subsection*{Uniqueness of tangent flows}
The uniqueness of tangent flows is not known in general. In the multiplicity one case, uniqueness of tangent flows has been proved by Schulze\cite{schulze2014uniqueness} for compact tangent flows, by Colding-Minicozzi\cite{colding2015uniqueness} (see also \cite{colding2025quantitative,Zhou2015}) for cylindrical tangent flows and by Chodosh-Schulze\cite{chodosh2021uniqueness} for asymptotically conical tangent flows.
See also generalizations by Zhu\cite{zhu2020ojasiewicz} and Lee-Zhao\cite{lee2024uniqueness}. For Lagrangian MCF, see \cite{neves2007singularities,lotay2022neck,li2024singularity}.

Our proof of uniqueness of tangent flows of CSF with convex projections is an extension of the Allard-Almgren method \cite{allard1981radial}\footnote{In general, a line of multiplicity two does not satisfy the integrability condition in \cite[Page 215 (1)]{allard1981radial} because of the situation that two lines are rotating at different speeds. However, in our case, a heuristic explanation is that this is made up by the projection convexity because the projection curve should be embedded and cannot be intersecting lines.} (see also \cite{allard2024corrections}) in \cite[\S 8]{choi2025uniqueness}, which proves uniqueness of tangent flows at infinity for ancient finite-entropy planar embedded CSF.
See also the previous works \cite{brendle2019uniqueness,brendle2021uniqueness} on MCF with similar ideas.
\begin{thm}
\label{the theorem on uniqueness of the tangent flows}
Assume the same hypotheses as in Theorem \ref{the flow version of type II blow up results for convex projections CSF}. Then the direction of the limit line is independent of the sequence $\{\tau_j\}$. Thus the tangent flow is unique.
\end{thm}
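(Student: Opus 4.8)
The plan is to run an Allard--Almgren scheme \cite{allard1981radial}, in the spirit of \cite[\S8]{choi2025uniqueness}, for Huisken's rescaled flow $\Gamma(\cdot,\tau)$ of Definition~\ref{definition of the rescaled solutions}. We work throughout under the hypothesis that a Type~II singularity forms, as in Theorem~\ref{the flow version of type II blow up results for convex projections CSF}; the eventual clash with Theorem~\ref{the theorem on nonuniqueness of the tangent flows} is exactly the contradiction that rules Type~II out. Fix one limit line $L_0$, with unit direction $\theta_0$, obtained along a sequence $\tau_j^0\to+\infty$. Define $\theta(\tau)$ to be the direction of the line through the origin that best fits $\Gamma(\cdot,\tau)\cap \overline{B_R(0)}$ in the Gaussian-weighted $L^2$ sense. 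Since any other sequential limit line has direction $\lim_j\theta(\tau_j')$ by local smooth convergence, it suffices to show $\theta(\tau)\to\theta_0$ as $\tau\to+\infty$; then the limit line, and hence the tangent flow, is unique.

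\textbf{Step 1: double-graph reduction.} Fix a large $R>0$. By Theorem~\ref{the flow version of type II blow up results for convex projections CSF}, for $\tau$ large $\Gamma(\cdot,\tau)\cap\overline{B_R(0)}$ is a union of exactly two arcs, each a normal graph $u_i(\cdot,\tau)$ of small $C^k$ norm over $L_{\theta(\tau)}$; write $a=\tfrac12(u_1+u_2)$ for the ``centre'' and $d=\tfrac12(u_1-u_2)$ for the ``gap'', which does not vanish. Because $P_{xy}|_\gamma$ is injective with convex image, the two sheets cannot be approximated by a transverse pair of lines through the origin --- the configuration responsible for the failure of the integrability condition \cite[p.~215]{allard1981radial} flagged in the footnote above --- and this keeps the rotational component of $d$ under control; this is where projection convexity does the essential work. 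With the normalization of Definition~\ref{definition of the rescaled solutions}, the rescaled CSF equation in normal-graph form linearizes, around $L_{\theta(\tau)}$, to the Ornstein--Uhlenbeck operator $\ml=\partial_{\sigma\sigma}-\sigma\,\partial_\sigma+1$ acting on $a$ and on $d$, up to errors quadratic in $(a,d,\partial_\sigma a,\partial_\sigma d)$ and in $\theta'(\tau)$. The operator $\ml$ is self-adjoint in $L^2(e^{-\sigma^2/2}\,d\sigma)$ with eigenvalues $1,0,-1,-2,\dots$: the eigenvalue $1$ corresponds to translating the line off the origin (excluded because $\gamma(\cdot,t)\to0$), and the eigenvalue $0$ to rotating the line, i.e.\ to the tangent space of the finite-dimensional family of lines through the origin.

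\textbf{Step 2: decay of the transverse part.} Let $w$ denote the component of $(a,d)$ that is orthogonal, in $L^2(e^{-\sigma^2/2}\,d\sigma)$, to the span of the eigenvalue-$1$ and eigenvalue-$0$ modes, the latter removed simultaneously with the choice of best-fit line. Huisken's Gaussian-weighted length is monotone non-increasing along $\Gamma(\cdot,\tau)$ and, by the blow-up classification, converges to its value on a line of multiplicity two; hence for $\tau$ large the defect is concentrated away from the origin and $\Gamma(\cdot,\tau)\setminus B_R$ carries only an exponentially (in $R$) small share of the weight, controlling all boundary and tail contributions. A Gaussian energy computation then gives, for a suitable $\delta>0$ and $R$ large,
\begin{equation*}
\frac{d}{d\tau}\,\|w(\cdot,\tau)\|^2\le-2\delta\,\|w(\cdot,\tau)\|^2+C\big(\|w(\cdot,\tau)\|^3+|\theta'(\tau)|^2+e^{-R^2/4}\big),
\end{equation*}
while the variation of the best-fit direction obeys $|\theta'(\tau)|\le C\,\|w(\cdot,\tau)\|$ since the rotational mode has been projected out. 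Combining these (and, once $\|w\|$ is small, a routine ODE comparison to absorb the cubic term) yields $\|w(\cdot,\tau)\|+|\theta'(\tau)|\le C e^{-\delta\tau}$ for $\tau$ large.

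\textbf{Step 3: conclusion, and the main obstacle.} Since $|\theta'(\tau)|$ is integrable, $\theta(\tau)$ is Cauchy and converges to some $\theta_\infty$; evaluating along $\tau_j^0$ gives $\theta_\infty=\theta_0$, and evaluating along any other sequence $\tau_j'$ shows the corresponding limit line has direction $\theta_\infty=\theta_0$. Hence the direction of the limit line, and the stationary line of multiplicity two it determines, is independent of the sequence, proving uniqueness. I expect the main obstacle to be the quantitative double-graph reduction of Step~1 in the multiplicity-two regime --- this is the point at which the argument must diverge from \cite[\S7]{choi2025uniqueness}, where the curve is embedded and its two branches are genuinely separated. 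One must control the interaction of the two sheets, which remain close to each other for all large $\tau$, and in particular rule out the ``two lines rotating at different speeds'' scenario, so that $a$ and $d$ genuinely see a single rotation parameter $\theta(\tau)$; this is precisely where injectivity of $P_{xy}|_\gamma$ together with convexity of $P_{xy}(\gamma)$ is used, substituting for the integrability condition of \cite{allard1981radial} and closing the scheme.
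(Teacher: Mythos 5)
Your proposal captures the correct high-level strategy: an Allard--Almgren scheme for the rescaled flow, expanding in the shifted Ornstein--Uhlenbeck eigenbasis, projecting out the low modes, and showing the remainder forces $\theta'(\tau)$ to be integrable. This is indeed the shape of the paper's argument in \S\ref{the section on linear scales}--\S\ref{the section on uniqueness of tangent flows}, and you correctly flag the two genuinely hard points: graphicality in the multiplicity-two regime, and ruling out the ``two lines rotating at different speeds'' mode. However, these are precisely what your proposal leaves as assertions rather than proofs, and they are the substance of the paper's work.

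Two gaps remain. First, your Step~2 is run at a \emph{fixed} radius $R$, giving an error $e^{-R^2/4}$ that is a time-independent constant; the differential inequality then only gives $\|w(\cdot,\tau)\| \lesssim e^{-\delta\tau} + e^{-R^2/8}$, so $\int|\theta'|\,d\tau$ diverges and $\theta(\tau)$ need not converge. To close the scheme one must let the graphical radius grow like $\rho(H) \sim 1/H$ as the flatness $H$ improves, which requires \emph{proving} that $\Gamma(\cdot,\tau)$ stays two-sheeted and $C^2$-controlled at this growing linear scale over time intervals. That is the content of Proposition~\ref{the proposition on estimates of linear scales} and Proposition~\ref{the proposition on C2 estimates over time intervals}, whose proofs rest on the lower bound for the rescaled enclosed area (Lemma~\ref{the lemma on the lower bound of the rescaled area}) together with the convexity of the projection; none of this appears in your sketch. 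The paper's discrete iteration (Propositions~\ref{the proposition gluing the domains to larger time interval} and~\ref{the prop on improvement of flatness}) is the bookkeeping device that implements this growing-scale re-anchoring. Second, you state that injectivity and convexity of $P_{xy}|_\gamma$ ``keeps the rotational component of $d$ under control'' and ``closes the scheme,'' but you never say how. The actual mechanism is Claim~\ref{the claim on difference of b for different branches}: because the two projected branches never cross ($y^{\alpha,1} > y^{\alpha,2}$ on $|\tilde x| \le 2$, by equation~\eqref{the equation on y alpha 1 strictly bigger than y alpha 2}), the difference $y^{\alpha,1} - y^{\alpha,2} - (\beta_1^\alpha - \beta_2^\alpha)\tilde x$ pointwise dominates $|(\beta_1^\alpha - \beta_2^\alpha)\tilde x|$ on a one-sided interval, which forces the best-fit slopes $\beta_1^\alpha,\beta_2^\alpha$ of the two branches to agree up to the already-controlled deviation. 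Without some quantitative statement of this kind, the scissors mode of $d$ is not excluded and your energy inequality for $\|w\|^2$ cannot be established. So the proposal is the right route but is missing the two lemmas that make the route passable.
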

To use the argument in \cite[\S 8]{choi2025uniqueness}, we need an analog of \cite[Theorem 7.3 (Graphical radius lower bound)]{choi2025uniqueness}, the proof of which does not apply to our setting for the following reason. In higher codimensions, we cannot keep track of sharp vertices (local maximum points of the curvature function) as in \cite{choi2025uniqueness}. In more detail, when curvature $k>0$, the evolution equation of the curvature is:
\begin{equation}
    k_t=k_{ss}+k^3-k\tau^2.
\end{equation}
As a result, one cannot apply the Sturmian theorem to the evolution equation of the derivative of the curvature function $k_s$ because of the torsion term $\tau^2$. Even so, we are still able to achieve similar estimates (Proposition \ref{the proposition on estimates of linear scales} and Proposition \ref{the proposition on C2 estimates over time intervals}) as in \cite[Theorem 7.3]{choi2025uniqueness}, making use of the blow-up results, the lower bound of the rescaled area of the projection curves and the geometric properties of CSF with convex projections; see \S\ref{the section on linear scales} for more details.

\subsection{Outline of the paper} 
In \S\ref{the section on known results on blow-up} we summarize the established blow-up results of CSF in the literature.
In \S\ref{the section about properties of CSF with one-to-one convex projections} we recall the geometric properties of CSF with convex projections.
In \S\ref{Blow-up argument for curves with convex projections} we improve the blow-up results (Theorem \ref{the flow version of type II blow up results for convex projections CSF}) for CSF with convex projections when Type~{II} singularities occur, building upon results in \S\ref{the section on known results on blow-up} and \S\ref{the section about properties of CSF with one-to-one convex projections}. 

In \S\ref{the section on the non-uniqueness of the tangent flows}, we first construct a barrier, which is a viscosity subsolution to the heat equation. We then make use of the barrier to show that the tangent flows are non-unique (Theorem \ref{the theorem on nonuniqueness of the tangent flows}).

In \S\ref{the section on linear scales} we establish estimates at line scales. In \S\ref{the section on uniqueness of tangent flows} we make use of the estimates in \S\ref{the section on linear scales} to show that the tangent flows are unique (Theorem \ref{the theorem on uniqueness of the tangent flows}).

\subsection*{Acknowledgments}
The author is indebted to his advisors Sigurd Angenent for introducing him to the study of curve shortening flow in higher codimensions and Hung Vinh Tran for sharing his expertise on viscosity solutions; he thanks them for many inspiring discussions and their support. The author wants to thank Jonathan J. Zhu for a helpful conversation, thank Kyeongsu Choi for explaining part of his work and thank Keaton Naff for a suggestion on the draft. The author thanks Gábor Székelyhidi, Panagiota Daskalopoulos, Theodora Bourni, Ao Sun, Ilyas Khan, Zhihan Wang, Xinrui Zhao for opportunities to present this work. The author also wants to thank Tang-Kai Lee, Mat Langford and Jacob Bernstein for their interests.

\section{Known results on blow-up}
\label{the section on known results on blow-up}
In this section, we recall the blow-up results of general immersed CSF in $\mathbb R^n$ and we restrict to the case that as $t\rightarrow T$, $\gamma\att$ shrinks to one point, which we may assume to be the origin. This is the only section that we do not assume CSF $\gamma\att$ has a one-to-one convex projection.
\begin{rmk}
\label{remark on the possibility that Type I yet union of lines}
Without the assumption that $\gamma\att$ shrinks to a point, to the best of the author's knowledge, in the case that $\gamma\att$ develops a Type~I singularity, it is not known whether there could be a subsequence $\{\tau_j\}$ along which the rescaled curves $\Gamma(\cdot,\tau_j)$ converge to a finite union of lines with multiplicity. The first Theorem in \cite[Page 492]{altschuler1991singularities} didn't include this case, but the author does not think it was justified in his proof. This assumption that $\gamma\att$ shrinks to a point allows for a cleaner formulation of Proposition \ref{prop on type I blow-up of general immersed CSF}.
\end{rmk}
The results in the literature are mostly stated in the planar case, but the argument also works in general dimension $n\geq2$.

We start by explaining how the assumption that $\gamma\att$ shrinks to one point precludes the scenario in Remark \ref{remark on the possibility that Type I yet union of lines}:
\begin{lem}
\label{bounedness of blow up assuming type I and shrinks to one point}
If an immersed CSF $\gamma\att$ develops a Type~I singularity and shrinks to one point as $t\rightarrow T$, then the rescaled CSF, introduced in Definition \ref{definition of the rescaled solutions}, is bounded.
\end{lem}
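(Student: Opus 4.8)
The plan is to bound the length of the time-trajectory $t\mapsto\gamma(u,t)$ of each individual point of the curve using the Type~I curvature bound, and then to use the fact that this trajectory terminates at the origin to convert that into a pointwise bound $|\gamma(u,t)|\lesssim\sqrt{T-t}$; dividing by $\sqrt{2T-2t}$ will then bound $\Gamma$ uniformly. This is the analog of Huisken's observation that a Type~I MCF has bounded rescaled flow, but in this setting it can be carried out by a direct estimate rather than via the monotonicity formula.

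First I would record the consequences of the hypotheses. By the Type~I condition there exist $\delta_0>0$ and $C_0<\infty$ with $k(u,t)\le\sqrt{C_0/(T-t)}$ for all $u\in S^1$ and $t\in[T-\delta_0,T)$. Since the flow is $\gamma_t=\gamma_{ss}$ and $|\gamma_{ss}|=|\partial_s T|=k$, each point moves with speed $|\gamma_t(u,t)|=k(u,t)\le\sqrt{C_0/(T-t)}$, and this bound is integrable as $t\uparrow T$.

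Next, for a fixed $u$ and $T-\delta_0\le t<t'<T$, integrating the evolution equation in time gives $\gamma(u,t')-\gamma(u,t)=\int_t^{t'}\gamma_{ss}(u,\sigma)\,d\sigma$, so $|\gamma(u,t')-\gamma(u,t)|\le\int_t^{t'}\sqrt{C_0/(T-\sigma)}\,d\sigma$. Because $\gamma\att$ contracts to the origin, $\gamma(u,t')\to 0$ as $t'\to T$; letting $t'\to T$ yields $|\gamma(u,t)|\le\int_t^T\sqrt{C_0/(T-\sigma)}\,d\sigma=2\sqrt{C_0(T-t)}$ for $t\in[T-\delta_0,T)$. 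Dividing by $\sqrt{2T-2t}$ gives $|\Gamma(u,\tau)|\le\sqrt{2C_0}$ whenever $t\in[T-\delta_0,T)$. For $t\in[0,T-\delta_0]$ the map $\gamma$ is smooth on the compact set $S^1\times[0,T-\delta_0]$, so $\sup|\gamma|=:M<\infty$ there while $\sqrt{2T-2t}\ge\sqrt{2\delta_0}$, giving $|\Gamma|\le M/\sqrt{2\delta_0}$. Combining the two ranges, $\sup_{u,\tau}|\Gamma(u,\tau)|\le\max\{\sqrt{2C_0},\,M/\sqrt{2\delta_0}\}<\infty$.

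The argument is elementary; the only delicate point — and the place where the hypothesis that $\gamma\att$ shrinks to a single point (rather than merely having bounded diameter) is genuinely used — is the passage $t'\to T$. It supplies both the existence of $\lim_{t'\to T}\gamma(u,t')$ and the value $0$ of that limit, which is exactly what lets one pin down the antiderivative of the speed with no free constant of integration; this is also the subtlety flagged in Remark~\ref{remark on the possibility that Type I yet union of lines}. Since the estimate is this short, I would present it directly rather than deducing boundedness from the monotonicity formula.
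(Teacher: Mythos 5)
Your proof is correct and follows essentially the same strategy as the paper's: integrate the speed bound $|\gamma_t|=k\lesssim (T-t)^{-1/2}$ coming from the Type~I condition, use convergence to the origin as the anchor for the antiderivative, and divide by the rescaling factor. The only difference is presentational — you split the time interval at $T-\delta_0$ and treat the compact piece separately, whereas the paper absorbs both regimes into a single uniform constant $M$ — but the content is identical.
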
  
\begin{proof}
We already assume that CSF $\gamma\att$ shrinks to the origin. Then for all $p\in S^1$ and $t\in[0,T)$,
\begin{equation*}
    |\gamma(p,t)|
    \leq\int_t^T|k(p,\tilde{t})|d\tilde{t}
    \leq M\int_t^T\frac{1}{\sqrt{T-\tilde{t}}}d\tilde{t}
    =2 M\sqrt{T-t}.
\end{equation*}
\end{proof}
We now summarize the known results in the literature on Type~I blow-up for CSF in $\mathbb R^n$.
\begin{prop}[Type~I blow-up]
\label{prop on type I blow-up of general immersed CSF}
    Assuming $\gamma\att$ shrinks to the origin as $t\rightarrow T$, the following three are equivalent:
    \begin{enumerate}[label=(\alph*)]
        \item As $t\rightarrow T$, $\gamma\att$ develops a Type~{I} singularity.
        \item For any sequence $t_j\rightarrow T$, there exists a subsequence such that $\frac{\gamma(\cdot,t_j)}{\sqrt{2T-2t_j}}$ converges in the $C^\infty$ sense to some Abresch-Langer curve with multiplicity at least one.
        \item For any sequence $\tau_j\rightarrow +\infty$, there exists a subsequence such that the $j$-th rescaled CSF $ \Gamma_j(\cdot,\tau)$ (Definition \ref{j-th rescaled CSF}) converges on $S^1\times[-\frac{1}{2}\log T,+\infty)$ in the $C^\infty_{loc}$ sense to a stationary solution of the rescaled CSF corresponding to some Abresch–Langer curve with multiplicity at least one.
    \end{enumerate}
\end{prop}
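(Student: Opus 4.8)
The plan is to establish the cycle of implications $(a)\Rightarrow(c)\Rightarrow(b)\Rightarrow(a)$, treating the rescaled CSF $\Gamma(u,\tau)$ as the fundamental object and exploiting Huisken's monotonicity formula in the rescaled variables. First I would recall that the rescaled flow satisfies $\Gamma_\tau = \Gamma_{\sigma\sigma} + \langle \Gamma_{\sigma\sigma},\cdot\rangle\text{-type terms}$; more precisely, using Definition \ref{definition of the rescaled solutions}, $\Gamma$ evolves by $\partial_\tau \Gamma = \Gamma_{\sigma\sigma} - \langle \Gamma, N\rangle N$ up to tangential reparameterization, and the Gaussian-weighted length $\int e^{-|\Gamma|^2/2}\,d\sigma$ is monotone non-increasing in $\tau$ with derivative controlled by $\int \big|\Gamma_{\sigma\sigma}+\Gamma^\perp\big|^2 e^{-|\Gamma|^2/2}\,d\sigma$ (the rescaled mean curvature). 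The Type~I hypothesis in $(a)$ is exactly what is needed to make this weighted length bounded below and the curvature locally bounded after rescaling, so that standard parabolic estimates apply.

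For $(a)\Rightarrow(c)$: under Type~I, Lemma \ref{bounedness of blow up assuming type I and shrinks to one point} gives that $\Gamma(\cdot,\tau)$ stays in a fixed ball, and the Type~I bound $k^2(u,t)(T-t)\le C$ translates to a uniform curvature bound $|\kappa_\Gamma|\le C'$ for the rescaled curves. Bernstein-type interior estimates for the (strictly parabolic, after fixing a good parameterization) rescaled CSF then bound all higher derivatives uniformly on $S^1\times[\tau_0,\infty)$. Hence for any $\tau_j\to\infty$ the translates $\Gamma_j(\cdot,\tau)=\Gamma(\cdot,\tau_j+\tau)$ subconverge in $C^\infty_{loc}(S^1\times\mathbb R)$ to a limit flow $\Gamma_\infty$. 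By Huisken's monotonicity, the weighted length $F(\tau)=\int e^{-|\Gamma|^2/2}d\sigma$ converges to a finite limit $F_\infty$ as $\tau\to\infty$, so along the subsequence the weighted-length-loss integral $\int_{\tau_j-1}^{\tau_j+1}\int|\kappa_\Gamma+\Gamma^\perp|^2 e^{-|\Gamma|^2/2}\to 0$; thus $\Gamma_\infty$ is a stationary point of the rescaled flow, i.e.\ $\kappa_\infty + \Gamma_\infty^\perp=0$, the shrinker equation. Since $\Gamma_\infty$ is a closed immersed one-dimensional shrinker in $\mathbb R^n$, it is planar (\cite[Lemma 5.1]{altschuler2013zoo}) and hence an Abresch-Langer curve (\cite{abresch1986normalized}), possibly with multiplicity; multiplicity at least one is automatic since $S^1$ is compact and the $C^\infty$ convergence prevents total collapse (the rescaled curves have length bounded below by the monotonicity limit $F_\infty>0$).

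For $(c)\Rightarrow(b)$: given $t_j\to T$, set $\tau_j=-\tfrac12\log(T-t_j)$ and apply $(c)$ to extract a subsequence with $\Gamma_j(\cdot,\tau)\to\Gamma_\infty(\cdot,\tau)$ in $C^\infty_{loc}$; evaluating at $\tau=0$ gives exactly $\frac{\gamma(\cdot,t_j)}{\sqrt{2T-2t_j}}=\Gamma(\cdot,\tau_j)\to\Gamma_\infty(\cdot,0)$ in $C^\infty$, and $\Gamma_\infty(\cdot,0)$ is the stated Abresch-Langer curve with multiplicity at least one. For $(b)\Rightarrow(a)$: argue by contradiction. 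If the singularity is not Type~I, pick $t_j\to T$ with $\lambda_j:=\sup_u k^2(u,t_j)(T-t_j)\to\infty$; but $(b)$ says $\frac{\gamma(\cdot,t_j)}{\sqrt{2T-2t_j}}$ subconverges in $C^\infty$ (in particular in $C^2$) to a fixed compact curve, whose curvature is bounded by some absolute constant $C_0$, and the curvature of the rescaled curve at time $t_j$ equals $\sqrt{2(T-t_j)}\,k(\cdot,t_j)$, so $\sup_u k^2(u,t_j)(T-t_j)\le \tfrac12 C_0^2 + o(1)$, contradicting $\lambda_j\to\infty$. The main obstacle is the $(a)\Rightarrow(c)$ step: one must be careful that the uniform higher-derivative estimates and the subsequential $C^\infty_{loc}$ convergence are genuinely available despite the curve possibly having self-intersections and despite the need to reparameterize — this is where one invokes the standard machinery (Huisken-type monotonicity in the rescaled frame plus interior Schauder estimates for the quasilinear parabolic system after a local graph representation), as in \cite{huisken1990asymptotic} adapted to arbitrary codimension; the rest is essentially bookkeeping.
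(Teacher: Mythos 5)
Your proposal is essentially correct but proceeds around the cycle $(a)\Rightarrow(c)\Rightarrow(b)\Rightarrow(a)$, while the paper goes $(a)\Rightarrow(b)\Rightarrow(c)\Rightarrow(a)$. The difference is more than cosmetic. You prove $(a)\Rightarrow(c)$ directly: under the Type~I bound you get uniform rescaled curvature and then uniform higher-order bounds, so the time-translates $\Gamma_j$ subconverge as \emph{flows}, and Huisken's monotonicity forces the limit to be stationary. The paper instead establishes the fixed-time convergence $(b)$ first (Huisken's curve blow-up) and then upgrades to flow convergence $(c)$ by smooth dependence on initial data plus a diagonal argument; your version collapses those two steps into one, which is cleaner. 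Your $(c)\Rightarrow(b)$ is immediate (evaluate the flow limit at one time), whereas the paper proves $(c)\Rightarrow(a)$, which it does by invoking the classification — only finitely many Abresch–Langer profiles have total curvature below a fixed threshold, giving a uniform curvature bound for all possible subsequential limits. Your $(b)\Rightarrow(a)$ avoids the classification entirely: if $\sup_u k^2(u,t_j)(T-t_j)\to\infty$, then along any convergent subsequence of rescaled curves the rescaled curvature is $\sqrt{2(T-t_j)}\,k(\cdot,t_j)$ and $C^2$ convergence to a single compact limit bounds it, already a contradiction along that subsequence; this is simpler and gets the same conclusion.

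One place where you are a bit thin and the paper is more careful: to identify the limit as an Abresch–Langer curve \emph{of finite multiplicity} you need an upper bound as well as the lower bound you mention. The paper invokes Altschuler's total curvature bound for this. Your monotonicity-based argument can supply the same conclusion — the Gaussian-weighted length of $\Gamma$ is non-increasing, so it is bounded above, and since each multiplicity contributes at least the density of a single shrinker (which is $\ge 1$), the multiplicity of the limit is bounded by the initial Gaussian density — but you only spell out the lower bound ($F_\infty>0$ implies multiplicity $\ge 1$) and not the upper one. It would be worth a sentence to close this. Also, evaluating the flow convergence in $(c)$ at $\tau=0$ implicitly assumes $0$ lies in the stated domain $[-\tfrac12\log T,\infty)$; if $T<1$ you should evaluate at some $\tau^\ast\ge -\tfrac12\log T$ (and correspondingly shift the choice of $\tau_j$), but this is a harmless relabeling since the limit is stationary.
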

\begin{proof}[Proof of Proposition \ref{prop on type I blow-up of general immersed CSF}]
\textbf{$(a)\Rightarrow(b)$.}
Due to the argument of Huisken \cite{huisken1990asymptotic} (see also \cite[Proposition 3.2.10]{mantegazza2011lecture}), for any sequence $t_j\rightarrow T$, there exists a subsequence such that the rescaled CSF locally smoothly converges to a shrinker, potentially with multiplicity.

All shrinking curves in $\mathbb R^n$ are planar; see for example \cite[Lemma 5.1]{altschuler2013zoo}.
By Lemma \ref{bounedness of blow up assuming type I and shrinks to one point}, the shrinker is bounded. In addition, the total curvature of the shrinker is bounded by \cite[Theorem 5.1]{altschuler1991singularities}. Thus the shrinker is one of the Abresch-Langer curves classified in \cite{abresch1986normalized}.

\textbf{$(b)\Rightarrow(c)$.}
Combined with the smooth dependence on initial conditions for solutions of PDE, one may take a convergent subsequence according to $(b)$ at times $\tau_j-m, m=1,2,\cdots$. Then $(c)$ is proved by a diagonal argument.

\textbf{$(c)\Rightarrow(a)$.}
It follows from the classification\cite{abresch1986normalized} that there are only finitely many Abresch-Langer curves with the total curvature $\int_{S^1} kds$ smaller than a fixed upper bound and thus the rescaled curvature is bounded for all time $t$ since for any sequence $t_j\rightarrow T$, we can take a subsequence such that the rescaled curvature is bounded by a uniform constant, which can be defined to be the maximum of the the rescaled curvature of the mentioned finitely many Abresch-Langer curves.
\end{proof}
To the best of the author's knowledge, the uniqueness of tangent flows is not fully known even in the Type~I case in the literature. It is potentially possible that along two sequences, the blow-up limits are two different Abresch-Langer curves with different multiplicities. Geometrically, it has also not been ruled out that a singularity is a rotating Abresch-Langer curve in $\mathbb R^n$.

When one tangent flow is a shrinking circle of multiplicity one, the uniqueness of tangent flows has been established by Schulze \cite{schulze2014uniqueness}. As a corollary, one has the following proposition.
\begin{prop}
\label{proposition on schulze's uniqueness results}
    If there exists one sequence $t_j\rightarrow T$, such that $\frac{\gamma(\cdot,t_j)}{\sqrt{2T-2t_j}}$ converges, up to reparameterization, in the $C^1$ sense to a circle of multiplicity one, then $\frac{\gamma\att}{\sqrt{2T-2t}}$ converges in $C^\infty$ to the circle as $t\rightarrow T$.  
\end{prop}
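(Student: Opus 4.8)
The plan is to combine the subsequential smooth convergence from Huisken's rescaling (which is available as soon as we verify the Type~I condition) with Schulze's uniqueness theorem, which upgrades $C^1$-subsequential convergence to a round point into full $C^\infty$ convergence of the rescaled flow. The first task is therefore to establish that the hypothesis — one sequence $t_j\to T$ along which $\frac{\gamma(\cdot,t_j)}{\sqrt{2T-2t_j}}$ converges in $C^1$ to a unit circle of multiplicity one — forces $\gamma\att$ to develop a Type~I singularity, so that Proposition \ref{prop on type I blow-up of general immersed CSF} becomes applicable.

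First I would argue that the stated $C^1$ convergence yields the equivalent condition $(b)$ of Proposition \ref{prop on type I blow-up of general immersed CSF} along the sequence $\{t_j\}$. Indeed, by Lemma \ref{bounedness of blow up assuming type I and shrinks to one point} (once we know Type~I) — or, to avoid circularity, directly via the monotonicity-based compactness of Huisken \cite{huisken1990asymptotic} applied along $\{t_j\}$ — one extracts a subsequence along which the rescaled curves converge smoothly to a shrinker; the $C^1$ hypothesis pins this shrinker down to be exactly the multiplicity-one unit circle, which is a bounded Abresch--Langer curve. Hence condition $(b)$ holds along $\{t_j\}$ with the limit being a single Abresch--Langer curve of multiplicity one. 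The implications $(b)\Rightarrow(c)\Rightarrow(a)$ in Proposition \ref{prop on type I blow-up of general immersed CSF} then give that $\gamma\att$ is Type~I. (The only subtlety is that $(b)$ in the proposition is phrased as ``for any sequence there is a subsequence''; but the proof of $(b)\Rightarrow(c)\Rightarrow(a)$ only needs the existence of \emph{one} sequence along which a bounded Abresch--Langer limit appears, since the argument in $(c)\Rightarrow(a)$ bounds the rescaled curvature uniformly using the finiteness of Abresch--Langer curves below any fixed total-curvature bound. One should state this carefully.)

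Once Type~I is known, Proposition \ref{prop on type I blow-up of general immersed CSF}$(c)$ guarantees that the rescaled CSF $\Gamma(\cdot,\tau)$ is smoothly precompact: every sequence $\tau_j\to\infty$ has a subsequence along which $\Gamma_j$ converges in $C^\infty_{loc}$ to a stationary rescaled solution, i.e. an Abresch--Langer shrinker with some multiplicity. In particular, along the given sequence coming from $\{t_j\}$, the limit is the round circle of multiplicity one, so the tangent flow (in the Brakke sense underlying \cite{schulze2014uniqueness}) at the spacetime singular point is a shrinking circle of multiplicity one — a compact, smooth, multiplicity-one self-shrinker. Schulze's uniqueness theorem \cite{schulze2014uniqueness} then applies: a mean curvature flow (here, CSF in $\mathbb R^n$) that has one tangent flow equal to a compact, multiplicity-one, smooth shrinker has that shrinker as its \emph{unique} tangent flow, and the rescaled flow converges to it smoothly (not merely subsequentially). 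Translating back through Definition \ref{definition of the rescaled solutions}, this says precisely that $\frac{\gamma\att}{\sqrt{2T-2t}}\to$ unit circle in $C^\infty$ as $t\to T$.

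The main obstacle is the bookkeeping in the first step: making sure that a single $C^1$-subconvergent sequence is genuinely enough to invoke the Type~I characterization, rather than the ``for all sequences'' form in which Proposition \ref{prop on type I blow-up of general immersed CSF} is stated. I would handle this by isolating the implication ``one bounded Abresch--Langer rescaled limit along some sequence $\Rightarrow$ Type~I'' as the real content, which is exactly what the proof of $(c)\Rightarrow(a)$ establishes (uniform rescaled-curvature bound from finiteness of low-total-curvature Abresch--Langer curves), and by noting that Huisken's monotonicity gives the smooth subconvergence to \emph{some} shrinker unconditionally along $\{t_j\}$, with the $C^1$ hypothesis only needed to name that shrinker. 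A secondary point to state cleanly is that $C^1$ convergence up to reparameterization to a multiplicity-one circle is incompatible with the limit being higher-multiplicity or a non-circular Abresch--Langer curve, so no ambiguity remains about which shrinker Schulze's theorem is fed; once that is said, invoking \cite{schulze2014uniqueness} is routine.
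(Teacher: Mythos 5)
Your detour through a Type~I classification is both unnecessary and contains a genuine gap, and the paper's proof avoids it entirely. The paper's argument is two lines: by smooth dependence on initial conditions the $C^1$ closeness at times $t_j$ upgrades to a parameterized tangent flow equal to the shrinking circle of multiplicity one, and then Schulze's theorem \cite{schulze2014uniqueness} is applied directly. Schulze's theorem takes as input one tangent flow equal to a compact, smooth, multiplicity-one shrinker and returns uniqueness of the tangent flow together with smooth convergence of the rescaled flow; Type~I is an output of that smooth convergence, not a hypothesis one needs to establish first.

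The gap in your first step is your claim that ``one bounded Abresch--Langer rescaled limit along some sequence $\Rightarrow$ Type~I is exactly what the proof of $(c)\Rightarrow(a)$ establishes.'' It is not. That implication in Proposition \ref{prop on type I blow-up of general immersed CSF} relies crucially on the universally quantified hypothesis: for \emph{every} sequence $\tau_j\to\infty$ there is a subsequence converging to an Abresch--Langer curve with uniformly bounded total curvature, and it is this for-all statement, together with the finiteness of the classification below a curvature bound, that forces the rescaled curvature to be bounded for all $t$. Having one good sequence gives you no control over the rescaled curvature between consecutive $t_j$'s or along other sequences, so you cannot conclude Type~I this way. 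Your supporting claim that ``Huisken's monotonicity gives the smooth subconvergence to some shrinker unconditionally along $\{t_j\}$'' is also not correct as stated: without a Type~I bound the monotonicity formula gives only Brakke/varifold subsequential convergence, and the smooth convergence along $\{t_j\}$ here comes precisely from the $C^1$ hypothesis combined with parabolic smoothing/smooth dependence, which is the paper's first step. If you simply take that first step and feed the resulting tangent flow to Schulze's theorem, you are done; the attempt to first re-derive Type~I from Proposition \ref{prop on type I blow-up of general immersed CSF} should be deleted.
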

\begin{proof}
     By smooth dependence of solutions to parabolic PDE on initial conditions, there is one tangent flow which is a shrinking circle of multiplicity one. Then this proposition follows from \cite{schulze2014uniqueness}. 
\end{proof}
Now let us summarize the known blow-up results for CSF in $\mathbb R^n$ in the literature without assuming the Type~I condition. 
\begin{prop}
\label{prop on blow-up of general immersed CSF}
For a rescaled CSF $\Gamma$, let $\tau_j\rightarrow+\infty$ be a given sequence and $\Gamma_j$ be the corresponding $j$-th rescaled CSF (Definition \ref{j-th rescaled CSF}). Then for almost every $\tau\in\mathbb R$, at least one of the following two cases happens: 
\begin{enumerate}[label=(\alph*)]
\item There exists a subsequence, such that the curve $\Gamma_j(\cdot,\tau)$ converges, up to reparameterization, in the $C^1$ sense to some Abresch-Langer curve with finite multiplicity.
\item There exists a subsequence, such that the curve\footnote{We emphasize that one only has convergence at time $\tau$, not at later times. The smooth dependence of solutions to parabolic PDE on initial conditions fails in the non-compact setting.} $\Gamma_j(\cdot,\tau)$ converges, up to reparameterization, in the $C^1_{loc}$ sense to a finite union of lines, each with finite multiplicity.
\end{enumerate}
The choice of the subsequence depends on $\tau$. 
\end{prop}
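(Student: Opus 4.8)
The plan is to combine Huisken's monotonicity formula with a uniform energy (total curvature) bound and the Sobolev/Arzelà–Ascoli argument already alluded to in the excerpt (as in \cite{Mantegazza2014blowup}, \cite[page 12-13]{chodoshmean}, \cite[Theorem 5.6]{edelenmean}), and then to use the classification of one-dimensional shrinkers to identify the limit at a.e.\ time. First I would recall that Huisken's monotonicity formula gives, for the rescaled CSF $\Gamma$, a monotone nonincreasing Gaussian-weighted length $\Theta(\tau)=\int \Gamma_j(\cdot,\tau)\, e^{-|\Gamma|^2/2}\,d\sigma$ which is bounded below (it is nonnegative) and whose $\tau$-derivative is $-\int |\,\Gamma+\partial_\sigma^2\Gamma\,|^2 e^{-|\Gamma|^2/2}\,d\sigma$ (the rescaled shrinker quantity). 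Hence $\int_{-\infty}^{\infty}\!\int |\,\Gamma+\partial_\sigma^2\Gamma\,|^2 e^{-|\Gamma|^2/2}\,d\sigma\,d\tau<\infty$, so for a.e.\ $\tau\in\mathbb R$ there is a subsequence $\tau_j+\tau$ along which $\int |\,\Gamma_j+\partial_\sigma^2\Gamma_j\,|^2 e^{-|\Gamma|^2/2}\,d\sigma\to 0$; call such a $\tau$ \emph{good}. Fix a good $\tau$ and work only at that time slice from now on.

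Next I would extract a curve limit at the good time $\tau$. The key input is a uniform bound on the rescaled total curvature $\int_{\Gamma_j(\cdot,\tau)} k_\Gamma\, d\sigma$, which follows from \cite[Theorem 5.1]{altschuler1991singularities} (the total curvature of a CSF is nonincreasing, hence bounded by its value at $t=0$; this is scaling invariant so passes to $\Gamma_j$). Writing the curve locally as a graph with the tangent-angle/arclength parametrization, the bounded total curvature plus the bounded $L^2$ shrinker quantity gives, via Sobolev embedding $W^{1,2}\hookrightarrow C^{0}$ for the tangent direction, equicontinuity of the unit tangent on any fixed ball $\overline{B_R(0)}$; by Arzelà–Ascoli and a diagonalization over $R\to\infty$ we get a subsequence with $\Gamma_j(\cdot,\tau)\to\Gamma_\infty$ in $C^1_{loc}$ up to reparametrization, where $\Gamma_\infty$ is a union of $C^{1,1}$ arcs. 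Because the shrinker quantity vanishes in the limit and the total curvature is finite, $\Gamma_\infty$ is a (possibly noncompact, possibly disconnected, possibly multiple) weak solution of the shrinker equation $\Gamma+\partial_\sigma^2\Gamma=0$ in the orthogonal-projection (normal) sense; standard elliptic regularity upgrades each arc to smooth. All one-dimensional shrinkers in $\mathbb R^n$ are planar (\cite[Lemma 5.1]{altschuler2013zoo}), so each component of $\Gamma_\infty$ is either a complete Abresch–Langer curve (\cite{abresch1986normalized}) — which is compact, with finite multiplicity forced by the finite total curvature — or a line through the origin (the only noncompact planar shrinkers, again with finite multiplicity), possibly a finite union of such. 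If the limit is entirely bounded we are in case (a); if it contains an unbounded piece then, since the only unbounded components are lines, we are in case (b). This is exactly the dichotomy claimed.

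The main obstacle I anticipate is the multiplicity/compactness bookkeeping in the noncompact setting: unlike the Type~I case (Proposition \ref{prop on type I blow-up of general immersed CSF}), there is no a priori curvature bound, so mass can slide off to infinity and I must argue that on each fixed ball $\overline{B_R(0)}$ only finitely many arc-components appear with uniformly bounded multiplicity, and that the $C^1_{loc}$ limit is genuinely a shrinker rather than merely a stationary-varifold-type object. The finite-total-curvature bound is what saves this: it caps, uniformly in $j$ and $R$, the number of times $\Gamma_j$ can wind or turn inside $\overline{B_R(0)}$, hence the multiplicity. I would also need to be slightly careful that ``a.e.\ $\tau$'' is genuinely adequate — the good set has full measure because the time-integral of the shrinker quantity is finite — and that the footnoted warning (no convergence at later times, since smooth dependence on initial data fails for noncompact data) is respected: the statement and proof are for the single slice $\tau$ only. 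Finally, the reparametrization in Definition \ref{the notion of locally smoothly convergence} must be invoked to make ``converges up to reparameterization in $C^1_{loc}$'' precise when the limit has several arcs meeting $\overline{B_R(0)}$; this is routine but should be stated.
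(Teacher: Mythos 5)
Your overall blueprint matches the argument the paper points to in the references (Chodosh, Edelen, Mantegazza, Stone): Huisken's monotonicity yields finiteness of the time-integrated shrinker quantity, hence subsequential vanishing at a.e.\ fixed $\tau$; one then extracts a $C^1_{loc}$ limit at that slice and classifies it by the Abresch--Langer/line dichotomy for $1$-dimensional shrinkers, using \cite[Lemma 5.1]{altschuler2013zoo} to reduce to the plane. This is faithful to the paper's intended proof.

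The place where you deviate, and where I think you have a genuine gap, is the control of the number of arcs inside a fixed ball. You attribute this to the total-curvature bound of \cite[Theorem 5.1]{altschuler1991singularities}, claiming it ``caps the number of times $\Gamma_j$ can wind or turn inside $\overline{B_R(0)}$, hence the multiplicity.'' That inference is not automatic: a bound on $\int k\,d\sigma$ does not directly bound the number of nearly straight chords a space curve can thread through a ball, since each chord can contribute arbitrarily little turning and the connecting arcs live outside the region where you want compactness. You would need a Milnor-type crossing argument to make this work, and even then it is delicate for immersed space curves. The paper instead handles the arc count in two ways: (i) the Gaussian density from Huisken's monotonicity gives a uniform local length bound, so any arc that reaches from $\partial B_R(0)$ into $B_{R/2}(0)$ has length $\gtrsim R$, bounding the number of arcs relevant to the $C^1_{loc}$ limit on $B_{R/2}(0)$; and (ii) Angenent's Sturmian theorem \cite{angenent1988zero} is invoked to track the number of arcs meeting $B_1(0)$ along the flow, which is what the paper actually cites and what underlies the subsequent lemma that the total multiplicity in case (b) equals $\lim_{\tau\to\infty}\tfrac12|\Gamma(\cdot,\tau)\cap\partial B_1(0)|$ independently of the subsequence. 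You do state the monotone Gaussian-weighted length, so the length-bound route is available to you; you should use it (or the Sturmian theorem) in place of the total-curvature argument for the arc count.

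A second, smaller point: you note the dichotomy ``entirely bounded $\Rightarrow$ (a); contains an unbounded piece $\Rightarrow$ (b),'' but you do not address the a priori possibility that the limit contains both a compact Abresch--Langer component and line components. The proposition's phrasing (``at least one of the following'') sidesteps this, but a careful proof should either rule out mixed limits for a closed immersed $\Gamma_j$ or explain why case (b) is the one that applies when an unbounded component is present. As written, the reader is left to fill this in.
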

The proof of Proposition \ref{prop on blow-up of general immersed CSF} is mainly White's argument in \cite[page 12-13]{chodoshmean} and \cite[Theorem 5.6, page 9-10]{edelenmean}. See also \cite[Proposition 2.19]{MagManNov13}, \cite{Mantegazza2014blowup} and the estimates in \cite[Lemma 2.9]{stone1994density}. In the proof, for the use of the Sobolev embedding to potentially a union of broken arcs in a ball $B_R(0)$ for some $R>0$, one can keep track of the arcs that intersect the unit ball $B_1(0)$ based on the Sturmian theorem \cite{angenent1988zero} and show that the other arcs, which we cannot keep track of, are outside of the ball $B_\frac{R}{2}(0)$. One can then apply the Sobolev embedding to each of the arcs that intersect the unit ball $B_1(0)$. As a corollary, 
\begin{lem}
    The sum of the multiplicities of the lines in Proposition \ref{prop on blow-up of general immersed CSF} $(b)$ is independent of the choice of the subsequence. This is because it equals the limit of one half of the intersection number $\lim\limits_{\tau\rightarrow+\infty}\frac{1}{2}|\Gamma(\cdot,\tau)\cap\partial B_1(0)|$.
\end{lem}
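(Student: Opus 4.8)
The plan is to show that the quantity $\sum_\alpha m_\alpha$ (the sum of multiplicities of the lines appearing in Proposition \ref{prop on blow-up of general immersed CSF} $(b)$) is controlled from both sides by $\frac{1}{2}|\Gamma(\cdot,\tau)\cap\partial B_1(0)|$ as $\tau\to+\infty$, which manifestly does not depend on the chosen subsequence. The first step is to recall that $t\mapsto|\gamma(\cdot,t)\cap\partial B_r(0)|$, equivalently $\tau\mapsto|\Gamma(\cdot,\tau)\cap\partial B_1(0)|$ after rescaling, is (for a.e.\ $\tau$, away from tangencies) a nonincreasing, integer-valued function: this is the standard consequence of the Sturmian/Angenent zero-counting theorem \cite{angenent1988zero} applied to the (signed) distance to the sphere, or more directly to the function whose zeros are the intersection points. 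Hence the limit $N_\infty:=\lim_{\tau\to+\infty}|\Gamma(\cdot,\tau)\cap\partial B_1(0)|$ exists and is a finite nonnegative integer, independent of everything.

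Next I would fix a subsequence $\tau_j\to+\infty$ realizing case $(b)$ and carry out the bookkeeping already sketched in the paragraph preceding the Lemma. For a large radius $R$, split the arcs of $\Gamma_j(\cdot,\tau)\cap\overline{B_R(0)}$ into those meeting $B_1(0)$ and those not meeting $B_1(0)$; the Sturmian theorem bounds the number of arcs meeting $B_1(0)$ by (half of) the intersection count with $\partial B_1(0)$ plus a controlled error, while the $C^1_{loc}$ convergence guarantees that the remaining arcs escape $B_{R/2}(0)$ and so do not contribute to the limiting union of lines inside $B_{R/2}(0)$. Each limiting line passes through $B_{R/2}(0)$ and hence meets $\partial B_1(0)$ in exactly two points (counted with multiplicity, a line of multiplicity $m$ meets it in $2m$ points), so
\begin{equation*}
    2\sum_\alpha m_\alpha \;=\; \lim_{j\to\infty}|\Gamma_j(\cdot,\tau)\cap\partial B_1(0)| \;=\; N_\infty,
\end{equation*}
where the middle equality uses $C^1_{loc}$ convergence together with transversality of each limiting line to $\partial B_1(0)$ (so intersection points are stable and no points are created or lost in the limit), and the last equality is the monotone limit identified in the first step. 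Since $N_\infty$ is intrinsic to the rescaled flow, $\sum_\alpha m_\alpha$ is independent of the subsequence.

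The main obstacle is making the interchange of limits in the middle equality rigorous: one must rule out intersection points of $\Gamma_j(\cdot,\tau)$ with $\partial B_1(0)$ either disappearing into the region $B_R(0)\setminus B_{R/2}(0)$ along the uncontrolled arcs, or concentrating/colliding on $\partial B_1(0)$ in the limit. The first is handled by choosing $R$ large (depending on the fixed $\tau$) so that, by the escape property, no uncontrolled arc reaches $\partial B_1(0)$; the second is handled by noting that the limiting lines are transverse to $\partial B_1(0)$, so for $j$ large the intersections are simple and locally graphical, whence the count is locally constant. One should also note the genuinely exceptional $\tau$'s — times at which $\Gamma(\cdot,\tau)$ is tangent to $\partial B_1(0)$ — form a measure-zero set and can be avoided, which is already built into the ``almost every $\tau$'' phrasing of Proposition \ref{prop on blow-up of general immersed CSF}; picking a single good $\tau$ suffices since $N_\infty$ does not depend on $\tau$.
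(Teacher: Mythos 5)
Your proof follows the paper's own (very brief) sketch and has the right overall structure: apply Angenent's zero-counting theorem to $g := |\Gamma|^2-1$, which (using $\Gamma_\tau=\Gamma_{\sigma\sigma}+\Gamma$ and $|\Gamma_\sigma|=1$) satisfies a linear parabolic equation $g_\tau=g_{\sigma\sigma}+2g$ up to a transport term, so that $N(\tau):=|\Gamma(\cdot,\tau)\cap\partial B_1(0)|$ is finite, integer-valued and nonincreasing with a limit $N_\infty$; then for a good $\tau$ and subsequence pass to the limit to identify $2\sum_\alpha m_\alpha$ with $N_\infty$.

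There is, however, one genuine gap. You write that ``each limiting line passes through $B_{R/2}(0)$ and hence meets $\partial B_1(0)$ in exactly two points,'' but this is a non sequitur: for $R$ large, a line that meets $B_{R/2}(0)$ may miss $\overline{B_1(0)}$ entirely (contributing to $\sum_\alpha m_\alpha$ but not to the intersection count), or it may graze $\partial B_1(0)$ tangentially (in which case the intersection number of nearby arcs is not locally constant, and the middle equality in your display fails). What you actually need is that every limiting line passes through the origin; this forces it to hit $\partial B_1(0)$ transversally in exactly two antipodal points, since the line is radial there. This fact is not free: it comes from Huisken's monotonicity formula, which shows that the blow-up limit is a self-shrinker in the weak sense, and a line is a self-shrinker precisely when it passes through the origin (a non-radial line is carried to infinity by the rescaling). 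Equivalently, the tracked arcs in the paper's sketch all intersect $B_1(0)$, so the limit lines meet $\overline{B_1(0)}$, and then the shrinker equation upgrades this to passing through $0$ and rules out the tangential case. Once this is supplied, the rest of your argument — stability of simple transverse intersections under $C^1_{loc}$ convergence, escape of the untracked arcs past $B_{R/2}(0)$ so they do not contribute to $N(\tau_j+\tau)$, and the $\tau$- and subsequence-independence of $N_\infty$ — does close the proof.
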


As far as the author knows, for Proposition \ref{prop on blow-up of general immersed CSF}, in general it is not known whether one can improve the proposition from almost every $\tau$ to all $\tau$ and from $C^1_{loc}$ to $C^\infty_{loc}$. However, for CSF with a one-to-one convex projection, we are able to make these two improvements in \S\ref{Blow-up argument for curves with convex projections}.

\section{CSF with one-to-one convex projections}
\label{the section about properties of CSF with one-to-one convex projections}
In this section, we always assume the initial curve $\gamma_0\subset\mathbb R^n$ has a one-to-one convex projection onto the $xy$-plane. 

\subsection{Geometry of CSF with convex projections}
\label{the subsection on Geometry of CSF with convex projections}
\begin{lem}[Theorem 1.5 of \cite{sun2024curve}]
\label{main theorem of the paper that the curve shrinks to a point}
For each $ t>0$, CSF $\gamma\att$ has a one-to-one uniformly convex projection onto the $xy$-plane. As $t\rightarrow T$, $\gamma\att$ shrinks to a point.
\end{lem}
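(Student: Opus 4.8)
The plan is to track the planar projection $\gb\att:=P_{xy}(\gamma\att)$ together with the ``heights'' of $\gamma$ above it, and to run maximum‑principle and continuity arguments on the coupled pair. Write $\gamma=(\gb,z)$ with $z\colon S^1\to\mathbb R^{n-2}$, so that in the \emph{space} arc length $s$ the flow $\gamma_t=\gamma_{ss}$ decouples as $\gb_t=\gb_{ss}$, $z_t=z_{ss}$. Put $\rho:=|\gb_s|=|P_{xy}(\gamma_s)|\in(0,1]$, the cosine of the inclination of the space tangent to the $xy$-plane, with $\rho^2+|z_s|^2=1$. A short computation in the Frenet frame of the projection gives $\gb_{ss}=\rho_s\,\tb+\rho^2\kb\,\nb$, where $\tb,\nb,\kb$ are the unit tangent, unit normal, and curvature of $\gb\att$; hence, up to tangential reparametrization, the projection evolves by the curve‑shortening‑type flow with positive conformal factor
\begin{equation*}
  \gb_t^{\perp}=\rho^2\,\kb\,\nb .
\end{equation*}
Under this reduction, ``$\gamma\att$ has a one‑to‑one convex projection'' says exactly that $\gb\att$ is a (locally) convex immersed plane curve with $\kb>0$ whose projection map is injective, and what must be shown is: \textbf{(i)} $\rho$ stays bounded below on $[0,T)$, so the projected flow remains uniformly parabolic; \textbf{(ii)} $\kb>0$ is preserved and, in fact, holds strictly for every $t>0$; \textbf{(iii)} $\gb\att$, hence $\gamma\att$, contracts to a single point as $t\to T$.

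Granting (i), part (ii) is routine. In the projection arc length $\s$ the curvature satisfies an equation of the form $\partial_t\kb=\rho^2\kb_{\s\s}+b_1\kb_\s+b_0\kb+\rho^2\kb^3$ with $b_0,b_1$ bounded on each $[0,T-\delta]$, which is uniformly parabolic there. The maximum principle preserves $\kb\ge0$ (at a spatial minimum $\kb_\s=0$, $\kb_{\s\s}\ge0$, so $\partial_t\kb=\rho^2\kb_{\s\s}\ge0$ wherever $\kb=0$), and the strong maximum principle upgrades this to $\kb>0$ for $t>0$ — the instantaneous gain of uniform convexity even from a merely convex $\gamma_0$. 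Equivalently one may pass to the turning‑angle gauge $\theta$ of the projection and argue with the support function $\overline h(\theta,t)$, which solves $\partial_t\overline h=-\rho^2/(\overline h+\overline h_{\theta\theta})$.

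The crux — and the step I expect to be the main obstacle — is (i). The projected flow degenerates wherever $\rho\to0$, and $\rho$ is genuinely coupled to $z$, whose evolution $z_t=z_{ss}$ is written in the space arc length, which itself depends on the projection; so the argument must control the whole system at once. I would argue by continuity/bootstrap on the maximal subinterval of $[0,T)$ along which $\gb\att$ stays uniformly convex and immersed with $\rho$ bounded below. In the $\theta$ gauge, $\gamma=(\gb(\theta),z(\theta))$ with $\partial_\theta\gb=\kb^{-1}\tb$, whence $\rho=(1+\kb^2|\partial_\theta z|^2)^{-1/2}$; thus $\rho\to0$ can occur only via $\kb\to\infty$ (a corner forming on the projection) or $|\partial_\theta z|\to\infty$ (the height becoming infinitely steep over $\gb$). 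The first is excluded because the projected flow is smoothing: interior parabolic estimates bound $\kb$ pointwise on each $[0,T-\delta]$, given a lower bound on $\rho$ and the initial data. For the second, in this gauge $z$ solves a strictly parabolic \emph{linear} equation with coefficients assembled from $\rho$ and $\kb$; the maximum principle bounds $\operatorname{osc}_{S^1}z\att$, and interpolation together with interior estimates bounds $\partial_\theta z$ — again uniformly while $\rho$ is bounded below. These a priori bounds close the loop, so the maximal interval is all of $[0,T)$ and $\gb\att$ stays one‑to‑one and (for $t>0$) uniformly convex.

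Finally, for (iii): since $\gb\att$ is uniformly convex and moves by $\gb_t^{\perp}=\rho^2\kb\,\nb$ with $0<\rho\le1$, its enclosed area obeys $\tfrac{d}{dt}A(t)=-\int\rho^2\kb\,d\s=-\int\rho^2\,d\theta<0$; a sub/supersolution comparison with shrinking round circles (the factor $\rho^2\le1$ only slowing things down), together with the fact that a uniformly convex plane curve with bounded curvature cannot form a corner, shows $\gb\att$ shrinks to a single point $p_\infty\in\mathbb R^2$ with diameter tending to $0$ — and, since otherwise the projection would keep positive inradius, $L(t)=\int|\gamma_u|\,du$ would stay bounded below, and parabolic regularity for $z_t=z_{ss}$ would keep $\gamma$ smooth, this collapse must occur precisely at the singular time $T$. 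Hence the $xy$-extent of $\gamma\att$ tends to $0$, while $z_t=z_{ss}$ on a circle of vanishing total length forces $\operatorname{osc}_{S^1}z\att\to0$; combining, $\operatorname{diam}(\gamma\att)\to0$, so $\gamma\att$ converges to a single point of $\mathbb R^n$, which we may take to be the origin.
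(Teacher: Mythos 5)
This lemma is not proved in the present paper; it is cited verbatim as Theorem~1.5 of \cite{sun2024curve}, and the paper relies on the machinery developed there (in particular the ``three-point condition'', \cite[Definition~4.8, Proposition~5.3]{sun2024curve}) rather than reproving the statement. So the right object of comparison is \cite{sun2024curve} itself, and against that the decisive step of your argument has a gap.

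Your decomposition is set up correctly: $\gb_s=\rho\,\tb$ gives $\gb_{ss}=\rho_s\tb+\rho^2\kb\,\nb$ and hence $\gb_t^{\perp}=\rho^2\kb\,\nb$, and you correctly identify the lower bound on $\rho$ as the crux. But the bootstrap you sketch for it is circular. You want to exclude $\rho\to 0$ by excluding $\kb\to\infty$ and $|\partial_\theta z|\to\infty$, yet the interior parabolic estimates you invoke for both are \emph{conditional on} a lower bound for $\rho$, which is precisely what is to be proved. A continuity argument that only produces estimates ``given a lower bound on $\rho$'' cannot close: on the putative maximal interval the constants degenerate as the infimum of $\rho$ drops, and nothing in your chain of estimates is quantitatively independent of that infimum. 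The same problem infects part~(ii): the coefficients $b_0,b_1$ in your evolution equation for $\kb$ involve $\rho$ and its $\s$-derivatives, so ``bounded on each $[0,T-\delta]$'' again presupposes (i). What is actually needed --- and what \cite{sun2024curve} supplies --- is a \emph{monotone geometric quantity} preserved by the flow that controls the slope a priori: a chord-arc type (``three-point'') condition proved by a maximum-principle argument on the two-point (or three-point) function $|p^1-p^2|/|P_{xy}(p^1)-P_{xy}(p^2)|$. This gives the bounded-slope estimate (Lemma~\ref{upper bound of slope of secant lines}) and the lower bound $x_s^2+y_s^2\ge\delta$ (Lemma~\ref{lower bounds of c}) directly and uniformly up to $T$, without any bootstrap. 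Once one has that, your parts (ii) and (iii) are essentially fine (the area argument $\frac{d}{dt}A=-\int\rho^2\,d\theta\le -2\pi\delta$ gives finite-time collapse of the projection, and the bounded slope transfers this to $\gamma$), so the missing idea is exactly the preserved two-point quantity: without it, the proof does not go through.
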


The next lemma follows from Corollary 5.7 of \cite{sun2024curve}.
\begin{lem}[Bounded slope lemma]
\label{upper bound of slope of secant lines}
For an arbitrary $\epsilon>0$, there exists $M>0$ such that for each $t\in\left[\epsilon,T\right)$ and for arbitrary two points $p^1,p^2$ on $\gamma\att$, 
\begin{equation}
\label{a corollary of the three-point condition}
|p^1-p^2|\leq M|P_{xy}(p^1)-P_{xy}(p^2)|   
\end{equation}
where $|\cdot|$ stands for the standard Euclidean distance.
\end{lem}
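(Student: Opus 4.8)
The plan is to deduce this secant-slope estimate from a pointwise bound on the tangent directions of $\gamma\att$ relative to the $xy$-plane, together with a covering/integration argument along the curve. First I would recall from Lemma \ref{main theorem of the paper that the curve shrinks to a point} that for each $t>0$ the projection $P_{xy}(\gamma\att)$ is a uniformly convex (hence embedded) closed curve in the $xy$-plane; in particular $P_{xy}|_{\gamma\att}$ is a diffeomorphism onto its image, so we may use the projection arc-length as a global parameter. The key quantity to control is the ratio $|\gamma_u|/|(P_{xy}\gamma)_u|$, i.e. the ``slope'' of the space curve over its shadow: if one shows that on the fixed time slab $t\in[\epsilon,T)$ this ratio is bounded above by a constant $M=M(\epsilon)$, then integrating along the (shorter) projection arc between $P_{xy}(p^1)$ and $P_{xy}(p^2)$ gives $|p^1-p^2|\le \text{(length of the space arc)} \le M\cdot\text{(length of that projection arc)}$, and since the projection curve is convex the relevant projection arc-length is comparable to the chord $|P_{xy}(p^1)-P_{xy}(p^2)|$ up to a universal constant — this is where convexity of the shadow is essential, as it rules out the arc length being much larger than the chord. (One must take the shorter of the two projection arcs, and absorb the chord-vs-arc comparison constant for a convex curve into $M$.)

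The heart of the matter is therefore the uniform tangent-slope bound, which I expect is exactly the content of Corollary 5.7 of \cite{sun2024curve} cited here; in the write-up I would invoke it directly. Conceptually it should follow because the graph-like structure of CSF with a convex projection is preserved under the flow with quantitative control away from $t=0$: one can write the space curve, at each time, as a graph of the remaining $n-2$ coordinates over the convex projection curve, and the known interior gradient estimates for CSF (or the maximum principle applied to the evolution equation of the slope function, using the support-function / angle parametrization on the convex shadow) yield a bound depending only on a lower bound $\epsilon$ for $t$ and on the initial data. The dependence on $\epsilon$ rather than a universal constant is expected and harmless, since the statement only asserts $t\in[\epsilon,T)$.

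The main obstacle — really the only substantive point — is establishing (or citing correctly) the uniform upper bound on the space-to-shadow slope ratio on $[\epsilon,T)$ that does not degenerate as $t\to T$; once that is in hand the rest is the elementary chord/arc comparison for convex planar curves and an integration. I would make sure the constant $M$ is chosen to simultaneously dominate (i) the slope ratio bound from \cite{sun2024curve} and (ii) the factor $\tfrac{\pi}{2}$-type constant relating arc length to chord length on a convex curve, so that the single inequality \eqref{a corollary of the three-point condition} holds as stated. No new estimates beyond those already in \cite{sun2024curve} and Lemma \ref{main theorem of the paper that the curve shrinks to a point} should be needed.
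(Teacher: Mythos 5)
Your reduction has a genuine gap at the chord-versus-arc step. You want to conclude from a pointwise tangent-slope bound ($|\gamma_u|/|(P_{xy}\gamma)_u|\le M$, which indeed gives $|p^1-p^2|\le M\cdot(\text{projection arc length})$ by integrating) and then convert projection arc length into the projection chord $|P_{xy}(p^1)-P_{xy}(p^2)|$ ``up to a universal constant'' by convexity. But a closed convex plane curve does \emph{not} satisfy a chord--arc inequality with a universal constant: on a thin ellipse $x^2/a^2+y^2/b^2=1$ with $a\gg b$, the two points $(0,\pm b)$ have chord $2b$ while both arcs between them have length comparable to $a$, so the ratio is $\sim a/b$ and is unbounded. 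Worse, this is exactly the regime that occurs here: by Theorem~\ref{the flow version of type II blow up results for convex projections CSF} and Lemma~\ref{a preparatory lemma for type II blow-up}, when a Type~II singularity forms the rescaled projection curves degenerate to a line of multiplicity two, so the projection curves become arbitrarily eccentric as $t\to T$. Thus the constant you would need in your chord--arc step blows up precisely in the situation the lemma must cover, and the integration argument cannot close.

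There is also a misattribution that feeds the gap. The pointwise tangent-slope bound you are invoking is in fact the content of Lemma~\ref{lower bounds of c} (Corollary 5.8 of \cite{sun2024curve}): $x_s^2+y_s^2\ge\delta$ is exactly $|(P_{xy}\gamma)_s|\ge\sqrt{\delta}$, i.e.\ $|\gamma_s|/|(P_{xy}\gamma)_s|\le 1/\sqrt\delta$. Corollary 5.7 of \cite{sun2024curve}, which the paper actually cites for this lemma, is already the secant estimate \eqref{a corollary of the three-point condition} itself; it comes from the scale-invariant ``three-point condition'' of \cite[Definition 4.8, Proposition 5.3]{sun2024curve}, a quantity preserved by the flow via a maximum-principle argument on pairs/triples of points. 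That is the mechanism that makes the secant bound robust as the projection degenerates; it is genuinely stronger than the tangent bound and is not recoverable from it plus convexity alone. In short: the paper's proof is a direct citation of the preserved secant estimate, while your argument attempts a tangent-to-secant upgrade that fails for thin convex shadows.
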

Recall that $\frac{\partial}{\partial s}$ is the arc-length derivative defined via equation (\ref{the equation defining the arc length}).
\begin{lem}[Corollary 5.8 of \cite{sun2024curve}]
\label{lower bounds of c}
For an arbitrary $\epsilon>0$, there exists $\delta>0$ such that $x_s^2+y_s^2\geq\delta>0$ for all $t\in\left[\epsilon,T\right)$.
\end{lem}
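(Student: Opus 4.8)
The final statement to be proved is Lemma~\ref{lower bounds of c}, stated as ``Corollary 5.8 of \cite{sun2024curve}'': for any $\epsilon>0$ there exists $\delta>0$ with $x_s^2+y_s^2\geq\delta>0$ for all $t\in[\epsilon,T)$. Since the quantity $x_s^2+y_s^2 = |P_{xy}(\gamma_s)|^2$ measures how much of the unit tangent vector $\gamma_s$ lies in the $xy$-plane, this is a quantitative non-degeneracy statement: the projection $P_{xy}|_\gamma$ is not just injective but never develops a vertical tangent (relative to the $xy$-plane) on $[\epsilon,T)$.

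\textbf{Plan of proof.} The natural approach is to reduce this to the Bounded slope lemma (Lemma~\ref{upper bound of slope of secant lines}) by a limiting/secant argument, since \eqref{a corollary of the three-point condition} is exactly the integrated form of the desired pointwise bound. First I would fix $\epsilon>0$ and let $M>0$ be the constant from Lemma~\ref{upper bound of slope of secant lines}, valid on $[\epsilon,T)$. For a fixed $t\in[\epsilon,T)$ and a fixed point $p=\gamma(u_0,t)$, apply \eqref{a corollary of the three-point condition} with $p^1 = \gamma(u_0,t)$ and $p^2 = \gamma(u,t)$ for $u$ near $u_0$; dividing by $|u-u_0|$ and letting $u\to u_0$ along the curve (parameterized by arc-length $s$ for concreteness), the left side tends to $|\gamma_s(u_0,t)| = 1$ while the right side tends to $M\,|P_{xy}(\gamma_s(u_0,t))| = M\sqrt{x_s^2+y_s^2}$. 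Hence $1 \leq M\sqrt{x_s^2+y_s^2}$, i.e. $x_s^2+y_s^2 \geq 1/M^2 =: \delta > 0$, and this bound is uniform in $t\in[\epsilon,T)$ and in the point on the curve because $M$ is. This is clean because the secant inequality holds for \emph{all} pairs of points, so the limit as the two points merge gives the tangential statement for free.

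\textbf{Main obstacle.} The one point that needs care is the legitimacy of the limit: one must check that as $p^2\to p^1$ along $\gamma\att$, the secant direction $(p^1-p^2)/|p^1-p^2|$ converges to the unit tangent $\gamma_s$ and that $(P_{xy}(p^1)-P_{xy}(p^2))/|p^1-p^2|$ converges to $P_{xy}(\gamma_s)$ — both are immediate from smoothness of $\gamma\att$ for $t>0$ (guaranteed by parabolic regularity, and $t\geq\epsilon>0$) together with the fact that $|\gamma_u|\neq 0$, so that arc-length is a legitimate local parameter. A second, more bookkeeping-level point: one should confirm that the constant $M$ from Lemma~\ref{upper bound of slope of secant lines} is genuinely independent of which point of $\gamma\att$ we base the secant at and of $t\in[\epsilon,T)$ — but that is exactly the content of that lemma, so no new work is needed. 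Alternatively, and essentially equivalently, one can argue directly from uniform convexity of the projection plus the bounded slope estimate without taking a limit: the projection curve $P_{xy}(\gamma\att)$ is a uniformly convex planar curve, so its arc-length derivative has unit length in the $xy$-plane, and the bounded slope bound controls the ratio of space arc-length to projected arc-length by $M$; then $x_s^2+y_s^2 = |ds_{xy}/ds|^2 \geq 1/M^2$, where $s_{xy}$ denotes arc-length of the projection. I would present the secant-limit version as the primary argument since it uses only Lemma~\ref{upper bound of slope of secant lines} as a black box.
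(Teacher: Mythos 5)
Your proof is correct. The paper itself gives no argument here — Lemma~\ref{lower bounds of c} is simply quoted as Corollary~5.8 of \cite{sun2024curve}, just as the preceding Lemma~\ref{upper bound of slope of secant lines} is quoted as Corollary~5.7 of the same reference — so there is no in-paper proof to compare against. Your derivation of the pointwise tangential bound $x_s^2+y_s^2\geq 1/M^2$ from the secant inequality \eqref{a corollary of the three-point condition} by letting $p^2\to p^1$ along the curve is the natural infinitesimal version of the bounded slope lemma and is entirely sound: differentiability of $\gamma\att$ for $t\geq\epsilon>0$ gives $|p^1-p^2|/|s_1-s_2|\to 1$ and $|P_{xy}(p^1)-P_{xy}(p^2)|/|s_1-s_2|\to\sqrt{x_s^2+y_s^2}$, and the uniformity in $t\in[\epsilon,T)$ and in the base point is exactly what Lemma~\ref{upper bound of slope of secant lines} supplies. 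Since Corollaries~5.7 and~5.8 in \cite{sun2024curve} sit adjacent and both descend from the three-point condition there, your route almost certainly mirrors the original derivation; in any case it uses only material already available in this paper and is a correct, self-contained argument.
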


Because in this paper we only consider the asymptotic behavior as $t\rightarrow T$, replacing the initial curve by $\gamma(\cdot,\epsilon)$ if needed, we may assume that properties, described in Lemma \ref{upper bound of slope of secant lines} and Lemma \ref{lower bounds of c}, holding for $t\in\left[\epsilon,T\right)$ hold for all $t\geq0$. Recall that we have assumed CSF $\gamma\att$ shrinks to the origin.

\subsection{Type~I singularity and compact blow-up limits}
Based on geometric properties of CSF with convex projections, we can rule out immersed Abresch-Langer curves and higher multiplicity.
\begin{lem}
\label{the lemma that circle with multiplicity one if AL with multiplicity m}
    If there exists a subsequence, such that $\frac{\gamma(\cdot,t_j)}{\sqrt{2T-2t_j}}$ converges, up to reparameterization, in the $C^1$ sense to some Abresch-Langer curve $\Gamma_{AL}$ with finite multiplicity $m\geq1$ in some $2$-plane $P^2\subset \mathbb R^n$, then the Abresch-Langer curve is the unit circle and the multiplicity is one. Moreover, the linear map $P_{xy}|_{P^2}:P^2\rightarrow xy\text{-plane}$ is a linear isomorphism.
\end{lem}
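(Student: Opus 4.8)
The plan is to use the geometric constraints on CSF with convex projections—specifically the Bounded slope lemma (Lemma~\ref{upper bound of slope of secant lines}) and the lower bound $x_s^2+y_s^2\geq\delta$ (Lemma~\ref{lower bounds of c})—to control the blow-up limit $\Gamma_{AL}$ and its projection. First I would observe that a $C^1$ limit of curves satisfying \eqref{a corollary of the three-point condition} again satisfies the same inequality with the same constant $M$: if $q^1,q^2$ are two points on the limiting rescaled curve, pull them back to nearby points $p^1_j,p^2_j$ on $\frac{\gamma(\cdot,t_j)}{\sqrt{2T-2t_j}}$, apply the (scale-invariant) bounded slope inequality there, and pass to the limit. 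This shows that $P_{xy}|_{\Gamma_{AL}}$ is \emph{injective}: if $P_{xy}(q^1)=P_{xy}(q^2)$ then $|q^1-q^2|\leq M\cdot 0=0$. In particular the multiplicity must be $m=1$, since an Abresch--Langer curve traversed $m\geq2$ times has points with coinciding projections (indeed coinciding positions). Similarly, the rescaled version of Lemma~\ref{lower bounds of c} passes to the limit: along $\Gamma_{AL}$ we still have $x_\sigma^2+y_\sigma^2\geq\delta>0$ in the arc-length parameter $\sigma$ of the limit, which forces $P_{xy}|_{\Gamma_{AL}}$ to be a local diffeomorphism (an immersion) onto its image.

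Next I would record that $\Gamma_{AL}$ is planar—lying in some $2$-plane $P^2\subset\mathbb R^n$—because all one-dimensional shrinkers in $\mathbb R^n$ are planar (Abresch--Langer curves), as already used in the excerpt. Then $P_{xy}|_{P^2}:P^2\to xy\text{-plane}$ is a linear map; its restriction to $\Gamma_{AL}$ is an immersion by the previous paragraph, so $P_{xy}|_{P^2}$ cannot have a nontrivial kernel (a kernel direction would be tangent to $\Gamma_{AL}$ somewhere, since a closed curve in a plane has tangent lines in every direction, contradicting the local-diffeomorphism property there). Hence $P_{xy}|_{P^2}$ is an injective linear map between $2$-dimensional spaces, i.e.\ a linear isomorphism, which also gives the ``moreover'' clause. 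Consequently $P_{xy}(\Gamma_{AL})$ is a closed immersed curve in the plane obtained from $\Gamma_{AL}$ by a linear isomorphism, so it is injective (embedded) as well, and it must be convex: the projection curves $P_{xy}(\gamma(\cdot,t_j))$ are convex by Lemma~\ref{main theorem of the paper that the curve shrinks to a point}, convexity (nonnegative signed curvature of an embedded curve) is preserved under $C^1$ limits together with the isomorphism on the $2$-plane, and after rescaling the limit projection is still embedded with signed curvature $\geq0$.

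Finally I would invoke the classification of Abresch--Langer curves: among all of them, the only one whose image is embedded (equivalently, the only embedded closed shrinking curve in the plane up to the linear normalization) is the round circle—every non-circular Abresch--Langer curve has self-intersections, or at any rate is not both embedded and convex. Since $\Gamma_{AL}$ maps by a linear isomorphism to an embedded convex curve, $\Gamma_{AL}$ itself is embedded; being an embedded Abresch--Langer curve it must be the circle, and the multiplicity is $1$ as already shown. I expect the main obstacle to be the passage-to-the-limit bookkeeping: making precise that the scale-invariant inequalities of Lemmas~\ref{upper bound of slope of secant lines} and~\ref{lower bounds of c}, which hold for the curves $\gamma(\cdot,t_j)$ (after normalization), genuinely survive in the $C^1$ (not merely $C^0$) limit and up to the reparameterization implicit in the convergence—so that one legitimately concludes $P_{xy}|_{\Gamma_{AL}}$ is an injective immersion rather than just a continuous injection. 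Once that is in hand, the reduction to the circle via the Abresch--Langer classification is essentially immediate.
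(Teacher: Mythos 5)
Your derivation that $P_{xy}|_{P^2}$ is a linear isomorphism is valid but takes a genuinely different route: you pass the lower bound $x_\sigma^2+y_\sigma^2\geq\delta$ of Lemma~\ref{lower bounds of c} to the limit, so $P_{xy}|_{\Gamma_{AL}}$ is an immersion, and then observe that a closed curve in a $2$-plane has tangent lines in every direction, so a nontrivial kernel of $P_{xy}|_{P^2}$ would be hit by a tangent vector. The paper instead finds two distinct spatial points of $\Gamma_{AL}$ whose difference lies in the candidate kernel and applies the (scale-invariant) bounded slope lemma along the approximating sequence. Both arguments are correct.

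However, your deductions of multiplicity $m=1$ and embeddedness of $\Gamma_{AL}$ contain a genuine gap. What the bounded slope lemma gives in the limit is injectivity of $P_{xy}$ on the \emph{spatial image} of $\Gamma_{AL}$: if $P_{xy}(q^1)=P_{xy}(q^2)$ then $q^1=q^2$ as points of $\mathbb R^n$. But higher multiplicity and Abresch--Langer self-intersections are exactly situations where two distinct parameter values map to the \emph{same} spatial point; there the inequality $|q^1-q^2|\leq M|P_{xy}(q^1)-P_{xy}(q^2)|$ reduces to $0\leq 0$ and carries no information. Your sentence that $P_{xy}(\Gamma_{AL})$ is embedded ``because it is obtained from $\Gamma_{AL}$ by a linear isomorphism'' presupposes that $\Gamma_{AL}$ is embedded, which is the thing to be proved; and ``convexity is preserved under $C^1$ limits'' does not by itself exclude a multiply-covered limit curve. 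The paper closes both gaps with tools you did not invoke: for embeddedness, shrinker self-intersections are transverse (Sturmian theorem), so a self-intersection of $\Gamma_{AL}$ would, via the linear isomorphism $P_{xy}|_{P^2}$, yield a transverse and hence $C^1$-stable self-intersection of $P_{xy}(\gamma(\cdot,t_j))$ for $j$ large, contradicting the one-to-one projection; and for $m=1$, if $m>1$ the projection $P_{xy}|_{P^2}(\Gamma_{AL})$ has winding number $m>1$ about the origin, forcing the same winding number, hence a self-intersection, for $P_{xy}(\gamma(\cdot,t_j))$ when $j$ is large. These two steps need to be supplied for your argument to close.
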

\begin{proof}[Proof of Lemma \ref{the lemma that circle with multiplicity one if AL with multiplicity m}]
    \begin{clm}
    The linear map 
$$P_{xy}|_{P^2}:P^2\rightarrow xy\text{-plane}$$
is injective.
\end{clm}
\begin{proof}[Proof of the Claim]
    If this were not true, then there would exist a nonzero vector $\vec{v}\in P^2$ such that $P_{xy}|_{P^2}(\vec{v})=0$. Then there would exist two different points $p^1_\infty,p^2_\infty$ on $\Gamma_{AL}$ such that the vector pointing from $p^1_\infty$ to $p^2_\infty$ is parallel to the vector $\vec{v}$. Pick two sequences of points $p^1_j,p^2_j$ on the curve $\frac{\gamma(\cdot,t_j)}{\sqrt{2T-2t_j}}$ satisfying $p^1_j\rightarrow p^1_\infty,p^2_j\rightarrow p^2_\infty$. Then by the bounded slope lemma, Lemma \ref{upper bound of slope of secant lines}, where equation (\ref{a corollary of the three-point condition}) is scaling invariant, one has that
\begin{equation}
\label{three-point condition in n dim for two sequences of points}
    |p^1_j-p^2_j|\leq M|P_{xy}(p^1_j)-P_{xy}(p^2_j)|,   
\end{equation}
where $\lim\limits_{j\rightarrow+\infty}|p^1_j-p^2_j|=|p^1_\infty-p^2_\infty|>0$ because $p^1_\infty,p^2_\infty$ are two different points. However, 
\begin{equation*}
\lim\limits_{j\rightarrow+\infty}|P_{xy}(p^1_j)-P_{xy}(p^2_j)|
=|P_{xy}(p^1_\infty)-P_{xy}(p^2_\infty)|
=|P_{xy}(p^1_\infty-p^2_\infty)|
=0
\end{equation*}
because $P_{xy}|_{P^2}(\vec{v})=0$ and the vector pointing from $p^1_\infty$ to $p^2_\infty$ is parallel to the vector $\vec{v}$.

Taking the limit $j\rightarrow+\infty$ in equation (\ref{three-point condition in n dim for two sequences of points}) leads to a contradiction.
\end{proof}
Thus the map $P_{xy}|_{P^2}$ is a linear isomorphism by comparing the dimension of $P^2\text{ and the }xy\text{-plane}$.

By the Sturmian theorem \cite{angenent1988zero}, $\Gamma_{AL}$ can only have transverse self-intersections because $\Gamma_{AL}$ is a shrinker.
Since the linear map $P_{xy}|_{P^2}$ is bijective, $P_{xy}|_{P^2}(\Gamma_{AL})$ also can only have transverse self-intersections. Therefore if $\Gamma_{AL}$  had self-intersections, then $P_{xy}|_{P^2}(\Gamma_{AL})$ and thus $P_{xy}|_{P^2}(\gamma(\cdot,t_j))$ would have transverse self-intersections for large $j$. This contradicts that $\gamma\att$ has a one-to-one convex projection onto the $xy$-plane. Therefore $\Gamma_{AL}$ is embedded and thus is a circle. 

If the multiplicity $m$ were not one, then the winding number of $P_{xy}|_{P^2}(\Gamma_{AL})$ and $P_{xy}|_{P^2}(\gamma(\cdot,t_j))$ with respect to the origin would equal $m>1$. Thus the curve $P_{xy}|_{P^2}(\gamma(\cdot,t_j))$ would have a self-intersection, which gives a contradiction.
\end{proof}
For Type~I singularities, we fully understand the asymptotic behavior.
\begin{prop}
   \label{Type I implies shrink to a round point}
    If $\gamma\att$ develops a Type~I singularity as $t\rightarrow T$, then $\frac{\gamma\att}{\sqrt{2T-2t}}$ converges in $C^\infty$ to a unit circle of multiplicity one in some $2$-plane $P^2\subset \mathbb R^n$ as $t\rightarrow T$. Moreover, the linear map $P_{xy}|_{P^2}:P^2\rightarrow xy\text{-plane}$ is a linear isomorphism.
\end{prop}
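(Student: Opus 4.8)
The plan is to assemble this proposition from three ingredients already in place: the Type~I blow-up dichotomy (Proposition \ref{prop on type I blow-up of general immersed CSF}), the rigidity of compact blow-up limits under the convex-projection hypothesis (Lemma \ref{the lemma that circle with multiplicity one if AL with multiplicity m}), and Schulze's uniqueness of tangent flows near a multiplicity-one circle (Proposition \ref{proposition on schulze's uniqueness results}). So the proposition is essentially a corollary, and the work is in checking that the hypotheses match up.

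First I would fix an arbitrary sequence $t_j\to T$. Since $\gamma\att$ develops a Type~I singularity and (by \cite{sun2024curve}) shrinks to the origin, the implication $(a)\Rightarrow(b)$ of Proposition \ref{prop on type I blow-up of general immersed CSF} yields a subsequence along which $\frac{\gamma(\cdot,t_j)}{\sqrt{2T-2t_j}}$ converges in $C^\infty$ (in particular in $C^1$), up to reparameterization, to an Abresch--Langer curve $\Gamma_{AL}$ of finite multiplicity $m\geq1$ in some $2$-plane $P^2\subset\mathbb R^n$. Next I would feed this convergent subsequence into Lemma \ref{the lemma that circle with multiplicity one if AL with multiplicity m}: because $\gamma_0$ has a one-to-one convex projection, the Sturmian theorem together with the bounded slope lemma (Lemma \ref{upper bound of slope of secant lines}) force $\Gamma_{AL}$ to be embedded and of winding number one, hence a unit circle of multiplicity one, and they also force $P_{xy}|_{P^2}$ to be a linear isomorphism. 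This already pins down the $2$-plane $P^2$ and proves the isomorphism claim.

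Finally, having exhibited one sequence along which the rescaled curves converge in $C^1$ to a circle of multiplicity one, I would invoke Proposition \ref{proposition on schulze's uniqueness results} (Schulze's uniqueness theorem \cite{schulze2014uniqueness}), whose hypothesis is precisely what the previous two steps supply: it upgrades this subsequential limit to a full-time limit, so that $\frac{\gamma\att}{\sqrt{2T-2t}}$ converges in $C^\infty$ to that circle as $t\to T$. In particular every convergent subsequence now has the same limit, so the $2$-plane is unambiguously $P^2$, which closes the argument.

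I do not expect a genuine obstacle here: the substantive analysis has already been carried out in Proposition \ref{prop on type I blow-up of general immersed CSF}, Lemma \ref{the lemma that circle with multiplicity one if AL with multiplicity m}, and the cited uniqueness result. The only points to be attentive to are (i) that the convergence produced by the Type~I blow-up is smooth, so Lemma \ref{the lemma that circle with multiplicity one if AL with multiplicity m}, which only needs $C^1$, is applicable, and (ii) that Schulze's theorem requires exactly $C^1$ convergence (up to reparameterization) to a multiplicity-one circle, which is what the first two steps deliver, so no multiplicity or immersed Abresch--Langer limit can sneak back in.
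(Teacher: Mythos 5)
Your proposal is correct and follows exactly the same route as the paper: invoke Proposition~\ref{prop on type I blow-up of general immersed CSF} to get a subsequential Abresch--Langer limit, apply Lemma~\ref{the lemma that circle with multiplicity one if AL with multiplicity m} to identify it as a multiplicity-one circle with $P_{xy}|_{P^2}$ an isomorphism, and then upgrade via Schulze's uniqueness (Proposition~\ref{proposition on schulze's uniqueness results}). Your remarks on the matching of convergence modes ($C^\infty$ from Huisken's argument, $C^1$ sufficing for the lemma and for Schulze) are accurate and add nothing problematic.
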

\begin{proof}[Proof of Proposition \ref{Type I implies shrink to a round point}]
By Proposition \ref{prop on type I blow-up of general immersed CSF},    there exists a sequence $\{t_j\}$ such that $\frac{\gamma(\cdot,t_j)}{\sqrt{2T-2t_j}}$ converges to some Abresch-Langer curve $\Gamma_{AL}$ in some $2$-plane $P^2$ with multiplicity $m\geq1$. 

By Lemma \ref{the lemma that circle with multiplicity one if AL with multiplicity m}, the limit is a circle of multiplicity one.

This proposition then follows from Schulze's uniqueness of tangent flows (see Proposition \ref{proposition on schulze's uniqueness results}).
\end{proof}

\begin{lem}
\label{one intermediate lemma on Type II blow up with convex projections}
If there exists a subsequence such that $\Gamma_j(\cdot,\tau)$ converges up to reparameterization in the $C^1$ sense to some Abresch-Langer curve with finite multiplicity, then $\gamma$ develops a Type~I singularity.
\end{lem}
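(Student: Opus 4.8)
The plan is to reduce everything to Lemma \ref{the lemma that circle with multiplicity one if AL with multiplicity m} and Schulze's uniqueness (Proposition \ref{proposition on schulze's uniqueness results}); the only real work is a change of variables. First I would translate the hypothesis into a statement about the standard rescaled curves along a sequence of times approaching $T$. By Definition \ref{j-th rescaled CSF}, $\Gamma_j(\cdot,\tau)=\Gamma(\cdot,\tau_j+\tau)$, and by Definition \ref{definition of the rescaled solutions}, $\Gamma(\cdot,\widetilde\tau)$ is, as a curve, exactly $\frac{\gamma(\cdot,\widetilde t)}{\sqrt{2T-2\widetilde t}}$ where $\widetilde\tau=-\frac12\log(T-\widetilde t)$. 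Choosing $t_j$ so that $-\frac12\log(T-t_j)=\tau_j+\tau$, the assumption $\tau_j\to+\infty$ forces $t_j\to T$, and the hypothesis becomes: along a subsequence, $\frac{\gamma(\cdot,t_j)}{\sqrt{2T-2t_j}}$ converges, up to reparameterization, in $C^1$ to some Abresch–Langer curve with finite multiplicity.

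Next I would invoke Lemma \ref{the lemma that circle with multiplicity one if AL with multiplicity m}, which is available since $\gamma_0$ has a one-to-one convex projection: the only possibility is that this Abresch–Langer limit is the unit circle of multiplicity one (lying in some $2$-plane $P^2$ with $P_{xy}|_{P^2}$ an isomorphism). Hence there is one sequence $t_j\to T$ along which the rescaled curves converge in $C^1$, up to reparameterization, to a circle of multiplicity one. Proposition \ref{proposition on schulze's uniqueness results} then applies and yields that $\frac{\gamma\att}{\sqrt{2T-2t}}$ converges in $C^\infty$ to that circle as $t\to T$.

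Finally I would read off the Type~I bound. The $C^\infty$ convergence of the rescaled curves to the unit circle forces the curvature of the rescaled curve, which is the geometric (hence reparameterization-invariant) quantity $\sqrt{2T-2t}\,k(u,t)$, to converge uniformly to $1$; in particular it is bounded for $t$ near $T$, so
\begin{equation*}
\limsup_{t\rightarrow T}\ \sup_{u\in S^1}k^2(u,t)(T-t)
=\tfrac12\limsup_{t\rightarrow T}\ \sup_{u\in S^1}\big(\sqrt{2T-2t}\,k(u,t)\big)^2
=\tfrac12<+\infty,
\end{equation*}
which is exactly the Type~I condition. I do not expect a genuine obstacle here: the content is entirely supplied by Lemma \ref{the lemma that circle with multiplicity one if AL with multiplicity m} and Proposition \ref{proposition on schulze's uniqueness results}, and the one point requiring care is merely the bookkeeping in the first step identifying $\Gamma_j(\cdot,\tau)$ with a rescaled curve at a time $t_j\to T$.
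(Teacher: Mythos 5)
Your proof is correct and follows essentially the same route as the paper's: translate the $C^1$ convergence of $\Gamma_j(\cdot,\tau)$ into $C^1$ convergence of $\frac{\gamma(\cdot,t_j)}{\sqrt{2T-2t_j}}$ along some $t_j\to T$, invoke Lemma~\ref{the lemma that circle with multiplicity one if AL with multiplicity m} to force a multiplicity-one circle, apply Proposition~\ref{proposition on schulze's uniqueness results}, and read off the Type~I bound. The only cosmetic difference is that the paper inserts an explicit "upgrade to $C^\infty$ by smooth dependence" step before citing Schulze's uniqueness, which is already subsumed in the statement of Proposition~\ref{proposition on schulze's uniqueness results} as you use it.
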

\begin{proof}
   By Lemma \ref{the lemma that circle with multiplicity one if AL with multiplicity m}, the limit is a circle of multiplicity one. 

   By smooth dependence on solutions of PDE, we may assume the convergence is in $C^\infty $ sense by picking another sequence at nearby times if necessary.
   
   This lemma follows from Schulze's uniqueness theorem (see Proposition \ref{proposition on schulze's uniqueness results}).
\end{proof}
 
\subsection{Type~{II} singularity and non-compact blow-up limits} 
For any sequence $\tau_j=-\frac{1}{2}\log(T-t_j)\rightarrow+\infty$, recall that we denote by $\Gamma_j$ the $j$-th rescaled CSF along $\{\tau_j\}$ defined in Definition \ref{j-th rescaled CSF}. With convex projections, the sequential limit of the rescaled CSF cannot be transverse lines.

\begin{lem}
\label{a preparatory lemma for type II blow-up}
 If there exists a sequence $\tau_j\rightarrow+\infty$, such that as $j\rightarrow+\infty$, $\Gamma_j(\cdot,\tau)$ converges in the sense of $C^1_{loc}$ to a finite union of lines, each with finite multiplicity, then the union of these lines is a line of multiplicity two.  In addition, the line is not perpendicular to the $xy$-plane. 
\end{lem}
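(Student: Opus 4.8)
The plan is to prove Lemma \ref{a preparatory lemma for type II blow-up} in two stages: first rule out any configuration with total multiplicity different from two (and in particular transverse lines), then show the resulting line is not perpendicular to the $xy$-plane. Throughout I would exploit the two structural facts available from \S\ref{the section about properties of CSF with one-to-one convex projections}: the bounded slope lemma (Lemma \ref{upper bound of slope of secant lines}), which is scaling invariant and hence passes to the rescaled flow $\Gamma_j$ and to the limit, and the fact that $P_{xy}(\Gamma_j(\cdot,\tau))$ is an embedded uniformly convex curve containing the origin in its interior (Lemma \ref{main theorem of the paper that the curve shrinks to a point}), again a scaling-invariant statement.

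First I would handle the geometry of the limit lines. Suppose $\Gamma_j(\cdot,\tau)\to \bigcup_\alpha L_\alpha$ in $C^1_{loc}$, each $L_\alpha$ with finite multiplicity $m_\alpha$. Applying $P_{xy}$ and using the bounded slope lemma in the limit, every secant of the limit set is controlled by its $xy$-projection; in particular no $L_\alpha$ can be perpendicular to the $xy$-plane (that would force two distinct limit points with the same projection, contradicting \eqref{a corollary of the three-point condition} in the limit as in the proof of Lemma \ref{the lemma that circle with multiplicity one if AL with multiplicity m}), and two distinct lines $L_\alpha,L_\beta$ cannot have parallel $xy$-projections through distinct points for the same reason. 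So the $P_{xy}L_\alpha$ are genuine lines through the origin in the plane. But $P_{xy}(\Gamma_j(\cdot,\tau))$ is convex and embedded with the origin inside, so its limit, while it may degenerate, cannot be a union of lines crossing transversally at the origin: a convex curve meets any line through an interior point in exactly two points, so in $C^1_{loc}$ the projection can only converge to (a multiple cover of) a single line. Hence all $P_{xy}L_\alpha$ coincide as a single line $\ell$ through the origin, and since $P_{xy}|_{L_\alpha}$ is injective (no vertical lines), all the $L_\alpha$ coincide as a single line $L$. It remains to pin the multiplicity: the total multiplicity equals $\lim_{\tau\to\infty}\tfrac12|\Gamma(\cdot,\tau)\cap\partial B_1(0)|$, and I would argue this equals $2$ because the convex embedded projection meets $\partial B_1(0)$-type level sets in exactly two points asymptotically — more precisely, if the multiplicity were $\geq 2$ then arguing as in Lemma \ref{the lemma that circle with multiplicity one if AL with multiplicity m} the winding number of $P_{xy}(\gamma(\cdot,t_j))$ about the origin would exceed one, contradicting embeddedness of the convex projection; and it cannot be $0$ or $1$ since $\Gamma_j(\cdot,\tau)$ is a closed curve shrinking to the origin under rescaling so each side of the line scale contributes at least one sheet passing near the origin, giving at least two. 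Thus total multiplicity is exactly two.

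For the last assertion — $L$ not perpendicular to the $xy$-plane — this already follows from the bounded slope argument above: if $L$ were the vertical line $\{0\}\times\mathbb{R}^{n-2}$-direction content, pick two distinct points $p^1_\infty\neq p^2_\infty$ on $L$ lying over the origin, approximate them by $p^1_j,p^2_j\in\Gamma_j(\cdot,\tau)$, and apply the scaling-invariant inequality \eqref{a corollary of the three-point condition}: the left side tends to $|p^1_\infty-p^2_\infty|>0$ while the right side tends to $M\,|P_{xy}(p^1_\infty-p^2_\infty)|=0$, a contradiction. Hence $L$ is not perpendicular to the $xy$-plane.

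The main obstacle I anticipate is the multiplicity count, specifically making rigorous that a $C^1_{loc}$ limit of the uniformly convex embedded projection curves $P_{xy}(\Gamma_j(\cdot,\tau))$ cannot be a union of two or more transverse lines and cannot be a single line covered with multiplicity $\geq 2$. The subtle point is that the projection curves are shrinking (they pass to the origin under the rescaling and flatten out), so one must track what the multiplicity bookkeeping of the full space curve does to the planar winding number; convexity plus embeddedness should force the winding number about the origin to be exactly one for every $t$, and this is what caps the space-curve multiplicity at two. Converting ``winding number one'' plus ``bounded slope'' into ``the limiting line set has exactly two sheets'' cleanly — without circular reliance on the conclusion — is where the care is needed; the vertical-line exclusion, by contrast, is a routine consequence of Lemma \ref{upper bound of slope of secant lines}.
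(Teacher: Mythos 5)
Your overall approach matches the paper's: you use the bounded slope lemma (Lemma \ref{upper bound of slope of secant lines}) to rule out lines perpendicular to the $xy$-plane and to rule out two distinct space lines with identical projections, and you use convexity of the projection curves to rule out transverse lines in the projection. The non-perpendicularity argument at the end is exactly the paper's. The step where you observe that a convex embedded curve meets any line through an interior point in exactly two points is also the correct key fact for the projection.

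However, there is a genuine gap in your multiplicity count. You argue ``if the multiplicity were $\geq 2$ then arguing as in Lemma \ref{the lemma that circle with multiplicity one if AL with multiplicity m} the winding number of $P_{xy}(\gamma(\cdot,t_j))$ about the origin would exceed one,'' but this is not how winding number interacts with multiplicity of a \emph{line}. The winding number of the embedded convex projection curve about the origin is always exactly $1$, independently of the multiplicity of the limiting line: a long, thin convex oval converges to a multiplicity-two line while still having winding number one. So multiplicity $\geq 2$ (or even $\geq 3$) does not produce winding number $>1$. Moreover, as stated your implication would actually contradict the lower bound you want (multiplicity $\geq 2$). The winding-number argument from Lemma \ref{the lemma that circle with multiplicity one if AL with multiplicity m} is tailored to compact Abresch--Langer limits (a circle of multiplicity $m$ projects to a closed curve of winding number $m$), and it has no analogue for line limits. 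The correct argument for the upper bound $m\leq 2$ is the one you already gestured at elsewhere in your proof: a convex embedded plane curve meets any line $\ell'$ parallel to the limit $\ell$ in at most two points (it decomposes into an upper and a lower graph over $\ell$), so its $C^1_{loc}$ limit along $\ell$ can carry multiplicity at most $2$. That is exactly the paper's one-line observation ``Because the projection curve $P_{xy}\Gamma_j(\cdot,\tau)$ is convex, the multiplicity $m$ is at most $2$.'' For the lower bound $m\geq 2$, your heuristic (``each side of the line scale contributes at least one sheet'') is also not rigorous; the paper instead cites White's local regularity theorem \cite{white2005local}, which shows that a multiplicity-one tangent flow would force smoothness of the flow near the singular spacetime point, impossible since the curvature blows up as $t\to T$.
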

\begin{proof}
By the bounded slope lemma, Lemma \ref{upper bound of slope of secant lines}, none of the lines in the union is perpendicular to the $xy$-plane. As a result, the projection of each of the above lines onto the $xy$-plane is a line and cannot be a point. It follows from \cite{white2005local} that the summation of the multiplicities of lines is at least $2$.

\begin{clm}
    The projection curves $P_{xy}\Gamma_j(\cdot,\tau)$ converge to one line with multiplicity $m\geq2$.
\end{clm}
\begin{proof}[Proof of the Claim]
    If this claim were not true, then the projection curves $P_{xy}\Gamma_j(\cdot,\tau)$ would converge to a union of two or more transverse lines in the $xy$-plane. 

     Then $C^1_{loc}$ close to transverse lines implies that $P_{xy}\Gamma_j(\cdot,\tau)$ should have self-intersections for large $j$.
    
    But for each $j$, the projection curve $P_{xy}\Gamma_j(\cdot,\tau)$ is convex and thus embedded.
\end{proof}

Because the projection curve $P_{xy}\Gamma_j(\cdot,\tau)$ is convex, the multiplicity $m$ is at most $2$. As a result, $m=2$.

\begin{clm}
    The space curves $\Gamma_j(\cdot,\tau)$ converge to one line of multiplicity two in $\mathbb R^n$.
\end{clm}
\begin{proof}[Proof of the Claim]
    If this claim were not true, then the space curves $\Gamma_j(\cdot,\tau)$ would converge to a union of two intersecting lines $L_1,L_2$ in $\mathbb R^n$ with $P_{xy}L_1=P_{xy}L_2$ but $L_1\neq L_2$.

    Then there exist points $p_i\in L_i$ for $i=1,2$ such that $P_{xy}(p_1)=P_{xy}(p_2)$ but $p_1\neq p_2$.
    
    This contradicts the bounded slope lemma, Lemma \ref{upper bound of slope of secant lines}.
\end{proof}

\end{proof}
In summary,
\begin{lem}
\label{one lemma on Type II blow up with convex projections}
Let $\gamma$ be a CSF with a one-to-one convex projection, and assume $\gamma$ develops a Type~{II} singularity. If $\tau_j\rightarrow+\infty$ is a given sequence, then for almost every $\tau\in\mathbb R$, there exists a subsequence, such that $\Gamma(\cdot,\tau+\tau_j)$ converges, up to reparameterization, in the $C^1_{loc}$ sense to a line of multiplicity two.  In addition, the line is not perpendicular to the $xy$-plane. The choice of the subsequence depends on $\tau$. 
\end{lem}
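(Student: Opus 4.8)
The plan is to combine Proposition~\ref{prop on blow-up of general immersed CSF}, which already gives the dichotomy at almost every $\tau$, with Lemma~\ref{one intermediate lemma on Type II blow up with convex projections} and Lemma~\ref{a preparatory lemma for type II blow-up}, which use the convex-projection geometry to rule out the bad outcome and pin down the good one. Fix a sequence $\tau_j\rightarrow+\infty$. By Proposition~\ref{prop on blow-up of general immersed CSF}, for almost every $\tau\in\mathbb R$ there is a subsequence (depending on $\tau$) so that $\Gamma(\cdot,\tau+\tau_j)=\Gamma_j(\cdot,\tau)$ converges in $C^1$ up to reparameterization either (a) to an Abresch--Langer curve with finite multiplicity, or (b) in $C^1_{loc}$ to a finite union of lines each with finite multiplicity.

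First I would eliminate case (a). If case (a) occurred at some $\tau$, then by Lemma~\ref{one intermediate lemma on Type II blow up with convex projections} the flow $\gamma$ would develop a Type~I singularity, contradicting the standing Type~II hypothesis. Hence case (b) must hold for almost every $\tau$. Next I would invoke Lemma~\ref{a preparatory lemma for type II blow-up}: its hypothesis is exactly that $\Gamma_j(\cdot,\tau)$ converges in $C^1_{loc}$ to a finite union of lines of finite multiplicity, which is what case (b) supplies. The conclusion of that lemma is that the union is in fact a single line of multiplicity two, and that this line is not perpendicular to the $xy$-plane. Combining, for almost every $\tau$ the chosen subsequence yields $C^1_{loc}$ convergence to a line of multiplicity two, not perpendicular to the $xy$-plane, which is precisely the assertion.

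There is essentially no new obstacle here, since the statement is a packaging of the preceding lemmas; the only point requiring a line of care is making sure the hypotheses line up verbatim. In particular, the ``almost every $\tau$'' and the $\tau$-dependence of the subsequence are inherited directly from Proposition~\ref{prop on blow-up of general immersed CSF}, and the dichotomy is exhaustive, so once (a) is ruled out one is automatically in the regime where Lemma~\ref{a preparatory lemma for type II blow-up} applies. One might worry whether case (a) and case (b) could both occur (the proposition only says ``at least one''); this causes no trouble, because it suffices that case (b) holds, and we have shown (a) leads to a contradiction under the Type~II assumption, so at every $\tau$ in the full-measure set we may select a subsequence realizing (b).

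\begin{proof}[Proof of Lemma~\ref{one lemma on Type II blow up with convex projections}]
Fix the sequence $\tau_j\rightarrow+\infty$ and apply Proposition~\ref{prop on blow-up of general immersed CSF} to the rescaled CSF $\Gamma$: for almost every $\tau\in\mathbb R$ there is a subsequence (depending on $\tau$) along which $\Gamma_j(\cdot,\tau)=\Gamma(\cdot,\tau+\tau_j)$ converges, up to reparameterization, either in $C^1$ to an Abresch--Langer curve of finite multiplicity, or in $C^1_{loc}$ to a finite union of lines, each of finite multiplicity. The first alternative is impossible: by Lemma~\ref{one intermediate lemma on Type II blow up with convex projections} it would force $\gamma$ to develop a Type~I singularity, contrary to hypothesis. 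Hence for almost every $\tau$ the second alternative holds along some subsequence. For such $\tau$, Lemma~\ref{a preparatory lemma for type II blow-up} applies and shows that this finite union of lines is a single line of multiplicity two, which is moreover not perpendicular to the $xy$-plane. This is the claimed conclusion; the dependence of the subsequence on $\tau$ is inherited from Proposition~\ref{prop on blow-up of general immersed CSF}.
\end{proof}
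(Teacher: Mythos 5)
Your proposal is correct and follows exactly the route the paper intends: the paper itself introduces this lemma with the phrase ``In summary,'' treating it as a direct assembly of Proposition~\ref{prop on blow-up of general immersed CSF}, Lemma~\ref{one intermediate lemma on Type II blow up with convex projections}, and Lemma~\ref{a preparatory lemma for type II blow-up}, which is precisely the chain of implications you spell out. Your side remark about the dichotomy being non-exclusive is handled correctly as well, since ruling out alternative~(a) under the Type~II hypothesis forces~(b) at almost every $\tau$.
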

\section{Improved blow-up results for CSF with convex projections}
\label{Blow-up argument for curves with convex projections}
In this section,  as discussed in \S\ref{the subsection on Geometry of CSF with convex projections}, we may assume for all $t\in[0,T)$, $\gamma\att$ has a one-to-one uniformly convex projection onto the $xy$-plane with no tangent lines perpendicular to the $xy$-plane. Recall that we have assumed $\gamma\att$ shrinks to the origin.  

In this section, we always assume $\gamma\att$ develops a Type~{II} singularity as $t\rightarrow T$. We will improve Lemma \ref{one lemma on Type II blow up with convex projections} from almost every $\tau$ to all $\tau$ and from $C^1_{loc}$ to $C^\infty_{loc}$. More generally, we will prove Theorem \ref{the flow version of type II blow up results for convex projections CSF}.

\subsection{Preparations}
\label{the subsection on preparations on improving blow-up argument}
For any sequence $\tau_j=-\frac{1}{2}\log(T-t_j)\rightarrow+\infty$, recall that we denote by $\Gamma_j$ the corresponding $j$-th rescaled CSF defined in Definition \ref{j-th rescaled CSF}.

For any sequence $\tau_j\rightarrow+\infty$, by Lemma \ref{one lemma on Type II blow up with convex projections}, we may pick numbers $a<0<b$ such that, by taking subsequences, $\Gamma_j(\cdot,a)$ converges to a line $L_a\subset\mathbb R^n$ of multiplicity two and $\Gamma_j(\cdot,b)$ converges to some line $L_b\subset\mathbb R^n$ of multiplicity two, as $j\rightarrow+\infty$. The lines $L_a,L_b$ are not perpendicular to the $xy$-plane by Lemma \ref{a preparatory lemma for type II blow-up}. 

Our goal is to prove that, for the chosen numbers $a,b$, there exists a subsequence along which $\Gamma_j(\cdot,\tau),\tau\in[a,b]$ converges to a line of multiplicity two. Theorem \ref{the flow version of type II blow up results for convex projections CSF} follows from taking a subsequence by the diagonal argument picking $a_m<-m,b_m>m+1,m\in\mathbb N$.

 We may assume the projection $P_{xy}L_a$ of the line $L_a$ is the $x$-axis. 

By the maximum principle and the definition of the rescaled CSF, we can establish the following lemma which will be useful in the proof of Lemma \ref{control the j-th rescaled CSF from up and down} and Lemma \ref{control the j-th rescaled CSF from the right}.
\begin{lem}
\label{bounded by lines not far away}
If, for some real numbers $a<b$, some nonzero vector $\vec{v}\in\mathbb R^n$ and some index $j\in\mathbb N$,
\begin{equation*}
    \sup \limits_{\sigma\in S^1}\vec{v}\cdot \Gamma_j(\sigma,a)\leq R,
\end{equation*}
then 
\begin{equation*}
    \sup \limits_{\sigma\in S^1}\vec{v}\cdot \Gamma_j(\sigma,\tau)\leq e^{\tau-a}R\leq e^{b-a}R
\end{equation*}
for any $\tau\in[a,b]$ for the same index $j$.
\end{lem}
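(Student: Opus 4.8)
\textbf{Proof proposal for Lemma \ref{bounded by lines not far away}.}

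The plan is to pass from the rescaled CSF back to the original CSF, where the ordinary heat-type evolution and the maximum principle apply cleanly, and then to translate the resulting inequality back to the rescaled picture. First I would fix the nonzero vector $\vec v \in \mathbb R^n$ and consider the scalar function $f(u,t) := \vec v \cdot \gamma(u,t)$. Since $\gamma$ satisfies $\gamma_t = \gamma_{ss}$, the function $f$ satisfies $f_t = f_{ss} = \frac{1}{|\gamma_u|^2} f_{uu}$, which is a linear parabolic equation on the compact manifold $S^1$. By the parabolic maximum principle, $\max_{u \in S^1} f(u,t)$ is nonincreasing in $t$; in particular, a linear function of $\gamma$ attains its spatial maximum at a boundary-in-time value. (No term like $cf$ with $c>0$ appears, so there is no exponential growth at the level of the unrescaled flow.)

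Next I would unwind the definition of the rescaled CSF. By Definition \ref{definition of the rescaled solutions}, $\Gamma_j(\sigma,\tau) = \Gamma(\sigma,\tau_j+\tau) = \frac{\gamma(\cdot,t)}{\sqrt{2T-2t}}$ where $\tau_j+\tau = -\frac12\log(T-t)$, i.e.\ $\sqrt{T-t} = e^{-(\tau_j+\tau)}$ and $\sqrt{2T-2t} = \sqrt2\, e^{-(\tau_j+\tau)}$. Writing $t_a$ for the time corresponding to $\tau_j+a$ and $t_\tau$ for the time corresponding to $\tau_j+\tau$, the hypothesis $\sup_\sigma \vec v\cdot\Gamma_j(\sigma,a)\le R$ becomes $\sup_u \vec v\cdot\gamma(u,t_a) \le \sqrt2\, e^{-(\tau_j+a)} R$. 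Since $a \le \tau$ forces $t_a \le t_\tau$, monotonicity of the spatial maximum gives $\sup_u \vec v\cdot\gamma(u,t_\tau) \le \sup_u \vec v\cdot\gamma(u,t_a) \le \sqrt2\, e^{-(\tau_j+a)} R$. Dividing by $\sqrt{2T-2t_\tau} = \sqrt2\, e^{-(\tau_j+\tau)}$ yields
\begin{equation*}
    \sup_{\sigma\in S^1} \vec v\cdot\Gamma_j(\sigma,\tau) \le \frac{\sqrt2\, e^{-(\tau_j+a)} R}{\sqrt2\, e^{-(\tau_j+\tau)}} = e^{\tau-a} R,
\end{equation*}
and since $\tau \le b$ we further have $e^{\tau-a}R \le e^{b-a}R$, which is the claim.

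The argument is essentially routine, so there is no serious obstacle; the only point requiring a little care is the bookkeeping of the relabeled time variable $\tau_j+\tau$ versus $t$, and making sure the scaling factors $\sqrt{2T-2t}$ cancel to produce precisely the exponential factor $e^{\tau-a}$ rather than, say, $e^{2(\tau-a)}$. One should also note that the maximum principle step uses only that $u\mapsto\gamma(u,t)$ is an immersion (so $|\gamma_u|\neq 0$ and the equation $f_t = |\gamma_u|^{-2} f_{uu}$ is genuinely parabolic), which is part of the standing assumptions on CSF, and that $S^1$ is compact so the spatial maximum is attained. Alternatively, one can phrase the monotonicity directly in the rescaled variables: $\Gamma$ solves $\Gamma_\tau = \Gamma_{\sigma\sigma} + \Gamma$, so $g(\sigma,\tau):=\vec v\cdot\Gamma$ satisfies $g_\tau = g_{\sigma\sigma} + g$, and the maximum principle applied to $e^{-\tau}\max_\sigma g$ gives that $e^{-\tau}\max_\sigma g(\cdot,\tau)$ is nonincreasing, hence $\max_\sigma g(\cdot,\tau) \le e^{\tau-a}\max_\sigma g(\cdot,a) \le e^{\tau-a}R$ for $\tau\in[a,b]$; this is the quickest route and avoids any change of variables.
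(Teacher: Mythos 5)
Your main argument is correct and is essentially the same as the paper's proof: apply the maximum principle to the unrescaled quantity $\vec v\cdot\gamma$ (the spatial maximum of a linear height function is nonincreasing under CSF), then unwind the rescaling $\sqrt{2T-2\tilde t_j(\tau)}=\sqrt{2}\,e^{-(\tau_j+\tau)}$ to pick up exactly the factor $e^{\tau-a}$. Your closing alternative, working directly with $g=\vec v\cdot\Gamma$ in the rescaled picture, also works; the only small subtlety you glossed over is that the rescaled flow actually satisfies $\Gamma_\tau=\Gamma_{\sigma\sigma}+\Gamma^\perp$ (not $+\Gamma$) up to tangential motion, but at the spatial maximum of $g$ one has $\vec v\cdot\Gamma_\sigma=0$, so $\vec v\cdot\Gamma^\perp=\vec v\cdot\Gamma$ there and the conclusion is unaffected.
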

\begin{proof}
If $\vec{v}\cdot\gamma(\cdot,t_0)\leq C$, then by the maximum principle 
    \begin{equation*}
        \vec{v}\cdot\gamma(\cdot,t)\leq C
    \end{equation*}
for all $t\geq t_0$.

 It follows from Definition \ref{definition of the rescaled solutions} and Definition \ref{j-th rescaled CSF} that
 \begin{equation*}
     \vec{v}\cdot \Gamma_j(\cdot,\tau)
     = \vec{v}\cdot \Gamma(\cdot,\tau_j+\tau)
     = \vec{v}\cdot \frac{\gamma(\cdot,\tilde{t}_j(\tau))}{\sqrt{2T-2\tilde{t}_j(\tau)}}
 \end{equation*}
 where
 \begin{equation*}
     T-\tilde{t}_j(\tau)=e^{-2\tau_j-2\tau}.
 \end{equation*}
 Thus for $\tau\in[a,b]$,
 \begin{equation*}
    \sqrt{ \frac{2T-2\tilde{t}_j(a)}{2T-2\tilde{t}_j(\tau)}}
    =\sqrt{\frac{e^{-2\tau_j-2a}}{e^{-2\tau_j-2\tau}}}
    =\frac{e^{-a}}{e^{-\tau}}=e^{\tau-a}
 \end{equation*}
 and
 \begin{equation*}
      \vec{v}\cdot \Gamma_j(\cdot,\tau)
     = \vec{v}\cdot \frac{\gamma(\cdot,\tilde{t}_j(\tau))}{\sqrt{2T-2\tilde{t}_j(\tau)}}
     = \vec{v}\cdot \frac{\gamma(\cdot,\tilde{t}_j(\tau))}{\sqrt{2T-2\tilde{t}_j(a)}}\sqrt{ \frac{2T-2\tilde{t}_j(a)}{2T-2\tilde{t}_j(\tau)}}\leq  R e^{\tau-a}.
 \end{equation*}
\end{proof}
\subsection{Graphicality}
We denote by $\vec{e}_1$ the vector $(1,0,\cdots,0)$ and by $\vec{e}_2$ the vector $(0,1,0,\cdots,0)$.

Based on our assumption in \S \ref{the subsection on preparations on improving blow-up argument}, the curves $P_{xy}\Gamma_j(\cdot,a)$ converge to the $x$-axis of multiplicity two in the $C^1_{loc}$ sense. We will extend the linear estimates at $\tau=a$ to the time interval $[a,b]$, taking advantage of the convex projection.
\begin{lem}
\label{control the j-th rescaled CSF from up and down}
For any constant $R>0$ large and $H>0$ small, there exists $j_1\in\mathbb N$ such that for  $j\geq j_1$ and for all $(\sigma,\tau)$ satisfying $-R\leq \vec{e}_1\cdot \Gamma_j(\sigma,\tau)\leq R$, one has
\begin{equation*}
    -H\leq  \vec{e}_2\cdot \Gamma_j(\sigma,\tau)\leq H
\end{equation*}
for any $\tau\in[a,b]$.
\end{lem}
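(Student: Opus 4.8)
The plan is to convert the convexity of the projection curves $P_{xy}\Gamma_j(\cdot,\tau)$ into the desired two-sided bound on $\vec{e}_2\cdot\Gamma_j$ over the whole slab $-R\le \vec{e}_1\cdot\Gamma_j\le R$, and then propagate the bound forward in time from $\tau=a$ to $\tau\in[a,b]$ using the maximum-principle estimate of Lemma \ref{bounded by lines not far away}. First I would use the hypothesis at $\tau=a$: since $P_{xy}\Gamma_j(\cdot,a)$ converges in $C^1_{loc}$ to the $x$-axis with multiplicity two, for any $R'>0$ and $H'>0$ there is $j_0$ so that every point of $\Gamma_j(\cdot,a)$ with $|\vec{e}_1\cdot\Gamma_j|\le R'$ satisfies $|\vec{e}_2\cdot\Gamma_j|\le H'$. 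The issue is that the slab where $|\vec{e}_1\cdot\Gamma_j(\cdot,\tau)|\le R$ at later times $\tau$ need not be the image of the slab at time $a$, so I cannot directly invoke $C^1_{loc}$-convergence at time $a$; this is why the maximum principle is needed to carry information forward.

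The key mechanism is to trap the curve between two \emph{slabs} (regions between parallel hyperplanes) whose boundary hyperplanes move outward only by the controlled factor $e^{\tau-a}$. Concretely, at time $a$ apply the $C^1_{loc}$ hypothesis to conclude $\sup_{\sigma}\,\vec{e}_2\cdot\Gamma_j(\sigma,a)\le H_0$ and $\sup_\sigma(-\vec{e}_2)\cdot\Gamma_j(\sigma,a)\le H_0$ for points in a large enough $\vec{e}_1$-slab — but in fact I want to control $\vec{e}_2$ \emph{globally}, which I cannot do from this alone because the curve is long. So instead I would argue geometrically using the convexity of $P_{xy}\Gamma_j(\cdot,\tau)$ directly at each time $\tau$: the projection curve is a convex closed curve, hence its intersection with the vertical strip $\{-R\le x\le R\}$ in the $xy$-plane is controlled by the two points where it exits the strip together with the slope bound from Lemma \ref{upper bound of slope of secant lines}. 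Using Lemma \ref{bounded by lines not far away} applied with $\vec v=\vec e_1$ and with $\vec v=-\vec e_1$, the $x$-extent where the curve sits near the origin does not blow up as $\tau$ ranges over $[a,b]$; combining this with the linear convergence at $\tau=a$ of $P_{xy}\Gamma_j(\cdot,a)$ to the $x$-axis, and the convexity (which forces the convex arc inside a bounded $x$-strip to stay within a thin $y$-strip once its endpoints do, via the slope bound), yields $|\vec e_2\cdot\Gamma_j(\sigma,\tau)|\le H$ on the required set for $j$ large.

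More precisely, the steps in order are: (1) fix $R,H$; choose an intermediate $R'=e^{b-a}R$ and a small $H'$ to be determined; (2) by $C^1_{loc}$-convergence of $P_{xy}\Gamma_j(\cdot,a)$ to the doubled $x$-axis, get $j_1$ so that for $j\ge j_1$ the part of $\Gamma_j(\cdot,a)$ over $\{|x|\le R'+1\}$ satisfies $|\vec e_2\cdot\Gamma_j|\le H'$, and moreover the two ``exit points'' of $P_{xy}\Gamma_j(\cdot,a)$ through $\{x=\pm(R'+1)\}$ are within $H'$ of the $x$-axis; (3) apply Lemma \ref{bounded by lines not far away} with $\vec v=\pm\vec e_1$ to see that for $\tau\in[a,b]$ the whole rescaled curve $\Gamma_j(\cdot,\tau)$ lies in $\{|x|\le R'\}$ on the relevant portion, hence the set $\{-R\le \vec e_1\cdot\Gamma_j(\sigma,\tau)\le R\}$ sits well inside the $x$-strip where we have control at time $a$; (4) apply Lemma \ref{bounded by lines not far away} with $\vec v=\pm\vec e_2$ starting from the bound $\sup(\pm\vec e_2)\cdot\Gamma_j(\cdot,a)\le H'$ on that portion — but to make this a genuine maximum-principle statement I restrict to the sub-arc cut out by the moving hyperplanes $\{x=\pm R'\}$, on which $\vec e_2\cdot\gamma$ still satisfies the scalar heat equation, so its spatial max over that sub-arc is non-increasing after rescaling-correction, giving $|\vec e_2\cdot\Gamma_j(\sigma,\tau)|\le e^{b-a}H'\le H$ provided $H'$ was chosen $\le e^{-(b-a)}H$. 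The main obstacle is step (4): making the maximum principle apply on a \emph{sub-arc with a moving boundary} rather than on all of $S^1$, i.e. ensuring that the endpoints of the relevant sub-arc never re-enter the region $\{-R\le x\le R\}$ where we need the conclusion. I expect this is handled exactly by the slope bound (Lemma \ref{upper bound of slope of secant lines}) together with the convexity of the projection, which together force the curve to leave the wider $x$-strip $\{|x|\le R'\}$ monotonically and keep the boundary data of the sub-arc at $|x|=R'$, hence safely outside $\{|x|\le R\}$, for all $\tau\in[a,b]$ once $j$ is large.
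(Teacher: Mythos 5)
There is a genuine gap in steps (3) and (4). Step (3) misapplies Lemma~\ref{bounded by lines not far away}: to use it with $\vec v=\pm\vec e_1$ you would need a bound on $\sup_\sigma(\pm\vec e_1)\cdot\Gamma_j(\sigma,a)$ at time $a$, but the rescaled curve is essentially unbounded in the $\vec e_1$-direction (it converges to a line whose projection is the $x$-axis), so that lemma yields nothing useful there. More seriously, step (4) is circular: to run the maximum principle for $\vec e_2\cdot\gamma$ on a sub-arc cut out by the moving free boundary $\{|x|=R'\}$, you must control the boundary values $|\vec e_2\cdot\gamma|$ on $\{|x|=R'\}$ for every $\tau\in[a,b]$ --- and that is precisely the statement being proved, just with $R'$ in place of $R$. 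Keeping the boundary points away from $\{|x|\le R\}$, as you emphasize, does not help: a large boundary value at $|x|=R'$ would still propagate inward through the heat equation and destroy the interior bound, so the location of the boundary is not the issue.

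The paper avoids this circularity by converting convexity of the projection into a \emph{global} linear estimate on all of $S^1$, which is compact and boundaryless, so Lemma~\ref{bounded by lines not far away} applies directly. At time $a$ it takes the two tangent lines $L_j^1,L_j^2$ of $P_{xy}\Gamma_j(\cdot,a)$ at the two points where the projection crosses $x=0$; by convexity the \emph{entire} projection curve at time $a$ is pinched between these two lines. The $C^1_{loc}$ convergence at $\tau=a$ bounds their slopes by $\tfrac{H}{2R}$ and their $y$-intercepts by $\tfrac{H}{2}e^{a-b}$. Applying Lemma~\ref{bounded by lines not far away} with $\vec v$ taken to be the normal to each tangent line (a vector in the $xy$-plane) shows that for all $\tau\in[a,b]$ the whole projection curve remains between the parallel lines with the same slopes and with $y$-intercepts scaled by $e^{\tau-a}$; restricting to $|\tilde x|\le R$ then gives $|\tilde y|\le \tfrac{H}{2R}R+\tfrac{H}{2}e^{\tau-b}\le H$, with no sub-arc needed. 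This tangent-line trick is the idea your proposal is missing; once you have it, the remaining work is exactly the maximum-principle estimate you already isolated.
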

\begin{proof}
We first bound the projection of the rescaled curve at time $\tau=a$ by lines from above and below.

Since  $P_{xy}\Gamma_j(\cdot,a)$ converges to the $x$-axis of multiplicity two as $j\rightarrow+\infty$, for any $R,H>0$, we can pick $j_1\in\mathbb N$ such that for all $\sigma$ satisfying $-R\leq \vec{e}_1\cdot \Gamma_j(\sigma,a)\leq R$, one has that,
\begin{equation}
\label{rescaled curve at time a is small vertically}
    -\frac{H}{2}e^{a-b}\leq  \vec{e}_2\cdot \Gamma_j(\sigma,a)\leq \frac{H}{2}e^{a-b}
\end{equation}
and the gradient
\begin{equation}
\label{rescaled curve at time a has small gradient}
   \left|\frac{(\vec{e}_2\cdot\Gamma_j)_\sigma}{(\vec{e}_1\cdot\Gamma_j)_\sigma}(\sigma,a)\right|\leq \frac{H}{2R}
\end{equation}
for any $j\geq j_1$.

In addition, we denote by $\Gamma_j(\sigma_1,a),\Gamma_j(\sigma_2,a)$ the points whose $x$-coordinates are $0$ with $\vec{e}_2\cdot\Gamma_j(\sigma_1,a)>0,\vec{e}_2\cdot\Gamma_j(\sigma_2,a)<0$. Consider the tangent lines $L_j^1,L_j^2$ of the projection of the rescaled curves $P_{xy}(\Gamma_j(\cdot,a))$ at points $P_{xy}\Gamma_j(\sigma_1,a),P_{xy}\Gamma_j(\sigma_2,a)$. The equations of the tangent lines $L_j^1,L_j^2$ are
\begin{equation*}
   \ty_1(\tx,a)= \frac{(\vec{e}_2\cdot\Gamma_j)_\sigma}{(\vec{e}_1\cdot\Gamma_j)_\sigma}(\sigma_1,a)\tx+\vec{e}_2\cdot \Gamma_j(\sigma_1,a)
\end{equation*}
and
\begin{equation*}
   \ty_2(\tx,a)= \frac{(\vec{e}_2\cdot\Gamma_j)_\sigma}{(\vec{e}_1\cdot\Gamma_j)_\sigma}(\sigma_2,a)\tx+\vec{e}_2\cdot \Gamma_j(\sigma_2,a).
\end{equation*}

Since for each $j$, $P_{xy}\Gamma_j(\cdot,a)$ is a uniformly convex curve, the projection $P_{xy}\Gamma_j(\cdot,a)$ is pinched between the lines $L_j^1,L_j^2$. In other words, $P_{xy}\Gamma_j(\cdot,a)$ is contained in the domain $\Omega(a)$ defined by:
\begin{equation*}
    \Omega(a)=\{(\tx,\ty)|\ty_2(\tx,a)\leq \ty\leq \ty_1(\tx,a)\}.
\end{equation*}
We consider a family of lines parameterized by $\tau$,
\begin{equation*}
   \ty_1(\tx,\tau)= \frac{(\vec{e}_2\cdot\Gamma_j)_\sigma}{(\vec{e}_1\cdot\Gamma_j)_\sigma}(\sigma_1,a)\tx+e^{\tau-a}\vec{e}_2\cdot \Gamma_j(\sigma_1,a)
\end{equation*}
and
\begin{equation*}
   \ty_2(\tx,\tau)= \frac{(\vec{e}_2\cdot\Gamma_j)_\sigma}{(\vec{e}_1\cdot\Gamma_j)_\sigma}(\sigma_2,a)\tx+e^{\tau-a}\vec{e}_2\cdot \Gamma_j(\sigma_2,a).
\end{equation*}
Since for each $j$, $P_{xy}\Gamma_j(\cdot,a)$ is a uniformly convex curve, we can apply Lemma \ref{bounded by lines not far away}, where we choose $\vec{v}$ to be either of the two normal vectors to the tangent lines $L_j^1,L_j^2$ in the $xy$-plane. 

As a result, for any $\tau\in[a,b]$, $P_{xy}\Gamma_j(\cdot,\tau)$ is contained in the domain $\Omega(\tau)$ defined as follows:
\begin{equation*}
    \{(\tx,\ty)|\ty_2(\tx,\tau)\leq \ty\leq \ty_1(\tx,\tau)\}.
\end{equation*}
According to equation (\ref{rescaled curve at time a is small vertically}) and equation (\ref{rescaled curve at time a has small gradient}),
the domain $\Omega(\tau)$ is contained in the domain $W(\tau)$ defined as follows:
\begin{equation*}
     \{(\tx,\ty)| |\ty|\leq\frac{H}{2}e^{\tau-b}+\frac{H}{2R}|\tx|  \}.
\end{equation*}
This lemma is proved because for all $\tau\in[a,b]$, the domain $W(\tau)\cap\{(\tx,\ty)|-R\leq\tx\leq R\}$ is contained in 
\begin{equation*}
      \{(\tx,\ty)|-R\leq\tx\leq R, |\ty|\leq H\}.
\end{equation*}
\end{proof}

\begin{lem}
\label{control the j-th rescaled CSF from the right}
For any constant $R>0$, there exists $j_2\in\mathbb N$ such that
\begin{equation*}
    \sup \limits_{\sigma\in S^1} \vec{e}_1\cdot \Gamma_j(\sigma,\tau)>R\text{ and }\inf \limits_{\sigma\in S^1} \vec{e}_1\cdot \Gamma_j(\sigma,\tau)<-R
\end{equation*}
for any $j\geq j_2$ and any $\tau\in[a,b]$.
\end{lem}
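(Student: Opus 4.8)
The plan is to argue by contradiction using the diameter lower bound that follows from the Type~II hypothesis. If the conclusion fails, then for some $R>0$ there is a subsequence $j_k$ and times $\tau_k\in[a,b]$ such that the entire arc $\Gamma_{j_k}(\cdot,\tau_k)$ is contained in the slab $\{\,|\vec e_1\cdot x|\le R\,\}$ (or the analogous half-space condition fails). Combined with Lemma \ref{control the j-th rescaled CSF from up and down}, which confines the $y$-coordinate once the $x$-coordinate is bounded, this would force $\Gamma_{j_k}(\cdot,\tau_k)$ to have uniformly bounded $xy$-projection, and then by the bounded slope lemma (Lemma \ref{upper bound of slope of secant lines}), which is scaling invariant, the \emph{whole} rescaled curve $\Gamma_{j_k}(\cdot,\tau_k)$ would have uniformly bounded diameter in $\mathbb R^n$, say $\operatorname{diam}\Gamma_{j_k}(\cdot,\tau_k)\le C(R,M,H)$.

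The next step is to derive a contradiction from such a uniform diameter bound. A rescaled CSF whose curve at a fixed time has bounded diameter is, after passing to a further subsequence, $C^1_{loc}$-subconvergent (by the blow-up results recalled in \S\ref{the section on known results on blow-up}, namely the dichotomy in Proposition \ref{prop on blow-up of general immersed CSF}) to a \emph{compact} limit, which by boundedness must fall into case (a): an Abresch--Langer curve of finite multiplicity. But by Lemma \ref{one intermediate lemma on Type II blow up with convex projections}, if $\Gamma_j(\cdot,\tau)$ subconverges in $C^1$ to an Abresch--Langer curve of finite multiplicity, then $\gamma$ develops a Type~I singularity — contradicting the standing Type~II assumption of this section. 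Hence no such subsequence exists, and both inequalities in the statement hold for all $j$ large and all $\tau\in[a,b]$ simultaneously. (One small point to spell out: since $\tau_k$ ranges in a compact interval $[a,b]$, after passing to a subsequence we may assume $\tau_k\to\tau_\infty\in[a,b]$, and we apply the blow-up dichotomy at a time slightly before $\tau_\infty$ where it is valid, or absorb the $\tau_k\to\tau_\infty$ into the diagonal extraction; the uniform diameter bound is what matters.)

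The main obstacle is making the contradiction robust against the fact that the blow-up dichotomy of Proposition \ref{prop on blow-up of general immersed CSF} only holds for almost every $\tau$, whereas here $\tau_k$ is an arbitrary sequence in $[a,b]$. The cleanest fix is to note that the uniform diameter bound $\operatorname{diam}\Gamma_{j_k}(\cdot,\tau_k)\le C$ is inherited on a neighborhood: by Lemma \ref{bounded by lines not far away} applied to the six coordinate directions $\pm\vec e_i$ bounding the diameter at time $\tau_k$, the curves $\Gamma_{j_k}(\cdot,\tau)$ stay within a fixed enlargement of that bounded region for all $\tau\in[\tau_k,\min(\tau_k+1,b)]$ (or one uses $[\max(a,\tau_k-1),\tau_k]$ if $\tau_k=b$), so one may pick a good time $\tau_k'$ in that interval at which the dichotomy applies, with the diameter still controlled. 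Then the argument above produces the compact Abresch--Langer limit at $\tau_k'$ and hence the Type~I conclusion, the desired contradiction. The remaining details — quantifying "bounded $xy$-projection" from the two slab bounds, and verifying that the bounded slope lemma upgrades this to a bound in $\mathbb R^n$ — are routine given the results already established in \S\ref{the section about properties of CSF with one-to-one convex projections}.
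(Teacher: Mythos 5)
There is a gap at the first step. Negating the lemma gives, for some $R>0$, a subsequence $j_k$ and times $\tau_k\in[a,b]$ along which only \emph{one} of the two inequalities fails, say $\sup_\sigma \vec e_1\cdot\Gamma_{j_k}(\sigma,\tau_k)\le R$; the curve is then confined to a half-space $\{x\le R\}$, not to the slab $\{|x|\le R\}$. Lemma \ref{control the j-th rescaled CSF from up and down} only controls the $y$-coordinate over the region $\{|x|\le R\}$, so the curve may still extend arbitrarily far in the $-x$ direction inside the wedge established in that proof. The uniform diameter bound you invoke therefore does not follow, and without it you cannot extract a compact Abresch--Langer blow-up limit at (or near) time $\tau_k$, so the appeal to Lemma \ref{one intermediate lemma on Type II blow up with convex projections} to reach a Type~I contradiction is not available. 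Your argument would only apply in the event that both inequalities failed simultaneously along the same subsequence, which the negation does not guarantee.

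The paper reaches the contradiction directly from the half-space bound: Lemma \ref{bounded by lines not far away} propagates $\sup_\sigma\vec e_1\cdot\Gamma_{j_k}(\sigma,\tau_k)\le R$ forward to the fixed time $b$, where by the setup of \S\ref{the subsection on preparations on improving blow-up argument} the curves $\Gamma_{j_k}(\cdot,b)$ converge in $C^1_{loc}$ to a line $L_b$ of multiplicity two that is not perpendicular to the $xy$-plane. Hence $P_{xy}L_b$ is a genuine line through the origin, and no such line can be simultaneously bounded from above, below (using Lemma \ref{control the j-th rescaled CSF from up and down}), and from the right; this contradicts the one-sided bound at time $b$. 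Two minor remarks on your sketch: in $\mathbb R^n$ there are $2n$, not six, coordinate directions; and Lemma \ref{bounded by lines not far away} propagates bounds forward in time only, so the backward interval $[\max(a,\tau_k-1),\tau_k]$ you suggest for $\tau_k=b$ is not available.
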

\begin{proof}
It suffices to prove the first inequality, as the proof of the second one is similar.
Assume the first inequality were not true, then there would exist a subsequence such that
\begin{equation*}
    \sup \limits_{\sigma\in S^1} \vec{e}_1\cdot \Gamma_j(\sigma,\tau_j)\leq R
\end{equation*}
for some $\tau_j\in[a,b]$, then by Lemma \ref{bounded by lines not far away},
\begin{equation*}
    \sup \limits_{\sigma\in S^1} \vec{e}_1\cdot \Gamma_j(\sigma,b)\leq e^{b-\tau_j}R\leq e^{b-a}R.
\end{equation*}
In other words, curves $\Gamma_j(\sigma,b)$ are bounded from the right.

Combined with Lemma \ref{control the j-th rescaled CSF from up and down}, curves $P_{xy}\Gamma_j(\sigma,b)$ are bounded from above, below and from the right. However, $\Gamma_j(\cdot,b)$ converges to some line $L_b$ of multiplicity two, where the line $L_b$ is not perpendicular to the $xy$-plane. Thus $P_{xy}L_b$ cannot be bounded from above, below and from the right. By taking $j$ large, $P_{xy}\Gamma_j(\cdot,b)$ is close to the line $P_{xy}L_b$ on some large ball. This gives a contradiction.
\end{proof}
As a result, for any constant $R>0$,  for large enough $j$ and any $\tau\in[a,b]$, $\Gamma_j(\cdot,\tau)$ is graphical over the $x$-axis on the interval $[-R,R]$ because the projection curve $P_{xy}\Gamma_j(\cdot,\tau)$ is convex and thus graphical over the $x$-axis except at the maximum and minimum points of the function $x(\cdot,\tau)=\vec{e}_1\cdot\Gamma_j(\cdot,\tau)$.

\subsection{Gradient and curvature estimates}
For any  constant $R>0$ large and $H>0$ small, we take $j_0=\max\{j_1,j_2\}$, as chosen in Lemma \ref{control the j-th rescaled CSF from up and down} and Lemma \ref{control the j-th rescaled CSF from the right}.
\begin{lem}[Gradient estimates]
\label{gradient estimates of rescaled CSF}
   For $j\geq j_0$ and for all $(\sigma,\tau)$ satisfying $-\frac{R}{2}\leq \vec{e}_1\cdot \Gamma_j(\sigma,\tau)\leq \frac{R}{2}$, one has 
\begin{equation*}
   \left|\frac{(\vec{e}_2\cdot\Gamma_j)_\sigma}{(\vec{e}_1\cdot\Gamma_j)_\sigma}(\sigma,\tau)\right|\leq \frac{8H}{R}
\end{equation*}
for any $\tau\in[a,b]$.
\end{lem}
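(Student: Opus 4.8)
The plan is to reduce the statement to an elementary estimate for convex and concave graphs over $[-R,R]$ whose values lie in $[-H,H]$. Fix $j\ge j_0$ and $\tau\in[a,b]$, and abbreviate $x(\sigma)=\vec{e}_1\cdot\Gamma_j(\sigma,\tau)$, $y(\sigma)=\vec{e}_2\cdot\Gamma_j(\sigma,\tau)$. The quantity to be bounded, $\frac{(\vec{e}_2\cdot\Gamma_j)_\sigma}{(\vec{e}_1\cdot\Gamma_j)_\sigma}(\sigma,\tau)=\frac{y_\sigma}{x_\sigma}$, is exactly the geometric slope $\frac{d\ty}{d\tx}$ of the (smooth, uniformly convex) projection curve $P_{xy}\Gamma_j(\cdot,\tau)$ at the corresponding point, and is in particular independent of orientation and reparameterization. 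Since $j\ge j_0\ge j_2$, Lemma \ref{control the j-th rescaled CSF from the right} shows that $x(\cdot)$ takes values strictly larger than $R$ and strictly smaller than $-R$; hence the (unique) maximum and minimum of the $\tx$-coordinate along the convex curve are attained at points with $|\tx|>R$. Consequently, traversing the convex curve from its min-$\tx$ point to its max-$\tx$ point, $\tx$ increases monotonically through all intermediate values, so the portion of $P_{xy}\Gamma_j(\cdot,\tau)$ lying in the closed strip $\{|\tx|\le R\}$ is the union of exactly two arcs, each a graph $\ty=f(\tx)$ over the whole interval $[-R,R]$; one of them, $f_+$, is concave and the other, $f_-$, is convex. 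Since $j\ge j_0\ge j_1$, Lemma \ref{control the j-th rescaled CSF from up and down} gives $|f_\pm(\tx)|\le H$ for all $\tx\in[-R,R]$.

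Next I would run the elementary estimate. Fix $\sigma$ with $x_0:=x(\sigma)\in[-R/2,R/2]$; the corresponding point of $P_{xy}\Gamma_j(\cdot,\tau)$ lies in the open strip $\{|\tx|<R\}$, hence on the graph of one of $f_+$ or $f_-$, call it $f$ (smooth, and either convex or concave). Because $x_0+\tfrac R2\le R$ and $x_0-\tfrac R2\ge -R$, both points $x_0\pm\tfrac R2$ lie in $[-R,R]$, so $|f(x_0\pm\tfrac R2)|\le H$ and $|f(x_0)|\le H$. If $f$ is convex, monotonicity of difference quotients gives
\begin{equation*}
  \frac{f(x_0)-f(x_0-R/2)}{R/2}\le f'(x_0)\le \frac{f(x_0+R/2)-f(x_0)}{R/2},
\end{equation*}
and the two outer quantities have absolute value at most $\frac{2H}{R/2}=\frac{4H}{R}$; the concave case is identical with the inequalities reversed. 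Hence $|f'(x_0)|\le\frac{4H}{R}$. Since $f'(x_0)$ equals $\frac{(\vec{e}_2\cdot\Gamma_j)_\sigma}{(\vec{e}_1\cdot\Gamma_j)_\sigma}(\sigma,\tau)$ at the corresponding parameter $\sigma$, and the hypothesis $-R/2\le\vec{e}_1\cdot\Gamma_j(\sigma,\tau)\le R/2$ is precisely $|x_0|\le R/2$, this proves the bound with constant $\frac{4H}{R}$, which is $\le\frac{8H}{R}$.

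There is essentially no analytic difficulty here; the estimate is just convexity applied to a thin convex curve. The only point requiring care is the geometric bookkeeping in the first paragraph: one must make sure that the extreme points of the $\tx$-coordinate lie \emph{outside} the strip $\{|\tx|\le R\}$, so that within the strip the curve is genuinely a union of two graphs defined on all of $[-R,R]$ (in particular at the endpoints $\tx=\pm R$, where the uniform bound $|\ty|\le H$ is still available); this is exactly what the combination of Lemma \ref{control the j-th rescaled CSF from the right} and Lemma \ref{control the j-th rescaled CSF from up and down} supplies, which is why the statement is phrased for $j\ge j_0=\max\{j_1,j_2\}$.
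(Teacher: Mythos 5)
Your proof is correct. It differs from the paper's in presentation, though both ultimately rest on the same inputs (Lemma \ref{control the j-th rescaled CSF from up and down}, Lemma \ref{control the j-th rescaled CSF from the right}, and convexity of the projection curve). The paper argues by contradiction: if the slope at some $(\sigma_0,\tau_0)$ exceeded $8H/R$, the tangent line $L_0$ at that point would descend to $y=-2H$ within $|\tx|<R$; since the closed convex projection curve lies entirely on one side of its tangent lines, the curve itself would then have to leave the band $|\ty|\le H$ in that strip, contradicting Lemma \ref{control the j-th rescaled CSF from up and down}. You instead give a forward argument: decompose the curve in the strip $\{|\tx|\le R\}$ into a concave upper graph and a convex lower graph (justified by Lemma \ref{control the j-th rescaled CSF from the right}, which pushes the vertical tangencies outside $[-R,R]$), and sandwich $f'(x_0)$ between two difference quotients over intervals of length $R/2$, each controlled by $4H/R$. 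Your argument is more elementary and makes the role of the hypothesis $|x_0|\le R/2$ (ensuring $x_0\pm R/2\in[-R,R]$) completely explicit; as a bonus it gives the sharper constant $4H/R$ in place of $8H/R$. The paper's tangent-line version is terser but requires one to verify separately that the tangent line does cross $y=-2H$ before $|\tx|$ reaches $R$, which is where the factor $8$ comes from.
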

\begin{proof}
    If this lemma were not true, then there would exist some $j_g\geq j_0$ and a point $(\sigma_0,\tau_0)$ with $-\frac{R}{2}\leq \vec{e}_1\cdot \Gamma_{j_g}(\sigma_0,\tau_0)\leq \frac{R}{2}$ but
\begin{equation*}
   \left|\frac{(\vec{e}_2\cdot\Gamma_{j_g})_\sigma}{(\vec{e}_1\cdot\Gamma_{j_g})_\sigma}(\sigma_0,\tau_0)\right|>\frac{8H}{R}.
\end{equation*}
We denote by $L_0$ the tangent line of the convex curve $P_{xy}(\Gamma_{j_g}(\cdot,\tau_0))$ at the point $P_{xy}\Gamma_{j_g}(\sigma_0,\tau_0)$. We may assume the point $P_{xy}\Gamma_{j_g}(\sigma_0,\tau_0)$ is on the upper branch, thus the convex curve $P_{xy}(\Gamma_{j_g}(\cdot,\tau_0))$ must be below the line $L_0$. 

The line $L_0$ intersects the line segment $\ell=\{(\tilde{x},-2H)|-R\leq \tilde{x}\leq R\}$ at some point. The curve $P_{xy}(\Gamma_{j_g}(\cdot,\tau_0))$ therefore also intersects $\ell$, which contradicts Lemma \ref{control the j-th rescaled CSF from up and down}.
\end{proof}
\begin{lem}[Curvature estimates]
  \label{curvature estimates of rescaled CSF}
  There exists a constant $M>0$ such that for all $j\geq j_0$ and for all $(\sigma,\tau)$ satisfying $-\frac{R}{4}\leq \vec{e}_1\cdot \Gamma_j(\sigma,\tau)\leq \frac{R}{4}$and $\frac{a}{2}\leq\tau\leq\frac{b}{2}$, one has
\begin{equation*}
   |\Gamma_{j\sigma\sigma}(\sigma,\tau)|\leq M.
\end{equation*}
\end{lem}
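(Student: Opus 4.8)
The plan is to upgrade the gradient bound from Lemma \ref{gradient estimates of rescaled CSF} to a curvature bound by a standard interior parabolic estimate, working with the scalar graph function rather than the vector-valued curve. First I would fix $R$ and $H$ and $j\ge j_0$, and recall from \S\ref{Blow-up argument for curves with convex projections} that on the slab $\{-\tfrac{R}{2}\le \vec e_1\cdot\Gamma_j\le\tfrac R2\}$ and for $\tau\in[a,b]$ the curve $\Gamma_j(\cdot,\tau)$ is a graph over the $x$-axis: write $\Gamma_j$ in the new parameter $x=\vec e_1\cdot\Gamma_j\in[-\tfrac R2,\tfrac R2]$ as $x\mapsto(x,\, \vec u(x,\tau))$ where $\vec u$ records the remaining $n-1$ coordinates (the $\vec e_2$-coordinate and the $\mathbb R^{n-2}$ part). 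Under the rescaled CSF, such a graph satisfies a uniformly parabolic quasilinear system of the form $\vec u_\tau = \dfrac{\vec u_{xx}}{1+|\vec u_x|^2} - x\,\vec u_x\cdot(\text{lower order}) + \vec u$-type terms (the precise form is the rescaled graphical CSF); the point is only that the principal coefficient $(1+|\vec u_x|^2)^{-1}$ is bounded between two positive constants thanks to the gradient bound $|\vec u_x|\le C(H,R)$ just established, and that the drift and zeroth-order terms are bounded on the relevant compact region.

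The key steps, in order: (1) Convert to the graph function $\vec u$ on $\{|x|\le \tfrac R2\}\times[a,b]$ and record that, by Lemma \ref{gradient estimates of rescaled CSF} applied with $R$ there taken to be our $R$, one has $|\vec u_x|\le C$ on $\{|x|\le \tfrac R2\}\times[a,b]$, and by Lemma \ref{control the j-th rescaled CSF from up and down} (with $H$ small) one has $|\vec u|\le C$ there as well — so $\vec u$ is bounded in $C^1$. (2) Observe that $\vec u$ solves a uniformly parabolic quasilinear equation with coefficients that are smooth functions of $\vec u_x$ and of $x$, hence, given the $C^1$ bound, the coefficients lie in $C^{0}$ (indeed $C^{\alpha}$ once one has a tiny bit more) with norms controlled on $\{|x|\le\tfrac R2\}\times[a,b]$ uniformly in $j$. (3) Apply interior parabolic Schauder/De Giorgi–Nash–Moser-type estimates (e.g. Krylov–Safonov for the Hölder gradient estimate followed by Schauder, or directly Ecker–Huisken-style interior estimates for graphical CSF) on the smaller cylinder $\{|x|\le\tfrac R4\}\times[\tfrac a2,\tfrac b2]$ to bound $|\vec u_{xx}|$ by a constant $M=M(R,H,a,b)$ independent of $j$. (4) Translate back: since $\Gamma_{j\sigma\sigma}$ at a point equals the curve's acceleration in arc length, and arc length and the graph parameter $x$ are comparable with bounds depending only on $|\vec u_x|$, the bound $|\vec u_{xx}|\le M$ together with $|\vec u_x|\le C$ yields $|\Gamma_{j\sigma\sigma}|\le M'$ on $\{-\tfrac R4\le \vec e_1\cdot\Gamma_j\le\tfrac R4\}\times[\tfrac a2,\tfrac b2]$, which is the claim (after renaming $M'$ to $M$).

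The main obstacle is step (3): one must be careful that the parabolic estimate used is genuinely \emph{interior in both space and time} and does \emph{not} require any bound on $\vec u$ or its derivatives near the lateral boundary $\{|x|=\tfrac R2\}$ beyond the $C^1$ bound already in hand, nor any initial-data regularity at $\tau=a$ — this is exactly why one shrinks from $[-\tfrac R2,\tfrac R2]\times[a,b]$ down to $[-\tfrac R4,\tfrac R4]\times[\tfrac a2,\tfrac b2]$. A secondary point to get right is that the estimate must be uniform in $j$: this holds because the parabolic operator for the rescaled graphical CSF is the same for all $j$ (only the solution $\vec u=\vec u_j$ changes), and the $C^1$ bounds $|\vec u_j|,|\vec u_{jx}|\le C$ are uniform in $j$ by the cited lemmas, so the constants in the interior estimate — which depend only on the ellipticity ratio, the $C^{0}$ (or $C^\alpha$) norms of the coefficients, the $C^1$ norm of $\vec u_j$, the dimension, and the size of the two nested cylinders — are likewise uniform in $j$. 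No genuinely new idea is needed beyond standard interior parabolic regularity once graphicality and the gradient bound are available.
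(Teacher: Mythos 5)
Your overall strategy — convert to graph coordinates over the $x$-axis, establish $C^1$ bounds, write the graphical rescaled CSF as a uniformly parabolic equation in divergence form, then apply De Giorgi--Nash--Moser for Hölder continuity of the gradient followed by Schauder for $C^2$ estimates, with constants uniform in $j$ — matches the paper's proof. However, there is a genuine gap in step (1): you assert that Lemma~\ref{gradient estimates of rescaled CSF} gives $|\vec u_x|\le C$, but that lemma only controls the $\vec e_2$-component of the gradient, namely the ratio $\bigl|(\vec e_2\cdot\Gamma_j)_\sigma/(\vec e_1\cdot\Gamma_j)_\sigma\bigr|$, i.e.\ $y_x$. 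It says nothing about the remaining $n-2$ components $z_{\ell x}$.

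Bounding $z_{\ell x}$ requires an extra input that you omit: Lemma~\ref{lower bounds of c}, the uniform lower bound $x_\sigma^2+y_\sigma^2\ge\delta>0$ along the flow (which is scaling invariant). Combining that with the bound $|y_\sigma|\le\frac{8H}{R}|x_\sigma|$ from Lemma~\ref{gradient estimates of rescaled CSF} gives $|x_\sigma|$ bounded away from zero, and since arc-length parameterization gives $|z_{\ell\sigma}|\le1$, one concludes $|z_{\ell x}|=|z_{\ell\sigma}/x_\sigma|\le C$. Without this step, the principal coefficient $\bigl(1+y_x^2+\sum_\ell z_{\ell x}^2\bigr)^{-1}$ in the graphical equation has no lower bound, so uniform parabolicity — the hypothesis every interior estimate in your step (3) rests on — is not established. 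Once you add this observation, the rest of your argument goes through and is essentially the paper's.
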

\begin{proof}
Any given branch of $\Gamma_j$ in the region $|x|\leq R$ is a graph
\begin{equation*}
    x\rightarrow(x,\ty(x,\tau),\tilde{z}_1(x,\tau),\cdots,\tilde{z}_{n-2}(x,\tau)).
\end{equation*}
The function $\ty$ satisfies the graphical rescaled CSF equation:
\begin{equation}
\label{y satisfy the graphical rescaled CSF equation}
\tilde{y}_\tau=
\frac{\ty_{\tx\tx}}{1+\ty_\tx^2+\tilde{z}_{1\tx}^2+\cdots+\tilde{z}_{(n-2)\tx}^2}-\tx\ty_\tx+\ty.
\end{equation}
Thus,
\begin{equation*}
(\tilde{y}_\tx)_\tau=
\left(\frac{(\ty_{\tx})_\tx}{1+\ty_\tx^2+\tilde{z}_{1\tx}^2+\cdots+\tilde{z}_{(n-2)\tx}^2}\right)_\tx
-\tx(\ty_\tx)_\tx,
\end{equation*}
which is a parabolic equation for $\ty_\tx$ in divergence form with a lower order term.

By the gradient estimates and Lemma \ref{lower bounds of c},
\begin{equation*}
    \ty_\tx^2+\tilde{z}_{1\tx}^2+\cdots+\tilde{z}_{(n-2)\tx}^2
\end{equation*}
is bounded from above.

Thus by De Giorgi-Nash-Moser type estimates, $\ty_\tx$ is Hölder continuous. See for example \cite[Theorem 1.1, Chapter V, \S 1, page 419]{ladyzhenskaia1968linear}. Similarly $\tilde{z}_{1\tx},\cdots,\tilde{z}_{(n-2)\tx}$ are Hölder continuous. Thus we obtain curvature estimates by applying Schauder estimates to equation (\ref{y satisfy the graphical rescaled CSF equation}). 

All the estimates mentioned in this proof are uniform for all $j\geq j_0$.
\end{proof}
\subsection{Uniform convergence}
By Lemma \ref{gradient estimates of rescaled CSF}, Lemma \ref{curvature estimates of rescaled CSF} and a priori estimates of parabolic PDEs, we have higher order estimates. By the Arzelà–Ascoli theorem, we may assume $\Gamma_j(\cdot,\tau)$ converges locally smoothly to some limit flow $\Gamma_{\infty}(\cdot,\tau)$ because we have (upper and lower) bounds on length locally for $\tau\in[a,b]$, replacing $a,b$ by $2a,2b$ in Lemma \ref{curvature estimates of rescaled CSF} if necessary.

It follows from Lemma \ref{control the j-th rescaled CSF from up and down}, choosing constants $R=m$ and $H=\frac{1}{m}$ for all $m\in \mathbb N$ large, that for $\tau\in[a,b]$, $\Gamma_j(\cdot,\tau)$ converges to some line whose projection is the $x$-axis, which is the projection of the line that $\Gamma_j(\cdot,a)$ converges to. That is to say, for $\tau\in[a,b]$, $  P_{xy}\Gamma_{\infty}(\cdot,\tau)$ and $P_{xy}\Gamma_{\infty}(\cdot,a)$ are the same line. 

 Next, we will show that not only the projection, but the space curves $\Gamma_{\infty}(\cdot,\tau)$ and $\Gamma_{\infty}(\cdot,a)$ are the same line, for any $\tau\in[a,b]$.

\begin{proof}[Proof of Theorem \ref{the flow version of type II blow up results for convex projections CSF}]
Due to the convergence $\Gamma_j\rightarrow\Gamma_\infty$, the limit flow satisfies the rescaled CSF 
\begin{equation}
    (\Gamma_{\infty})_\tau=(\Gamma_{\infty})_{\sigma\sigma}+\Gamma_{\infty}^\perp,
\end{equation}
up to a tangential motion.

Take a subsequence such that $\tau_j+a> \tau_{j-1}+b$. Then by Huisken's monotonicity formula \cite{huisken1990asymptotic},
\begin{equation}
\label{an equation to be used for the limit rescaled CSF}
  \sum_{j=1}^{+\infty} \int_{\tau_{j}+a}^{\tau_j+b}\int_{\Gamma(\cdot,\tau)}e^{-\frac{|\Gamma|^2}{2}}|\Gamma_{\sigma\sigma}+\Gamma^\perp|^2d\sigma d\tau= \sum_{j=1}^{+\infty} \int_{a}^{b}\int_{\Gamma_j(\cdot,\tau)}e^{-\frac{|\Gamma|^2}{2}}|\Gamma_{\sigma\sigma}+\Gamma^\perp|^2d\sigma d\tau
\end{equation}
is finite. 

As a result,
\begin{equation*}
   \lim_{j\rightarrow+\infty} \int_{a}^{b}\int_{\Gamma_j(\cdot,\tau)}e^{-\frac{|\Gamma|^2}{2}}|\Gamma_{\sigma\sigma}+\Gamma^\perp|^2d\sigma d\tau=0
\end{equation*}
and for any $R>0$, 
\begin{equation*}
   \lim_{j\rightarrow+\infty} \int_{a}^{b}\int_{\Gamma_j(\cdot,\tau)\cap B_R(0)}e^{-\frac{|\Gamma|^2}{2}}|\Gamma_{\sigma\sigma}+\Gamma^\perp|^2d\sigma d\tau=0,
\end{equation*}
where we denote by $B_R(0)$ the ball centered at the origin with radius $R$, 

Because $\Gamma_j(\cdot,\tau)$ converges locally smoothly to $\Gamma_{\infty}(\cdot,\tau)$,
\begin{equation*}
  \int_{a}^{b}\int_{\Gamma_{\infty}(\cdot,\tau)\cap B_R(0)}e^{-\frac{|\Gamma|^2}{2}}|\Gamma_{\sigma\sigma}+\Gamma^\perp|^2d\sigma d\tau=0.
\end{equation*}
As a result, the time derivative $(\Gamma_{\infty})_\tau$ vanishes and  $\Gamma_{\infty}(\cdot,\tau)$ remains the same line for any $\tau\in[a,b]$.
\end{proof}

\subsection{Corollaries}
We conclude this section with definitions and corollaries which will be useful in \S\ref{the section on linear scales}. 

We denote by $\Gamma$ the rescaled CSF and by $\overline{\Gamma}$ the projection of $\Gamma$ onto the $xy$-plane.
\begin{lem}
\label{two local min points of norm square of the projection curves}
Assume the same hypotheses as in Theorem \ref{the flow version of type II blow up results for convex projections CSF}. For arbitrary $R>0$, there exists $\tau_R^{pt}\in[-\frac{1}{2}\log T,+\infty)$ such that for any $\tau\geq \tau_R^{pt}$, the projection of the rescaled CSF inside the ball $\og(\cdot,\tau)\cap B_{R}(0)$ has exactly two connected components, and the function $|\og|^2$ has exactly one minimum point on each component. In addition, we can smoothly track these two minimum points.
\end{lem}
\begin{proof}
If there exists a sequence $\{\tau_l\}$ such that the number of the connected components of $\og(\cdot,\tau_l)\cap B_{R}(0)$ is not two. Based on Theorem \ref{the flow version of type II blow up results for convex projections CSF}, by taking a subsequence, $\Gamma(\cdot,\tau_l)$ locally smoothly converges to a line $L$ of multiplicity two. The line $L$ passes through the origin and is not perpendicular to the $xy$-plane. Thus the projection of this line $P_{xy}L$ intersects transversely with the circle $\partial B_R(0)$. This gives a contradiction because $\og(\cdot,\tau_l)\cap B_{R}(0)$ is $C^\infty$ close to $P_{xy}L\cap B_{R}(0)$.
\begin{clm}
      The function $|\og|^2$ has exactly one minimum point on each component of $\og(\cdot,\tau)\cap B_{R}(0)$. 
\end{clm}
\begin{proof}[Proof of the Claim]
The author learned the following argument from \cite[Proof of Lemma 3.5]{choi2025uniqueness}.

On each component, the function $|\og|^2$ has at least one minimum point since each component of $\og(\cdot,\tau)\cap B_{R}(0)$ is $C^\infty$ close to some line segment passing through the origin.

We denote by $\bar{\sigma}$ the arc-length parameter of $\og$, by direct computations,
  \begin{equation*}
      \left(|\og|^2\right)_\os=2\og\cdot\og_\os
  \end{equation*}
  and
  \begin{equation}
      \left(|\og|^2\right)_{\os\os}
      =2\og\cdot\og_{\os\os}+2.
  \end{equation}
Thus, for points on $\og(\cdot,\tau)\cap B_{R}(0)$, because $|\og|\leq R$ and the curvature of the projection curve $|\og_{\os\os}|$ is small, one has
  \begin{equation*}
      \left(|\og|^2\right)_{\os\os}>0.
  \end{equation*}
  Thus the function $|\og|^2$ has at most one critical point on each component.
\end{proof}
We can smoothly track the minimum points $\og(u_{min}(t),t)$ by applying the implicit function theorem to
  \begin{equation*}
      \left(|\og|^2\right)_\os(u_{min}(t),t)=0
  \end{equation*}
  based on
   \begin{equation*}
       \left(|\og|^2\right)_{\os\os}(u_{min}(t),t)>0.
  \end{equation*}
\end{proof}

\begin{defi}
\label{labeling of the minimum points}
For each $\tau\geq\tau_1^{pt}$, we label these two minimum points of the function $|\og|^2$ inside the unit ball $ B_{1}(0)$ in Lemma \ref{two local min points of norm square of the projection curves} by $p(\tau),q(\tau)\in\mathbb R^2$.
\end{defi}
By Lemma \ref{two local min points of norm square of the projection curves}, for  $R\geq 1$ and $\tau\geq\tau_R^{pt}$, the points $p(\tau),q(\tau)$ are minimum points of $|\og|^2$ inside the ball $ B_{R}(0)$ and thus are independent of the choice of $R$.

For each $\tau\geq\tau_1^{pt}$, we denote by $\mathcal{O}$ the origin and by $\overline{\mathcal{O}p(\tau)}$, $\overline{\mathcal{O}q(\tau)}$ the line segments connecting the origin and points $p(\tau),q(\tau)$ respectively. The line segments $\overline{\mathcal{O}p(\tau)}$, $\overline{\mathcal{O}q(\tau)}$ divide the domain enclosed by the projection of the rescaled curve $\og(\cdot,\tau)$ into two smaller domains.

\begin{figure}[ht]
    \centering
    \includegraphics[width=0.9\textwidth]{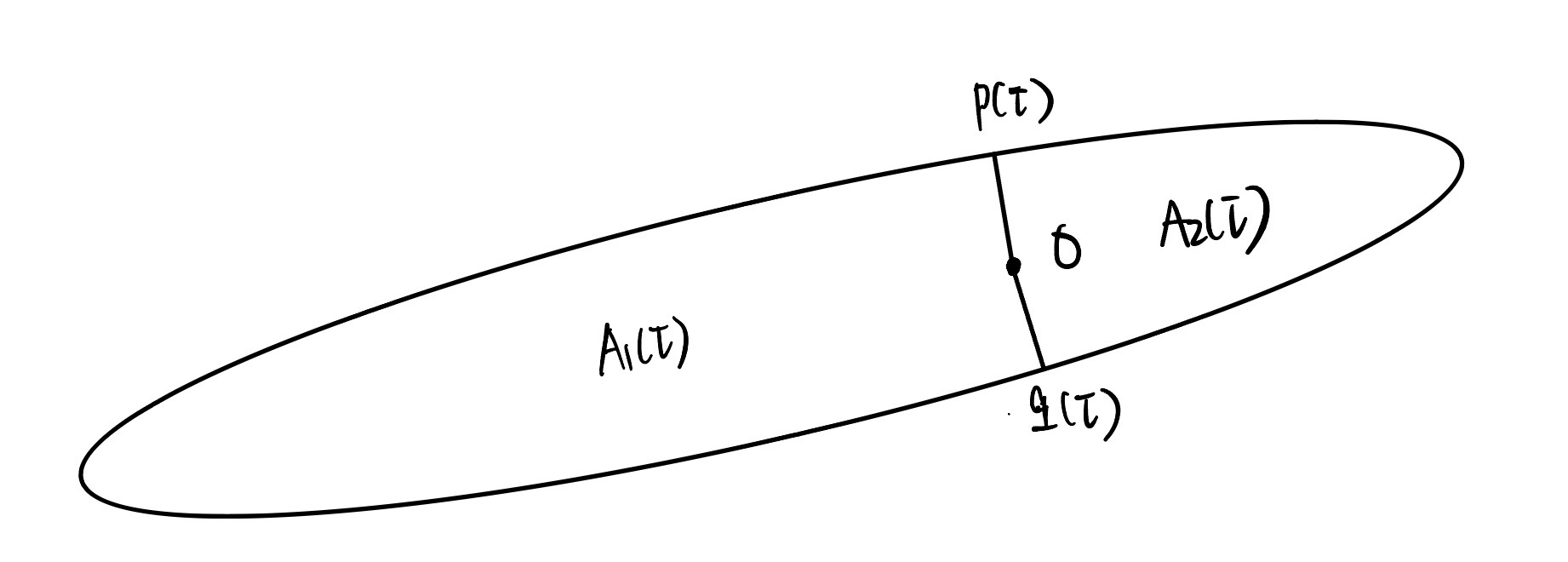}
    \caption{Illustration of $A_1(\tau),A_2(\tau)$.}
    \label{fig:A1A2}
\end{figure}
\begin{defi}
\label{definition of the rescaled area A_m}
For each $\tau\geq\tau_1^{pt}$, we denote by $A_1(\tau),A_2(\tau)$ the area of the two domains enclosed by the line segments $\overline{\mathcal{O}p(\tau)}$, $\overline{\mathcal{O}q(\tau)}$ and the projection of the rescaled curve $\og(\cdot,\tau)$. See Figure \ref{fig:A1A2}.
\end{defi}
We are able to bound the rescaled area from below.
\begin{lem}
   \label{the lemma on the lower bound of the rescaled area}
    There exists $\delta_0>0$ and $\tau_{\delta_0}$ such that 
    \begin{equation*}
        A_1(\tau),A_2(\tau)\geq\delta_0
    \end{equation*}
    for all $\tau\in[\tau_{\delta_0},+\infty)$.
\end{lem}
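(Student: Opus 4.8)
The plan is to follow the two lobe areas $A_1(\tau),A_2(\tau)$ directly along the rescaled flow and to extract a differential inequality that forces each of them to stay bounded below.

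\emph{Setup.} Fix $R\ge 1$ large enough for Lemma \ref{two local min points of norm square of the projection curves} to apply, and let $\tau$ be large. By Theorem \ref{the flow version of type II blow up results for convex projections CSF} the image of $\og(\cdot,\tau)$ is $C^2$-close on $B_R(0)$ to a line through the origin; after a ($\tau$-dependent) horizontal rotation we may assume this line is the $x$-axis. Then $\og(\cdot,\tau)\cap B_R(0)$ consists of two nearly-horizontal arcs, one above and one below the origin (the origin being interior to the convex region), we have $p(\tau),q(\tau)\to0$, and — from the closest-point condition — the unit tangents of $\og$ at $p(\tau),q(\tau)$ tend to $(-1,0)$ and $(1,0)$ respectively. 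Since $\og$ is traversed counterclockwise and $p(\tau),q(\tau)$ lie on its ``upper'' and ``lower'' parts, each of the two arcs $\beta_1,\beta_2$ of $\og(\cdot,\tau)$ joining $p(\tau)$ to $q(\tau)$ (with $\beta_i$ bounding the region of area $A_i$ together with the segments $\overline{\mathcal Op(\tau)},\overline{\mathcal Oq(\tau)}$) has total turning tending to $\pi$. Hence, writing $\kb>0$ for the curvature of the convex curve $\og$ and $ds$ for its arc-length element, for all large $\tau$
\begin{equation*}
\int_{\beta_i}\kb\,ds\ \ge\ \frac{\pi}{2},\qquad i=1,2 .
\end{equation*}

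\emph{Evolution of $A_i$.} Next I would differentiate $A_i(\tau)$ by the transport formula: $\tfrac{d}{d\tau}A_i$ is the integral of the outward normal velocity of $\partial(\text{region of area }A_i)$ over that boundary. Along $\beta_i$ the outward normal of the region coincides with the outward unit normal $\nb$ of $\og$; projecting the rescaled CSF $\Gamma_\tau=\Gamma_{\sigma\sigma}+\Gamma^{\perp}$ (tangential motion being irrelevant, and $P_{xy}$ commuting with $\partial_\sigma$) one finds the normal speed of $\og$ to be $-c^{2}\kb+h$, where $c=|\og_\sigma|=|P_{xy}\gamma_s|$ and $h=\og\cdot\nb>0$ is the support function of $\og$ based at the origin. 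Applying the divergence theorem to the radial field $x\mapsto x$ (of divergence $2$) on the region of area $A_i$, and using that $h$ vanishes on the two radial segments, gives $\int_{\beta_i}h\,ds=2A_i$. The contributions of the two moving segments $\overline{\mathcal Op(\tau)},\overline{\mathcal Oq(\tau)}$ are bounded by $\tfrac12\big(|p(\tau)||\dot p(\tau)|+|q(\tau)||\dot q(\tau)|\big)$, which tends to $0$ since $p(\tau),q(\tau)\to0$ while $\dot p(\tau),\dot q(\tau)$ stay bounded (the points $p,q$ are tracked near the origin, where the rescaled flow has controlled speed and curvature, by the same implicit-function-theorem estimate used in Lemma \ref{two local min points of norm square of the projection curves}). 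Altogether, for a function $E_i(\tau)\to0$,
\begin{equation*}
\frac{d}{d\tau}A_i(\tau)\ =\ -\int_{\beta_i}c^{2}\kb\,ds\ +\ 2A_i(\tau)\ +\ E_i(\tau).
\end{equation*}

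\emph{Conclusion.} By Lemma \ref{lower bounds of c} (after the reduction in \S\ref{the subsection on Geometry of CSF with convex projections}) we have $c^{2}\ge\delta$ for a fixed $\delta>0$, so $\int_{\beta_i}c^{2}\kb\,ds\ge\delta\int_{\beta_i}\kb\,ds\ge\pi\delta/2$ for large $\tau$, and therefore
\begin{equation*}
\frac{d}{d\tau}A_i(\tau)\ \le\ 2A_i(\tau)-\frac{\pi\delta}{2}+E_i(\tau)\qquad\text{for all large }\tau .
\end{equation*}
Set $\delta_0:=\pi\delta/8$ and choose $\tau_{\delta_0}$ so large that the turning estimate holds and $|E_i(\tau)|<\delta_0$ for all $\tau\ge\tau_{\delta_0}$. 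If $A_i(\tau)<\delta_0$ for some $\tau\ge\tau_{\delta_0}$, then $\tfrac{d}{d\tau}A_i(\tau)<2\delta_0-4\delta_0+\delta_0=-\delta_0<0$; a standard continuity argument then shows $A_i$ stays below $\delta_0$ and decreases at rate $\ge\delta_0$ forever after, so $A_i$ would eventually become negative, contradicting $A_i(\tau)>0$ (the region of area $A_i$ is genuinely two-dimensional, as $p(\tau)\ne q(\tau)$ and neither equals the origin). Hence $A_1(\tau),A_2(\tau)\ge\delta_0$ for all $\tau\ge\tau_{\delta_0}$, which is the assertion; in particular $\delta_0$ may be taken proportional to the constant $\delta$ of Lemma \ref{lower bounds of c}.

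\emph{Main difficulty.} The delicate step is the evolution identity: the boundary arc $\beta_i$ runs around a sharp vertex of $\og$, where the rescaled curvature is not controlled by the blow-up. This is precisely where the structure pays off — that uncontrolled part enters only through $\int_{\beta_i}(-c^{2}\kb+h)\,ds$, and the support-function identity $\int_{\beta_i}h\,ds=2A_i$ converts it into the harmless term $2A_i$, while the genuinely negative term $-\int_{\beta_i}c^{2}\kb\,ds$ stays bounded away from zero by convexity ($\kb>0$), by the turning of $\beta_i$ being near $\pi$, and by $c^{2}\ge\delta$. A secondary point is the uniform-in-$\tau$ decay of the endpoint terms $E_i$, which relies on $p(\tau),q(\tau)\to0$ together with the control near the origin furnished by Theorem \ref{the flow version of type II blow up results for convex projections CSF}.
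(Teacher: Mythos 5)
Your proof is correct, but it follows a genuinely different route from the paper's. You derive a differential inequality for the rescaled lobe area in rescaled time,
\begin{equation*}
\frac{d}{d\tau}A_i(\tau)\ \le\ 2A_i(\tau)-\frac{\pi\delta}{2}+E_i(\tau),\qquad E_i(\tau)\to 0,
\end{equation*}
where the $2A_i$ term comes from converting the dilation part of the normal speed via the support-function identity $\int_{\beta_i}h\,d\bar s=2A_i$, and you then argue by contradiction: if $A_i$ ever drops below $\delta_0=\pi\delta/8$, the inequality forces $A_i$ to become negative in bounded time, contradicting positivity of the enclosed area. The paper instead works with the \emph{unrescaled} lobe area $(2T-2t)A_1(\tau(t))$ as a function of the original time $t$. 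There the dilation term disappears (it is absorbed into the rescaling), the rate of change is simply $-\int(x_s^2+y_s^2)\,\bar k\,d\bar s+o(1)\le -\tfrac{\pi}{2}\delta_1$, and since the unrescaled area tends to $0$ as $t\to T$, a single backward integration over $[t,T]$ yields $A_1(\tau)\ge\tfrac{\pi}{4}\delta_1$ with no ODE-trapping or contradiction step needed. The two arguments rest on exactly the same geometric ingredients — total turning of $\beta_i$ near $\pi$ from the perpendicularity at the minima of $|\og|^2$, the lower bound $x_s^2+y_s^2\ge\delta$ (your $c^2$), and the vanishing of the endpoint contributions as $p,q\to 0$ — but the paper's unrescaled formulation is a little cleaner and gives a slightly better constant ($\pi\delta/4$ rather than $\pi\delta/8$). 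Your version has the merit of staying entirely in the rescaled picture, at the cost of the extra ODE-contradiction reasoning and of needing the $2A_i$ bookkeeping explicitly.
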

\begin{proof}
We prove $A_1(\tau)\geq\delta_0$; the proof for $A_2$ is similar.
    We consider the rate of change of the area enclosed by the projection of (unscaled) CSF onto the $xy$-plane, recall that $\tau=\tau(t)=-\frac{1}{2}\log(T-t)$, by \cite[Lemma 3.3]{sun2024curve} and the improved blow-up results (Theorem \ref{the flow version of type II blow up results for convex projections CSF}), for $\tau$ large,
    \begin{equation}
    \label{the equation on the change rate of the unscaled area}
        \frac{d}{dt}((2T-2t)A_1(\tau(t)))=-\int_{q(\tau)}^{p(\tau)} (x_s^2+y_s^2)\bar kd\bar s+o(1)\leq -\frac{\pi}{2}\delta_1,
    \end{equation}
    where we used $x_s^2+y_s^2\geq\delta_1$ for some $\delta_1>0$ (\cite[Corollary 5.8]{sun2024curve}) and the turning angle between the points $p(\tau),q(\tau)$ is $\pi+o(1)$ for $\tau$ large, because $p(\tau),q(\tau)$ are minimum points of the function $|\og|^2$.

    Because CSF $\gamma$ shrinks to a point, $\lim\limits_{t\rightarrow T}(2T-2t)A_1(\tau(t))=0$.

    As a result, by integrating equation \eqref{the equation on the change rate of the unscaled area},
    \begin{equation}
    \label{the equation on unsimplified lower bound of area}
        0-(2T-2t)A_1(\tau)\leq-\frac{\pi}{2}\delta_1(T-t).
    \end{equation}
    Pick $\delta_0=\frac{\pi}{4}\delta_1$.
\end{proof}

\section{Non-uniqueness of tangent flows}
\label{the section on the non-uniqueness of the tangent flows}
In this section, as discussed in \S\ref{the subsection on Geometry of CSF with convex projections}, we may assume the initial curve $\gamma_0$ has a one-to-one uniformly convex projection onto the $xy$-plane with no tangent lines perpendicular to the $xy$-plane and we may assume CSF $\gamma\att$ shrinks to the origin. 

In this section, we construct a barrier (Definition \ref{definition of the barrier}) and prove that it is a viscosity subsolution to the heat equation (Proposition \ref{the barrier as a subsolution}). We then make use of the barrier to show that the tangent flows are non-unique when Type~{II} singularities occur (Proposition \ref{barrier gives tangent flows of perpendicular direction}).
\begin{defi}
    \label{defintion of x max and x min}
    We define the functions
    \begin{equation*}
        x_{\max}(t)=\max_{u\in S^1} x(u,t) \text{ and }  x_{\min}(t)=\min_{u\in S^1} x(u,t).
    \end{equation*}
\end{defi}
\begin{lem}
\label{max and min of x are C^1}
    The functions $x_{\max}(t),x_{\min}(t)$ are $C^1$ in the variable $t$.
\end{lem}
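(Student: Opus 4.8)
The plan is to combine the uniqueness of the extremal point, which comes from the uniform convexity of the projection, with the standard formula for the derivative of a moving maximum (``Hamilton's trick''). Write $x(u,t)=\vec e_1\cdot\gamma(u,t)$; this is smooth on $S^1\times[0,T)$ and satisfies $x_t=x_{ss}$. By Lemma~\ref{main theorem of the paper that the curve shrinks to a point}, for every $t$ the projection curve $P_{xy}(\gamma(\cdot,t))$ is one-to-one and uniformly (hence strictly) convex, so the restriction of the $x$-coordinate to it attains its maximum at exactly one point; pulling this point back through the injective map $P_{xy}|_{\gamma(\cdot,t)}$ produces a \emph{unique} $u_{\max}(t)\in S^1$ with $x(u_{\max}(t),t)=x_{\max}(t)$, and likewise a unique minimizer $u_{\min}(t)$. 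Since $S^1$ is compact and $x$ is $C^1$, the function $x_{\max}$ is locally Lipschitz, in particular continuous.

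Next I would apply Hamilton's trick (e.g.\ \cite[Lemma~2.1.3]{mantegazza2011lecture}): for $x\in C^1(S^1\times[0,T))$ the one-sided derivatives of $x_{\max}$ exist at every $t$ and equal the maximum, resp.\ minimum, of $\partial_t x(\cdot,t)$ over the set of maximizers of $x(\cdot,t)$. As this set is the single point $\{u_{\max}(t)\}$, the two one-sided derivatives coincide, so $x_{\max}$ is differentiable at every $t$ with
\[
    x_{\max}'(t)=\partial_t x(u_{\max}(t),t)=x_{ss}(u_{\max}(t),t).
\]
(Equivalently, one may use that $x_{\max}$ is locally absolutely continuous with a.e.\ derivative $x_{ss}(u_{\max}(t),t)$ and invoke the fundamental theorem of calculus once the right-hand side has been shown to be continuous.)

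It remains to prove that $x_{\max}'$ is continuous, and since $x_{ss}$ is continuous on $S^1\times[0,T)$ it suffices to show that $t\mapsto u_{\max}(t)$ is continuous. Let $t_k\to t$ and pass to a subsequence along which $u_{\max}(t_k)\to u^\ast$. Then, using the continuity of $x$ and of $x_{\max}$,
\[
    x(u^\ast,t)=\lim_{k\to\infty}x(u_{\max}(t_k),t_k)=\lim_{k\to\infty}x_{\max}(t_k)=x_{\max}(t),
\]
so $u^\ast=u_{\max}(t)$ by uniqueness of the maximizer; hence $u_{\max}(t_k)\to u_{\max}(t)$ along the full sequence. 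Consequently $x_{\max}'(t_k)=x_{ss}(u_{\max}(t_k),t_k)\to x_{ss}(u_{\max}(t),t)=x_{\max}'(t)$, i.e.\ $x_{\max}\in C^1([0,T))$; the argument for $x_{\min}$ is identical. The only non-soft ingredient is the uniqueness of the extremizer, which is precisely where the uniform convexity of the projection from Lemma~\ref{main theorem of the paper that the curve shrinks to a point} enters; everything else is the routine max-of-a-family lemma, so I do not expect any serious obstacle.
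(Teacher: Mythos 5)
Your proof is correct, but it takes a genuinely different route from the paper's. The paper parametrizes the projection curve locally by $y$ near the maximum of $x$, writes down the critical-point equation $x_y(y_0(t),t)=0$, notes $x_{yy}(y_0(t),t)\neq0$ from the nonvanishing projected curvature, and invokes the implicit function theorem to get $y_0\in C^1$ (hence $x_{\max}(t)=x(y_0(t),t)\in C^1$). You instead use Danskin's/Hamilton's envelope lemma to get differentiability with $x_{\max}'(t)=x_{ss}(u_{\max}(t),t)$, and then prove continuity of the derivative by showing that the maximizer $u_{\max}(t)$ depends continuously on $t$ (the soft compactness-plus-uniqueness argument). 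The trade-offs are as one would expect: the IFT route is a one-line bootstrap that actually yields $C^\infty$ regularity of $x_{\max}$ for free and isolates exactly where uniform (nonvanishing-curvature) convexity is used, whereas your route only needs strict convexity (uniqueness of the maximizer) and reads more like the standard maximum-principle toolkit, but as stated it stops at $C^1$. Both are valid for the lemma as formulated. One small housekeeping remark: Lemma~\ref{main theorem of the paper that the curve shrinks to a point} gives uniform convexity for $t>0$; as the paper notes in \S\ref{the subsection on Geometry of CSF with convex projections}, one may replace $\gamma_0$ by $\gamma(\cdot,\epsilon)$ so that these properties hold from $t=0$, and your appeal to it "for every $t$" implicitly uses that convention.
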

\begin{proof}
    Since the projection curve is uniformly convex, at each time $t$, $x\att$ has a unique maximum point.
    We can view $x$ as a function of $y$ and denote by $y_0(t)$ the value of $y$ where $x\att$ achieves its maximum. In other words, $x(y_0(t),t)=x_{\max}(t)$.

    Thus the derivative vanishes at the maximum point:
    \begin{equation*}
        x_y(y_0(t),t)=0.
    \end{equation*}
    Since the projection curve is uniformly convex, the curvature 
    \begin{equation*}
        \frac{x_{yy}}{(1+x_y^2)^\frac{3}{2}}
    \end{equation*}
    is nonzero. 
    
    Hence $x_{yy}(y_0(t),t)\neq0$. Then it follows from the implicit function theorem that $y_0(t)$ is $C^1$ in $t$. Thus $x_{\max}(t)=x(y_0(t),t)$ is $C^1$ in $t$.
\end{proof}

\subsection{\texorpdfstring{The difference $Y$ between the upper and lower branch}{The difference along the vertical direction between the upper and lower branch}}
We denote by $(x,y,z_1,\cdots,z_{n-2})$ a point in $\mathbb R^n$.

Let us consider the graph flow over the $x$-axis:
\begin{equation}
    y_t=\frac{y_{xx}}{1+y_x^2+z_{1x}^2+\cdots+z_{(n-2)x}^2} 
\end{equation}
and
\begin{equation}
    z_{it}=\frac{z_{ixx}}{1+y_x^2+z_{1x}^2+\cdots+z_{(n-2)x}^2},
\end{equation}
where $x\in[x_{\min}(t),x_{\max}(t)],t\in[0,T)$ and $z_{ix}^2=\left(\frac{\partial z_i}{\partial x}\right)^2, 1\leq i\leq n-2$.

We denote by $(y^u(x,t),z^u_1(x,t),\cdots,z^u_{n-2}(x,t))$ and $(y^l(x,t),z^l_1(x,t),\cdots,z^l_{n-2}(x,t))$ the solutions corresponding to the upper and lower branch.
\begin{defi}
We define the \emph{difference} of $y$ between the upper and lower branch to be
\begin{equation}
\label{definition of Y}
    Y(x,t):=y^u(x,t)-y^l(x,t),
\end{equation}
where $ x\in[x_{\min}(t),x_{\max}(t)]$ and $t\in[0,T)$.
\end{defi}

By definition of $Y$, one has that
 \begin{equation}
 \label{difference of y is zero at boundary}
     Y(x=x_{\min}(t),t)=0 \text{ and } Y(x=x_{\max}(t),t)=0
 \end{equation}
for all $t\in[0,T)$.

The next lemma follows from the equations of the graph flow and the convexity of the projection curves.
\begin{lem}
\label{the difference of y as a supersolution}
   The function $Y$ is a supersolution for the linear heat equation, i.e.
   \begin{equation}
    Y_t\geq Y_{xx}.
\end{equation}
\end{lem}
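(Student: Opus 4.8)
The plan is to compute the evolution equations of $y^u$ and $y^l$, subtract them, and then use the convexity of the projection curves to bound the error terms that come from the fact that $y^u$ and $y^l$ solve quasilinear heat equations with different denominators. First I would write $y^u_t = a^u\,y^u_{xx}$ and $y^l_t = a^l\,y^l_{xx}$ where $a^u := (1+(y^u_x)^2+\sum_i (z^u_{ix})^2)^{-1}$ and $a^l$ is defined analogously. Subtracting gives
\begin{equation*}
Y_t = a^u Y_{xx} + (a^u - a^l)\,y^l_{xx}.
\end{equation*}
So the task reduces to showing the ``remainder'' term $(a^u - a^l)\,y^l_{xx} \ge 0$ (or more precisely that the full right-hand side dominates $Y_{xx}$, which since $a^u \le 1$ will require an extra sign consideration as well).

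The key geometric input is that at a fixed $x\in(x_{\min}(t),x_{\max}(t))$, the vertical line meets the convex projection curve $P_{xy}(\gamma(\cdot,t))$ in exactly two points: the upper branch point (with $y$-value $y^u$) and the lower branch point (with $y$-value $y^l$). Convexity of the projection curve forces the curvature to have a definite sign on each branch: along the upper branch the projection curve is concave as a graph over $x$, so $y^u_{xx} \le 0$, while along the lower branch it is convex, so $y^l_{xx} \ge 0$. This is the crucial observation. Then I would compare $a^u$ and $a^l$: since $0 < a^u, a^l \le 1$, we have $|a^u - a^l| < 1$, but more usefully $a^u - a^l$ has a sign opposite to $(y^u_x)^2 + \sum (z^u_{ix})^2 - (y^l_x)^2 - \sum (z^l_{ix})^2$. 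Combining: when the upper branch is ``steeper'' the coefficient $a^u$ is smaller, and one checks that in all cases the product $(a^u - a^l) y^l_{xx}$ together with the slack $(a^u - 1)Y_{xx} = (a^u-1)(y^u_{xx}-y^l_{xx})$ — where $y^u_{xx}-y^l_{xx} \le 0$ and $a^u - 1 \le 0$, so this slack term is $\ge 0$ — yields $Y_t \ge Y_{xx}$. Actually the cleanest route is: $Y_t - Y_{xx} = (a^u-1)y^u_{xx} - (a^l - 1)y^l_{xx} = (1-a^u)(-y^u_{xx}) + (1-a^l)(y^l_{xx}) \ge 0$, since $1-a^u \ge 0$, $-y^u_{xx}\ge 0$, $1-a^l\ge 0$, and $y^l_{xx}\ge 0$. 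This is elegant and I would present it this way.

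The main obstacle I anticipate is justifying the sign of the second derivatives $y^u_{xx} \le 0$ and $y^l_{xx} \ge 0$ rigorously, and handling the regularity/boundary issues near $x_{\min}(t)$ and $x_{\max}(t)$ where the parametrization of each branch over $x$ degenerates (the tangent line to the projection curve becomes vertical there, so $y^u_x, y^l_x \to \pm\infty$). For the sign of the curvature I would invoke Lemma \ref{main theorem of the paper that the curve shrinks to a point}: the projection curve $P_{xy}(\gamma(\cdot,t))$ is uniformly convex, hence as a closed convex curve it lies above its lower branch and below its upper branch, giving concavity of $y^u(\cdot,t)$ and convexity of $y^l(\cdot,t)$ in the interior. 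For the boundary degeneration, since the differential inequality is to be understood in the interior of $[x_{\min}(t),x_{\max}(t)]$ (where all the graph quantities are smooth by Lemma \ref{lower bounds of c} away from the turning points, and where the denominators are finite and positive), and since $Y$ vanishes at the endpoints by \eqref{difference of y is zero at boundary}, the inequality $Y_t \ge Y_{xx}$ on the open region is exactly what is needed — the endpoints only enter later as boundary conditions when $Y$ is used as a barrier. I would remark that no issue arises from the moving domain because the inequality is pointwise in the interior.
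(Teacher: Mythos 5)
Your final ``cleanest route'' is precisely the paper's argument: the paper proves $y^u_t\geq y^u_{xx}$ and $y^l_t\leq y^l_{xx}$ separately from $y^u_{xx}\leq 0$, $y^l_{xx}\geq 0$ (convexity of the projection) together with $a^u,a^l\leq 1$, then subtracts, which is exactly your identity $Y_t-Y_{xx}=(1-a^u)(-y^u_{xx})+(1-a^l)\,y^l_{xx}\geq 0$ repackaged. The initial detour through $(a^u-a^l)\,y^l_{xx}$ and the extended regularity discussion are unnecessary but harmless.
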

\begin{proof}
Because of the convexity of the projection curve,
\begin{equation*}
    y^u_{xx}\leq0, \quad y^l_{xx}\geq0.
\end{equation*}
Because
\begin{equation*}
    \frac{1}{1+y_x^2+z_{1x}^2+\cdots+z_{(n-2)x}^2} \leq1,
\end{equation*}
we have 
\begin{equation*}
    y^u_{t}=\frac{y^u_{xx}}{1+(y^u_{x})^2+(z_{1x}^u)^2+\cdots+(z_{(n-2)x}^u)^2} \geq y^u_{xx},
\end{equation*}
and
\begin{equation*}
   y^l_{t}=\frac{y^l_{xx}}{1+(y^l_{x})^2+(z_{1x}^l)^2+\cdots+(z_{(n-2)x}^l)^2} \leq y^l_{xx}.
\end{equation*}
Thus,
\begin{align*}
    Y_t=y^u_{t}-y^l_{t}
    \geq y^u_{xx}-y^l_{xx}=Y_{xx}.
\end{align*}
\end{proof}
\subsection{The barrier and its regularity}
We define the function
    \begin{equation}
    \label{the definition of f in the barrier}
        f(t):=\int_0^{t}\left[\frac{\pi^2}{4}\max\left\{\frac{1}{x_{\max}^2(\tau)},\frac{1}{x_{\min}^2(\tau)}\right\}-\frac{1}{2(T-\tau)}\right]d\tau,
    \end{equation}
and the function
\begin{equation}
    \theta(x,t):=\begin{cases}
        \frac{\pi}{2}\frac{x}{x_{\max}(t)} & \text{if } 0\leq x\leq x_{\max}(t),\\
        \frac{\pi}{2}\frac{x}{-x_{\min}(t)} & \text{if } x_{\min}(t)\leq x\leq0.
    \end{cases}
\end{equation}
where the functions $x_{\max}(t),x_{\min}(t)$ are defined in Definition \ref{defintion of x max and x min}.

Let $\epsilon>0$ be a small constant to be chosen in inequality \eqref{choice of epsilon in def of barrier}.
\begin{defi}
\label{definition of the barrier}
    With above notation, we define the \emph{barrier} to be
    \begin{equation}
    \varphi(x,t)=\epsilon e^{-f(t)}\sqrt{T-t}\cos\theta(x,t),
\end{equation}
where $t\in[0,T),  x\in[x_{\min}(t),x_{\max}(t)]$.
\end{defi}
By the definition of the function $\theta$,
\begin{equation}
\label{the barrier vanishes at the boundary}
    \varphi(x_{\min}(t),t)=\varphi(x_{\max}(t),t)=0.
\end{equation}

We start with the regularity of the barrier. 
\begin{lem}
    \label{regularity of the barrier}
    The function $\cos\theta(x,t)$ is 
    \begin{enumerate}[label=(\alph*)]
        \item $C^1$ in the variables $x,t$.
        \item  $C^2$ in $x$ on $[x_{\min}(t),x_{\max}(t)]\backslash\{0\}$.
        \item one-sided $C^2$ in $x$ at $x=0$ from the left and right.
    \end{enumerate}
\end{lem}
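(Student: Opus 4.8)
\textbf{Proof plan for Lemma \ref{regularity of the barrier}.}

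The plan is to prove the three regularity claims essentially by inspecting the explicit formula for $\cos\theta(x,t)$ on each of the two pieces $\{0\le x\le x_{\max}(t)\}$ and $\{x_{\min}(t)\le x\le 0\}$ and then checking that the pieces match up to first order across $x=0$. On the right piece we have $\cos\theta(x,t)=\cos\!\big(\tfrac{\pi}{2}\tfrac{x}{x_{\max}(t)}\big)$, and on the left piece $\cos\theta(x,t)=\cos\!\big(\tfrac{\pi}{2}\tfrac{x}{-x_{\min}(t)}\big)$; by Lemma \ref{max and min of x are C^1} the functions $x_{\max}(t)$ and $x_{\min}(t)$ are $C^1$ in $t$, and (since the projection curve is uniformly convex and shrinks only at $t=T$) they are bounded away from $0$ for $t\in[0,T)$, so on the interior of each piece the argument of the cosine is a $C^1$ function of $(x,t)$ and a $C^\infty$ (in particular $C^2$) function of $x$ alone. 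This immediately gives claims (b) and (c): on $[x_{\min}(t),x_{\max}(t)]\setminus\{0\}$ the function is smooth in $x$, and at $x=0$ each one-sided restriction extends to a $C^2$ function of $x$ because $\cos(cx)$ is entire.

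The only real content is claim (a), the $C^1$ matching at $x=0$. First I would record the one-sided values and derivatives at $x=0$. For both pieces $\cos\theta(0,t)=\cos 0 =1$, so the function is continuous across $x=0$. For the $x$-derivative, on the right $\partial_x\cos\theta=-\tfrac{\pi}{2x_{\max}(t)}\sin\!\big(\tfrac{\pi}{2}\tfrac{x}{x_{\max}(t)}\big)$, which vanishes at $x=0$; on the left $\partial_x\cos\theta=\tfrac{\pi}{2x_{\min}(t)}\sin\!\big(\tfrac{\pi}{2}\tfrac{x}{-x_{\min}(t)}\big)$, which also vanishes at $x=0$. Hence both one-sided $x$-derivatives agree (equal to $0$) at $x=0$, so $\partial_x\cos\theta$ exists and is continuous there. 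For the $t$-derivative, on the right $\partial_t\cos\theta = \tfrac{\pi}{2}\tfrac{x\, x_{\max}'(t)}{x_{\max}(t)^2}\sin\!\big(\tfrac{\pi}{2}\tfrac{x}{x_{\max}(t)}\big)$, which vanishes at $x=0$, and likewise the left $t$-derivative vanishes at $x=0$; so $\partial_t\cos\theta$ exists and equals $0$ along $x=0$ from both sides. Finally, away from $x=0$ both partials are manifestly continuous (compositions of $C^1$ functions, using Lemma \ref{max and min of x are C^1}), and the computed boundary values show the one-sided limits of $\partial_x\cos\theta$ and $\partial_t\cos\theta$ as $x\to 0^\pm$ both tend to $0$, matching the value at $x=0$. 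Therefore $\nabla_{x,t}\cos\theta$ is continuous on the whole domain, which is claim (a).

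The mild obstacle, and the only place care is needed, is justifying that differentiation in $t$ commutes with the piecewise definition near $x=0$ — i.e. that the function really is differentiable (not merely separately one-sided differentiable) across the seam $x=0$. This is handled by observing that on a neighborhood of any point $(0,t_0)$ the function is given by $\cos\theta(x,t)=\cos\big(\tfrac{\pi}{2}\,x\,g(x,t)\big)$ where $g(x,t)=1/x_{\max}(t)$ for $x\ge 0$ and $g(x,t)=-1/x_{\min}(t)$ for $x\le 0$; $g$ is merely bounded and measurable (possibly discontinuous at $x=0$), but the prefactor $x$ forces $x\,g(x,t)\to 0$ with its full gradient as $x\to 0$, so $\cos\theta$ is $C^1$ with $\nabla_{x,t}\cos\theta(0,t)=0$ regardless of the jump in $g$. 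With this remark the three claims follow, and the same reasoning transfers to $\varphi$ itself: since $\epsilon e^{-f(t)}\sqrt{T-t}$ is $C^1$ in $t$ on $[0,T)$ (using that $f$ is $C^1$, which follows from Lemma \ref{max and min of x are C^1} and the integral form of \eqref{the definition of f in the barrier}) and independent of $x$, $\varphi$ inherits exactly the regularity of $\cos\theta$ stated in (a)–(c).
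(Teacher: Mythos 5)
Your proposal follows essentially the same route as the paper: compute the one-sided formulas for $(\cos\theta)_x$ and $(\cos\theta)_t$, observe that both one-sided partials vanish at $x=0$ so the pieces match to first order, invoke Lemma \ref{max and min of x are C^1} for $t$-regularity, and read off the one-sided $C^2$ behavior from the explicit expressions. One small imprecision in your closing remark: the assertion that ``$x\,g(x,t)\to 0$ with its full gradient'' is not literally true, since $\partial_x(xg) = g + xg_x$ tends to the nonzero one-sided values $1/x_{\max}(t_0)$ or $1/(-x_{\min}(t_0))$; what actually kills the gradient of $\cos(\tfrac{\pi}{2}xg)$ at $x=0$ is the vanishing $\sin(\tfrac{\pi}{2}xg)$ factor from the chain rule multiplied against the merely bounded $\nabla(xg)$. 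This does not affect correctness because your explicit computations of $(\cos\theta)_x$ and $(\cos\theta)_t$ in the middle paragraph already establish the matching and the one-sided limits directly, which is exactly what the paper does.
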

\begin{proof}
    
By direct computations,
\begin{equation*}
    (\cos\theta)_x
    =\begin{cases}
        -\sin\left(\frac{\pi}{2}\frac{x}{x_{\max}(t)} \right)\frac{\pi}{2}\frac{1}{x_{\max}(t)}& \text{if } x>0,\\
        0& \text{if } x=0,\\
        -\sin\left(\frac{\pi}{2}\frac{x}{-x_{\min}(t)}\right)\frac{\pi}{2}\frac{1}{-x_{\min}(t)} & \text{if } x<0.
    \end{cases}
\end{equation*}

\begin{equation*}
    (\cos\theta)_t
    =\begin{cases}
        \sin\left(\frac{\pi}{2}\frac{x}{x_{\max}(t)} \right)\frac{\pi}{2}\frac{x}{x_{\max}^2(t)}x_{\max}^\prime(t)& \text{if } x>0,\\
        0& \text{if } x=0,\\
        -\sin\left(\frac{\pi}{2}\frac{x}{-x_{\min}(t)}\right)\frac{\pi}{2}\frac{x}{x_{\min}^2(t)}x_{\min}^\prime(t) & \text{if } x<0.
    \end{cases}
\end{equation*}

Thus part $(a)$ is proved and part $(b)$ is direct. Next, we prove part $(c)$.

Let us consider the left and right derivative of  $(\cos\theta)_x$ at the point $x=0$:
\begin{equation}
\label{right side second order derivative of costheta}
    \lim\limits_{x\rightarrow0^+}\frac{(\cos\theta)_x-0}{x}=-\frac{\pi^2}{4}\frac{1}{x_{\max}^2(t)},
\end{equation}
and
\begin{equation}
\label{left side second order derivative of costheta}
    \lim\limits_{x\rightarrow0^-}\frac{(\cos\theta)_x-0}{x}=-\frac{\pi^2}{4}\frac{1}{x_{\min}^2(t)}.
\end{equation}

Thus the function $\cos\theta(x,t)$ is one-sided $C^2$ in $x$ at $x=0$ from the left and right.
\end{proof}

\subsection{The barrier as a subsolution}

A comprehensive reference on viscosity solutions is \cite{crandall1992user}. We adopt the following notion of the viscosity subsolutions.
\begin{defi}
\label{definition of the viscosity subsolution}
    We say a function $\varphi$ satisfies 
    \begin{equation*}
         \varphi_t-\varphi_{xx}\leq0 
    \end{equation*}
   at the point $(x_0,t_0)$ in the \emph{viscosity sense}, if 
   \begin{equation*}
       \psi_t(x_0,t_0)-\psi_{xx}(x_0,t_0)\leq 0
   \end{equation*}
   for any smooth test function $\psi$ with
   \begin{align*}
       \varphi(x_0,t_0)=\psi(x_0,t_0) 
   \end{align*}
   and
   \begin{equation*}
       \varphi(x,t)\leq\psi(x,t) \text{ for all }t\leq t_0 \text{ and } x\in[x_{\min}(t),x_{\max}(t)].
   \end{equation*}
\end{defi}
\begin{rmk}
     Here we require the test function to touch from above only for $t\leq t_0$. See \cite[Lemma 1.22 on Page 21]{tran2021hamilton} for a similar treatment.
\end{rmk}
The next lemma follows from a direct verification.
\begin{lem}
\label{subsolution in the classical sense imply in the viscosity sense}
A smooth function $\varphi$ that satisfies 
    \begin{equation*}
         \varphi_t-\varphi_{xx}\leq0 
    \end{equation*}
at the point $(x_0,t_0)$ in the smooth sense also satisfies it in the viscosity sense.
\end{lem}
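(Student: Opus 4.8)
The plan is to reduce the claim to the elementary maximum-principle comparison between $\varphi$ and a touching test function. Fix a point $(x_0,t_0)$ with $x_0\in(x_{\min}(t_0),x_{\max}(t_0))$ — the only points at which the viscosity inequality needs testing — and let $\psi$ be any smooth test function touching $\varphi$ from above at $(x_0,t_0)$ in the one-sided sense of Definition~\ref{definition of the viscosity subsolution}. I would introduce the difference $w:=\psi-\varphi$, which is smooth because both $\psi$ and $\varphi$ are, and which satisfies $w(x_0,t_0)=0$ together with $w(x,t)\geq 0$ for every $t\leq t_0$ and every $x\in[x_{\min}(t),x_{\max}(t)]$.

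Next I would extract two pieces of infinitesimal information about $w$ at $(x_0,t_0)$. Spatially: since $x_0$ is interior, $w(\cdot,t_0)$ is nonnegative on a full neighborhood of $x_0$ and vanishes at $x_0$, so $x_0$ is an interior local minimum of $w(\cdot,t_0)$ and hence $w_{xx}(x_0,t_0)\geq 0$. In time: the function $t\mapsto w(x_0,t)$ is nonnegative for $t\leq t_0$ and vanishes at $t_0$, so comparing difference quotients from the left yields $w_t(x_0,t_0)\leq 0$. This last step is the only place the restriction ``$t\leq t_0$'' in Definition~\ref{definition of the viscosity subsolution} is used, and it is precisely what allows only the one-sided (left) derivative estimate.

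Finally I would combine: at $(x_0,t_0)$ one has $\psi_t-\psi_{xx}=(\varphi_t-\varphi_{xx})+(w_t-w_{xx})$, where the first parenthesis is $\leq 0$ by hypothesis and $w_t(x_0,t_0)-w_{xx}(x_0,t_0)\leq 0-0=0$ by the two estimates just obtained; hence $\psi_t(x_0,t_0)-\psi_{xx}(x_0,t_0)\leq 0$, which is the required inequality. There is no genuine obstacle in this lemma — it is the standard bridge between classical and viscosity subsolutions — and the only point demanding a little care is keeping the time-touching condition one-sided, so that the left-derivative estimate $w_t(x_0,t_0)\leq 0$ is exactly what is available and exactly what is needed.
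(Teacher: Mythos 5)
Your proof is correct and is precisely the ``direct verification'' the paper alludes to without writing out: touch from above, use the interior spatial minimum of $w=\psi-\varphi$ at $(x_0,t_0)$ to get $w_{xx}\geq 0$, use the one-sided time constraint to get $w_t\leq 0$, and add these to the classical inequality for $\varphi$. The one thing worth noting is that your restriction to interior $x_0$ is doing real work (at $x_0=x_{\max}(t_0)$ or $x_{\min}(t_0)$ the sign of $w_{xx}$ is not forced), but this matches how the lemma is actually used: in the comparison argument (Claim~\ref{the second claim in the comparison principle}) the touching point is shown to be interior, so the boundary case is never invoked.
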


\begin{prop}
    \label{the barrier as a subsolution}
    Let $\varphi$ be the barrier, introduced in Definition \ref{definition of the barrier}. Then
    \begin{equation}
    \label{the equation that the barrier is a subsolution}
        \varphi_t-\varphi_{xx}\leq0 
    \end{equation}
    on $\{ (x,t)|t\in[0,T), x\in[x_{\min}(t),x_{\max}(t)]\}$ in the viscosity sense.
\end{prop}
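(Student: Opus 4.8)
The plan is to check the viscosity inequality separately at points with $x\ne 0$ --- where by Lemma \ref{regularity of the barrier} the barrier $\varphi$ is $C^2$ in $x$ and $C^1$ in $t$ (one-sidedly at the spatial endpoints), so it suffices to verify $\varphi_t-\varphi_{xx}\le 0$ classically and invoke Lemma \ref{subsolution in the classical sense imply in the viscosity sense} --- and at the corner $x=0$, where one must argue directly from Definition \ref{definition of the viscosity subsolution}. The $\max$ in the definition \eqref{the definition of f in the barrier} of $f$ is exactly what is needed to close the corner case.

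I would write $\varphi=g(t)\cos\theta(x,t)$ with $g(t):=\epsilon e^{-f(t)}\sqrt{T-t}>0$ and first record the two ingredients that run the whole computation. Because $x_{\max},x_{\min}$ are $C^1$ (Lemma \ref{max and min of x are C^1}) and, being monotone with limit $0$ as $t\to T$, strictly positive, resp.\ negative, on $[0,T)$ (else by the strong maximum principle a coordinate of $\gamma$ would vanish identically, contradicting the convexity of the projection in Lemma \ref{main theorem of the paper that the curve shrinks to a point}), the integrand of \eqref{the definition of f in the barrier} is continuous, so $f,g\in C^1$ and
\[
\frac{g'(t)}{g(t)}=-f'(t)-\frac{1}{2(T-t)}=-\frac{\pi^2}{4}\max\Big\{\tfrac1{x_{\max}^2(t)},\tfrac1{x_{\min}^2(t)}\Big\}.
\]
Moreover each coordinate of $\gamma$ satisfies the heat equation along the flow, so the maximum principle (Hamilton's trick, with Lemma \ref{max and min of x are C^1} for the regularity) gives $x_{\max}'(t)\le 0\le x_{\min}'(t)$.

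For $0<x<x_{\max}(t)$ I would substitute the formulas of Lemma \ref{regularity of the barrier} for $(\cos\theta)_t$ and $(\cos\theta)_{xx}$, together with the identity above, into $\varphi_t-\varphi_{xx}$ to get
\[
\varphi_t-\varphi_{xx}=g(t)\frac{\pi^2}{4}\Big(\tfrac1{x_{\max}^2(t)}-\max\Big\{\tfrac1{x_{\max}^2(t)},\tfrac1{x_{\min}^2(t)}\Big\}\Big)\cos\theta+g(t)\frac{\pi x}{2x_{\max}^2(t)}x_{\max}'(t)\sin\theta,
\]
both of whose summands are $\le 0$: the first since $g>0$, $\cos\theta\ge 0$ and $\tfrac1{x_{\max}^2}\le\max\{\cdot\}$; the second since $g>0$, $x>0$, $\sin\theta\ge 0$ and $x_{\max}'\le 0$. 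The interval $x_{\min}(t)<x<0$ is the mirror image (replace $x_{\max}$ by $x_{\min}$, and there $x<0$, $\sin\theta\le 0$, $x_{\min}'\ge 0$), and at the endpoints $x=x_{\min}(t),x_{\max}(t)$, where $\cos\theta=0$ and $(\cos\theta)_{xx}=0$, the same computation collapses to $g(t)\tfrac\pi2\tfrac{x_{\max}'(t)}{x_{\max}(t)}\le 0$ and its mirror. Hence $\varphi$ is a (one-sided) classical, and therefore viscosity, subsolution at every point with $x\ne 0$.

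Finally, at $(0,t_0)$, I would take a test function $\psi$ as in Definition \ref{definition of the viscosity subsolution}. Since $\varphi$ is $C^1$ near $(0,t_0)$ and $x=0$ is interior in $x$, the touching condition forces $\psi_x(0,t_0)=\varphi_x(0,t_0)=0$, and comparing along $x=0$ (where $t\mapsto\psi(0,t)-\varphi(0,t)\ge 0$ vanishes at $t_0$) gives $\psi_t(0,t_0)\le\varphi_t(0,t_0)=g'(t_0)$, using $(\cos\theta)_t(0,\cdot)=0$ and $\cos\theta(0,\cdot)=1$. Comparing along $t=t_0$, the one-sided expansions that follow from \eqref{right side second order derivative of costheta}--\eqref{left side second order derivative of costheta}, namely $\varphi(x,t_0)=g(t_0)\bigl(1-\tfrac{\pi^2}{8x_{\max}^2(t_0)}x^2+o(x^2)\bigr)$ for $x\ge 0$ and its $x_{\min}$-analogue for $x\le 0$, force, via $\psi(\cdot,t_0)\ge\varphi(\cdot,t_0)$ and the matching first jets, $\psi_{xx}(0,t_0)\ge-g(t_0)\tfrac{\pi^2}{4x_{\max}^2(t_0)}$ and $\psi_{xx}(0,t_0)\ge-g(t_0)\tfrac{\pi^2}{4x_{\min}^2(t_0)}$, hence
\[
\psi_{xx}(0,t_0)\ \ge\ -g(t_0)\frac{\pi^2}{4}\min\Big\{\tfrac1{x_{\max}^2(t_0)},\tfrac1{x_{\min}^2(t_0)}\Big\}\ \ge\ -g(t_0)\frac{\pi^2}{4}\max\Big\{\tfrac1{x_{\max}^2(t_0)},\tfrac1{x_{\min}^2(t_0)}\Big\}=g'(t_0),
\]
so $\psi_t(0,t_0)-\psi_{xx}(0,t_0)\le g'(t_0)-g'(t_0)=0$. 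The hard part is exactly this corner estimate: at $x=0$ the barrier bends down with two different one-sided curvatures, $-g(t_0)\tfrac{\pi^2}{4x_{\max}^2(t_0)}$ and $-g(t_0)\tfrac{\pi^2}{4x_{\min}^2(t_0)}$, so a test function from above is pinned from below only by the larger of them, i.e.\ by $-g(t_0)\tfrac{\pi^2}{4}\min\{\cdot\}$; making $g'(t_0)$ dominated by that minimum is precisely why $f$ is built from $\max$ in \eqref{the definition of f in the barrier}, and everything else is bookkeeping with the cited lemmas.
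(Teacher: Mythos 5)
Your proof is correct and follows the same route as the paper: away from $x=0$ you verify $\varphi_t-\varphi_{xx}\le 0$ classically using the definition of $f$ and the signs $x_{\max}'\le 0\le x_{\min}'$, and at the corner $x=0$ you compare the test function's second derivative against the better (closer-to-zero) of the two one-sided second derivatives of $\varphi$, which is exactly why the $\max$ appears in \eqref{the definition of f in the barrier}. Your writing $g'(t_0)=-g(t_0)\frac{\pi^2}{4}\max\{\cdot\}$ and chaining $\psi_{xx}(0,t_0)\ge g'(t_0)\ge\psi_t(0,t_0)$ is the same inequality the paper derives, just bookkept slightly differently.
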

\begin{proof}
 At $x\in[x_{\min}(t),x_{\max}(t)]\backslash\{0\}$, by Lemma \ref{max and min of x are C^1}, Lemma \ref{regularity of the barrier} and Lemma \ref{subsolution in the classical sense imply in the viscosity sense}, we can show equation (\ref{the equation that the barrier is a subsolution})
 in the classical sense.
 
 We can compute the derivative in $t$:
 \begin{align*}
    \varphi_t&=\epsilon e^{-f(t)}\left[-f_t\sqrt{T-t}\cos\theta-\frac{1}{2\sqrt{T-t}}\cos\theta-\sqrt{T-t}\sin\theta\theta_t\right]\\
    &=\epsilon e^{-f(t)}\left[-\frac{\pi^2}{4}\max\left\{\frac{1}{x_{\max}^2(t)},\frac{1}{x_{\min}^2(t)}\right\}\sqrt{T-t}\cos\theta-\sqrt{T-t}\sin\theta\theta_t\right]\\
    &=\epsilon e^{-f(t)}\sqrt{T-t}\left[-\frac{\pi^2}{4}\max\left\{\frac{1}{x_{\max}^2(t)},\frac{1}{x_{\min}^2(t)}\right\}\cos\theta-\sin\theta\theta_t\right],
\end{align*}
where we used equation (\ref{the definition of f in the barrier}).

For the derivatives in $x$:
\begin{equation*}
    \varphi_x=\epsilon e^{-f(t)}\sqrt{T-t}[-\sin\theta\theta_x],
\end{equation*}
\begin{align*}
    \varphi_{xx}&=\epsilon e^{-f(t)}\sqrt{T-t}[-\cos\theta\theta_x^2-\sin\theta\theta_{xx}]\\
    &=\epsilon e^{-f(t)}\sqrt{T-t}[-\cos\theta\theta_x^2],
\end{align*}
where we use the fact that $\theta_{xx}=0$ at $x\neq0$.

It follows from 
\begin{equation*}
    \theta_x(x,t)=\begin{cases}
        \frac{\pi}{2}\frac{1}{x_{\max}(t)} & \text{if } 0< x\leq x_{\max}(t),\\
        \frac{\pi}{2}\frac{1}{-x_{\min}(t)} & \text{if } x_{\min}(t)\leq x<0
    \end{cases}
\end{equation*}
that
 \begin{equation*}
        \varphi_t-\varphi_{xx}\leq
        \epsilon e^{-f(t)}\sqrt{T-t}\left[-(\sin\theta)\theta_t\right],
    \end{equation*}
 where
    \begin{equation*}
    \theta_t:=\begin{cases}
        -\frac{\pi}{2}\frac{x}{x_{\max}^2(t)}x_{\max}^\prime(t) & \text{if } x>0,\\
        \frac{\pi}{2}\frac{x}{x_{\min}^2(t)}x_{\min}^\prime(t) & \text{if } x<0.
    \end{cases}
\end{equation*}
It follows from  $(\sin\theta) x\geq0$ , $x_{\max}^\prime(t)\leq0$ and  $x_{\min}^\prime(t)\geq0$ that
 \begin{equation*}
        \varphi_t-\varphi_{xx}\leq0
    \end{equation*}
for $x\in[x_{\min}(t),x_{\max}(t)]\backslash\{0\}$.

It remains to show this lemma at $x=0$ in the viscosity sense. 

At $x=0$ and $t_0\in[0,T)$ fixed, we must show for any smooth test function $\psi=\psi(x,t)$ satisfying
\begin{equation*}
    \psi(0,t_0)=\varphi(0,t_0)
\end{equation*}
and $\psi(x,t)\geq\varphi(x,t)$ for $t\leq t_0$, that
\begin{equation*}
       \psi_t(0,t_0)-\psi_{xx}(0,t_0)\leq 0.
   \end{equation*}
It follows from $\psi\geq\varphi$ and Lemma \ref{regularity of the barrier} that the second order derivative of $\psi$ in $x$ at $x=0$ is no less than the one-sided second order derivatives of $\varphi$, that is to say the following:
\begin{equation*}
    \psi_{xx}(0,t_0)\geq \max\left\{\lim\limits_{x\rightarrow0^+} \frac{\varphi_x(x,t_0)-\varphi_x(0,t_0)}{x},\lim\limits_{x\rightarrow0^-} \frac{\varphi_x(x,t_0)-\varphi_x(0,t_0)}{x}\right\}.
\end{equation*}
By equation (\ref{right side second order derivative of costheta}) and (\ref{left side second order derivative of costheta}),
\begin{align*}
    \psi_{xx}(0,t_0)&\geq\epsilon e^{-f(t_0)}\sqrt{T-t_0}\max\left\{-\frac{\pi^2}{4}\frac{1}{x_{\max}^2(t_0)},-\frac{\pi^2}{4}\frac{1}{x_{\min}^2(t_0)}\right\}\\
    &=\epsilon e^{-f(t_0)}\sqrt{T-t_0}(-1)\min\left\{\frac{\pi^2}{4}\frac{1}{x_{\max}^2(t_0)},\frac{\pi^2}{4}\frac{1}{x_{\min}^2(t_0)}\right\}.
\end{align*}
It follows from $ \psi(0,t_0)=\varphi(0,t_0)$, $ \psi\geq\varphi$ for $t\leq t_0$, Lemma \ref{max and min of x are C^1} and  Lemma \ref{regularity of the barrier} that
\begin{equation*}
    \psi_t(0,t_0)\leq\varphi_t(0,t_0).
\end{equation*}
By taking the derivative of $\varphi(0,t)=\epsilon e^{-f(t)}\sqrt{T-t}$ in $t$, we find
\begin{align*}
    \varphi_t(0,t)&=\epsilon e^{-f(t)}\left[-f_t\sqrt{T-t}-\frac{1}{2\sqrt{T-t}}\right]\\
    &=\epsilon e^{-f(t)}\left[-\frac{\pi^2}{4}\max\left\{\frac{1}{x_{\max}^2(t)},\frac{1}{x_{\min}^2(t)}\right\}\sqrt{T-t}\right],
\end{align*}
where we used equation (\ref{the definition of f in the barrier}).

In summary,
\begin{align*}
      & \psi_t(0,t_0)-\psi_{xx}(0,t_0)\\
      &\leq \varphi_t(0,t_0)-\epsilon e^{-f(t_0)}\sqrt{T-t_0}(-1)\min\left\{\frac{\pi^2}{4}\frac{1}{x_{\max}^2(t_0)},\frac{\pi^2}{4}\frac{1}{x_{\min}^2(t_0)}\right\}\\
      &=\epsilon e^{-f(t_0)}\sqrt{T-t_0}\frac{\pi^2}{4}\left[\min\left\{\frac{1}{x_{\max}^2(t_0)},\frac{1}{x_{\min}^2(t_0)}\right\}-\max\left\{\frac{1}{x_{\max}^2(t_0)},\frac{1}{x_{\min}^2(t_0)}\right\}\right]\\
      &\leq0.
 \end{align*}
 That is to say
 \begin{equation*}
        \varphi_t-\varphi_{xx}\leq0
\end{equation*}
holds at $(0,t_0)$ in the viscosity sense.
\end{proof}
\subsection{\texorpdfstring{Compare $Y$ with the barrier $\varphi$}{Compare the difference along the y direction with the barrier}}

The domain $[x_{\min}(t),x_{\max}(t)]$ is evolving with time $t$. To be rigorous, instead of applying the comparison principle for the viscosity solutions directly, we explain in detail that we have the comparison principle. 

We choose $\epsilon$ in the definition of the barrier (Definition \ref{definition of the barrier}) small so that 
\begin{equation}
\label{choice of epsilon in def of barrier}
    Y(x,0)\geq\varphi(x,0).
\end{equation}
\begin{lem}
    \label{compare the difference along y and the barrier}
One has 
\begin{equation}
    Y(x,t)\geq\varphi(x,t)
\end{equation}
for $t\in[0,T),  x\in[x_{\min}(t),x_{\max}(t)]$.
\end{lem}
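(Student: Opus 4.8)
The goal is to establish the comparison inequality $Y \geq \varphi$ on the time-varying domain $\{(x,t) : t \in [0,T),\ x \in [x_{\min}(t), x_{\max}(t)]\}$. The strategy is a maximum-principle/viscosity-comparison argument adapted to a moving domain, combining three ingredients already prepared: $Y$ is a (classical) supersolution of the heat equation by Lemma~\ref{the difference of y as a supersolution}; $\varphi$ is a viscosity subsolution by Proposition~\ref{the barrier as a subsolution}; and both functions vanish on the lateral boundary $x = x_{\min}(t)$ and $x = x_{\max}(t)$ by equation~\eqref{difference of y is zero at boundary} and equation~\eqref{the barrier vanishes at the boundary}, while the ordering holds at the initial time $t=0$ by our choice~\eqref{choice of epsilon in def of barrier} of $\epsilon$. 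First I would fix an arbitrary $T' < T$ and work on the compact-in-time slab $[0,T']$; since $\varphi$ carries the decaying factor $\sqrt{T-t}$ it suffices to prove the inequality there and let $T' \to T$.

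The core of the argument is to consider $w := Y - \varphi$ and show $w \geq 0$ on the slab. Suppose not; then by continuity (both $Y$ and $\varphi$ are continuous up to the closure of the moving domain, $Y$ being $C^1$ in $t$ via Lemma~\ref{max and min of x are C^1} applied to the graph solutions and $\varphi$ being continuous by Lemma~\ref{regularity of the barrier}) the function $w$ attains a negative minimum at some point $(x_0, t_0)$ with $t_0 > 0$. Because $w = 0$ on the lateral boundary (both $Y$ and $\varphi$ vanish there) and $w \geq 0$ at $t = 0$, the minimum point $(x_0,t_0)$ lies in the \emph{open} spatial interval $x_0 \in (x_{\min}(t_0), x_{\max}(t_0))$ and at a positive time. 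To run the comparison at $(x_0, t_0)$ one standard device is to perturb: replace $\varphi$ by $\varphi + \delta t$ (or by $\varphi e^{\delta(t_0 - t)}$, or subtract $\delta/(T'-t)$ — whichever keeps the subsolution property) so that the contact happens at a strict interior minimum, and then let $\delta \to 0$ at the end. At the contact point, $Y - \varphi - \delta t$ has an interior minimum; using that $Y$ is a classical supersolution, $Y$ serves as a legitimate smooth test function touching $\varphi + \delta t$ from above at $(x_0,t_0)$ for all earlier times in a neighborhood, so the viscosity subsolution property of $\varphi$ (Definition~\ref{definition of the viscosity subsolution}, Proposition~\ref{the barrier as a subsolution}) gives $(\varphi + \delta t)_t - (\varphi + \delta t)_{xx} \le 0$ evaluated via the test function $Y$, i.e. $Y_t(x_0,t_0) - Y_{xx}(x_0,t_0) + \delta \le 0$. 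But $Y_t - Y_{xx} \ge 0$, a contradiction once $\delta > 0$.

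The one genuinely delicate point — and the step I expect to be the main obstacle — is the handling of the \emph{moving lateral boundary} together with the mild irregularity of $Y$ and $\varphi$ at $x=0$ and near the endpoints. Concretely: (i) as $t \to T$ the interval $[x_{\min}(t), x_{\max}(t)]$ shrinks to a point, so one must confirm that the minimum of $w$ over the slab $[0,T']$ is attained (the domain is not a fixed cylinder); this is where restricting to $t \le T' < T$ and invoking continuity up to the closed shrinking region is essential, and one should check that $Y$ extends continuously (with value $0$) to the vanishing endpoints. (ii) The viscosity machinery must be applied with the test-function-from-above convention restricted to $t \le t_0$, exactly as set up in Definition~\ref{definition of the viscosity subsolution} and the following remark, so that the one-sided-in-time touching by $Y$ is admissible. (iii) At the single non-smooth spatial point $x=0$, $\varphi$ is only one-sided $C^2$; but Proposition~\ref{the barrier as a subsolution} already promotes the subsolution property to the viscosity sense precisely there, so as long as the contradiction argument is phrased through the viscosity definition rather than through classical pointwise derivatives, no further care is needed. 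Apart from these bookkeeping issues the argument is the standard comparison principle, and letting $T' \to T$ and $\delta \to 0$ finishes the proof.
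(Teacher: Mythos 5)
Your proposal is correct and follows essentially the paper's strategy of perturbation to strictness plus a first-touching comparison through the viscosity definition; the paper's variant perturbs $Y$ upward by $\epsilon_1 e^t$ (so $Y_1 := Y + \epsilon_1 e^t$ is a strict supersolution and the boundary separation $Y_1 = \epsilon_1 e^t > 0 = \varphi$ is automatic) rather than modifying $\varphi$, which sidesteps having to recheck the subsolution property of the perturbed barrier. One small sign slip: you write $\varphi + \delta t$, but to get your displayed contradiction $Y_t - Y_{xx} + \delta \leq 0$ you actually need $\varphi - \delta t$ (or one of your other hedge options), since $\varphi + \delta t$ is not a subsolution and perturbing $\varphi$ upward moves the contact the wrong way.
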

\begin{proof}
  For each $\epsilon_1>0$, we define the perturbed function
  \begin{equation*}
      Y_1(x,t)=Y(x,t)+\epsilon_1 e^t
  \end{equation*}
    and the time
    \begin{equation*}
        t_1=\sup\{\tilde{t}\in[0,T)|Y_1(x,t)>\varphi(x,t) \text{ for 
 all } t\in[0,\tilde{t}] , x\in[x_{\min}(t),x_{\max}(t)]\}.
    \end{equation*}
  Based on equation (\ref{difference of y is zero at boundary}) and equation (\ref{the barrier vanishes at the boundary}), 
\begin{equation}
\label{boundary value of Y_1}
    Y_1(x_{\min}(t),t)=Y_1(x_{\max}(t),t)=\epsilon_1 e^t>0
\end{equation}
and
\begin{equation}
\label{boundary value of varphi second one}
    \varphi(x_{\min}(t),t)=\varphi(x_{\max}(t),t)=0.
\end{equation}
Because $\epsilon$ in the definition of the barrier (Definition \ref{definition of the barrier}) is small, the time $t_1$ is positive.

By definition of the time $t_1$,
\begin{equation}
\label{an inequality follows from definition of t1}
    Y_1(x,t)\geq\varphi(x,t)
\end{equation}
for all $t\leq t_1$.

\begin{clm}
    \label{the second claim in the comparison principle}
    If $t_1<T$, then there would exist an $x_1\in(x_{\min}(t_1),x_{\max}(t_1))$ such that 
    \begin{equation*}
        Y_1(x_1,t_1)=\varphi(x_1,t_1).
    \end{equation*}
\end{clm}
\begin{proof}[Proof of Claim \ref{the second claim in the comparison principle}]
   If this claim were not true, then 
    \begin{equation*}
        Y_1(x,t_1)>\varphi(x,t_1)
    \end{equation*}
    for all $x\in(x_{\min}(t_1),x_{\max}(t_1))$.
    
   It follows from equation (\ref{boundary value of Y_1}) and equation (\ref{boundary value of varphi second one}) that
   \begin{equation*}
       Y_1(x_{\min}(t_1),t_1)>\varphi(x_{\min}(t_1),t_1)\text{ and }Y_1(x_{\max}(t_1),t_1)>\varphi(x_{\max}(t_1),t_1).
   \end{equation*}
   
Thus, for each $x\in[x_{\min}(t_1),x_{\max}(t_1)]$, we can pick a spacetime neighborhood $N_x$ of the point $(x,t_1)$ such that $Y_1>\varphi$ in the neighborhood. The neighborhoods $N_x$ form an open cover of the compact set $\{(x,t_1)|x_{\min}(t_1)\leq x\leq x_{\max}(t_1)\}$, so we can take a finite subcover.
   
   As a result, there exists a $t_2>t_1$ such that 
    \begin{equation*}
        Y_1(x,t)>\varphi(x,t) \text{ for 
 all } t\in[0,t_2] , x\in[x_{\min}(t),x_{\max}(t)],
    \end{equation*}
    which contradicts the definition of the time $t_1$.
\end{proof}
\begin{clm}
    \label{the claim on t1 equals T}
    One has $t_1=T$.
\end{clm}

\begin{proof}[Proof of Claim \ref{the claim on t1 equals T}]
If $t_1<T$, because $\varphi$ is a viscosity subsolution (Proposition \ref{the barrier as a subsolution}), viewing $Y_1$ as a smooth test function (Claim \ref{the second claim in the comparison principle} and inequality (\ref{an inequality follows from definition of t1})), one has 
\begin{equation*}
    Y_{1t}(x_1,t_1)- Y_{1xx}(x_1,t_1)\leq0
\end{equation*}
by Definition \ref{definition of the viscosity subsolution}.

But due to Lemma \ref{the difference of y as a supersolution},
\begin{equation*}
     Y_{1t}(x_1,t_1)- Y_{1xx}(x_1,t_1)=Y_{t}(x_1,t_1)- Y_{xx}(x_1,t_1)+\epsilon_1 e^{t_1}\geq\epsilon_1 e^{t_1}>0.
\end{equation*}
So we have reached a contradiction.
\end{proof}
Thus, for all $t\in[0,T) , x\in[x_{\min}(t),x_{\max}(t)]$ and all $\epsilon_1>0$,
\begin{equation*}
      Y_1(x,t)=Y(x,t)+\epsilon_1 e^t\geq\varphi(x,t).
  \end{equation*}
Taking the limit $\epsilon_1\rightarrow0$, one has that
\begin{equation*}
    Y(x,t)\geq\varphi(x,t).
\end{equation*}
\end{proof}
\subsection{Non-uniqueness of tangent flows}
We are now ready to prove Theorem \ref{the theorem on nonuniqueness of the tangent flows}, which follows from the following proposition combined with Theorem \ref{the flow version of type II blow up results for convex projections CSF}.
\begin{prop}
\label{barrier gives tangent flows of perpendicular direction}    
Assume CSF $\gamma\att$ develops a Type~{II} singularity as $t\rightarrow T$. If along some sequence of times $\{t_j\}$, $\frac{\gamma(\cdot,t_j)}{\sqrt{2T-2t_j}}$ locally smoothly converges to a line $L_1$ of multiplicity two, then there exists a line $L_2$ with $P_{xy}L_1\perp P_{xy}L_2$ and another sequence of times $\{t_j^\prime\}$ such that the rescaled curves $\frac{\gamma(\cdot,t_j^\prime)}{\sqrt{2T-2t_j^\prime}}$ locally smoothly converge to the line $L_2$ of multiplicity two. 
\end{prop}
\begin{proof}
We may assume the projection of the line $L_1$ onto the $xy$-plane is the $x$-axis.

Thus as $j\rightarrow+\infty$,
\begin{equation}
\label{difference of y is small at x=0}
    Y(x=0,t_j)=o(\sqrt{T-t_j}),
\end{equation}
where $Y$ is the difference between the upper and lower branches, which we had defined in equation (\ref{definition of Y}).

We can establish the following pivotal lemma:
\begin{lem}
\label{estimates of x_max(t)}
    For the chosen sequence $\{t_j\}$, one has that 
   \begin{equation}
   \label{integral that implies estimates of x_max(t)}
          \lim_{j\rightarrow+\infty}f(t_j)= \lim_{j\rightarrow+\infty} \int_0^{t_j}\left[\frac{\pi^2}{4}\max\left\{\frac{1}{x_{\max}^2(\tau)},\frac{1}{x_{\min}^2(\tau)}\right\}-\frac{1}{2(T-\tau)}\right]d\tau=+\infty,
    \end{equation}
    where we had defined the function $f$ in equation (\ref{the definition of f in the barrier}).
\end{lem}

\begin{proof}
Lemma \ref{compare the difference along y and the barrier} tells us that $Y(x,t)\geq\varphi(x,t)$. Thus by the definition of the barrier $\varphi$ (Definition \ref{definition of the barrier}), we have
\begin{equation*}
     Y(0,t_j)\geq \varphi(0,t_j)=\epsilon e^{-f(t_j)}\sqrt{T-t_j}.
\end{equation*}

Equation (\ref{difference of y is small at x=0}) implies that
\begin{equation*}
    e^{-f(t_j)}=o(1) \text{ as } j\rightarrow+\infty.
\end{equation*}

That is to say 
\begin{equation*}
    f(t_j)\rightarrow+\infty\text{ as } j\rightarrow+\infty.
\end{equation*}
This lemma then follows from the definition of $f$ (equation (\ref{the definition of f in the barrier})).
\end{proof}

By Lemma \ref{estimates of x_max(t)}, there exists another sequence $\{t_j^\prime\}$ with $t_j^\prime\rightarrow T$ such that the integrand of equation (\ref{integral that implies estimates of x_max(t)}) is positive. In other words,
\begin{equation}
\label{control of the min of x min and x max}
    \min\{|x_{\min}(t_j^\prime)|,|x_{\max}(t_j^\prime)|\}< \frac{\pi}{2}\sqrt{2T-2t_j^\prime}.
\end{equation}
By taking a subsequence based on Theorem \ref{the flow version of type II blow up results for convex projections CSF}, without relabeling, the rescaled curves $\frac{\gamma(\cdot,t_j^\prime)}{\sqrt{2T-2t_j^\prime}}$ converge to a line $L_2$ of multiplicity two. The projection of the line $L_2$ onto the $xy$-plane has to be the $y$-axis by equation (\ref{control of the min of x min and x max}).
\end{proof}
\section{Linear scales}
\label{the section on linear scales}
For the rest of this paper, $\delta_0$ refers to one fixed constant satisfying the conclusion of Lemma \ref{the lemma on the lower bound of the rescaled area}. 

As discussed in \S\ref{the subsection on Geometry of CSF with convex projections}, we may assume the initial curve $\gamma_0$ has a one-to-one uniformly convex projection onto the $xy$-plane with no tangent lines perpendicular to the $xy$-plane and we may assume CSF $\gamma\att$ shrinks to the origin. In addition, we always assume $\gamma\att$ develops a Type~{II} singularity as $t\rightarrow T$. 
\begin{defi}
\label{definition of the linear scale rho}
For a constant $\delta>0$, we define the \emph{linear scale} $\rho^\delta:\mathbb R_{>0}\rightarrow\mathbb R_{>0}$ to be
\begin{equation}
\label{the choice of rho}
    \rho^\delta(H):=\frac{\delta}{20H}.
\end{equation}
\end{defi}
\begin{notation}
\label{the remark on rho short for rho delta 0}
For convenience, we omit $\delta$ and simply write $\rho=\rho^\delta$ when no ambiguity arises. In this section, $\rho$ always refers to $\rho^{\delta_0}$. We will choose a different $\delta$ in the next section; see Notation \ref{notation of rho delta 1}.
\end{notation}
The goal of this section is to establish quantitative $C^2$ estimates (Proposition \ref{the proposition on estimates of linear scales} and Proposition \ref{the proposition on C2 estimates over time intervals}) at linear scales over time intervals. These estimates will play a key role in the proof of uniqueness of tangent flows in the next section.
\begin{defi}
\label{the definition of the rotation of the xy plane}
In $\mathbb R^n$, we define a \emph{horizontal rotation} to be an element in $SO(n)$ that rotates the $xy$-plane while fixing vectors perpendicular to the $xy$-plane.
\end{defi}
The property that $\Gamma$ has a one-to-one convex projection onto the $xy$-plane is preserved by horizontal rotations.

We start with $C^1$ estimates at a fixed time. 
\begin{prop}
\label{the proposition on estimates of linear scales}
There exist small constants $\lambda_0\in(0,1)$, $H_0>0$ and a large time $\tau_0$ for which the following is true. 

Suppose that at a time $\tau\geq\tau_0$, there is a horizontal rotation $S_\tau$ and a vector $\vec\theta=\vec\theta(\tau)=(\theta_1(\tau),\cdots,\theta_{n-2}(\tau))\in[0,\frac{\pi}{2})^{n-2}\subset\mathbb R^{n-2}$ such that $S_\tau\Gamma(\cdot,\tau)\cap \{|x|\leq 2\}$ consists of the graphs of functions $y^1,y^2,z^1_\ell,z^2_\ell$ ($1\leq\ell\leq n-2$) with
    \begin{equation*}
       \| y^i(x,\tau)\|_{C^1[-2,2]}
       +\sum_{\ell=1}^{n-2}\|z^i_\ell(x,\tau)-(\tan\theta_\ell) x\|_{C^1[-2,2]}\leq H\leq H_0 \text{ for }
       i=1,2,
    \end{equation*}
where the indices $i=1,2$ label the branches of the space curve, corresponding to the upper and lower branches of the projection curve in the $xy$-plane respectively.

Then $S_\tau\Gamma(\cdot,\tau)\cap \{|x|\leq \frac{3}{4}\rho^{\delta_0}(H)\}$ is a union of the graphs of functions $y^i(\cdot,\tau)$, $z^i_\ell(\cdot,\tau)$, $i=1,2$. In addition, for $x\in[-\frac{1}{2}\rho^{\delta_0}(H),\frac{1}{2}\rho^{\delta_0}(H)]$, one has 
    \begin{equation*}
        |y^i(x,\tau)|\leq CH(|x|+1),  \quad |y_x^i(x,\tau)|\leq CH\text{ for }i=1,2
    \end{equation*}
    and for $x\in[-\frac{1}{2}\lambda_0\rho^{\delta_0}(H),\frac{1}{2}\lambda_0\rho^{\delta_0}(H)]$, $i=1,2$ and $1\leq\ell\leq n-2$, one has
    \begin{equation*}
        |z^i_\ell(x,\tau)-(\tan\theta_\ell) x|\leq CH(|x|+1),  \quad |z_{\ell x}^i(x,\tau)-\tan\theta_\ell|\leq CH
    \end{equation*}
where the constants $\delta_0$, $\lambda_0$, $C$ are independent of the time $\tau$.
\end{prop}
\begin{rmk}
\label{the remark on apriori bound of possible angle of the limit line}   
By the bounded slope lemma (Lemma \ref{upper bound of slope of secant lines}), we have an a priori upper bound $0\leq\theta_M<\frac{\pi}{2}$, independent of the time $\tau$, such that
\begin{equation*}
    \tan^2\theta_1+\cdots+\tan^2\theta_{n-2}\leq \tan^2\theta_M
\end{equation*}
for all possible $\vec\theta=\vec\theta(\tau)=(\theta_1(\tau),\cdots,\theta_{n-2}(\tau))$ in Proposition \ref{the proposition on estimates of linear scales}.
\end{rmk}

Based on Nash-Moser estimates and Schauder estimates, we are able to derive the following $C^2$ estimates, over time intervals where we have $C^1$ estimates.
\begin{prop}
\label{the proposition on C2 estimates over time intervals}
Let the constants $\lambda_0$, $H_0$, $\tau_0$ be as in Proposition \ref{the proposition on estimates of linear scales}. 

Suppose that for some $\taup\geq\tau_0$ and $\mathcal{T}>0$, there is a horizontal rotation $S_\taup$ and a vector $\vec\theta=\vec\theta(\taup)=(\theta_1(\taup),\cdots,\theta_{n-2}(\taup))\in[0,\frac{\pi}{2})^{n-2}\subset\mathbb R^{n-2}$  such that for any $\tau\in[\taup,\taup+\mathcal{T}]$, $S_\taup\Gamma(\cdot,\tau)\cap \{|x|\leq 2\}$ consists of the graphs of functions $y^1,y^2,z^1_\ell,z^2_\ell$ ($1\leq\ell\leq n-2$) with
    \begin{equation*}
       \| y^i(x,\tau)\|_{C^1[-2,2]}
       +\sum_{\ell=1}^{n-2}\|z^i_\ell(x,\tau)-(\tan\theta_\ell) x\|_{C^1[-2,2]}\leq H\leq H_0 \text{ for }
       i=1,2.
    \end{equation*}
 
Then for $x\in[-\frac{1}{4}\lambda_0\rho^{\delta_0}(H),\frac{1}{4}\lambda_0\rho^{\delta_0}(H)]$, $\tau\in[\taup+\frac{1}{2},\taup+\mathcal{T}]$, $i=1,2$ and $1\leq\ell\leq n-2$, one has
    \begin{equation*}
          |y_{xx}^i(x,\tau)|\leq CH,\quad|z_{\ell xx}^i(x,\tau)|\leq CH
    \end{equation*}
where the constant $C$ is independent of $\tau^\prime$, $\rho^{\delta_0}(H)$ and $\mathcal T$.
\end{prop}
For the structure of this section, \S\ref{subsection:Setup and a sketch of the proof of C1 estimates}-\S \ref{the subsection on deriving estimates for z from estimates of y} are devoted to proving Proposition \ref{the proposition on estimates of linear scales}, while Proposition \ref{the proposition on C2 estimates over time intervals} will be proved in \S\ref{subsection:proof of the proposition on C2 estimates over time intervals}.
\begin{rmk}
\label{the remark on dependence of constants on the three-point condition} 
In our estimates, all constants are allowed to depend on the constant in the three-point condition (\cite[Definition 4.8]{sun2024curve}), which is independent of time along the flow (\cite[Proposition 5.3 and Page 21]{sun2024curve}). This is harmless because our analysis focuses only on the singularity.
\end{rmk}

\subsection{Setup and a sketch of the proof of Proposition \ref{the proposition on estimates of linear scales}}
\label{subsection:Setup and a sketch of the proof of C1 estimates}
Recall that we denote by $\Gamma$ the rescaled CSF and by $\overline{\Gamma}$ the projection of $\Gamma$ onto the $xy$-plane.

Recall the definition of the minimum points $p(\tau),q(\tau)$ of the function $|\og|^2$ (Definition \ref{labeling of the minimum points}) and the definition of area $A_1(\tau),A_2(\tau)$ (Definition \ref{definition of the rescaled area A_m}).

Formally, the idea of the proof of Proposition \ref{the proposition on estimates of linear scales} is as follows. The areas $A_1(\tau),A_2(\tau)$ have a lower bound (Lemma \ref{the lemma on the lower bound of the rescaled area}), so when the height $H$ of $\og$ is small, $\og$ needs to be graphical over a very large interval, at approximately the linear scale $\rho$; in addition, $\og$ needs to have very small slopes over that interval.

For convenience, we adopt the notion $p(\tau),q(\tau)$ independent of the horizontal rotations (Definition \ref{the definition of the rotation of the xy plane}) that we will choose.

We denote by $|\mathcal{O}p(\tau)|,|\mathcal{O}q(\tau)|$ the distances of the minimum points $p(\tau),q(\tau)$ to the origin $\mathcal{O}$.
\begin{lem}
\label{the lemma from c0 norm to distance to the origin}
If there exists a horizontal rotation $S_\tau$ such that the projection $P_{xy}(S_\tau\Gamma(\cdot,\tau)\cap \{|x|\leq 2\})$ consists of the graphs of functions $y^1,y^2$ with
    \begin{equation*}
   \|y^i(x,\tau)\|_{C^0[-2,2]}\leq H \text{ for }i=1,2,
    \end{equation*}
then one has
    \begin{equation}
    \label{the equation on that the distance of the minimum points to the origin is no more than $H$}
        |\mathcal{O}p(\tau)|,|\mathcal{O}q(\tau)|\leq H.
    \end{equation}
\end{lem}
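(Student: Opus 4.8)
The plan is to reduce the statement to the observation that inside the unit ball the rescaled projection curve $\og(\cdot,\tau)$ is nothing but the two given graphs, which forces the two minimum points $p(\tau),q(\tau)$ to sit one on each graph, hence within distance $H$ of the origin.

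First I would normalize by the rotation: since a horizontal rotation $S_\tau$ is an isometry of $\mathbb R^2$ fixing the origin, it preserves $|\og|^2$ and the distances $|\mathcal Op(\tau)|,|\mathcal Oq(\tau)|$, so I may assume $S_\tau=\mathrm{id}$. Then $\og(\cdot,\tau)\cap\{|x|\le 2\}$ is the union of the graphs of $y^1(\cdot,\tau),y^2(\cdot,\tau)$ over $[-2,2]$ with $|y^i|\le H$, and in particular the $x$-range of $\og(\cdot,\tau)$ contains $[-2,2]$, so $x=0$ is strictly interior to it. Since $\og(\cdot,\tau)$ is an embedded convex curve, the line $\{x=0\}$ meets it in exactly two distinct points $P_1=(0,y^1(0,\tau))$ and $P_2=(0,y^2(0,\tau))$, and $|P_i|=|y^i(0,\tau)|\le H$.

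Next I would recall that, by Lemma \ref{two local min points of norm square of the projection curves} and Definition \ref{labeling of the minimum points}, $p(\tau),q(\tau)$ are the unique minimizers of $|\og|^2$ on the two connected components of $\og(\cdot,\tau)\cap B_1(0)$; in particular $p(\tau),q(\tau)\in B_1(0)$, so if $H\ge 1$ there is nothing to prove. Assuming $H<1$, one has $P_1,P_2\in\og(\cdot,\tau)\cap B_1(0)$. The heart of the argument is to show $P_1,P_2$ lie in different components of $\og(\cdot,\tau)\cap B_1(0)$: since $B_1(0)\subseteq\{|x|<1\}$ and, by convexity, the two branches of $\og(\cdot,\tau)$ are strictly ordered over $(-1,1)$, the curve splits there as the disjoint union of two open sub-arcs
\[
\og(\cdot,\tau)\cap\{|x|<1\}=\mathcal G_1\sqcup\mathcal G_2,\qquad \mathcal G_i=\{(x,y^i(x,\tau)):|x|<1\},
\]
so any connected subset of $\og(\cdot,\tau)\cap B_1(0)$ lies entirely in $\mathcal G_1$ or in $\mathcal G_2$; as $P_1\in\mathcal G_1$ and $P_2\in\mathcal G_2$, no component can contain both. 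Hence one component contains $P_1$ and the other contains $P_2$, so the minimum of $|\og|^2$ on each component is at most $\max\{|P_1|^2,|P_2|^2\}\le H^2$, which is exactly $|\mathcal Op(\tau)|,|\mathcal Oq(\tau)|\le H$.

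I do not expect a serious obstacle; the only points that need a little care are verifying that the hypothesis genuinely forces $0$ to be strictly interior to the $x$-range (so that $P_1\ne P_2$ and $y^1,y^2$ are single-valued through $x=0$), and the clopen decomposition $\og(\cdot,\tau)\cap\{|x|<1\}=\mathcal G_1\sqcup\mathcal G_2$, which is what makes the separation-of-components step go through.
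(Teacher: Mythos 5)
Your proof is correct and takes essentially the same approach as the paper: the paper simply records the one-line chain of inequalities $|\mathcal{O}p(\tau)|,|\mathcal{O}q(\tau)|\leq\max_i|y^i(0,\tau)|\leq H$, using that $p(\tau),q(\tau)$ are minimizers on their respective branches. What you add is an explicit justification (via the clopen splitting of $\og(\cdot,\tau)\cap\{|x|<1\}$ into the two graphs and the $H<1$ reduction) that the two points $(0,y^i(0,\tau))$ do in fact lie on the two distinct components of $\og(\cdot,\tau)\cap B_1(0)$, a step the paper leaves implicit.
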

\begin{proof}
Because $p(\tau),q(\tau)$ are minimum points of the function $|\og|^2$,  one has  
\begin{equation}
\label{the distance of the minimum points to the origin}
    |\mathcal{O}p(\tau)|,|\mathcal{O}q(\tau)|\leq \max_{i=1,2}\{|y^i(0,\tau)|\}
    \leq\max_{i=1,2}\| y^i(x,\tau)\|_{C^0[-2,2]}\leq H.
\end{equation}
\end{proof}
Next, we introduce the following two horizontal rotations that we will choose in the process of deriving estimates.
\begin{defi}
    \label{the choice of the rotation S tau 1}
We define $S_\tau^1$ to be the horizontal rotation such that for the rotated curve $S^1_\tau\Gamma(\cdot,\tau)$, the minimum point $q(\tau)$ is on the negative $y$-axis and the $x$ coordinate $x(p(\tau))$ of the point $p(\tau)$ is non-positive (see Figure \ref{fig:linear scales}). 
\end{defi}
\begin{defi}
\label{definition of the angle bisecting rotation}
We define the \emph{angle-bisecting rotation $S_\tau^{bis}$} to be the horizontal rotation such that the $x$-axis bisects the angle formed by $\mathcal{O}p(\tau)$ and $\mathcal{O}q(\tau)$ and the $x$-coordinates $x(p(\tau))$ and $x(q(\tau))$ are non-positive (see Figure \ref{fig:linear scales bis}). 
\end{defi}

\begin{figure}[ht]
    \centering
    \begin{minipage}{0.5\linewidth}
        \centering
        \includegraphics[width=\linewidth]{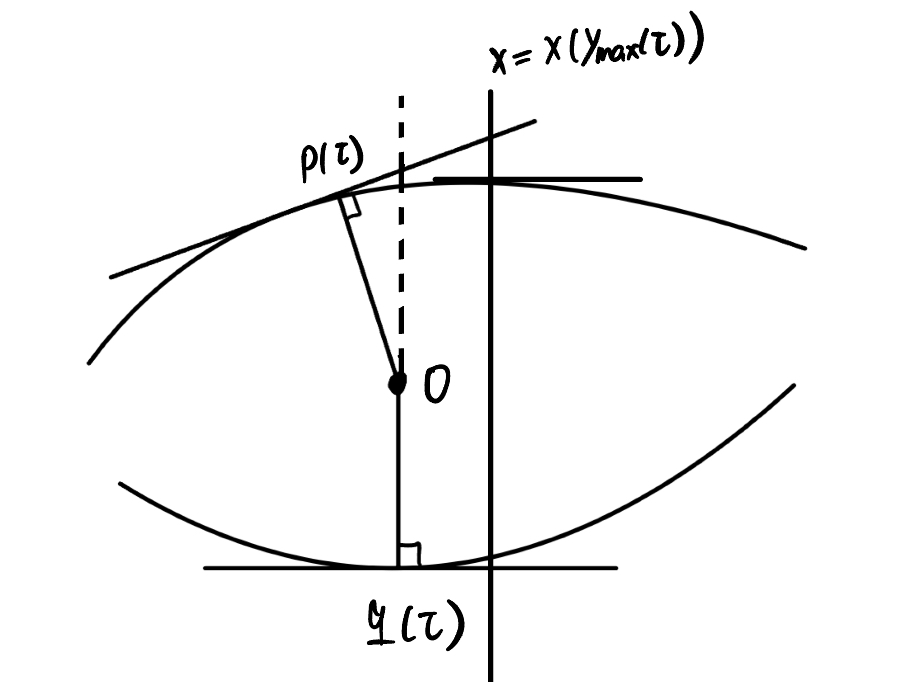}
        \caption{Points $p(\tau)$, $q(\tau)$ on $S^1_\tau\Gamma(\cdot,\tau)$.}
        \label{fig:linear scales}
    \end{minipage}\hfill
    \begin{minipage}{0.5\linewidth}
        \centering
        \includegraphics[width=\linewidth]{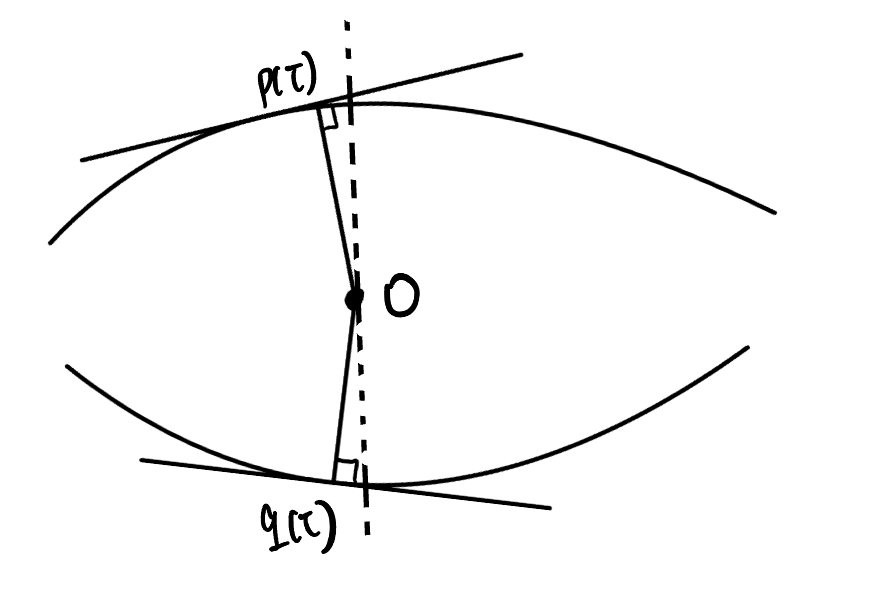}
        \caption{Points $p(\tau)$, $q(\tau)$ on $S^{bis}_\tau\Gamma(\cdot,\tau)$.}
        \label{fig:linear scales bis}
    \end{minipage}
\end{figure}

\subsubsection*{Sketch of the proof of Proposition \ref{the proposition on estimates of linear scales}}
In \S \ref{the subsection on gradient estimates w.r.t. S1} we derive gradient estimates for the upper branch of the rotated curve $S^1_\tau\Gamma(\cdot,\tau)$. Based on the estimates for $S^1_\tau\Gamma(\cdot,\tau)$ in \S \ref{the subsection on gradient estimates w.r.t. S1}, we derive gradient estimates in \S\ref{the subsection on gradient estimates w.r.t. Sbis} for the upper branch of the rotated curve $S^{bis}_\tau\Gamma(\cdot,\tau)$. Because of the choice of the horizontal rotation $S_\tau^{bis}$ (Definition \ref{definition of the angle bisecting rotation}), the estimates on the lower branch are no different from the estimates for the upper branch of $S^{bis}_\tau\Gamma(\cdot,\tau)$. Thus we have gradient estimates for both upper and lower branches of the rotated curve $S^{bis}_\tau\Gamma(\cdot,\tau)$. In \S\ref{the subsection on gradient estimates w.r.t. angle no more than H}, we use the derived estimates for $S^{bis}_\tau\Gamma(\cdot,\tau)$ to establish the desired $C^1$ estimates for $y$ in Proposition \ref{the proposition on estimates of linear scales}. In \S \ref{the subsection on deriving estimates for z from estimates of y}, we establish the $C^1$ estimates for $z_\ell$ $(1\leq \ell\leq n-2)$ in Proposition \ref{the proposition on estimates of linear scales}. 

Throughout  \S \ref{the subsection on gradient estimates w.r.t. S1}-\S\ref{the subsection on gradient estimates w.r.t. Sbis}, we always assume
there is a horizontal rotation $S_\tau$ such that $P_{xy}(S_\tau\Gamma(\cdot,\tau)\cap \{|x|\leq 2\})$ consists of the graphs of functions $y^1,y^2$ with
    \begin{equation}
       \|y^i(x,\tau)\|_{C^1[-2,2]}\leq H, \quad i=1,2,
    \end{equation}
for some $H$ small.

By Lemma \ref{the lemma from c0 norm to distance to the origin}, $|\mathcal{O}p(\tau)|,|\mathcal{O}q(\tau)|\leq H$.
\subsection{Gradient estimates on the upper branch}
\label{the subsection on gradient estimates w.r.t. S1}
In this subsection, we always consider the rotated curve $S^1_\tau\Gamma(\cdot,\tau)$ and the coordinate functions $x,y$ are taken with respect to this rotation.

We denote by
\begin{equation*}
    x_{\max}(\tau):=\max_{u\in S^1 }x(u,\tau),\quad 
    x_{\min}(\tau):=\min_{u\in S^1 }x(u,\tau)
\end{equation*}
the maximum and minimum values of the function $x$ at time $\tau$.

And we denote by $x(y_{\max}(\tau))$ the $x$ coordinate of the maximum point of the function $y(\cdot,\tau)$. One has $x(y_{\max}(\tau))\geq x(p(\tau))$ because the projection curve is convex and the slope of the upper branch is decreasing. See Figure \ref{fig:linear scales}. 

By comparing areas, we first estimate $|x_{\max}(\tau)|$ and $|x_{\min}(\tau)|$.
\begin{lem}
\label{lower bound of x max and x min}
Let $\delta_0$ be the constant in Lemma \ref{the lemma on the lower bound of the rescaled area},  one has 
\begin{equation}
\label{lower bound of x min}
    2H|x_{\min}(\tau)|\geq\delta_0
\end{equation}
and
\begin{equation}
\label{lower bound of x max}
    (y_{\max}(\tau)+H)(x_{\max}(\tau)+H)\geq\delta_0.
\end{equation}
\end{lem}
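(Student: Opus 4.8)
The plan is to prove each inequality by trapping one of the two domains whose areas $A_1(\tau),A_2(\tau)$ appear in Lemma~\ref{the lemma on the lower bound of the rescaled area} inside a coordinate rectangle built from $x_{\min}(\tau),x_{\max}(\tau),y_{\max}(\tau)$ and $H$, and then quoting $A_1(\tau),A_2(\tau)\ge\delta_0$. I would first fix the picture. Let $\og=\og(\cdot,\tau)$ be the convex closed projection of the rotated curve $S^1_\tau\Gamma(\cdot,\tau)$; between its leftmost point (at $x=x_{\min}(\tau)$) and its rightmost point (at $x=x_{\max}(\tau)$) it is the union of an upper graph $y=y^u(x)$ with $y^u$ concave and a lower graph $y=y^l(x)$ with $y^l$ convex on $[x_{\min}(\tau),x_{\max}(\tau)]$, the two agreeing at the endpoints. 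By Definition~\ref{the choice of the rotation S tau 1} and the normalizations preceding the lemma, $q(\tau)=(0,-q_0)$ lies on the lower graph with $q_0=|\mathcal{O}q(\tau)|\le H$, and $p(\tau)$ lies on the upper graph with $x(p(\tau))\in[-H,0]$ and $|\mathcal{O}p(\tau)|\le H$. Since $\gamma$ shrinks to the origin, the origin lies in the interior of the region $\Omega(\tau)$ bounded by $\og$ (in particular $y^l(0)<0<y^u(0)$), and the segments $\overline{\mathcal{O}p(\tau)},\overline{\mathcal{O}q(\tau)}$ cut $\Omega(\tau)$ into two domains $R_\ell,R_r$ of areas $A_1(\tau),A_2(\tau)$ in some order, $R_\ell$ being the one bounded by the subarc of $\og$ through the leftmost point and $R_r$ the one through the rightmost point. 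For $\tau$ large the blow-up result (Theorem~\ref{the flow version of type II blow up results for convex projections CSF}) forces $|x_{\min}(\tau)|,|x_{\max}(\tau)|>H$, so $p(\tau),q(\tau)$ lie strictly between the leftmost and rightmost points and these two subarcs are as described.

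The argument then uses two elementary convexity facts. First, $q(\tau)$ is a critical point of $|\og|^2$ along the lower branch and lies on the $y$-axis, so $\og(q)\perp\og_\sigma(q)$ with $\og(q)$ vertical; hence the tangent at $q(\tau)$ is horizontal, and since $y^l$ is convex this identifies $\min_{S^1}y(\cdot,\tau)=y^l(0)=-q_0\ge -H$. Second, as already observed just above the lemma, $x(y_{\max}(\tau))\ge x(p(\tau))$, so $y^u$ is nondecreasing on $[x_{\min}(\tau),x(p(\tau))]$ and thus $y^u\le y^u(x(p(\tau)))=y(p(\tau))\le H$ there; combining this with $y^l$ nonincreasing on $[x_{\min}(\tau),0]$ (its minimum being at $0$) and $y^l(x_{\min}(\tau))=y^u(x_{\min}(\tau))\le H$ gives $y^l\le H$ on $[x_{\min}(\tau),0]$. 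Finally, along the subarc through the leftmost point the coordinate $x$ ranges exactly over $[x_{\min}(\tau),0]$ (it decreases from $x(p(\tau))\le 0$ to $x_{\min}(\tau)$, then increases to $x(q(\tau))=0$), and along the subarc through the rightmost point $x$ ranges over $[x(p(\tau)),x_{\max}(\tau)]\subseteq[-H,x_{\max}(\tau)]$.

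With these in hand I would conclude. For \eqref{lower bound of x min}: the two segments bounding $R_\ell$ lie in $[x_{\min}(\tau),0]\times[-H,H]$ because $|\mathcal{O}p(\tau)|,|\mathcal{O}q(\tau)|\le H$, and the bounding subarc — which is the upper branch over $[x_{\min}(\tau),x(p(\tau))]$ followed by the lower branch over $[x_{\min}(\tau),0]$ — lies in $[x_{\min}(\tau),0]\times[-H,H]$ by the $x$-confinement above together with $y\ge -H$ (first fact) and $y\le H$ (second fact); hence $R_\ell\subseteq[x_{\min}(\tau),0]\times[-H,H]$ and $\delta_0\le |R_\ell|\le 2H|x_{\min}(\tau)|$. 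For \eqref{lower bound of x max}: similarly the segments and the bounding subarc of $R_r$ lie in $[x(p(\tau)),x_{\max}(\tau)]\times[\min_{S^1}y(\cdot,\tau),\,y_{\max}(\tau)]$ — placing the segments uses $y^l(0)<0<y^u(0)\le y_{\max}(\tau)$ — so, with $x_{\max}(\tau)-x(p(\tau))\le x_{\max}(\tau)+H$ and $y_{\max}(\tau)-\min_{S^1}y(\cdot,\tau)=y_{\max}(\tau)+q_0\le y_{\max}(\tau)+H$, we get $\delta_0\le |R_r|\le (x_{\max}(\tau)+H)(y_{\max}(\tau)+H)$. Here I use that $|R_\ell|$ and $|R_r|$ are $A_1(\tau)$ and $A_2(\tau)$ in some order, both $\ge\delta_0$ by Lemma~\ref{the lemma on the lower bound of the rescaled area}.

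The one step that requires genuine care is confining the left subarc to the horizontal strip $\{|y|\le H\}$: this is precisely where the monotonicity of the two branches enters, via $x(y_{\max}(\tau))\ge x(p(\tau))$ and the identification of $\min_{S^1}y(\cdot,\tau)$ with the value of $y$ at $q(\tau)$. The remaining ingredients — the behavior of $x$ along a subarc of a convex curve and the interiority of the origin — are routine facts about convex curves, so I expect no further difficulty.
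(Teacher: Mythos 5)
Your proof is correct and uses the same approach as the paper: trap the two subdomains whose areas are $A_1(\tau),A_2(\tau)$ inside the coordinate rectangles $[x_{\min},0]\times[-H,H]$ and $[-H,x_{\max}]\times[-H,y_{\max}]$, then invoke the area lower bound of Lemma \ref{the lemma on the lower bound of the rescaled area}. The paper's proof simply asserts those two rectangle containments; you supply the convexity bookkeeping (horizontal tangent at $q$, monotonicity of $y^u$ left of $x(p(\tau))$) that verifies them, which is a faithful expansion of the same argument.
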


\begin{proof}
The area $A_1(\tau),A_2(\tau)$ is no bigger than the area of the following rectangles respectively:
\begin{equation*}
    \{(x,y)|x_{\min}(\tau)\leq x\leq 0, -H\leq y\leq H\}
\end{equation*}
and 
\begin{equation*}
    \{(x,y)|-H\leq x\leq x_{\max}(\tau), -H\leq y\leq y_{\max}(\tau)\}.
\end{equation*}

This lemma then follows from Lemma \ref{the lemma on the lower bound of the rescaled area}.
\end{proof}
For $\rho=\rho^{\delta_0}(H)=\frac{\delta_0}{20H}$, our goal in this subsection is to get gradient estimates for $|x|\leq\rho$ (Proposition \ref{gradient estimates on the upper branch}).

By inequality \eqref{lower bound of x min}, $ |x_{\min}(\tau)|\geq\frac{\delta_0}{2H}=10\rho>\rho$.

Recall that the indices $i=1,2$ label the upper and lower branches respectively. We first estimate the gradient at $x=-\rho$ on the upper branch.
\begin{lem}
\label{the lemma on the gradient estimate at the end point on the upper branch}
    One has that
    \begin{equation*}
       0< y_x^1(-\rho,\tau)\leq\frac{2H}{|x_{\min}(\tau)|-\rho}.
    \end{equation*}
\end{lem}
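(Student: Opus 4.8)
The plan is to run a concavity argument for the upper branch $y^1(\cdot,\tau)$ of the rotated curve $S^1_\tau\Gamma(\cdot,\tau)$, anchored at the leftmost point $x=x_{\min}(\tau)$, and to control the value there by the horizontal supporting line of the projection at $q(\tau)$. First I would set up the geometry. Uniform convexity of $\og(\cdot,\tau)$ makes $y^1(\cdot,\tau)$ strictly concave on $(x_{\min}(\tau),x_{\max}(\tau))$, with a unique interior maximum at $x(y_{\max}(\tau))$ and $y_x^1(\cdot,\tau)$ strictly decreasing, vanishing there. Using $x(y_{\max}(\tau))\geq x(p(\tau))$, $x(p(\tau))\in[-H,0]$ (from $|\mathcal{O}p(\tau)|\leq H$ and the normalization of $S^1_\tau$), and $|x_{\min}(\tau)|\geq\frac{\delta_0}{2H}=10\rho$, after choosing $H_0$ with $20H_0^2<\delta_0$ (so that $H<\rho$) one gets $x_{\min}(\tau)<-\rho<-H\leq x(y_{\max}(\tau))$. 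Thus $-\rho$ is an interior point lying strictly to the left of the maximum, so $y_x^1(-\rho,\tau)>y_x^1(x(y_{\max}(\tau)),\tau)=0$, which is the claimed lower bound.

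For the upper bound I would establish the two-sided pinch $-H\leq y^1(x_{\min}(\tau),\tau)$ and $y^1(-\rho,\tau)\leq H$. The second is easy: with $x^{\ast}:=\min\{0,x(y_{\max}(\tau))\}\in[-H,0]\subset[-2,2]$ one has $-\rho<x^{\ast}$ and $y^1(\cdot,\tau)$ increasing on $[x_{\min}(\tau),x^{\ast}]$, hence $y^1(-\rho,\tau)\leq y^1(x^{\ast},\tau)\leq\|y^1(\cdot,\tau)\|_{C^0[-2,2]}\leq H$. The first is the crucial geometric input: since $q(\tau)$ is a minimum of $|\og|^2$, the position vector of $\og(\cdot,\tau)$ at $q(\tau)$ is orthogonal to its tangent, and as $S^1_\tau$ places $q(\tau)$ on the negative $y$-axis, the tangent line of $\og(\cdot,\tau)$ at $q(\tau)$ is the horizontal line $\{y=-|\mathcal{O}q(\tau)|\}$. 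Because $\og(\cdot,\tau)$ is a convex curve with the origin in its interior (the flow $\gamma$ shrinks to the origin), it lies in the closed half-plane $\{y\geq-|\mathcal{O}q(\tau)|\}\subseteq\{y\geq-H\}$; in particular $y^1(x_{\min}(\tau),\tau)\geq-H$.

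Finally, concavity of $y^1(\cdot,\tau)$ on $[x_{\min}(\tau),-\rho]$ gives
\[
y_x^1(-\rho,\tau)\leq\frac{y^1(-\rho,\tau)-y^1(x_{\min}(\tau),\tau)}{-\rho-x_{\min}(\tau)}\leq\frac{H-(-H)}{|x_{\min}(\tau)|-\rho}=\frac{2H}{|x_{\min}(\tau)|-\rho},
\]
since $-\rho-x_{\min}(\tau)=|x_{\min}(\tau)|-\rho>0$, which together with the first paragraph is the assertion. I expect the one genuinely non-routine point to be the claim that $\og(\cdot,\tau)$ lies above its horizontal supporting line at $q(\tau)$, i.e.\ that the origin is interior to $\og(\cdot,\tau)$; this is where one invokes that $\gamma$ shrinks to the origin (applying the maximum principle to a linear coordinate function along CSF, which solves the heat equation, shows that no coordinate can stay of one sign, so the origin cannot be separated from the convex region bounded by $\og(\cdot,\tau)$). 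Everything else — placing $-\rho$ in the strictly increasing portion of the branch, and the secant estimate — is routine once $H_0$ is chosen small.
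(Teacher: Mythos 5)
Your overall approach is the same as the paper's: both rely on the concavity of $y^1(\cdot,\tau)$, a two-sided pinch $-H\le y^1\le H$ near $x=0$ coming from $|\mathcal{O}p(\tau)|,|\mathcal{O}q(\tau)|\le H$, and then a secant/integral argument over $[x_{\min}(\tau),-\rho]$. (The paper states the contrapositive and integrates the slope lower bound over that interval; you compute a secant slope directly — same content. Your explicit argument for the positivity $y_x^1(-\rho,\tau)>0$, and your derivation of $y^1(x_{\min}(\tau),\tau)\ge -H$ via the horizontal supporting line at $q(\tau)$, are correct and fill in details the paper leaves tacit.)

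There is, however, a concrete gap in your ``easy'' step. You write
$y^1(-\rho,\tau)\le y^1(x^\ast,\tau)\le\|y^1(\cdot,\tau)\|_{C^0[-2,2]}\le H$,
but the $C^0$ bound on $[-2,2]$ is \emph{not} a hypothesis in this part of the argument. The subsection containing this lemma works under the standing assumption $|\mathcal{O}p(\tau)|,|\mathcal{O}q(\tau)|\le H$ only, and the curve under consideration is $S^1_\tau\Gamma(\cdot,\tau)$; the $C^1[-2,2]$ smallness hypothesis in Proposition~\ref{the proposition on estimates of linear scales} refers to a \emph{different} horizontal rotation $S_\tau$. Lemma~\ref{the lemma from c0 norm to distance to the origin} converts that hypothesis into $|\mathcal{O}p(\tau)|,|\mathcal{O}q(\tau)|\le H$, after which the $C^0[-2,2]$ bound is no longer available for the rotated curve. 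The fix is immediate: since $-\rho<-H\le x(p(\tau))\le x(y_{\max}(\tau))$ and $y^1$ is increasing on $[x_{\min}(\tau),x(y_{\max}(\tau))]$, take $x^\ast=x(p(\tau))$ and use $y^1(x(p(\tau)),\tau)\le|\mathcal{O}p(\tau)|\le H$ directly, avoiding any reference to $\|y^1\|_{C^0[-2,2]}$.
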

\begin{proof}
    If this lemma were not true, then 
     \begin{equation*}
       y_x^1(-\rho,\tau)>\frac{2H}{|x_{\min}(\tau)|-\rho}.
    \end{equation*}
     Because the projection curve is convex and the slope of the upper branch is decreasing, for $x<-\rho$, 
      \begin{equation*}
       y_x^1(x,\tau)\geq y_x^1(-\rho,\tau)>\frac{2H}{|x_{\min}(\tau)|-\rho}.
    \end{equation*}
As a result,
\begin{equation*}
        2H=\frac{2H}{|x_{\min}(\tau)|-\rho}(|x_{\min}(\tau)|-\rho)<\int_{x_{\min}(\tau)}^{-\rho}y_x^1(x,\tau)dx\leq2H,
    \end{equation*}
which gives a contradiction.
\end{proof}

\begin{lem}
  \label{the lemma that xy max is no less than rho}
    If $x(y_{\max}(\tau))\geq\rho$, then for $x\in[-\rho,\rho]$,
    \begin{equation*}
       0\leq y_x^1(x,\tau)\leq\frac{2H}{|x_{\min}(\tau)|-\rho}.
    \end{equation*}
\end{lem}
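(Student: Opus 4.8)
The plan is to deduce both inequalities from the monotonicity of the slope along the upper branch. Since the projection curve $P_{xy}\bigl(S^1_\tau\Gamma(\cdot,\tau)\bigr)$ is uniformly convex, on the upper branch the slope $x\mapsto y^1_x(x,\tau)$ is non-increasing over its domain $[x_{\min}(\tau),x_{\max}(\tau)]$, and it vanishes precisely at $x=x(y_{\max}(\tau))$, the abscissa of the topmost point of the branch. First I would record the domain check: by inequality \eqref{lower bound of x min} one has $|x_{\min}(\tau)|\geq 10\rho>\rho$, and $x(y_{\max}(\tau))\leq x_{\max}(\tau)$ always, so under the hypothesis $x(y_{\max}(\tau))\geq\rho$ the interval $[-\rho,\rho]$ is contained in $[x_{\min}(\tau),x_{\max}(\tau)]$; in particular the upper branch is a genuine graph over $[-\rho,\rho]$ and the point $-\rho$ lies in its domain.

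For the lower bound, the point is that for every $x\in[-\rho,\rho]$ we have $x\leq\rho\leq x(y_{\max}(\tau))$, so monotonicity of the slope gives $y^1_x(x,\tau)\geq y^1_x\bigl(x(y_{\max}(\tau)),\tau\bigr)=0$. For the upper bound, I would invoke Lemma \ref{the lemma on the gradient estimate at the end point on the upper branch}, which gives $y^1_x(-\rho,\tau)\leq\frac{2H}{|x_{\min}(\tau)|-\rho}$, the right-hand side being finite and positive since $|x_{\min}(\tau)|>\rho$; then for $x\in[-\rho,\rho]$ we have $x\geq-\rho$, and monotonicity again gives $y^1_x(x,\tau)\leq y^1_x(-\rho,\tau)\leq\frac{2H}{|x_{\min}(\tau)|-\rho}$. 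Combining the two bounds proves the lemma.

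I do not expect a genuine obstacle in this step: it reduces entirely to the convexity-induced monotonicity of $y^1_x$ together with the endpoint estimate already established in Lemma \ref{the lemma on the gradient estimate at the end point on the upper branch}. The only real bookkeeping is the domain inclusion $[-\rho,\rho]\subset[x_{\min}(\tau),x_{\max}(\tau)]$ from the first paragraph, and it is there — and in the lower bound — that the hypothesis $x(y_{\max}(\tau))\geq\rho$ is used; the complementary case $x(y_{\max}(\tau))<\rho$ must be treated by a different argument elsewhere.
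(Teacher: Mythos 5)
Your proof is correct and follows the same route as the paper's: convexity of the projection makes $y^1_x$ non-increasing, the hypothesis $x(y_{\max}(\tau))\geq\rho$ gives the lower bound by comparing with the vanishing slope at the topmost point, and the upper bound comes from Lemma \ref{the lemma on the gradient estimate at the end point on the upper branch} at the left endpoint $x=-\rho$. The domain-inclusion check you spell out is implicit in the paper but is a useful addition.
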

\begin{proof}
     Because the projection curve is convex and the slope of the upper branch is decreasing, for $x$ satisfying $-\rho\leq x\leq \rho\leq x(y_{\max}(\tau))$, by Lemma \ref{the lemma on the gradient estimate at the end point on the upper branch},
      \begin{equation*}
      0\leq y_x^1(x,\tau)\leq y_x^1(-\rho,\tau)\leq\frac{2H}{|x_{\min}(\tau)|-\rho}.
    \end{equation*}
\end{proof}
\begin{lem}
    If $x(y_{\max}(\tau))<\rho$, then for $x\in[-\rho,x(y_{\max}(\tau)))$,
    \begin{equation}
    \label{estimates of similar nature when xymax is smaller than rho}
       0< y_x^1(x,\tau)\leq\frac{2H}{|x_{\min}(\tau)|-\rho}.
    \end{equation}
    As a result, 
    \begin{equation}
    \label{upper bound of y max with x min}
        y_{\max}(\tau)\leq \frac{4H\rho}{|x_{\min}(\tau)|-\rho}+H.
    \end{equation}
    In addition, for $x\in[x(y_{\max}(\tau)),\rho]$,
    \begin{equation}
    \label{gradient control in terms of y max}
        |y_x^1(x,\tau)|\leq \frac{ y_{\max}(\tau)+H}{x_{\max}(\tau)-\rho}.
    \end{equation}
\end{lem}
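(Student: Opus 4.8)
The plan is to run the same convexity bookkeeping as in Lemma~\ref{the lemma on the gradient estimate at the end point on the upper branch} and Lemma~\ref{the lemma that xy max is no less than rho}, now in the regime where the vertex $x(y_{\max}(\tau))$ of the (concave) upper branch has already been passed before $x=\rho$. Inequality~\eqref{estimates of similar nature when xymax is smaller than rho} is then immediate: for $-\rho\le x<x(y_{\max}(\tau))$ we are strictly to the left of the vertex, so $y_x^1(x,\tau)>0$, while the slope of the upper branch is decreasing, hence $y_x^1(x,\tau)\le y_x^1(-\rho,\tau)\le \tfrac{2H}{|x_{\min}(\tau)|-\rho}$ by Lemma~\ref{the lemma on the gradient estimate at the end point on the upper branch} --- which is verbatim the argument of Lemma~\ref{the lemma that xy max is no less than rho}.

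For~\eqref{upper bound of y max with x min} I would integrate the gradient bound: $y_{\max}(\tau)=y^1(x(y_{\max}(\tau)),\tau)=y^1(-\rho,\tau)+\int_{-\rho}^{x(y_{\max}(\tau))}y_x^1\,dx$, where the integral is at most $\tfrac{2H}{|x_{\min}(\tau)|-\rho}\bigl(x(y_{\max}(\tau))+\rho\bigr)\le\tfrac{4H\rho}{|x_{\min}(\tau)|-\rho}$ by~\eqref{estimates of similar nature when xymax is smaller than rho} and $x(y_{\max}(\tau))<\rho$. For the boundary term I would note $-\rho<-H\le x(p(\tau))\le x(y_{\max}(\tau))$ --- the first inequality because $\rho=\tfrac{\delta_0}{20H}>H$ after shrinking $H_0$, the middle because $|\mathcal{O}p(\tau)|\le H$ --- so on $[x_{\min}(\tau),x(y_{\max}(\tau))]$, where $y^1$ is nondecreasing, $y^1(-\rho,\tau)\le y^1(x(p(\tau)),\tau)\le|\mathcal{O}p(\tau)|\le H$; adding the two bounds gives~\eqref{upper bound of y max with x min}.

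For~\eqref{gradient control in terms of y max} the first point is that it is now \emph{meaningful}: from~\eqref{upper bound of y max with x min} and $|x_{\min}(\tau)|\ge 10\rho$ one gets $y_{\max}(\tau)<2H$, so Lemma~\ref{lower bound of x max and x min} forces $x_{\max}(\tau)>\rho$ once $H_0$ is small enough, and the upper branch is genuinely a graph over $[x(y_{\max}(\tau)),\rho]$. On that range we are to the right of the vertex, so $y_x^1\le 0$, and the decreasing slope gives $|y_x^1(x,\tau)|\le|y_x^1(\rho,\tau)|$; so it suffices to bound $|y_x^1(\rho,\tau)|$, which I would do by contradiction exactly as in Lemma~\ref{the lemma on the gradient estimate at the end point on the upper branch}: if $|y_x^1(\rho,\tau)|>\tfrac{y_{\max}(\tau)+H}{x_{\max}(\tau)-\rho}$, then by the decreasing slope $y^1(\rho,\tau)-y^1(x_{\max}(\tau),\tau)=\int_\rho^{x_{\max}(\tau)}|y_x^1|\,dx>y_{\max}(\tau)+H$, contradicting $y^1(\rho,\tau)\le y_{\max}(\tau)$ together with $y^1(x_{\max}(\tau),\tau)\ge -H$.

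The one ingredient that needs its own short argument --- and the main point to be careful about --- is the lower bound $y^1(x_{\max}(\tau),\tau)\ge -H$. I would extract it from the lower branch: since $q(\tau)$ is a critical point of $|\og|^2$, the tangent of $\og$ at $q(\tau)$ is orthogonal to the position vector $\mathcal{O}q(\tau)$, which lies on the negative $y$-axis by the choice of the rotation $S_\tau^1$ (Definition~\ref{the choice of the rotation S tau 1}); hence the lower branch $y^2$ (which contains $q(\tau)$) has a horizontal tangent at $x=0$ and, being convex, attains its minimum there, so $y^2\ge y^2(0,\tau)=-|\mathcal{O}q(\tau)|\ge -H$ on its whole domain. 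Since $y^1\ge y^2$ pointwise, $y^1(x_{\max}(\tau),\tau)\ge -H$. Apart from this, the only subtlety is the order of the steps --- one must prove~\eqref{upper bound of y max with x min} first so that Lemma~\ref{lower bound of x max and x min} yields $x_{\max}(\tau)>\rho$ before~\eqref{gradient control in terms of y max} even makes sense --- and checking that the finitely many smallness conditions on $H_0$ (ensuring $\rho>H$ and $x_{\max}(\tau)>\rho$) can be met with all constants independent of $\tau$.
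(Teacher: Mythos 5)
The proposal is correct and follows the same approach as the paper: use concavity of the upper branch to integrate the monotone slope from $-\rho$ up to the vertex for \eqref{estimates of similar nature when xymax is smaller than rho} and \eqref{upper bound of y max with x min}, and integrate from $\rho$ out to $x_{\max}(\tau)$ to control the endpoint gradient for \eqref{gradient control in terms of y max}, exactly as in Lemma~\ref{the lemma on the gradient estimate at the end point on the upper branch}. You also correctly spell out several steps the paper leaves implicit (that $y^1(-\rho,\tau)\le H$ via the minimum point $p(\tau)$, that $y^1(x_{\max}(\tau),\tau)\ge -H$ via the horizontal tangent at $q(\tau)$ on the negative $y$-axis, and that \eqref{upper bound of y max with x min} together with Lemma~\ref{lower bound of x max and x min} forces $x_{\max}(\tau)>\rho$), all of which are needed and consistent with the paper's intended argument.
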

\begin{proof}
    Equation \eqref{estimates of similar nature when xymax is smaller than rho} holds by the same argument as in the proof of Lemma \ref{the lemma that xy max is no less than rho}.

    Equation \eqref{estimates of similar nature when xymax is smaller than rho} implies that
    \begin{align*}
         y_{\max}(\tau)&\leq \int_{-\rho}^{x(y_{\max}(\tau))}y_x^1(x,\tau)dx+y^1(-\rho,\tau)\\
        & \leq \frac{2H}{|x_{\min}(\tau)|-\rho}(2\rho)+H.
    \end{align*}
   As in the proof of Lemma \ref{the lemma on the gradient estimate at the end point on the upper branch}, we can estimate the gradient at $x=\rho$,
   \begin{equation*}
       |y_x^1(\rho,\tau)|\leq\frac{ y_{\max}(\tau)+H}{x_{\max}(\tau)-\rho}.
   \end{equation*}
   Thus equation \eqref{gradient control in terms of y max} holds because the slope of the upper branch is decreasing and $x(y_{\max}(\tau))<\rho$.
\end{proof}
Recall that $\rho$ has been chosen according to equation \eqref{the choice of rho}.
\begin{prop}[Gradient estimates on the upper branch]
\label{gradient estimates on the upper branch}
 For the rotated curve $S^1_\tau\Gamma(\cdot,\tau)$, for $x\in[-\rho,\rho]$, one has that
    \begin{equation*}
        |y_x^1(x,\tau)|\leq\frac{18}{\delta_0}H^2,
    \end{equation*}
    where $\delta_0$ is the constant from the area lower bound in Lemma \ref{the lemma on the lower bound of the rescaled area}.
\end{prop}
\begin{proof}[Proof of Proposition \ref{gradient estimates on the upper branch}]
\textbf{Case 1: $x(y_{\max}(\tau))\geq\rho$.}

By Lemma \ref{the lemma that xy max is no less than rho}, equation (\ref{lower bound of x min}) and the choice of $\rho$ (equation \eqref{the choice of rho}), 
\begin{equation}
\label{gradient estimates in case 1}
       0\leq y_x^1\leq\frac{2H}{\frac{\delta_0}{2H}-\rho}
       =\frac{4H^2}{\delta_0-2H\rho}
       \leq\frac{4H^2}{\frac{\delta_0}{2}}=\frac{8}{\delta_0}H^2.
\end{equation}

\textbf{Case 2: $x(y_{\max}(\tau))<\rho$.}
   \begin{clm}
   \label{the claim on y max}
      In this case, one has that
      \begin{equation*}
        y_{\max}(\tau)\leq2H.
    \end{equation*}
   \end{clm}
   \begin{proof}[Proof of the Claim]
Combine equation (\ref{lower bound of x min}) and equation (\ref{upper bound of y max with x min}),
    \begin{equation*}
        y_{\max}(\tau)\leq \frac{4H\rho}{\frac{\delta_0}{2H}-\rho}+H
        =\frac{8H^2\rho}{\delta_0-2H\rho}+H.
    \end{equation*}
    Because of the choice of $\rho$ (equation \eqref{the choice of rho}),
    \begin{equation*}
         y_{\max}(\tau)\leq \frac{8H (H\rho)}{\frac{\delta_0}{2}}+H\leq 2H.
    \end{equation*}
    \end{proof}
     By equation (\ref{lower bound of x max}),
     \begin{equation}
x_{\max}(\tau)\geq\frac{\delta_0}{3H}-H.
\end{equation}
Combined with equation (\ref{gradient control in terms of y max}), for $x\in[x(y_{\max}(\tau)),\rho]$,
    \begin{equation}
        |y_x^1|\leq \frac{ y_{\max}(\tau)+H}{x_{\max}(\tau)-\rho}
        \leq \frac{ 3H}{\frac{\delta_0}{3H}-H-\rho}
        =\frac{ 9H^2}{\delta_0-3H^2-3H\rho}
        \leq\frac{ 9H^2}{\frac{\delta_0}{2}}=\frac{18}{\delta_0}H^2
    \end{equation}
    for $H$ small.
\end{proof}
\subsection{\texorpdfstring{Angle-bisecting rotation and $C^1$ estimates on the lower branch}{Gradient estimates on the lower branch}}
\label{the subsection on gradient estimates w.r.t. Sbis}
In this subsection, we always consider the rotated curve $S^{bis}_\tau\Gamma(\cdot,\tau)$ (Definition \ref{definition of the angle bisecting rotation}) and the coordinate functions $x,y$ are taken with respect to this rotation.

Let $S_\tau^1$ be the horizontal rotation defined in Definition \eqref{the choice of the rotation S tau 1} and $(S_\tau^1)^{-1}$ be its inverse. Then by Proposition \ref{gradient estimates on the upper branch}, the rotation $S_\tau^{bis}(S_\tau^1)^{-1}$ is a horizontal rotation by angle $\mu$ with 
\begin{equation}
\label{the equation on estimates of tan 2 mu}
   | \tan(2\mu)|\leq\frac{18}{\delta_0}H^2.
\end{equation}

The goal of this subsection is to get $C^1$ estimates on the upper branch for $S_\tau^{bis}\Gamma(\cdot,\tau)$. Because of the choice of the rotation $S_\tau^{bis}$ (Definition \ref{definition of the angle bisecting rotation}), the estimates on the lower branch is no different from the upper branch.

\begin{lem}
\label{an elementary lemma on tan}
    For $|\theta_1|,|\theta_2|\leq\frac{\pi}{6}$, 
    \begin{equation*}
        |\tan(\theta_1+\theta_2)|\leq2|\tan(\theta_1)|+2|\tan(\theta_2)|.
    \end{equation*}
\end{lem}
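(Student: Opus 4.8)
The plan is to reduce everything to the tangent addition formula and a crude bound on the denominator it produces. Since $|\theta_1|,|\theta_2|\le\frac{\pi}{6}$, we have $|\theta_1+\theta_2|\le\frac{\pi}{3}<\frac{\pi}{2}$, so $\tan(\theta_1+\theta_2)$ is well-defined and the identity
\begin{equation*}
    \tan(\theta_1+\theta_2)=\frac{\tan\theta_1+\tan\theta_2}{1-\tan\theta_1\tan\theta_2}
\end{equation*}
is valid (in particular the denominator does not vanish, as we check next).

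Next I would use the monotonicity of $\tan$ on $[-\frac{\pi}{6},\frac{\pi}{6}]$ to get $|\tan\theta_i|\le\tan\frac{\pi}{6}=\frac{1}{\sqrt3}$ for $i=1,2$, hence $|\tan\theta_1\tan\theta_2|\le\frac13$ and therefore
\begin{equation*}
    |1-\tan\theta_1\tan\theta_2|\ge 1-\tfrac13=\tfrac23>0.
\end{equation*}
Combining this lower bound with the triangle inequality in the numerator of the addition formula gives
\begin{equation*}
    |\tan(\theta_1+\theta_2)|\le\frac{|\tan\theta_1|+|\tan\theta_2|}{2/3}=\frac32\big(|\tan\theta_1|+|\tan\theta_2|\big)\le 2|\tan\theta_1|+2|\tan\theta_2|,
\end{equation*}
which is the claimed inequality.

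There is really no substantive obstacle here: the only points requiring a word of care are confirming that $\theta_1+\theta_2$ stays in the domain where $\tan$ is defined and that the denominator $1-\tan\theta_1\tan\theta_2$ is bounded away from zero, both of which follow from the hypothesis $|\theta_i|\le\frac{\pi}{6}$. The constant $2$ in the conclusion is not sharp (one could even write $\frac32$), but $2$ is all that is needed for the later applications, so I would not optimize it.
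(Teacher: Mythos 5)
Your proof is correct, but it takes a genuinely different route from the paper. The paper writes $\tan(\theta_1+\theta_2)=\sin(\theta_1+\theta_2)/\cos(\theta_1+\theta_2)$, bounds $1/|\cos(\theta_1+\theta_2)|\le 2$ (since $|\theta_1+\theta_2|\le\pi/3$), and then uses the factorization $\sin(\theta_1+\theta_2)=(\tan\theta_1+\tan\theta_2)\cos\theta_1\cos\theta_2$ together with $|\cos\theta_1\cos\theta_2|\le 1$. You instead invoke the tangent addition formula $\tan(\theta_1+\theta_2)=(\tan\theta_1+\tan\theta_2)/(1-\tan\theta_1\tan\theta_2)$ and bound the denominator below by $2/3$ via $|\tan\theta_1\tan\theta_2|\le 1/3$. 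Both routes are elementary and equally short; your version is slightly sharper (it yields the constant $3/2$ rather than $2$), while the paper's version avoids having to verify that the denominator of the addition formula is nonvanishing by never introducing it. Either argument would serve the later applications equally well.
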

\begin{proof}
    One has that
    \begin{align*}
        |\tan(\theta_1+\theta_2)|&=\left|\frac{\sin(\theta_1+\theta_2)}{\cos(\theta_1+\theta_2)}\right|
        \leq 2|\sin(\theta_1+\theta_2)|\\
        &=2|(\tan\theta_1+\tan\theta_2)\cos\theta_1\cos\theta_2|\\
        &\leq2|(\tan\theta_1+\tan\theta_2)|.
    \end{align*}
\end{proof}

We now prove the estimates for rotation $S_\tau^{bis}$ based on estimates for rotation $S_\tau^1$. Recall that $|\mathcal{O}p(\tau)|$, $|\mathcal{O}q(\tau)|\leq H$.
\begin{lem}
\label{gradient estimates for both branches}
    On the upper branch of $S_\tau^{bis}\Gamma(\cdot,\tau)$, for $x\in[-\frac{3}{4}\rho,\frac{3}{4}\rho]$, one has that
    \begin{equation*}
        |y_x^1|\leq\frac{72}{\delta_0}H^2.
    \end{equation*}
\end{lem}
\begin{proof}
    By Proposition \ref{gradient estimates on the upper branch} and Lemma \ref{an elementary lemma on tan}, 
     \begin{equation*}
        |y_x^1|\leq 2|\tan\mu|+2\frac{18}{\delta_0}H^2
        \leq 2|\tan2\mu|+2\frac{18}{\delta_0}H^2.
    \end{equation*}
    Combined with equation \eqref{the equation on estimates of tan 2 mu}, this lemma is true.
\end{proof}
\subsection{\texorpdfstring{$C^1$ estimates for rotation by an angle no more than $H$}{Gradient estimates for small rotation}}
\label{the subsection on gradient estimates w.r.t. angle no more than H}
In this subsection, we use the established gradient estimates (Lemma \ref{gradient estimates for both branches}) with respect to time-dependent directions associated with the rotation $S_\tau^{bis}$ to get the desired $C^1$ estimates for $y$ in Proposition \ref{the proposition on estimates of linear scales}.

Recall that $\delta_0$ is the constant in Lemma \ref{the lemma on the lower bound of the rescaled area}.
\begin{lem}
  \label{the lemma on $C^1$ estimates for time dependent rotations}
    There exists $H_0^\prime>0$ small and time $\tau_0^\prime$ large such that the following holds. 
    
    Suppose that at a time $\tau\geq\tau_0^\prime$, there is a horizontal rotation $S_\tau$ such that $P_{xy}(S_\tau\Gamma(\cdot,\tau)\cap \{|x|\leq 2\})$ consists of the graphs of functions $y^1,y^2$ with
    \begin{equation}
     \label{the equation on the assumption of C1 small from -2 to 2, y only}
       \|y^i(x,\tau)\|_{C^1[-2,2]}\leq H\leq H_0^\prime, \quad i=1,2.
    \end{equation}

Then $P_{xy}(S_\tau\Gamma(\cdot,\tau)\cap \{|x|\leq \frac{3}{4}\rho^{\delta_0}(H)\})$ is a union of the graphs of functions $y^i(\cdot,\tau),i=1,2$. In addition, there exists an angle $\lambda=\lambda(\tau)$ with $|\tan\lambda|\leq 2H$ such that for $x\in[-\frac{1}{2}\rho^{\delta_0}(H),\frac{1}{2}\rho^{\delta_0}(H)]$,
    \begin{equation*}
       |y_x^i(x,\tau)-\tan\lambda(\tau)|\leq CH^2, \quad i=1,2.
    \end{equation*}
Moreover, for $x\in[-\frac{1}{2}\rho^{\delta_0}(H),\frac{1}{2}\rho^{\delta_0}(H)]$, one has
    \begin{equation*}
         |y^i|\leq 3H(|x|+1), \quad  |y_x^i(x,\tau)|\leq 3H.
    \end{equation*}
\end{lem}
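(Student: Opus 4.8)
The plan is to reduce, through a rotation of small angle, to the gradient estimates already proved for the canonical rotations $S^1_\tau$ (Definition \ref{the choice of the rotation S tau 1}) and $S^{bis}_\tau$ (Definition \ref{definition of the angle bisecting rotation}). First take $\tau_0'$ large enough that Lemma \ref{two local min points of norm square of the projection curves} applies on $B_3(0)$, and $H_0'$ small enough for Proposition \ref{gradient estimates on the upper branch} and the elementary inequalities below. From \eqref{the equation on the assumption of C1 small from -2 to 2, y only} and Lemma \ref{the lemma from c0 norm to distance to the origin}, $|\mathcal{O}p(\tau)|,|\mathcal{O}q(\tau)|\le H$ (the two minimum points of $|\og|^2$ being well defined and smoothly trackable by Lemma \ref{two local min points of norm square of the projection curves}); since their $x$-coordinates are $\le H\le 2$, they lie on the two graphs over $|x|\le 2$. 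Writing the upper branch as $x\mapsto(x,y^1(x))$, the critical-point equation for $x^2+y^1(x)^2$ gives $x_p=-y^1(x_p)y^1_x(x_p)$, so $|x_p|\le H^2$, $p(\tau)=(x_p,y^1(x_p))$, $|y^1(x_p)|\le H$; moreover $|y^1(x_p)-y^1(0)|\le|x_p|\sup|y^1_x|\le H^2|y^1(x_p)|$, so $y^1(x_p)$ has the same sign as $y^1(0)$, and similarly for $q(\tau)$.

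The key step is to show $S^{bis}_\tau$ is $O(H)$-close to $S_\tau$. Since $x_p=-y^1(x_p)y^1_x(x_p)$, the segment $\mathcal{O}p(\tau)$ is parallel to $(-y^1_x(x_p),1)$, hence its direction makes angle $\le\arctan H$ with the $y$-axis of the $S_\tau$-frame; the same holds for $\mathcal{O}q(\tau)$. Because the origin lies in the interior of the convex curve $\og(\cdot,\tau)$ (implicit in Definition \ref{definition of the rescaled area A_m}, and true for $\tau$ large), it lies strictly between the two branches at $x=0$, so by the sign statement $p(\tau),q(\tau)$ lie on opposite sides of the $x$-axis. Hence the bisector of $\angle p(\tau)\mathcal{O}q(\tau)$ is within $\arctan H$ of the $x$-axis of the $S_\tau$-frame, so the angle-bisecting rotation $S^{bis}_\tau$ differs from $S_\tau$ by an angle $\lambda=\lambda(\tau)$ with $|\lambda|\le\arctan H$, whence $|\tan\lambda|\le 2H$ for $H_0'$ small.

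To finish, I transfer the estimates. Since $|\mathcal{O}p(\tau)|,|\mathcal{O}q(\tau)|\le H$, Lemma \ref{gradient estimates for both branches} gives, in the $S^{bis}_\tau$-frame, that both branches are graphs $\tx\mapsto(\tx,Y^i(\tx))$ over $[-\tfrac34\rho,\tfrac34\rho]$ with $|Y^i_{\tx}|\le\tfrac{72}{\delta_0}H^2$, and integrating from the minimum point ($|\tx|\le H$ there) gives $|Y^i|\le CH$. The rotation of angle $\lambda$ changes coordinates by $x=\tx\cos\lambda-Y^i(\tx)\sin\lambda$, which is monotone (derivative $\cos\lambda-Y^i_{\tx}\sin\lambda>\tfrac12$) and moves the endpoints of $[-\tfrac34\rho,\tfrac34\rho]$ by only $O(\rho H^2)+O(H^2)=O(H)$, negligible on the scale $\rho=\tfrac{\delta_0}{20H}$ and absorbed by the slack between the $\rho$ of Proposition \ref{gradient estimates on the upper branch} and the $\tfrac34\rho$ used here; this yields graphicality of $P_{xy}(S_\tau\Gamma(\cdot,\tau)\cap\{|x|\le\tfrac34\rho\})$. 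For the $S_\tau$-frame graph function, still denoted $y^i$,
\begin{equation*}
    y^i_x=\frac{Y^i_{\tx}+\tan\lambda}{1-Y^i_{\tx}\tan\lambda},
    \qquad\text{hence}\qquad
    y^i_x-\tan\lambda=\frac{Y^i_{\tx}(1+\tan^2\lambda)}{1-Y^i_{\tx}\tan\lambda},
\end{equation*}
so $|y^i_x-\tan\lambda|\le CH^2$ on $[-\tfrac12\rho,\tfrac12\rho]$; then $|y^i_x|\le|\tan\lambda|+CH^2\le 3H$, and integrating from $x_p$ (where $|y^i(x_p)|\le H$, $|x_p|\le H^2$) gives $|y^i(x)|\le H+3H(|x|+H^2)\le 3H(|x|+1)$.

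The main obstacle is the second step: upgrading the trivial bound ``$S^{bis}_\tau$ differs from $S_\tau$ by a bounded angle'' to the scale-$H$ estimate $|\tan\lambda|\le 2H$. This genuinely requires the $C^1$ part of the hypothesis — not only the $C^0$ bound used for Lemma \ref{the lemma from c0 norm to distance to the origin} — to pin the directions of $\mathcal{O}p(\tau),\mathcal{O}q(\tau)$ near the $y$-axis, together with the geometric fact that the origin lies between the two branches. The subsequent transfer is routine trigonometry and integration; the only bookkeeping is that the small rotation does not erode the linear scale $\rho\sim H^{-1}$.
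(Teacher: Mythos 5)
Your proposal is correct and follows essentially the same route as the paper's own proof: identify $\lambda$ via the directions of $\mathcal{O}p(\tau)$ and $\mathcal{O}q(\tau)$ (equivalently, the slopes $y_x$ at the minimum points, since those vectors are normal to the curve), bound $|\tan\lambda|$ by $2H$ using the $C^1$ hypothesis, and then transfer the $O(H^2)$ gradient bound from the $S^{bis}_\tau$-frame (Lemma \ref{gradient estimates for both branches}) to the $S_\tau$-frame. The paper states $\lambda(\tau)=\tfrac12(\arctan y_x(p)+\arctan y_x(q))$ directly and transfers the slope bound with elementary tangent inequalities, while you derive the same angle from the normal-vector geometry and transfer the bound with the explicit rotation formula for graph slopes; you also spell out the opposite-sign argument for $p,q$ and the graphicality-range bookkeeping that the paper leaves implicit. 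These are presentational differences only, not a genuinely different argument.
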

\begin{proof}[Proof of Lemma \ref{the lemma on $C^1$ estimates for time dependent rotations}]
Recall Definition \ref{labeling of the minimum points},  the coordinate functions $x,y$ are taken with respect to the rotation $S_\tau$. 

We define
\begin{equation*}
    \lambda(\tau)=\frac{\arctan y_x(p(\tau))+\arctan y_x(q(\tau))}{2}.
\end{equation*}
By equation \eqref{the equation on the assumption of C1 small from -2 to 2, y only}, combined with Lemma \ref{an elementary lemma on tan} and $|2\tan\frac{\theta}{2}|\leq|\tan\theta|$ for small $\theta$,
\begin{equation*}
    |\tan\lambda(\tau)|\leq|y_x(p(\tau))|+|y_x(q(\tau))|\leq 2H.
\end{equation*}
Based on the definition of $\lambda(\tau)$ and definition of the rotation $S_\tau^{bis}$ (Definition \ref{definition of the angle bisecting rotation}), by Lemma \ref{gradient estimates for both branches}, for $x\in[-\frac{1}{2}\rho^{\delta_0}(H),\frac{1}{2}\rho^{\delta_0}(H)]$,
\begin{equation*}
    |\tan\left(\arctan y_x^i(x,\tau)-\lambda(\tau)\right)|\leq CH^2.
\end{equation*}
Because $|\tan(\theta_1-\theta_2)|\geq\frac{1}{2}|\tan(\theta_1)-\tan(\theta_2)|$ for $\theta_1,\theta_2$ small,
 \begin{equation*}
       |y_x^i(x,\tau)-\tan\lambda(\tau)|\leq CH^2.
    \end{equation*}
As a result,
\begin{equation*}
     |y_x^i(x,\tau)|\leq CH^2+|\tan\lambda|\leq CH^2+2H\leq3H.
\end{equation*}
Recall that we denote by $p(\tau)$ the minimum point of the function $|\og|^2$ on the upper branch and $|\mathcal{O}p(\tau)|\leq H$ (Lemma \ref{the lemma from c0 norm to distance to the origin}), by comparing the upper branch with the tangent line at the point $p(\tau)$, one has
\begin{align*}
     y^1(x,\tau)&\leq y^1(p(\tau)) +|x-x(p(\tau))|\sup_{|x|\leq\frac{1}{2}\rho}|y_x^1|\\
    & \leq H+(|x|+H)  (3H)
    \leq 3H(|x|+1)
\end{align*}
for $x\in[-\frac{1}{2}\rho,\frac{1}{2}\rho]$.

Similarly, for the lower branch, we have $y^2\geq- 3H(|x|+1)$. Because $y^1\geq y^2$, for $x\in[-\frac{1}{2}\rho,\frac{1}{2}\rho]$,
\begin{equation}
    -3H(|x|+1)\leq y^2\leq y^1\leq 3H(|x|+1).
\end{equation}
\end{proof}
\subsection{\texorpdfstring{Estimates for $z_\ell$ (Proof of Proposition \ref{the proposition on estimates of linear scales})}{estimates for vertical directions}}
\label{the subsection on deriving estimates for z from estimates of y}
Recall that we label a point in $\mathbb R^n$ by $(x,y,z_1,\cdots,z_{n-2})$. For $1\leq\ell\leq n-2$, we denote by $\vec{e}_1$, $\vec{e}_2$ and $\vec{e}_{\ell+2}$ the unit vectors in the directions of the positive $x$-axis, $y$-axis and $z_\ell$-axis.

We fix an index $\ell$ with $1\leq\ell\leq n-2$ from now on. The argument in this subsection applies to each such $\ell$ and the dependence on $\ell$ will remain implicit.

For $\alpha\in[0,\frac{\pi}{2})$, we adopt the notion
\begin{equation}
\label{definition of the vector e alpha}
    \vec{e}_\alpha:=\cos\alpha\vec{e}_2+\sin\alpha\vec{e}_{\ell+2}.
\end{equation}

\begin{defi}
\label{the definition of the 2 plane p alpha}
    We denote by $P_\alpha$ the $2$-plane spanned by the vectors $\vec{e}_1$ and $\vec{e}_\alpha$.
\end{defi}
The next lemma follows from \cite[Definition 4.8, Proposition 5.3 for $n=3$ and Proof of Theorem 1.5$(b)$ on Page 21]{sun2024curve}. 
\begin{lem}
   \label{the lemma on stability of one-to-one convex projection}
    There exists $\alpha_0\in(0,\frac{\pi}{2})$ such that the rescaled CSF $\Gamma(\cdot,\tau)$ has a one-to-one convex projection onto the $2$-plane $P_\alpha$ for all $\alpha\in[0,\alpha_0]$ and $\tau\geq\tau_{\alpha_0}$ for some $\tau_{\alpha_0}$.
\end{lem}
Based on Lemma \ref{the lemma on stability of one-to-one convex projection}, for $\alpha\in[0,\alpha_0]$ and $\tau\geq\tau_{\alpha_0}$, we denote by $A^\alpha_1(\tau),A^\alpha_2(\tau)$ the area described in Definition \ref{definition of the rescaled area A_m}, with respect to the projection onto the $2$-plane $P_\alpha$ instead of the $xy$-plane (which is the same as $P_0$). Thus $A^0_1(\tau)=A_1(\tau)$ and $A^0_2(\tau)=A_2(\tau)$, where $A_1(\tau),A_2(\tau)$ are defined in Definition \ref{definition of the rescaled area A_m}.

We fix one $\alpha\in(0,\alpha_0]$ from now on. 
\begin{lem}
\label{lemma:area for projection on alpha plane has a lower bound}
By picking $\alpha$ smaller if necessary, we may assume  $A^\alpha_1(\tau),A^\alpha_2(\tau)\geq\frac{1}{2}\delta_0$ for large enough $\tau$ for the fixed $\alpha$, where $\delta_0$ is the constant in Lemma \ref{the lemma on the lower bound of the rescaled area} (More accurately, $\delta_0=\frac{\pi}{4}\delta_1$ as picked in the proof of Lemma \ref{the lemma on the lower bound of the rescaled area}).
\end{lem}
\begin{proof}
By \cite[Proof of Corollary 5.8, particularly the dependence of $\delta$ on $\Delta_\ell$]{sun2024curve} and by picking $\alpha$ smaller if necessary, we may assume $(\Gamma_\sigma\cdot\vec{e}_1)^2+(\Gamma_\sigma\cdot\vec{e}_\alpha)^2\geq\frac{1}{2}\delta_1=\frac{2}{\pi}\delta_0$ along the flow.

By the proof of Lemma \ref{the lemma on the lower bound of the rescaled area}, particularly equation \eqref{the equation on unsimplified lower bound of area}, there exists $\tau_\alpha$ such that for $\tau\geq\tau_\alpha$ ,
\[A^\alpha_1(\tau),A^\alpha_2(\tau)\geq\frac{\pi}{4}\frac{1}{2}\delta_1=\frac{1}{2}\delta_0.\]
\end{proof}

\subsection*{Proof of Proposition \ref{the proposition on estimates of linear scales}}
What we have in mind is that the limit line is in the direction of the unit vector
\begin{equation*}
    \vec{v}=\frac{1}{\sqrt{1+\sum\limits_{\ell=1}^{n-2}\tan^2\theta_\ell}}
    (\vec{e}_1+\sum\limits_{\ell=1}^{n-2}\tan\theta_\ell\vec{e}_{\ell+2}).
\end{equation*}
To lighten our notation, we define an angle $\theta\in[0,\frac{\pi}{2})$ via 
\begin{equation*}
    \frac{1}{\cos^2\theta}=1+\sum\limits_{\ell=1}^{n-2}\tan^2\theta_\ell.
\end{equation*}

Then we can compute the component of the projection of the vector $\vec{v}$ onto the $2$-plane $P_\alpha$:
\begin{equation*}
    \vec{v}\cdot\vec{e}_1=\cos\theta, \quad 
    \vec{v}\cdot\vec{e}_\alpha=\tan\theta_\ell\sin\alpha\cos\theta.
\end{equation*}
We define an angle $\beta\in[0,\frac{\pi}{2})$ by
\begin{equation}
\label{the equation on the choice of theta}
    \tan\beta=\tan\theta_\ell\sin\alpha.
\end{equation}
Let $\theta_M\in[0,\frac{\pi}{2})$ be as in Remark \ref{the remark on apriori bound of possible angle of the limit line}, one has $\beta\in[0,\theta_M]$.

We define $S_\alpha(\beta)$ to be the rotation of the $2$-plane $P_\alpha$ by an angle $\beta$. That is to say, with respect to the orthonormal basis $\{e_1,e_\alpha\}$ of the $2$-plane $P_\alpha$, one has
\begin{equation*}
    S_\alpha(\beta)=\left(\begin{array}{cc}
\cos \beta & -\sin \beta \\
\sin \beta & \cos \beta
\end{array}\right),
\quad \beta=\beta(\tau).
\end{equation*}
We define the following rotated orthonormal basis of the $2$-plane $P_\alpha$:
\begin{equation}
\label{definition of the rotated vectors}
\vec{e}_1^{\,\prime}:=S_\alpha(\beta)\vec{e}_1, \quad
     \vec{e}_\alpha^{\,\prime}:=S_\alpha(\beta)\vec{e}_\alpha.
\end{equation}

The previous estimates (Lemma \ref{the lemma on $C^1$ estimates for time dependent rotations}) for the projection onto the $xy$-plane also apply to the projection onto the $2$-plane $P_\alpha$ with respect to the rotation $S_\alpha(\beta)$. In other words, we consider the curve $S_{\alpha}(-\beta)\Gamma(\cdot,\tau)$. As a result, for $\tau$ large and $H$ small, by Lemma \ref{the lemma on $C^1$ estimates for time dependent rotations} and Lemma \ref{lemma:area for projection on alpha plane has a lower bound}, based on assumptions of Proposition \ref{the proposition on estimates of linear scales}, for $ |\Gamma(\sigma,\tau)\cdot\vec{e}_1^{\,\prime}|\leq\frac{1}{4}\rho^{\delta_0}(H)$, one has that 
\begin{equation}
\label{y estimates for alpha projection}
    |\Gamma(\sigma,\tau)\cdot\vec{e}_\alpha^{\,\prime}|\leq 3H(|\Gamma(\sigma,\tau)\cdot\vec{e}_1^{\,\prime}|+1)
\end{equation}
and 
\begin{equation}
\label{gradient estimates for alpha projection}
    \left|\frac{\Gamma_\sigma\cdot\vec{e}_\alpha^{\,\prime}}{\Gamma_\sigma\cdot\vec{e}_1^{\,\prime}}(\sigma,\tau)\right|\leq 3H.
\end{equation}
\begin{lem}
    One has that
    \begin{equation}
    \label{an equation to substitute}
        \vec{e}_{\ell+2}-\tan\theta_\ell\vec{e}_1=\frac{1}{\sin\alpha}\left[-\cos\alpha\vec{e}_2+\sqrt{1+\tan^2\beta}\vec{e}_\alpha^{\,\prime}\right].
    \end{equation}   
\end{lem}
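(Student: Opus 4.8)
The plan is to prove the identity \eqref{an equation to substitute} by a completely direct computation: expand the rotated vector $\vec{e}_\alpha^{\,\prime}$ in terms of the standard basis $\{\vec{e}_1,\vec{e}_2,\vec{e}_{\ell+2}\}$, multiply by the scalar $\sqrt{1+\tan^2\beta}$, cancel the $\vec{e}_2$ contribution against $-\cos\alpha\,\vec{e}_2$, and finally divide by $\sin\alpha$, using the defining relation \eqref{the equation on the choice of theta} between $\beta$ and $\theta_\ell$ to rewrite the coefficient of $\vec{e}_1$.

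First I would unwind the definitions. By \eqref{definition of the rotated vectors}, $\vec{e}_\alpha^{\,\prime}=S_\alpha(\beta)\vec{e}_\alpha$, and with respect to the orthonormal basis $\{\vec{e}_1,\vec{e}_\alpha\}$ of $P_\alpha$ this is the second column of the rotation matrix, so $\vec{e}_\alpha^{\,\prime}=-\sin\beta\,\vec{e}_1+\cos\beta\,\vec{e}_\alpha$. Substituting the definition \eqref{definition of the vector e alpha} of $\vec{e}_\alpha$, this becomes
\begin{equation*}
\vec{e}_\alpha^{\,\prime}=-\sin\beta\,\vec{e}_1+\cos\beta\cos\alpha\,\vec{e}_2+\cos\beta\sin\alpha\,\vec{e}_{\ell+2}.
\end{equation*}
Since $\beta=\beta(\tau)\in[0,\beta_0)\subset[0,\tfrac{\pi}{2})$, we have $\cos\beta>0$ and $\sqrt{1+\tan^2\beta}=1/\cos\beta$; multiplying the last display by this factor gives
\begin{equation*}
\sqrt{1+\tan^2\beta}\,\vec{e}_\alpha^{\,\prime}=-\tan\beta\,\vec{e}_1+\cos\alpha\,\vec{e}_2+\sin\alpha\,\vec{e}_{\ell+2}.
\end{equation*}
Adding $-\cos\alpha\,\vec{e}_2$ cancels the $\vec{e}_2$ term, leaving $-\tan\beta\,\vec{e}_1+\sin\alpha\,\vec{e}_{\ell+2}$.

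Finally I would divide by $\sin\alpha$, which is legitimate because $\alpha\in(0,\alpha_0]$, and use \eqref{the equation on the choice of theta} in the form $\tan\beta/\sin\alpha=\tan\theta_\ell$; the result is exactly $\vec{e}_{\ell+2}-\tan\theta_\ell\,\vec{e}_1$, which is the left-hand side of \eqref{an equation to substitute}. There is no substantive obstacle in this argument; the only point demanding a little care is the bookkeeping of the sign convention in $S_\alpha(\beta)$ (reading $\vec{e}_\alpha^{\,\prime}$ off the second column, not the first) together with the positivity $\cos\beta>0$ needed to write $\sqrt{1+\tan^2\beta}=1/\cos\beta$ without an absolute value.
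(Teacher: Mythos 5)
Your proof is correct and takes essentially the same route as the paper: both expand $\vec{e}_\alpha^{\,\prime}=-\sin\beta\,\vec{e}_1+\cos\beta\,\vec{e}_\alpha$, rescale by $\sqrt{1+\tan^2\beta}=1/\cos\beta$ to reach $\vec{e}_\alpha-\tan\beta\,\vec{e}_1=\sqrt{1+\tan^2\beta}\,\vec{e}_\alpha^{\,\prime}$, substitute the definition of $\vec{e}_\alpha$, and then invoke $\tan\beta=\tan\theta_\ell\sin\alpha$. The only difference is cosmetic (you spell out the intermediate sign and positivity checks that the paper leaves implicit).
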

\begin{proof}
By equation (\ref{definition of the rotated vectors}),
    \begin{equation*}
        \vec{e}_\alpha-\tan\beta\vec{e}_1=\sqrt{1+\tan^2\beta}\vec{e}_\alpha^{\,\prime}
    \end{equation*}
    and combined with equation (\ref{definition of the vector e alpha}),
\begin{equation*}
    \sin\alpha\vec{e}_{\ell+2}-\tan\beta\vec{e}_1
     =-\cos\alpha\vec{e}_2+\sqrt{1+\tan^2\beta}\vec{e}_\alpha^{\,\prime}.
    \end{equation*}
This lemma then follows from equation (\ref{the equation on the choice of theta}).
\end{proof}
\begin{lem}
    One has that,
    \begin{equation}
    \label{the equation on estimates of e alpha prime}
          |\Gamma\cdot\vec{e}_\alpha^{\,\prime}|
        \leq CH(|x|+1).
    \end{equation}
\end{lem}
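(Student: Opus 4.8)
The plan is to derive \eqref{the equation on estimates of e alpha prime} from the already-established graphical bound \eqref{y estimates for alpha projection} by passing from the rotated coordinate $\Gamma\cdot\vec{e}_1^{\,\prime}$ back to the original coordinate $x=\Gamma\cdot\vec{e}_1$. The only structural input needed is that the rotation angle $\beta=\beta(\tau)$ stays bounded away from $\tfrac{\pi}{2}$ uniformly in $\tau$; this makes the change of coordinates within $P_\alpha$ a bi-Lipschitz map with constants independent of time, so \eqref{y estimates for alpha projection} transfers to an estimate in $|x|$ at the cost of a harmless change of the constant. Without this observation one only controls $\Gamma\cdot\vec{e}_\alpha^{\,\prime}$ in terms of $\Gamma\cdot\vec{e}_1^{\,\prime}$, which is not yet the claimed inequality.

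Concretely, since $\{\vec{e}_1^{\,\prime},\vec{e}_\alpha^{\,\prime}\}$ is the image of $\{\vec{e}_1,\vec{e}_\alpha\}$ under the planar rotation $S_\alpha(\beta)$ (see \eqref{definition of the rotated vectors}), inverting this orthogonal map gives $\vec{e}_1=\cos\beta\,\vec{e}_1^{\,\prime}-\sin\beta\,\vec{e}_\alpha^{\,\prime}$, hence
\[
\Gamma\cdot\vec{e}_1^{\,\prime}=\frac{1}{\cos\beta}\left(x+\sin\beta\,(\Gamma\cdot\vec{e}_\alpha^{\,\prime})\right).
\]
By Remark \ref{the remark on apriori bound of possible angle of the limit line}, applied through \eqref{the equation on the choice of theta}, we have $\beta\in[0,\beta_0)$ with $\beta_0<\tfrac{\pi}{2}$ depending only on the initial curve, so $\cos\beta\geq\cos\beta_0>0$. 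Plugging the previous display into \eqref{y estimates for alpha projection} and using $|\sin\beta|\leq1$ gives
\[
|\Gamma\cdot\vec{e}_\alpha^{\,\prime}|\leq 3H\left(\frac{1}{\cos\beta_0}|x|+\frac{1}{\cos\beta_0}|\Gamma\cdot\vec{e}_\alpha^{\,\prime}|+1\right);
\]
once $H$ is small enough that $\tfrac{3H}{\cos\beta_0}\leq\tfrac{1}{2}$, the term $|\Gamma\cdot\vec{e}_\alpha^{\,\prime}|$ on the right is absorbed into the left-hand side, and one obtains $|\Gamma\cdot\vec{e}_\alpha^{\,\prime}|\leq CH(|x|+1)$ with $C=6/\cos\beta_0$ independent of $\tau$, which is \eqref{the equation on estimates of e alpha prime}.

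The only point requiring care — and what I regard as the main (minor) obstacle — is matching the domains: \eqref{y estimates for alpha projection} is available only where $|\Gamma\cdot\vec{e}_1^{\,\prime}|\leq\tfrac{1}{4}\rho^{\delta_0}(H)$, while \eqref{the equation on estimates of e alpha prime} is phrased in terms of $|x|$. On the admissible region, \eqref{y estimates for alpha projection} already forces $|\Gamma\cdot\vec{e}_\alpha^{\,\prime}|\leq 3H(\tfrac{1}{4}\rho^{\delta_0}(H)+1)$, a fixed constant of order $\delta_0$ independent of $H$, so there $|x|$ and $|\Gamma\cdot\vec{e}_1^{\,\prime}|$ are comparable up to an additive constant of the same order. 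Starting from the two minimum points $p(\tau),q(\tau)$ of $|\og|^2$ (Definition \ref{labeling of the minimum points}), where all relevant quantities are $\leq H$ by Lemma \ref{the lemma from c0 norm to distance to the origin}, a short continuity argument along the curve then shows that $\{|x|\leq c\,\rho^{\delta_0}(H)\}$ lies inside the admissible set for a suitable fixed $c>0$; this is the range on which \eqref{the equation on estimates of e alpha prime} is to be read, and with it the algebraic estimate above completes the proof.
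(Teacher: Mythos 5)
Your proof is correct and is essentially the paper's own argument: both express $\Gamma\cdot\vec{e}_1^{\,\prime}$ in terms of $x$ and $\Gamma\cdot\vec{e}_\alpha^{\,\prime}$ (the paper uses the two forward rotation identities and substitutes, arriving at $|\Gamma\cdot\vec{e}_1^{\,\prime}|\le(\cos\beta+\sin\beta\tan\beta)|x|+\tan\beta|\Gamma\cdot\vec{e}_\alpha^{\,\prime}|=\tfrac{1}{\cos\beta}|x|+\tan\beta|\Gamma\cdot\vec{e}_\alpha^{\,\prime}|$, algebraically identical to your inverted form), then plug into \eqref{y estimates for alpha projection} and absorb the $|\Gamma\cdot\vec{e}_\alpha^{\,\prime}|$ term for $H$ small using $\beta\le\beta_0<\tfrac{\pi}{2}$. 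Your domain-matching remarks are consistent with what the paper records afterward (the estimate holding for $|x|\le\lambda\rho^{\delta_0}(H)$ with $\lambda$ depending on $\beta$) and do not change the substance of the argument.
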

\begin{proof}
    If follows from $\vec{e}_1^{\,\prime}=\cos\beta\vec{e}_1+\sin\beta\vec{e}_\alpha$ that
    \begin{equation*}
        |\Gamma\cdot\vec{e}_1^{\,\prime}|
\leq\cos\beta|\Gamma\cdot\vec{e}_1|+\sin\beta|\Gamma\cdot\vec{e}_\alpha|.
    \end{equation*}
    Combined with $\vec{e}_\alpha^{\,\prime}=-\sin\beta\vec{e}_1+\cos\beta\vec{e}_\alpha$,
    \begin{align*}
        |\Gamma\cdot\vec{e}_1^{\,\prime}|
&\leq\cos\beta|\Gamma\cdot\vec{e}_1|+\tan\beta(|\Gamma\cdot\vec{e}_\alpha^{\,\prime}|+\sin\beta|\Gamma\cdot\vec{e}_1|)\\
&\leq(\cos\beta+\sin\beta\tan\beta)|\Gamma\cdot\vec{e}_1|+\tan\beta|\Gamma\cdot\vec{e}_\alpha^{\,\prime}|.
    \end{align*}
    It follows from equation \eqref{y estimates for alpha projection} that
    \begin{equation*}
           (1-3H\tan\beta) |\Gamma\cdot\vec{e}_\alpha^{\,\prime}|\leq 3H((\cos\beta+\sin\beta\tan\beta)|\Gamma\cdot\vec{e}_1|+1).
    \end{equation*}
    As a result, for $H$ small enough, because $\beta\leq\theta_M<\frac{\pi}{2}$, where $\theta_M$ is as in Remark \ref{the remark on apriori bound of possible angle of the limit line}, 
    \begin{equation*}
        |\Gamma\cdot\vec{e}_\alpha^{\,\prime}|
        \leq CH(|\Gamma\cdot\vec{e}_1|+1).
    \end{equation*}
\end{proof}

Now we are ready to estimate $z_\ell$ direction.
\begin{equation*}
    |z_\ell-(\tan\theta_\ell) x|=|\Gamma\cdot\vec{e}_{\ell+2}-(\tan\theta_\ell)\Gamma\cdot\vec{e}_1|.
\end{equation*}
By equation (\ref{an equation to substitute}),
\begin{align*}
    |z_\ell-(\tan\theta_\ell) x|
    =\frac{1}{\sin\alpha}|\Gamma\cdot[-\cos\alpha\vec{e}_2+\sqrt{1+\tan^2\beta}\vec{e}_\alpha^{\,\prime}]|.
\end{align*}
As a result, by Lemma \ref{the lemma on $C^1$ estimates for time dependent rotations} and equation \eqref{the equation on estimates of e alpha prime},
\begin{align}
\label{the equation on estimates for z directions}
    |z_\ell-(\tan\theta_\ell) x|\leq\frac{\cos\alpha}{\sin\alpha}|\Gamma\cdot\vec{e}_2|+\frac{\sqrt{1+\tan^2\beta}}{\sin\alpha}|\Gamma\cdot\vec{e}_\alpha^{\,\prime}|
    \leq CH(|x|+1)
\end{align}
for $|x|\leq\lambda\rho(H)$ for some constant $\lambda=\lambda(\beta)\in(0,1)$. For example, $\lambda(\beta)=\frac{1}{8}\cos\beta$.

For the gradient estimates,
\begin{equation*}
    |z_{\ell x}-\tan\theta_\ell|=\left|\frac{\Gamma_\sigma\cdot\vec{e}_{\ell+2}}{\Gamma_\sigma\cdot\vec{e}_1}-\tan\theta_\ell\right|
    =\left|\frac{\Gamma_\sigma\cdot(\vec{e}_{\ell+2}-\tan\theta_\ell\vec{e}_1)}{\Gamma_\sigma\cdot\vec{e}_1}\right|.
\end{equation*}
By equation (\ref{an equation to substitute}),
\begin{align*}
    |z_{\ell x}-\tan\theta_\ell|
    =\frac{1}{\sin\alpha}\left|\frac{\Gamma_\sigma\cdot(-\cos\alpha\vec{e}_2+\sqrt{1+\tan^2\beta}\vec{e}_\alpha^{\,\prime})}{\Gamma_\sigma\cdot\vec{e}_1}\right|.
\end{align*}
As a result,
\begin{align*}
     |z_{\ell x}-\tan\theta_\ell|\leq\frac{\cos\alpha}{\sin\alpha}\left|\frac{\Gamma_\sigma\cdot\vec{e}_2}{\Gamma_\sigma\cdot\vec{e}_1}\right|
     +\frac{\sqrt{1+\tan^2\beta}}{\sin\alpha}
     \left|\frac{\Gamma_\sigma\cdot\vec{e}_\alpha^{\,\prime}}{\Gamma_\sigma\cdot\vec{e}_1^{\,\prime}}\right|\left|\frac{\Gamma_\sigma\cdot\vec{e}_1^{\,\prime}}{\Gamma_\sigma\cdot\vec{e}_1}\right|.
\end{align*}
By Lemma \ref{lower bounds of c} and Lemma \ref{gradient estimates for both branches} ,
\begin{equation*}
     \left|\frac{\Gamma_\sigma\cdot\vec{e}_1^{\,\prime}}{\Gamma_\sigma\cdot\vec{e}_1}\right|
    \leq  \frac{1}{\left|\Gamma_\sigma\cdot\vec{e}_1\right|}\leq C<+\infty.
\end{equation*}
By Lemma \ref{gradient estimates for both branches} and equation (\ref{gradient estimates for alpha projection}),
\begin{align}
\label{the equation on estimates for z directions derivatives}
     |z_{\ell x}-\tan\theta_\ell|\leq\left[\frac{\cos\alpha}{\sin\alpha}
     +\frac{\sqrt{1+\tan^2\beta}}{\sin\alpha}
     \left|\frac{\Gamma_\sigma\cdot\vec{e}_1^{\,\prime}}{\Gamma_\sigma\cdot\vec{e}_1}\right|\right](3H)\leq CH.
\end{align}
In summary, Proposition \ref{the proposition on estimates of linear scales} follows from combining Lemma \ref{the lemma on $C^1$ estimates for time dependent rotations}, equation \eqref{the equation on estimates for z directions} and equation \eqref{the equation on estimates for z directions derivatives}. The constants depend on $\alpha$, which is independent of time $\tau$; see Remark \ref{the remark on dependence of constants on the three-point condition}.

\subsection{Proof of Proposition \ref{the proposition on C2 estimates over time intervals}}
\label{subsection:proof of the proposition on C2 estimates over time intervals}
\begin{lem}
\label{equations of the graphical rescaled CSF}
One can compute the equations of the graphical rescaled CSF:
\begin{align}
&y_\tau=
\frac{y_{xx}}{1+y_x^2+{z_1}_x^2+\cdots+{z_{(n-2)}}_x^2}-xy_x+y\\
&{z_\ell}_\tau=
\frac{{z_\ell}_{xx}}{1+y_x^2+{z_1}_x^2+\cdots+{z_{(n-2)}}_x^2}-x{z_\ell}_x+{z_\ell},
\end{align}
where $1\leq \ell\leq n-2$.
\end{lem}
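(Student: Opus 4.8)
The plan is to obtain these equations from the non-parametric (graphical) form of the unrescaled CSF already recorded in \S\ref{the section on the barrier as a subsolution}, via Huisken's parabolic change of variables; equivalently one could start from the parametrized rescaled CSF $\Gamma_\tau=\Gamma_{\sigma\sigma}+\Gamma^\perp$ and read off coordinate components, but the bookkeeping is identical and the first route is shorter. First I would recall that a graph $(\bar x,\bar y(\bar x,t),\bar z_1(\bar x,t),\dots,\bar z_{n-2}(\bar x,t))$ over the first axis, evolving by CSF and reparametrized so that the $\bar x$–coordinate is time–independent, satisfies
\[
\bar y_t=\frac{\bar y_{\bar x\bar x}}{1+\bar y_{\bar x}^2+\bar z_{1\bar x}^2+\cdots+\bar z_{(n-2)\bar x}^2},
\]
together with the same equation with $\bar y$ replaced by each $\bar z_\ell$; this is exactly the system written just before Definition \ref{definition of Y}.

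Next I would pass to the rescaled picture. Writing $\mu(t):=\sqrt{2T-2t}=\sqrt 2\,e^{-\tau}$, the rescaled curve $\Gamma=\gamma/\mu$ is again a graph $(x,y(x,\tau),z_1(x,\tau),\dots)$ over the $x$–axis, with $\bar x=\mu x$ and $\bar y(\bar x,t)=\mu\,y(x,\tau)$ (and similarly for $z_\ell$). Differentiating in $\bar x$ shows that slopes are scale invariant, $\bar y_{\bar x}=y_x$, $\bar z_{\ell\bar x}=z_{\ell x}$, and that $\bar y_{\bar x\bar x}=\mu^{-1}y_{xx}$, so the denominators match. The one computation requiring care is the time derivative: using $d\tau/dt=1/\mu^2$ and $\mu'(t)=-1/\mu$, differentiating $\bar y(\bar x,t)=\mu(t)\,y\!\left(\bar x/\mu(t),\tau(t)\right)$ at fixed $\bar x$ gives $\bar y_t=\mu^{-1}\bigl(y_\tau+xy_x-y\bigr)$; in other words, on the rescaled graph functions, differentiation in $t$ at fixed $\bar x$ corresponds to $\mu^{-1}(\partial_\tau+x\partial_x-1)$. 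Substituting these relations into the displayed equation above and cancelling the common factor $\mu^{-1}$ yields
\[
y_\tau+xy_x-y=\frac{y_{xx}}{1+y_x^2+z_{1x}^2+\cdots+z_{(n-2)x}^2},
\]
which rearranges to the asserted equation for $y$; the computation for each $z_\ell$ is verbatim the same.

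The whole argument is elementary, and there is no real obstacle; the only genuine subtlety — and the place where a sign or a term is easy to drop — is the chain rule in the last step, because the spatial variable $x$ of the \emph{rescaled} graph itself depends on $t$, so the $t$–derivative taken at fixed $\bar x$ is not the $t$–derivative taken at fixed $x$. Keeping track of this dependence is precisely what produces the transport term $-xy_x$ and the zeroth–order term $+y$ in the rescaled equations.
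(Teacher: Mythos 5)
Your proof is correct and is the natural computation one would supply for this lemma, which the paper leaves as a direct verification. The change of variables is carried out accurately: slopes are scale-invariant ($\bar y_{\bar x}=y_x$), second derivatives pick up a factor $\mu^{-1}$, and the time derivative at fixed $\bar x$ produces $\mu^{-1}(y_\tau + x y_x - y)$, so that after cancelling $\mu^{-1}$ and rearranging one obtains exactly the displayed rescaled equations; the $z_\ell$ equations follow identically. You also rightly flag the one delicate point, namely that $\partial_t$ at fixed $\bar x$ differs from $\partial_t$ at fixed $x$, which is precisely what generates the transport term $-xy_x$ and the zeroth-order term $+y$.
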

We drop the index $i$ for simplicity.

By Proposition \ref{the proposition on estimates of linear scales}, $S_\taup\Gamma(\cdot,\tau)\cap \{|x|\leq \frac{3}{4}\rho^{\delta_0}(H)\}$ is a union of the graphs of functions $y^i(\cdot,\tau),z^i_\ell(\cdot,\tau),i=1,2$ and for $x\in[-\frac{1}{2}\lambda_0\rho^{\delta_0}(H),\frac{1}{2}\lambda_0\rho^{\delta_0}(H)]$, 
\begin{equation*}
    |y_x^i(x,\tau)|\leq CH,\quad |z_{\ell x}^i(x,\tau)-\tan\theta_\ell(\tau^\prime)|\leq CH\text{ for }i=1,2,
\end{equation*}
where $\theta_\ell(\tau^\prime)\in[0,\theta_M]$ is bounded away from $\frac{\pi}{2}$ by Remark \ref{the remark on apriori bound of possible angle of the limit line}.    

The function $y$ satisfies the following equation of the rescaled CSF (Lemma \ref{equations of the graphical rescaled CSF}):
\begin{equation*}
y_\tau=a(x,\tau)y_{xx}-xy_x+y, \text{ where } a=\frac{1}{1+y_x^2+z_{1x}^2+\cdots+z_{(n-2)x}^2}.
\end{equation*}
To clarify that the constant $C$ is independent of $\tau^\prime$, $\rho$ and $\mathcal T$, for arbitrary $\tau_1\in[\taup+1,\taup+\mathcal{T}]$ and $|x_1|\leq\frac{\rho}{2}-2$, we do a change of variables $\bar x=x-x_1e^{\tau-\tau_1}$, $\bar \tau=\tau-\tau_1$, $y(x,\tau)=\bar y(\bar x,\bar \tau)$ and $a(x,\tau)=\bar a(\bar x,\bar \tau)$. 
\begin{clm}
   \label{the claim on the graphical equation after change of variables}
    With above change of variables, one has
    \begin{equation}
    \label{eq:graphical rescaled CSF after changing of variables}
    \bar y_{\bar\tau}=\bar a(\bar x,\bar \tau)\bar y_{\bar x \bar x}-\bar x \bar y_{\bar x}+\bar y.
\end{equation}
\end{clm}
\begin{proof}[Proof of the Claim]
    By direct computations,
\begin{equation*}
    y_\tau=\bar y_{\bar\tau}-x_1e^{\tau-\tau_1}\bar y_{\bar x}
\end{equation*}
and
\begin{equation*}
    y_x=\bar y_{\bar x},\quad  y_{xx}=\bar y_{\bar x \bar x}.
\end{equation*}
Thus $\bar y_{\bar\tau}-x_1e^{\tau-\tau_1}\bar y_{\bar x}=\bar a(\bar x,\bar \tau)\bar y_{\bar x \bar x}-(\bar x +x_1e^{\tau-\tau_1})\bar y_{\bar x}+\bar y$.
\end{proof}
By Claim \ref{the claim on the graphical equation after change of variables}, the function $\bar y_{\bar x}$ satisfies the following equation in divergence form:
\begin{equation}
\label{the equation that yx satisfies, after change of variables}
     (\bar y_{\bar x})_{\bar\tau}=(\bar a(\bar y_{\bar x })_{\bar x})_{\bar x}-\bar x (\bar y_{\bar x})_{\bar x}.
\end{equation}

For $|\bar x|\leq2$ and $\bar\tau\in[-1,0]$, the functions $\bar y_{\bar x},(\bar z_{1})_{\bar x},\cdots,(\bar z_{n-2})_{\bar x}$ are uniformly bounded independent of $\tau_1$ and $x_0$. Thus by De Giorgi-Nash-Moser type estimates, see for example \cite[Theorem 1.1, Chapter V, \S 1, page 419]{ladyzhenskaia1968linear},  the function $\bar y_{\bar x}$ is Hölder continuous. Similarly $(\bar z_{1})_{\bar x},\cdots,(\bar z_{n-2})_{\bar x}$ are Hölder continuous. As a result, $\bar a$ is Hölder continuous. 

Thus by gradient estimates for equation \eqref{the equation that yx satisfies, after change of variables}, based on Schauder estimates of equation \eqref{eq:graphical rescaled CSF after changing of variables} and Proposition \ref{the proposition on estimates of linear scales}, for  $|\bar x|\leq1$ and $\bar\tau\in[-\frac{1}{2},0]$, 
\begin{equation}
\label{schauder estimates in a fixed region after change of variables}
    |\bar y_{\bar x \bar x}|\leq C |\bar y_{\bar x }|\leq CH
\end{equation}
for some constant $C$ independent of $\tau_1$ and $x_0$. 

As a result, by taking $x_1=\pm(\frac{\rho}{2}-k)$ for integers $k\in[2,\frac{\rho}{2}]$ and taking $\tau_1\in[\taup+1,\taup+\mathcal{T}]$, it follows from equation \eqref{schauder estimates in a fixed region after change of variables}, for $|x|\leq\frac{1}{\sqrt{e}}(\frac{\rho}{2}-2)+1$ and $\tau\in[\taup+\frac{1}{2},\taup+\mathcal{T}]$,
\begin{equation*}
    |y_{xx}(x,\tau)|=|\bar y_{\bar x \bar x}|\leq CH,
\end{equation*}
where the constant $C$ is independent of $\tau^\prime$, $\rho$ and $\mathcal T$ as long as we have gradient estimates.

The estimates for $|z^i_{\ell xx}|$ can be done similarly because $z_\ell^i(x,\tau)-(\tan\theta_\ell) x$ satisfies the same equation and same $C^1$ estimates as $y^i$.

\section{Uniqueness of tangent flows}
\label{the section on uniqueness of tangent flows}
In this section, we prove uniqueness of tangent flows for CSF with a one-to-one convex projection developing a type~{II} singularity by using the method of Allard-Almgren \cite{allard1981radial}. Our proof is a modification of \cite[\S8]{choi2025uniqueness}.

The proof involves quantitative estimates of flatness. Our goal is to show that at quantitatively large times, the rescaled CSF is quantitatively close to a line and rotates quantitatively slowly. 

Technically, the proof mainly relies on iterations of two procedures: gluing (Proposition \ref{the proposition gluing the domains to larger time interval}) and improvement of flatness  (Proposition \ref{the prop on improvement of flatness}), the proof of which will be postponed. We begin this section by formulating these two procedures and proving the uniqueness of tangent flows. First, we need to introduce two technical definitions of flatness in Definition \ref{definition of delta linear at angle theta} and Definition \ref{definition on condition H taup and T}.

We denote by
\[
\mr_{r,\tau_0}^\tau:=(-r,r)\times[\tau,\tau+\tau_0]
\]
a spacetime domain in $\mathbb R\times[-\frac{1}{2}\log T,+\infty)$.

Let $H_0$, $\tau_0$ be as in Proposition \ref{the proposition on estimates of linear scales}.  
\begin{defi}
\label{definition of delta linear at angle theta}
We say a rescaled CSF $\Gamma$ is \emph{$H$-linear at $\vec\theta$} at time $\tau^\prime$ for some vector $\vec\theta=\vec\theta(\taup)=(\theta_1(\taup),\cdots,\theta_{n-2}(\taup))\in[0,\frac{\pi}{2})^{n-2}\subset\mathbb R^{n-2}$ if $H\leq H_0$, $\taup\geq\tau_0$ and if there exist functions
    \begin{equation}
       y^i,z^i_1,\cdots z^i_{n-2}:
       \mr_{4,2}^{\tau^\prime}\rightarrow\mathbb R \text{ for }i=1,2
    \end{equation}
such that for each $\tau\in[\tau^\prime,\tau^\prime+2]$, $\Gamma(\cdot,\tau)\cap \{(x,y,z_1,\cdots,z_{n-2})\in\mathbb R^n||x|<4\}$ consists of the graphs of the functions $y^i,z^i_1,\cdots z^i_{n-2}$ with
    \begin{equation}
       \|y^i(x,\tau)\|_{C^2(\mr_{4,2}^{\tau^\prime})}+\sum_{\ell=1}^{n-2}\|z^i_\ell(x,\tau)-(\tan\theta_\ell) x\|_{C^2(\mr_{4,2}^{\tau^\prime})}\leq H
    \end{equation}
for indices $i=1,2$, which label the upper and lower branches. 
\end{defi}
\begin{defi}
\label{the definition on H linear time}
    We say a time $\tau_H^{linear}$ is an \emph{$H$-linear time} if for every $\taup\geq\tau_H^{linear}$, there is a horizontal rotation $S_{\taup}^{linear}$ and a vector $\vec\theta=\vec\theta(\taup)\in[0,\frac{\pi}{2})^{n-2}$ such that $S_{\taup}^{linear}\Gamma$ is $H$-linear at $\vec\theta$ at time $\tau^\prime$.
\end{defi}

\begin{defi}
   \label{definition on condition H taup and T}
    We say the rescaled CSF $\Gamma$ is \emph{$({H,\taup,\mathcal{T}})$-flat at $\vec\theta=\vec\theta(\taup)$} for some vector $\vec\theta=\vec\theta(\taup)=(\theta_1(\taup),\cdots,\theta_{n-2}(\taup))\in[0,\frac{\pi}{2})^{n-2}$ if $H\leq H_0$, $\taup\geq\tau_0$ and for any $\tau\in[\taup,\taup+\mathcal{T}]$, $\Gamma(\cdot,\tau)\cap \{(x,y,z_1,\cdots,z_{n-2})||x|\leq 2\}$ consists of the graphs of functions $y^i,z^i_1,\cdots z^i_{n-2}$ with 
    \begin{equation*}
       \|y^i(x,\tau)\|_{C^1[-2,2]}+\sum_{\ell=1}^{n-2}\|z^i_\ell(x,\tau)-(\tan\theta_\ell) x\|_{C^1[-2,2]}\leq H \text{ for }i=1,2.
    \end{equation*}
\end{defi}
\begin{notation}
We sometimes omit the index $i$ when the argument applies to both the upper and lower branches.
\end{notation}
We now are ready to formulate the iteration procedures.  
\begin{prop}[Gluing]
 \label{the proposition gluing the domains to larger time interval}
There is a constant $C_0>1$ such that the following holds. For any $L\in\mathbb N_{\geq2}$ and $H\leq\frac{1}{1000L}$, if $\tau_H^{linear}$ is an $H$-linear time, then for every $\taup\geq\tau_H^{linear}$, the rotated rescaled CSF $S_\taup^{linear}\Gamma$ is $({C_0LH,\taup,4L})$-flat at $\vec\theta=\vec\theta(\taup)$, where the horizontal rotation $S_\taup^{linear}$ and vector $\vec\theta(\taup)$ are as in Definition \ref{the definition on H linear time}. 
\end{prop}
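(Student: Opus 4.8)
The plan is to tile the long interval $[\taup,\taup+4L]$ by the $2L$ short blocks $J_k:=[\taup+2k,\taup+2k+2]$ ($k=0,\dots,2L-1$), on each of which the hypothesis furnishes a linear description, and then to show that the descriptions on adjacent blocks differ by only $O(H)$, so that over $2L$ blocks the total drift is $O(LH)$. First I would reduce to $S_\taup^{linear}=\mathrm{id}$: since horizontal rotations preserve the property of having a one-to-one convex projection as well as the hypothesis, replacing $\Gamma$ by $S_\taup^{linear}\Gamma$ we may assume $\Gamma$ itself is $H$-linear at $\vec\theta(\taup)$ at time $\taup$, and that for every $\sigma\ge\tau_H^{linear}$ there is a horizontal rotation $R_\sigma$ with $R_\sigma\Gamma$ $H$-linear at some $\vec\theta(\sigma)\in[0,\tfrac{\pi}{2})^{n-2}$ at time $\sigma$. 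For each block apply this at $\sigma_k:=\taup+2k\;(\ge\tau_H^{linear})$ to obtain a horizontal rotation $R_k$ and a direction $\vec\theta^{(k)}:=\vec\theta(\sigma_k)$ with $R_k\Gamma$ $H$-linear at $\vec\theta^{(k)}$ over $J_k$; by the reduction we may take $R_0=\mathrm{id}$. Write $\phi_k$ for the angle of $R_k$ and $\hat n_k$ for the unit vector $\bigl(R_k^{-1}\vec e_1+\sum_\ell(\tan\theta^{(k)}_\ell)\vec e_{\ell+2}\bigr)/\sqrt{1+\sum_\ell\tan^2\theta^{(k)}_\ell}$, i.e.\ the unit direction, in the fixed frame, of the multiplicity-two line that $R_k\Gamma$ is linear to (its $xy$-part points along $R_k^{-1}\vec e_1$, its $z$-part records $(\tan\theta^{(k)}_\ell)_\ell$).

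The crux — and the step I expect to be the main obstacle — is the \emph{handoff estimate} $|\phi_k-\phi_{k-1}|\le C_1H$ and $\max_\ell|\tan\theta^{(k)}_\ell-\tan\theta^{(k-1)}_\ell|\le C_1H$, with $C_1$ depending only on the initial curve. Here I would use that $J_{k-1}$ and $J_k$ overlap at the time $\sigma_k$: the $H$-linearity forces both branches of $\Gamma(\cdot,\sigma_k)$ to pass $O(H)$-close to the origin, so picking any point $P$ of $\Gamma(\cdot,\sigma_k)$ in $B_1(0)$, its unit tangent lies within $O(H)$ of $\hat n_{k-1}$ (read off from the block-$(k-1)$ graphs, which are valid at $\sigma_k$) and also within $O(H)$ of $\hat n_k$, whence $\bigl|\,\hat n_{k-1}-\hat n_k\,\bigr|=O(H)$, up to replacing $\hat n_k$ by $-\hat n_k$. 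The delicate point is to upgrade closeness of the \emph{unit} directions to closeness of the angle $\phi_k$ and of the tilts $\tan\theta^{(k)}_\ell$: the bounded slope lemma (Lemma~\ref{upper bound of slope of secant lines}) bounds the $xy$-norm of any unit tangent of $\Gamma$ from below, and Remark~\ref{the remark on apriori bound of possible angle of the limit line} bounds $\sqrt{1+\sum\tan^2\theta^{(k)}_\ell}$ from above by $\sec\theta_M$; comparing the norms and the $xy$- and $z$-components of $\hat n_{k-1},\hat n_k$ then yields the two claimed bounds. (The sign ambiguity — a possible $\pi$-rotation of $R_k$ — only arises when $\vec\theta^{(k)}$ is near $0$, and is removed by choosing $R_k$ so that $R_k^{-1}\vec e_1$ points along the common direction in which the $z_\ell$-coordinates of the two branches increase, at the cost of only a universal constant.) Telescoping from $\phi_0=0$ gives, for all $k\le 2L-1$, $|\phi_k|\le 2C_1LH$ and $\max_\ell|\tan\theta^{(k)}_\ell-\tan\theta^{(0)}_\ell|\le 2C_1LH$; since $LH\le\frac1{1000}$ — the threshold being taken small relative to $C_1,H_0,C_0$ — these are as small as needed below, so in particular the $\phi_k$ stay near $0$ with no wrap-around.

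Finally I would transfer each block's description back to the fixed frame. Fix $\tau\in[\taup,\taup+4L]$ and $k$ with $\tau\in J_k$; in the $R_k$-frame $\Gamma(\cdot,\tau)\cap\{|x|<4\}$ is a union of two graphs $x\mapsto(x,y^i(x),z^i_\ell(x))$ with $\|y^i\|_{C^1}+\sum_\ell\|z^i_\ell-(\tan\theta^{(k)}_\ell)x\|_{C^1}\le H$. Applying the horizontal rotation $R_k^{-1}$, which has angle $\phi_k=O(LH)$ and fixes every $z_\ell$, a routine reparameterization of $x$ (whose Jacobian is $1+O(\phi_k^2+H|\phi_k|)$, hence invertible since $LH\ll1$, and which carries the slab $\{|x|\le2\}$ of the fixed frame inside the $R_k$-slab $\{|x|<4\}$) shows that over $\{|x|\le2\}$ the two strands are still graphs over the $x$-axis, with the new graph functions obeying
\[
\|\tilde y^i\|_{C^1[-2,2]}+\sum_{\ell=1}^{n-2}\|\tilde z^i_\ell-(\tan\theta^{(0)}_\ell)x\|_{C^1[-2,2]}\ \le\ C\Big(|\phi_k|+H+\max_\ell\bigl|\tan\theta^{(k)}_\ell-\tan\theta^{(0)}_\ell\bigr|\Big)\ \le\ C_0LH
\]
for a universal $C_0$, using the telescoped bounds and $H\le\frac1{1000L}$ to absorb the higher-order terms. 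Since no other part of $\Gamma(\cdot,\tau)$ meets this slab (it lies inside $B_3(0)$, hence inside the $R_k$-slab, where only the two strands appear), and since $\taup\ge\tau_H^{linear}\ge\tau_0$ while the constants are fixed so that $C_0LH\le H_0$ (here $\tau_0,H_0$ are the constants of Proposition~\ref{the proposition on estimates of linear scales}), this is exactly the assertion that $S_\taup^{linear}\Gamma$ is $(C_0LH,\taup,4L)$-flat at $\vec\theta(\taup)$. The residual work is the elementary trigonometry in the handoff and in the reparameterization, together with the bookkeeping of the constants $C_1,C_0$.
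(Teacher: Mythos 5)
Your proof is correct and follows the same high-level scheme as the paper's: tile $[\taup,\taup+4L]$ by short blocks on which the $H$-linear hypothesis gives an $O(H)$-description, establish an $O(H)$ handoff estimate between consecutive block rotations (and line directions), telescope to get an $O(LH)$ total drift, and transfer each block's graphical description to the fixed $S_\taup^{linear}$-frame.

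There are two places where your route differs in mechanism, though the conclusion is the same. First, the handoff estimate: the paper compares the slopes $y^{i,j}_x,y^{i,j-1}_x$ of the two graphical descriptions at the tracked minimum point of $|\og|^2$ (its Claim 6.13), whereas you compare unit tangent vectors of $\Gamma(\cdot,\sigma_k)$ at any point in $B_1(0)$; these are interchangeable and give the same $O(H)$ bound, with your version being slightly more symmetric in the $z$-variables. Second — and this is the more substantive difference — the $C^1$ control on the $z_\ell$-profiles: the paper transfers its $y$-argument to the auxiliary function $y^\alpha=\cos\alpha\,y+\sin\alpha\,(z_\ell-\tan\theta_\ell x)/\sqrt{1+\tan^2\theta_\ell}$, exploiting the one-to-one convex projection onto the tilted plane $P_\alpha$ (Lemma \ref{the lemma on stability of one-to-one convex projection}), and then solves for $z_\ell$ as a linear combination of $y$ and $y^\alpha$. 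You instead read off the drift $|\tan\theta^{(k)}_\ell-\tan\theta^{(k-1)}_\ell|=O(H)$ directly from the handoff comparison of the unit directions $\hat n_{k-1},\hat n_k$, using the bounded slope lemma and Remark \ref{the remark on apriori bound of possible angle of the limit line} to bound the $xy$-projection norms $1/N_k$ from below and above, and then feed the telescoped drift into the reparametrization. Your route is more direct and avoids re-running the whole gluing argument for each tilted projection; the paper's route has the merit of reusing the $y$-machinery verbatim. One small soft spot in your writeup: your proposed convention for fixing the $\pi$-ambiguity of $R_k$ ("the direction in which the $z_\ell$-coordinates increase") degenerates exactly in the case where the ambiguity actually arises, namely $\vec\theta^{(k)}\approx 0$; the cleaner fix — and what the paper does — is simply to choose the representative of $R_k$ (among the two admissible ones) closer to $R_{k-1}$, which is always possible and costs at most a universal factor. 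This is minor and does not affect the correctness of the argument.
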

\begin{figure}[ht]
    \centering
    \begin{minipage}{0.49\textwidth}
        \centering
        \includegraphics[width=\textwidth]{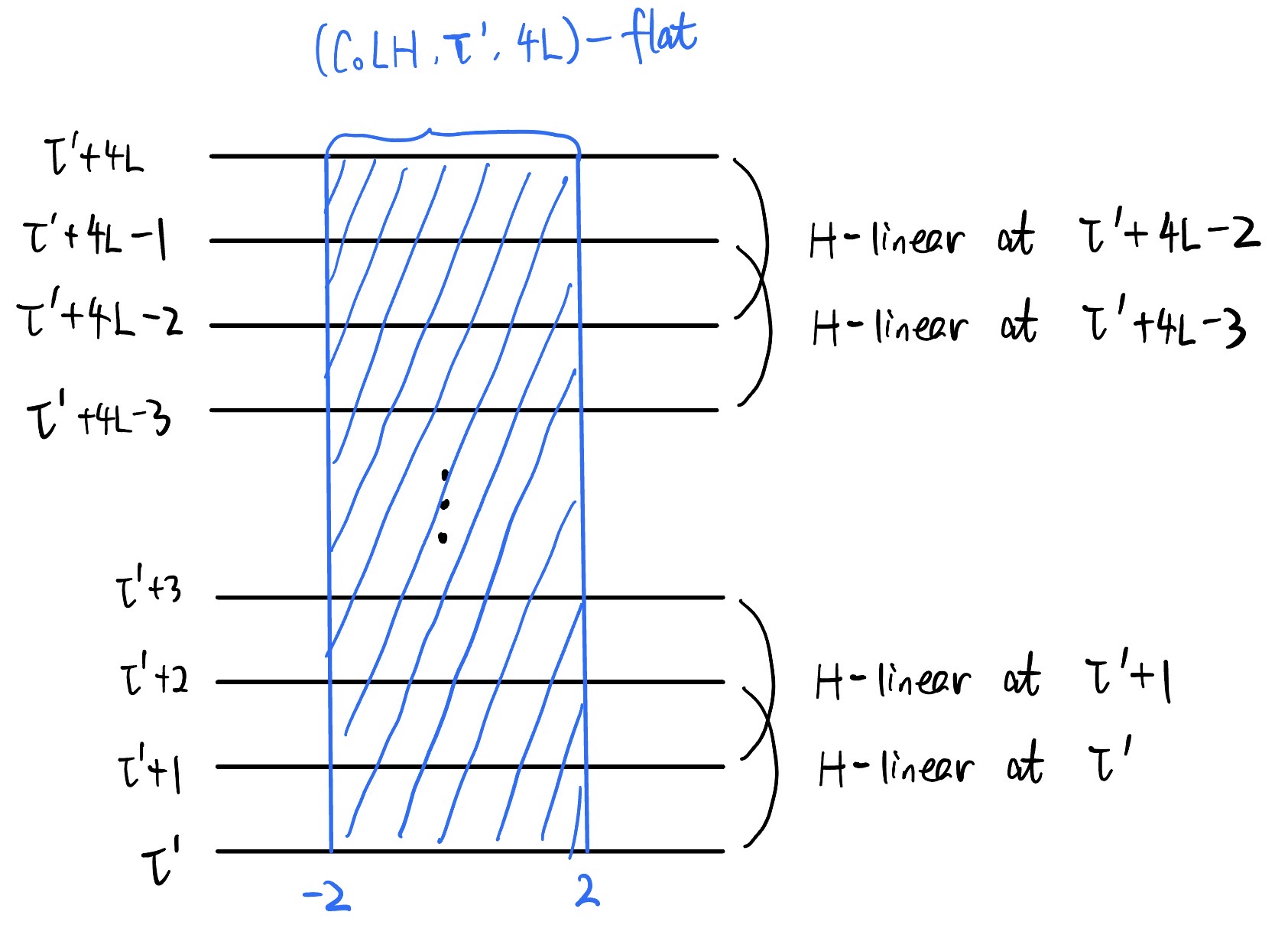}
        \caption{Illustration of Gluing}
        \label{fig:gluing}
    \end{minipage}
    \hfill
    \begin{minipage}{0.49\textwidth}
        \centering
        \includegraphics[width=\textwidth]{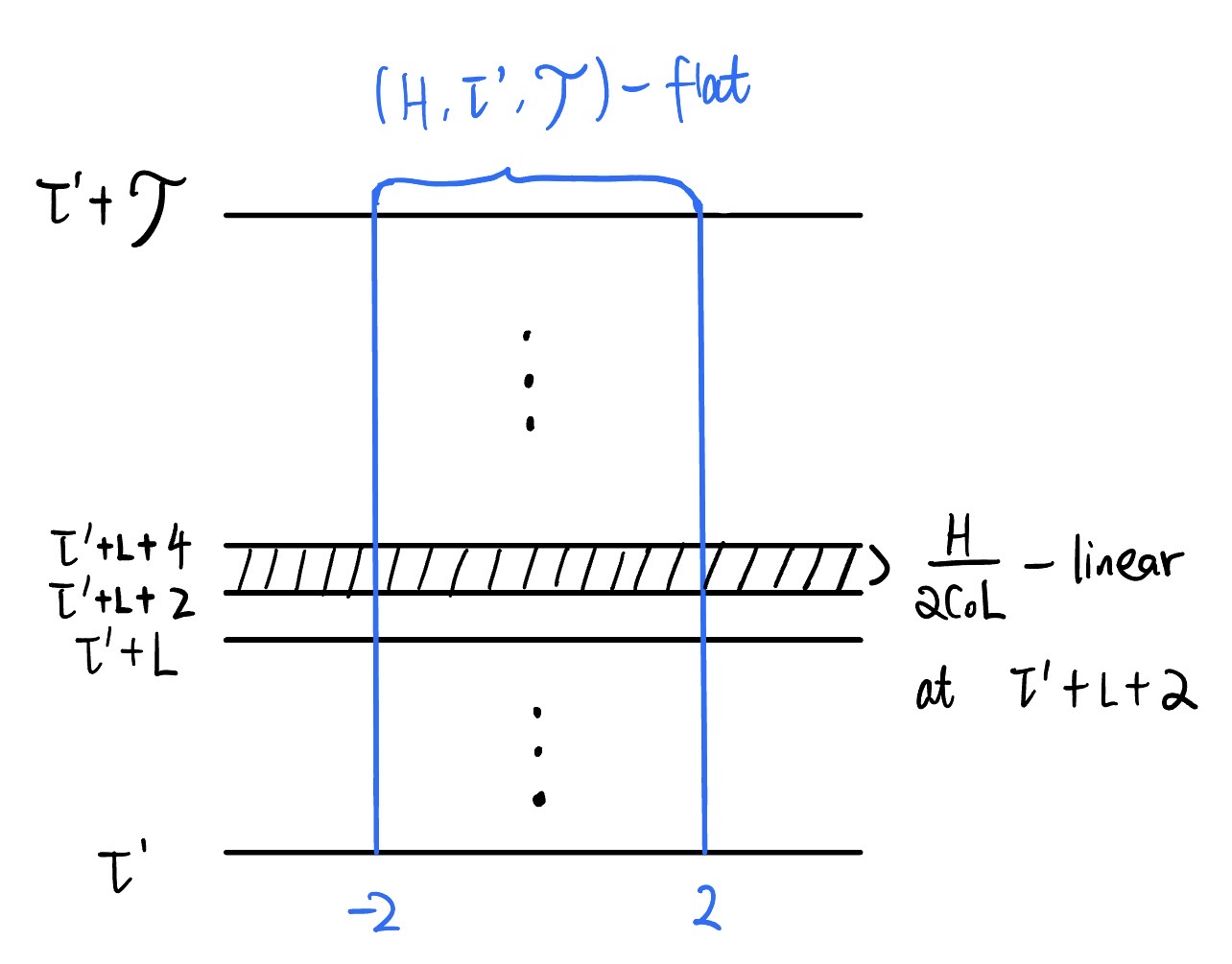}
        \caption{Illustration of Improvement of flatness}
        \label{fig:improvement_of_flatness}
    \end{minipage}
\end{figure}
\begin{prop}[Improvement of flatness]
\label{the prop on improvement of flatness}
Let $C_0$ be as in Proposition \ref{the proposition gluing the domains to larger time interval}. There exists a constant $L\in\mathbb N_{\geq2}$ such that for any $\mathcal{T}\geq2L+4$, there is an $H_1=H_1(L,\mathcal{T})\in(0,\frac{C_0}{1000}]$ such that the following holds. If for some horizontal rotation $S_\taup$ and some vector $\vec\theta=\vec\theta(\taup)$, the rescaled CSF $S_\taup\Gamma$ is $({H,\taup,\mathcal{T}})$-flat for some $H\leq H_1$ at $\vec\theta$, then there is a horizontal rotation $S^\prime$ and a vector $\vec\theta^\prime$ such that $S^\prime\Gamma$ is $\frac{H}{2C_0L}$-linear at $\vec\theta^\prime$ at time $\taup+L+2$. In addition, there is a uniform constant $C_1$ such that $|S^\prime-S_\taup|\leq C_1H$ and $|\vec\theta^\prime-\vec\theta(\taup)|\leq C_1H$. 
\end{prop}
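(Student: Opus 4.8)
The plan is to establish this as the ``improvement of flatness'' step of an Allard--Almgren iteration: linearize the rescaled flow about the best-fitting line, use the spectral gap of the drift heat operator to decay the error by a definite factor over the fixed elapsed time $L+2$, and absorb the single neutral (rotational) direction into the new horizontal rotation $\bar S_{\taup+L+2}$ and the new angle vector $\bar{\vec\theta}$. Concretely, from the $({H,\taup,\mathcal T})$-flatness hypothesis I would first apply Proposition~\ref{the proposition on estimates of linear scales} and Proposition~\ref{the proposition on C2 estimates over time intervals} to upgrade the $C^1$ bound on $\{|x|\le 2\}$ to $C^2$ control on the far larger slab $\{|x|\le\tfrac14\lambda_0\rho^{\delta_0}(H)\}\times[\taup+\tfrac12,\taup+\mathcal T]$, of the shape $|y^i|\le CH(|x|+1)$, $|y^i_x|+|y^i_{xx}|\le CH$, and likewise for $w^i_\ell:=z^i_\ell-(\tan\theta_\ell)x$; since $\tfrac14\lambda_0\rho^{\delta_0}(H)\sim H^{-1}\to\infty$ and $\mathcal T\ge 2L+4$, in particular $S_\taup\Gamma(\cdot,\tau)$ is graphical over $[-4,4]$ for every $\tau\in[\taup+\tfrac12,\taup+L+4]$. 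By Lemma~\ref{equations of the graphical rescaled CSF} each $u\in\{y^1,y^2\}\cup\{w^1_\ell,w^2_\ell\}$ solves $u_\tau=a\,u_{xx}-xu_x+u$ with $a$ within $O(H)$ of the \emph{constant} $(1+\sum_\ell\tan^2\theta_\ell)^{-1}=:\cos^2\theta$, where $\theta<\tfrac\pi2$ is bounded a priori by Remark~\ref{the remark on apriori bound of possible angle of the limit line}.

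Next I would linearize. After the change of variables $x\mapsto x\cos\theta$ the leading operator becomes $\partial_\tau=\partial_{xx}-x\partial_x+1$, and each $u$ satisfies a perturbation of this drift heat equation whose error is $O(H)\cdot u_{xx}=O(H^2)$ on the slab. Multiplying by a cutoff supported in $\{|x|\le\tfrac18\lambda_0\rho^{\delta_0}(H)\}$ introduces commutator and tail terms weighted by $e^{-cx^2}\le e^{-c'/H^2}$, hence negligible; the linear growth of $u$ in $x$ is harmless because all norms are taken with respect to the Gaussian measure $e^{-x^2/2}\,dx$, so that $\|u\|\lesssim H$. Expanding in Hermite polynomials, write $u=u_0+u_1x+u_{\ge2}$ for the projections onto $\mathrm{span}\{1\}$, $\mathrm{span}\{x\}$ and the orthogonal complement; the growth rates of $\partial_{xx}-x\partial_x+1$ on these subspaces are $+1$ (unstable), $0$ (neutral) and $\le-1$ (strictly stable).

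I would then run the three modes separately. Because $\tfrac{d}{d\tau}u_0=u_0+O(H^2)+(\text{tail})$ while $\|u\|\le CH$ on an interval of length $\mathcal T\ge 2L+4$, backward stability forces $|u_0(\taup)|\lesssim He^{-\mathcal T}+e^{\mathcal T}H^2+(\text{tail})$, whence $|u_0(\tau)|\lesssim He^{-L-2}+e^{\mathcal T}H^2+(\text{tail})$ for $\tau\in[\taup+L+1,\taup+L+4]$; the stable part decays, $\|u_{\ge2}(\tau)\|\lesssim e^{-(L+1)}H+CH^2$ there. The neutral mode $u_1$ does not decay and is instead absorbed into a change of frame: I would define $\bar S_{\taup+L+2}$ as a horizontal rotation and $\bar{\vec\theta}$ by shifting each $\tan\theta_\ell$, chosen exactly as the rotations $S_\tau^1,S_\tau^{bis}$ and the angle $\lambda(\tau)$ were chosen in \S\ref{the section on linear scales} (via the minimum points $p(\taup+L+2),q(\taup+L+2)$ of $|\og|^2$), so that in the new frame the $x$-modes of the graph functions vanish to leading order; as these coefficients are $\lesssim H$, this gives $|\bar S_{\taup+L+2}-S_\taup|\le C_1H$ and $|\bar{\vec\theta}-\vec\theta|\le C_1H$, with $\bar{\vec\theta}$ remaining in the a priori compact range. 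In the new frame the graph functions $\bar y^i$ and $\bar z^i_\ell-(\tan\bar\theta_\ell)x$ then have $C^0$-norm $\lesssim H\,(e^{-L}+H+e^{\mathcal T}H)$ on $[-4,4]\times[\taup+L+1,\taup+L+4]$, and the Nash--Moser/Schauder argument of Proposition~\ref{the proposition on C2 estimates over time intervals} upgrades this to a $C^2$ bound of the same order on $\mr_{4,2}^{\taup+L+2}$. Finally, I would first fix $L\in\mathbb N_{\ge2}$ so large that $Ce^{-L}\le\tfrac1{8C_0L}$, and then, for any $\mathcal T\ge 2L+4$, choose $H_1\le\tfrac{C_0}{1000}$ small enough (depending on $L$ and $\mathcal T$) that $CH_1\le\tfrac1{8C_0L}$ and $Ce^{\mathcal T}H_1\le\tfrac1{8C_0L}$; the $C^2$ bound is then $\le\tfrac H{2C_0L}$, i.e.\ $\bar S_{\taup+L+2}\Gamma$ is $\tfrac H{2C_0L}$-linear at $\bar{\vec\theta}$ at time $\taup+L+2$ in the sense of Definition~\ref{definition of delta linear at angle theta}.

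The main obstacle is the simultaneous flattening of \emph{both} branches of the doubled curve by one rigid motion while keeping the drift $O(H)$: a single horizontal rotation $\bar S_{\taup+L+2}$ must straighten $y^1$ and $y^2$ at the same time. This is precisely where the convex-projection hypothesis is indispensable --- the projection keeps the two branches ordered and non-crossing, the barrier of \S\ref{the section on the barrier as a subsolution} (Lemma~\ref{compare the difference along y and the barrier}) keeps them a definite distance apart, and the two minimum points $p(\tau),q(\tau)$ of $|\og|^2$ provide a canonical, smoothly varying frame in which the common ($x$-)mode is unambiguously identified and its drift controlled. A secondary, more routine technical point is to make the ``negligible tail'' and ``$O(H^2)$ error'' claims rigorous despite the linear growth of $u$ in $x$, which forces one to lean on the Gaussian weight and on the smallness of $\rho^{\delta_0}(H)^{-1}$ throughout.
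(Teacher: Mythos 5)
Your overall architecture matches the paper's: linearize the graphical rescaled CSF about a line, decompose in Hermite modes under the drift operator $\ml=\partial_x^2-x\partial_x+1$, run the unstable mode backward from $\taup+\mathcal T$, let the strictly stable modes decay, absorb the neutral mode into the new horizontal rotation and new angle vector, and then upgrade the resulting $L^2$-smallness to a $C^2$ bound on the fixed block $\mr^{\taup+L+2}_{4,2}$ via parabolic regularity. That is exactly the skeleton of the paper's argument (Lemmas \ref{estimates of projection of E and F tilde}--\ref{the lemma on difference between b at a later time}, Proposition \ref{the prop on finding new slopes}, Lemma \ref{the lemma on PDE bound C2 bounded by L2}). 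However there are two substantive problems and one misattribution worth flagging.

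\textbf{The unstable-mode estimate is wrong.} You write $|u_0(\taup)|\lesssim He^{-\mathcal T}+e^{\mathcal T}H^2$ and consequently demand $Ce^{\mathcal T}H_1\le\frac{1}{8C_0L}$. The Duhamel integral for $\dot u_0=u_0+E_0$ with $\|E_0\|\lesssim H^2$ is, integrating backward,
\begin{equation*}
  |u_0(\taup+L+2)|\le e^{(L+2)-\mathcal T}\,|u_0(\taup+\mathcal T)|+\int_{\taup+L+2}^{\taup+\mathcal T} e^{(\taup+L+2)-s}\,|E_0(s)|\,ds\ \le\ Ce^{-L}H+CH^2,
\end{equation*}
because $\int_0^\infty e^{-r}\,dr=1$; there is no $e^{\mathcal T}$ anywhere. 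The paper's version (proof of Lemma \ref{the lemma on difference between b at a later time}) gives $Ce^{-2L}H^2+C\mathcal T H^3$ for the squared quantity, i.e.\ polynomial growth in $\mathcal T$, which is what makes the subsequent choice of $H_1(L,\mathcal T)$ clean. Your $e^{\mathcal T}$ would force $H_1\sim e^{-\mathcal T}$; this is not fatal (since $\mathcal T$ is fixed once $L$ is), but it is an error.

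\textbf{The matching of the two branches is a genuine gap, not a remark.} You correctly identify that the ``main obstacle'' is that one rigid motion $\bar S_{\taup+L+2}$ and one vector $\bar{\vec\theta}$ must simultaneously straighten both $i=1$ and $i=2$, but you then gesture at the convex-projection hypothesis without giving the argument. The paper's Proposition \ref{the prop on finding new slopes} is where this is done, and the mechanism is Claim \ref{the claim on difference of b for different branches}: because the two branches never cross, $y^{\alpha,1}>y^{\alpha,2}$, so on the half-interval $[-2,0]$ (or $[0,2]$, depending on the sign of $\beta_1^\alpha-\beta_2^\alpha$) the quantity $y^{\alpha,1}-y^{\alpha,2}-(\beta_1^\alpha-\beta_2^\alpha)\tx$ pointwise dominates $|(\beta_1^\alpha-\beta_2^\alpha)\tx|$, yielding an $L^2$ bound on $|\beta_1^\alpha-\beta_2^\alpha|$ in terms of the $\mh$-deviations that Lemma \ref{the lemma on difference between b at a later time} already controls. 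Moreover this has to be done for a whole one-parameter family of projection planes $P_\alpha$ (Lemma \ref{the lemma on stability of one-to-one convex projection}), not just the $xy$-plane, in order to recover both the $y$-slope $K_\ty$ and the $z_\ell$-slopes $K_{\tz_\ell}$; the sheared (non-orthonormal) change of variables $\tilde x=x/\cos\theta$ (note: not $x\cos\theta$) is chosen precisely so that the curve retains a one-to-one convex projection onto the $\tilde x\tilde y$-plane, which is what makes the non-crossing inequality \eqref{the equation on y alpha 1 strictly bigger than y alpha 2} available. Your proposal to choose $\bar S$ and $\bar{\vec\theta}$ from the minimum points $p,q$ of $|\og|^2$ (as in $S^1_\tau,S^{bis}_\tau$) is a different, unjustified construction that would still leave open whether both branches' $x$-Hermite coefficients vanish to leading order in the new frame.

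\textbf{A misattribution:} the barrier of \S\ref{the section on the barrier as a subsolution} and Lemma \ref{compare the difference along y and the barrier} are used to prove \emph{non-uniqueness} of tangent flows (Theorem \ref{the theorem on nonuniqueness of the tangent flows}), not uniqueness. The quantitative ``definite separation'' that enters the uniqueness argument is instead the lower bound on the rescaled area $A_1,A_2\ge\delta_0$ (Lemma \ref{the lemma on the lower bound of the rescaled area}), which is what produces the linear scale $\rho^{\delta_0}(H)\sim H^{-1}$ in Propositions \ref{the proposition on estimates of linear scales}--\ref{the proposition on C2 estimates over time intervals}, and the non-crossing argument just described.
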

The purpose of gluing is to get estimates over a long time interval (the shaded domain in Figure \ref{fig:gluing}) and the purpose of improvement of flatness is to get \emph{better} flatness over the shaded domain in Figure \ref{fig:improvement_of_flatness}.

We now turn to prove the uniqueness of tangent flows.
\begin{proof}[Proof of Theorem \ref{the theorem on uniqueness of the tangent flows}]
Throughout this proof, we fix $C_0$ which is chosen in Proposition \ref{the proposition gluing the domains to larger time interval} and fix $L,\mathcal{T}=4L,H_1\leq\frac{1}{1000L}, C_1$ which are chosen in Proposition \ref{the prop on improvement of flatness}. 

By our improved blow-up result (Theorem \ref{the flow version of type II blow up results for convex projections CSF}), there exists an $\frac{H_1}{C_0L}$-linear time, which we denote by $\tau_1$.

We define $\tau_{k+1}=\tau_1+k(L+2)$ for $k\in\mathbb N$ and let us show the following claim by induction.

\begin{clm}
    The time $\tau_{k+1}$ is an $\frac{H_1}{2^{k}C_0L}$-linear time.
\end{clm}
\begin{proof}
We proceed by induction on $k$ and the $k=0$ case has been discussed.

Now assume that for some $k\geq1$, the time $\tau_{k}$ is an $\frac{H_1}{2^{k-1}C_0L}$-linear time.

By Definition \ref{the definition on H linear time}, for every $\taup\geq\tau_{k}$, there is a horizontal rotation $S_{\taup}^k$ and a vector $\vec\theta^k=\vec\theta^k(\taup)$ such that $S_{\taup}^k\Gamma$ is $\frac{H_1}{2^{k-1}C_0L}$-linear at $\vec\theta^k$ at time $\tau^\prime$.

By the gluing procedure (Proposition \ref{the proposition gluing the domains to larger time interval}), for each $\taup\geq\tau_k$, $S_\taup^k\Gamma$ is $(\frac{H_1}{2^{k-1}},\taup,4L)$-flat at $\vec\theta^k(\taup)$. 

By improving the flatness (Proposition \ref{the prop on improvement of flatness}), for each $\taup\geq\tau_k$, there is a horizontal rotation $S^{k+1}_{\taup+L+2}$ and a vector $\vec\theta^{k+1}=\vec\theta^{k+1}(\taup+L+2)$ such that $S^{k+1}_{\taup+L+2}\Gamma$ is $\frac{H_1}{2^{k-1}\cdot(2C_0L)}$-linear at $\vec\theta^{k+1}$ at time $\taup+L+2$. In addition, there is a uniform  such that $|S_\taup^k-S^{k+1}_{\taup+L+2}|\leq C_1\frac{H_1}{2^{k-1}}$ and $|\vec\theta^k(\taup)-\vec\theta^{k+1}(\taup+L+2)|\leq C_1\frac{H_1}{2^{k-1}}$. 

In other words, for every $\tau^{\prime\prime}:=\taup+L+2\geq\tau_k+L+2=\tau_{k+1}$, there is a horizontal rotation $S^{k+1}_{\taupp}$ and a vector $\vec\theta^{k+1}=\vec\theta^{k+1}(\taupp)$ such that $S^{k+1}_{\taupp}\Gamma$ is $\frac{H_1}{2^{k}C_0L}$-linear at $\vec\theta^{k+1}$ at time $\taupp$.

By Definition \ref{the definition on H linear time}, $\tau_{k+1}$ is an $\frac{H_1}{2^{k}C_0L}$-linear time.
\end{proof}
Let notation be as in the proof of the claim where it has been proved for each $k\geq1$, by taking $\tau^\prime=\tau_k$, the rotated flow $S_{\tau_k}^k\Gamma$ is $(\frac{H_1}{2^{k-1}},\tau_k,4L)$-flat at $\vec\theta^k(\tau_k)$. 
In addition, 
\[
|S_{\tau_k}^k-S^{k+1}_{\tau_{k+1}}|\leq C_1\frac{H_1}{2^{k-1}} \text{ and }|\vec\theta^k({\tau_k})-\vec\theta^{k+1}(\tau_{k+1})|\leq C_1\frac{H_1}{2^{k-1}}. 
\]
Because $4L\geq L+2=\tau_{k+1}-\tau_k$, the amount of rotation of the rescaled CSF over $[\tau_k,\tau_{k+1}]$ is bounded by $C\frac{H_1}{2^{k-1}}$. 

For any $\varepsilon>0$, there exists $k_\varepsilon\in\mathbb N$ such that 
\begin{equation*}
    \sum_{k\geq k_\varepsilon}\frac{1}{2^{k-1}}<\varepsilon.
\end{equation*}
As a result, for large enough time, the amount of rotation of the rescaled CSF is bounded by
\[
\sum_{k\geq k_\varepsilon}\frac{CH_1}{2^{k-1}}<\varepsilon CH_1.
\]
Thus the direction of the potentially rotating limit line actually has a limit. The direction of the limit lines and thus the tangent flow is unique.
\end{proof}
\subsection*{Outline of the rest of this section}
\S \ref{uniqueness-set up} and \S \ref{uniqueness-prep lemmas} introduce the basics. \S \ref{uniqueness-gluing} proves the gluing procedure (Proposition \ref{the proposition gluing the domains to larger time interval}). The rest of this section is devoted to prove the procedure on improvement of flatness  (Proposition \ref{the prop on improvement of flatness}).
\subsection{Setup}
\label{uniqueness-set up}
We use $\tx$ instead of $x$ as our variable to introduce the following notion because we will later perform a change of variables according to equation \eqref{equation on definition of coordinates with tilde}.
\subsubsection{The shifted Ornstein-Uhlenbeck operator}
We introduce the linear operator
\begin{equation}
    \ml=\partial_\tx^2-\tx\partial_\tx+1,
\end{equation}
which differs from the Ornstein-Uhlenbeck operator $\ml-1$ by $1$.

We use the Gaussian-weighted inner product
\begin{equation}
    \langle f, g\rangle_{\mathcal{H}}:=\frac{1}{\sqrt{2 \pi}} \int_{\mathbb{R}} f ge^{-\frac{\tx^2}{2}} d \tx,
\end{equation}
with associated norm
\begin{equation}
    \|f\|_\mh:=\sqrt{  \langle f, f\rangle_{\mathcal{H}}}.
\end{equation}
We sometimes omit the subscript $\mh$ when there is no confusion.

The operator $\ml$ is self-adjoint: 
\begin{equation}
\label{the operator L is self-adjoint}
\langle \ml f, g\rangle_{\mathcal{H}}=\langle  f, \ml g\rangle_{\mathcal{H}}.
\end{equation}

The eigenfunctions of the linear operator $-\ml$ are Hermite polynomials, we list the first three of them:
\begin{align*}
    &\varphi_1:=1, \quad -\ml \varphi_1=-\varphi_1\\
    &\varphi_2:=\tx, \quad -\ml \varphi_2=0\\
    &\varphi_3:=\frac{1}{\sqrt{2}}(\tx^2-1), \quad -\ml \varphi_3=1\cdot\varphi_3,
\end{align*}
where $\|\varphi_i\|_\mh=1, i=1,2,3$.

We define the projection $P_{-1}$ to the space spanned by $\varphi_1$, the projection $P_{0}$ to the space spanned by $\varphi_2$ and the projection $P_{\geq1}$ to the space spanned by $\varphi_i,i\geq3$, respectively:
\begin{equation}
\label{the equation on definition of the projections}
    P_{-1} f:=\left\langle f,\varphi_1\right\rangle_{\mathcal{H}} \varphi_1, 
    \quad P_0 f:=\left\langle f,\varphi_2\right\rangle_{\mathcal{H}} \varphi_2, 
    \quad P_{\geq1} f:=f-P_{-1} f-P_0 f.
\end{equation}
As a result, because the least positive eigenvalue of the operator $-\ml$ is $1$, for any $f\in P_{\geq1} \mh$,
\begin{equation*}
     \langle -\ml f, f\rangle_{\mathcal{H}}\geq \|f\|_\mh^2.
\end{equation*}
\subsubsection{Cut-off}
Throughout the rest of this paper, we fix a smooth \emph{cut-off function} $0\leq\eta\in C^\infty(\mathbb R)$ satisfying $|\eta_\tx|,|\eta_{\tx\tx}|\leq2$ in $\mathbb R$ and
\begin{align}
\label{definition of the cutoff function eta}
    \eta(\tx)\left\{\begin{array}{cl}
=1, & |\tx| \leqslant 1, \\
\in(0,1), & 1<|\tx|<2, \\
=0, & |\tx| \geqslant 2 .
\end{array}\right.
\end{align}

\begin{defi}
\label{def:cut-off profile}
    Given $\rho>0$ and an interval $I\subset\mathbb R$ containing $[-2\rho,2\rho]$, for a function $u(\cdot,\tau):I\rightarrow\mathbb R$,  we define the \emph{cut-off} $\hat u(\cdot,\tau):\mathbb R\rightarrow\mathbb R$ at scale $\rho$ to be
    \begin{align}
        \hu(\tx,\tau):=\left\{\begin{array}{cl}
u(\tx,\tau)\eta(\frac{\tx}{\rho}), & |\tx| \leq2\rho, \\
0, & |\tx| >2\rho .
\end{array}\right.
    \end{align}
\end{defi}
\begin{defi}
    We define the \emph{error term} $E$ to be
    \begin{equation}
        E:=\hu_\tau-\ml \hu.
    \end{equation}
\end{defi}
\subsection{Preparatory Lemmas}
\label{uniqueness-prep lemmas}
We denote by 
\begin{equation}
\label{rotation by an angle beta}
    S(\beta)=\left(\begin{array}{cc}
\cos \beta & -\sin \beta \\
\sin \beta & \cos \beta
\end{array}\right)
\end{equation}
the rotation by angle $\beta$. One has that 
\begin{equation}
\label{the equation on rotation times inverse of some other rotation}
    S(\alpha)S(\beta)^{-1}=S(\alpha-\beta).
\end{equation}
For a matrix $S=(s_{ij})$, we use the norm $|S|=\sqrt{\sum\limits_{i,j} s_{ij}^2}$.
\begin{lem}
\label{the lemma on the norm of the difference of the rotations}
    By direct computations,
    \begin{equation}
        |S(\alpha)-S(\beta)|^2=8\sin^2\left(\frac{\alpha-\beta}{2}\right).
    \end{equation}
    As a result, for $|\alpha-\beta|\leq\pi$,
    \begin{equation}
    \label{equation on bound of angles}
       \frac{2\sqrt{2}}{\pi} |\alpha-\beta|\leq|S(\alpha)-S(\beta)|\leq\sqrt{2}|\alpha-\beta|.
    \end{equation}
\end{lem}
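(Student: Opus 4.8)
The plan is to verify the stated identity by a direct entrywise computation and then extract the two-sided linear bound from the half-angle formula together with Jordan's inequality; there is no real obstacle here, so the write-up will be short.

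First I would write out the difference matrix using the definition \eqref{rotation by an angle beta}:
\[
S(\alpha)-S(\beta)=\begin{pmatrix}\cos\alpha-\cos\beta & -(\sin\alpha-\sin\beta)\\[2pt] \sin\alpha-\sin\beta & \cos\alpha-\cos\beta\end{pmatrix}.
\]
Summing the squares of its four entries gives
\[
|S(\alpha)-S(\beta)|^2=2\big[(\cos\alpha-\cos\beta)^2+(\sin\alpha-\sin\beta)^2\big].
\]
Expanding the bracket and applying $\cos^2+\sin^2=1$ twice together with the addition formula $\cos\alpha\cos\beta+\sin\alpha\sin\beta=\cos(\alpha-\beta)$ shows that $(\cos\alpha-\cos\beta)^2+(\sin\alpha-\sin\beta)^2=2-2\cos(\alpha-\beta)$, hence $|S(\alpha)-S(\beta)|^2=4\big(1-\cos(\alpha-\beta)\big)$. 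Finally the half-angle identity $1-\cos\theta=2\sin^2(\theta/2)$ with $\theta=\alpha-\beta$ yields $|S(\alpha)-S(\beta)|^2=8\sin^2\!\big(\tfrac{\alpha-\beta}{2}\big)$, which is the claimed identity.

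For the consequence, taking square roots gives $|S(\alpha)-S(\beta)|=2\sqrt{2}\,\big|\sin\!\big(\tfrac{\alpha-\beta}{2}\big)\big|$. When $|\alpha-\beta|\le\pi$ the argument $t:=\tfrac{|\alpha-\beta|}{2}$ lies in $[0,\tfrac{\pi}{2}]$, and on this interval $\sin$ is concave, so Jordan's inequality $\tfrac{2}{\pi}\,t\le\sin t\le t$ applies; multiplying through by $2\sqrt{2}$ produces exactly $\tfrac{2\sqrt{2}}{\pi}|\alpha-\beta|\le|S(\alpha)-S(\beta)|\le\sqrt{2}\,|\alpha-\beta|$. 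The only point worth a remark is that the hypothesis $|\alpha-\beta|\le\pi$ is precisely what confines the half-angle to the range $[0,\pi/2]$ on which the lower bound of Jordan's inequality is valid; for the upper bound the elementary estimate $|\sin t|\le|t|$ would suffice for all $t$.
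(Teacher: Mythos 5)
Your proof is correct and follows essentially the same route as the paper: a direct entrywise computation producing $4(1-\cos(\alpha-\beta))$, the half-angle identity, and then Jordan's inequality $\tfrac{2}{\pi}|x|\le|\sin x|\le|x|$ on $[0,\tfrac{\pi}{2}]$ to obtain the two-sided bound. No discrepancies.
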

\begin{proof}
     By direct computations,
    \begin{align*}
        |S(\alpha)-S(\beta)|^2&=2|\cos\alpha-\cos\beta|^2+2|\sin\alpha-\sin\beta|^2\\
        &=4-4\cos\alpha\cos\beta-4\sin\alpha\sin\beta\\
        &=4-4\cos(\alpha-\beta)=8\sin^2\left(\frac{\alpha-\beta}{2}\right).
    \end{align*}
    Inequalities \eqref{equation on bound of angles} follow from the fact that for $|x|\leq\frac{\pi}{2}$, $\frac{2}{\pi}|x|\leq|\sin x|\leq|x|$.
\end{proof}
\begin{lem}
\label{the lemma on L2 norm bounded by H norm}
For any real numbers $a<b$, there is a constant $C$, depending only on $a$ and $b$, such that
    \begin{equation}
        \|f\|_{L^2[a,b]}\leq C\|f\|_\mh.
    \end{equation}
\end{lem}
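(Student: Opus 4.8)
The plan is to exploit the fact that the Gaussian weight $e^{-\tx^2/2}$ defining $\|\cdot\|_\mh$ is bounded below by a strictly positive constant on the compact interval $[a,b]$, so that the $\mh$-norm dominates the ordinary $L^2[a,b]$-norm up to that constant.

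Concretely, I would set $M:=\max\{|a|,|b|\}$, so that $e^{-\tx^2/2}\geq e^{-M^2/2}$ for every $\tx\in[a,b]$. Then, discarding the (nonnegative) contribution of $\mathbb R\setminus[a,b]$ in the definition of $\|f\|_\mh$, I would estimate
\begin{equation*}
    \|f\|_\mh^2=\frac{1}{\sqrt{2\pi}}\int_{\mathbb R}f^2e^{-\frac{\tx^2}{2}}\,d\tx
    \geq\frac{1}{\sqrt{2\pi}}\int_a^b f^2e^{-\frac{\tx^2}{2}}\,d\tx
    \geq\frac{e^{-\frac{M^2}{2}}}{\sqrt{2\pi}}\int_a^b f^2\,d\tx
    =\frac{e^{-\frac{M^2}{2}}}{\sqrt{2\pi}}\,\|f\|_{L^2[a,b]}^2.
\end{equation*}
Rearranging yields $\|f\|_{L^2[a,b]}\leq (2\pi)^{1/4}e^{M^2/4}\,\|f\|_\mh$, so one may take $C=(2\pi)^{1/4}e^{M^2/4}$, which depends on $a,b$ only through $M$.

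There is essentially no obstacle here: the only thing to be slightly careful about is the integrability hypothesis on $f$. If $f$ is merely measurable, one interprets both norms as elements of $[0,+\infty]$ and the displayed chain of inequalities still holds verbatim; in particular it shows that any $f\in\mh$ automatically lies in $L^2[a,b]$ with the stated bound, which is the form in which the lemma will be applied in the subsequent sections.
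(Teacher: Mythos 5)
Your argument is correct and coincides with the paper's own: both bound the Gaussian weight $e^{-\tx^2/2}$ from below on the compact interval $[a,b]$ by its value at the endpoint with larger absolute value (your $e^{-M^2/4}$ and the paper's $\min\{e^{-a^2/2},e^{-b^2/2}\}^{-1/2}$ are the same constant up to the normalizing $(2\pi)^{1/4}$), then drop the remaining nonnegative part of the integral.
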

\begin{proof}
    One has that,
    \begin{equation}
        \|f\|_{L^2[a,b]}^2=\int_a^bf^2(\tx)d\tx
        \leq\frac{1}{\min\{e^{-\frac{a^2}{2}},e^{-\frac{b^2}{2}}\}}\int_a^bf^2(\tx)e^{-\frac{\tx^2}{2}}d\tx.
    \end{equation}
\end{proof}
\begin{lem}
\label{the claim on difference of x tilde and hat x tilde}
   For $\rho>0$ large enough,
    \begin{equation}
        \|\eta(\frac{\tx}{\rho})\tx-\tx\|_\mh\leq  C\frac{1}{\rho^{100}}.
    \end{equation}
\end{lem}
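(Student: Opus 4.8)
The plan is to reduce the statement to a standard Gaussian tail estimate. First I would rewrite the difference as
\begin{equation*}
\eta(\tfrac{\tx}{\rho})\tx-\tx=\tx\bigl(\eta(\tfrac{\tx}{\rho})-1\bigr),
\end{equation*}
and exploit the two defining properties of the cut-off $\eta$ from \eqref{definition of the cutoff function eta}: since $\eta\equiv 1$ on $[-1,1]$, we have $\eta(\cdot/\rho)\equiv 1$ on $[-\rho,\rho]$, so the right-hand side vanishes there; and since $0\le\eta\le1$, we have $|\eta(\tx/\rho)-1|\le 1$ everywhere. Combining these two facts gives the pointwise bound $|\eta(\tx/\rho)\tx-\tx|\le|\tx|\,\mathbf 1_{\{|\tx|\ge\rho\}}$.

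Next I would substitute this into the definition of $\|\cdot\|_\mh$, which reduces the problem to showing that $\frac{1}{\sqrt{2\pi}}\int_{|\tx|\ge\rho}\tx^2e^{-\tx^2/2}\,d\tx\le C^2\rho^{-200}$ for all large $\rho$. This Gaussian tail can be controlled by elementary means; for instance integration by parts gives $\int_\rho^\infty\tx^2e^{-\tx^2/2}\,d\tx=\rho e^{-\rho^2/2}+\int_\rho^\infty e^{-\tx^2/2}\,d\tx\le(\rho+1)e^{-\rho^2/2}$. Since $(\rho+1)e^{-\rho^2/2}$ decays faster than any fixed negative power of $\rho$, there is a threshold $\rho_0$ beyond which the displayed inequality holds, and taking square roots then yields the claim (one may take $C=1$).

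There is essentially no serious obstacle here; the only points requiring a little care are to apply the pointwise estimate on $\eta(\tx/\rho)-1$ on all of $\mathbb{R}$ rather than only on the transition region $1<|\tx|<2$ rescaled, and to keep track of the exponent through the square root (so one wants the tail bounded by $\rho^{-200}$, not merely $\rho^{-100}$). The same argument in fact delivers $\|\eta(\tx/\rho)\tx-\tx\|_\mh\le C_N\rho^{-N}$ for every $N$; the exponent $100$ is chosen only for convenience in the later estimates.
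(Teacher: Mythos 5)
Your proof is correct and follows essentially the same route as the paper: factor out $\tx(\eta(\tx/\rho)-1)$, note the integrand is supported on $\{|\tx|\ge\rho\}$ with $(\eta-1)^2\le 1$, and invoke a Gaussian tail estimate. The only difference is that you spell out the integration by parts for the tail bound, which the paper leaves implicit.
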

\begin{proof}
   By the definition of the $\mh$ norm,
    \begin{align*}
        \|\eta(\frac{\tx}{\rho})\tx-\tx\|_\mh^2
        &=\frac{1}{\sqrt{2\pi}}\int_{\mathbb R}(\eta(\frac{\tx}{\rho})\tx-\tx)^2e^{-\frac{\tx^2}{2}}d\tx\\
         &=\frac{1}{\sqrt{2\pi}}\int_{\mathbb R}(\eta(\frac{\tx}{\rho})-1)^2\tx^2e^{-\frac{\tx^2}{2}}d\tx.
    \end{align*}
    By the definition of the cut-off function $\eta$ (equation \eqref{definition of the cutoff function eta}),
     \begin{align*}
        \|\eta(\frac{\tx}{\rho})\tx-\tx\|_\mh^2
        &=\frac{1}{\sqrt{2\pi}}\int_{|\tx|\geq\rho}(\eta(\frac{\tx}{\rho})-1)^2\tx^2e^{-\frac{\tx^2}{2}}d\tx\\
        &\leq\frac{1}{\sqrt{2\pi}}\int_{|\tx|\geq\rho}\tx^2e^{-\frac{\tx^2}{2}}d\tx
        \leq C\frac{1}{\rho^{200}}.
        \end{align*}
\end{proof}

\subsection{Gluing of the domains (Proof of Proposition \ref{the proposition gluing the domains to larger time interval})}
\label{uniqueness-gluing}
To lighten notation, we abbreviate $S_\taup^{linear}$ by $S_\taup$ in this proof.

By Definition \ref{the definition on H linear time}, for every $\taup\geq\tau_H^{linear}$, there is a horizontal rotation $S_{\tau^\prime}$ and a vector $\vec\theta=\vec\theta(\taup)=(\theta_1(\taup),\cdots,\theta_{n-2}(\taup))\in[0,\frac{\pi}{2})^{n-2}$ such that the rotated rescaled CSF $S_{\tau^\prime}\Gamma$ is $H$-linear at $\vec\theta$ at time $\tau^\prime$. Furthermore, based on the improved blow-up results (Theorem \ref{the flow version of type II blow up results for convex projections CSF}), we may assume $|S_{\tau^\prime}-S_{\tau^\prime+1}|\leq\frac{1}{100}<\sqrt{2}$ for all $\taup$ in consideration of the fact that with an extra horizontal rotation by angle $\pi$, the rotated rescaled CSF is also $H$-linear. 

By Definition \ref{definition of delta linear at angle theta}, for each $j=0,1,\cdots,4L-2$ and each $\tau\in[\tau^\prime+j,\tau^\prime+j+2]$, $S_{\tau^\prime+j}\Gamma(\cdot,\tau)\cap \{(x,y,z_1,\cdots,z_{n-2})\in\mathbb R^n||x|<4\}$ consists of the graphs of the functions $y^{i,j},z^{i,j}_1,\cdots,z^{i,j}_{n-2}$ with estimates
    \begin{equation}
    \label{the equation on delta linear at tau prime plus j}
       \| y^{i,j}(x,\tau)\|_{C^2(\mr_{4,2}^{\tau^\prime+j})}
       +\sum_{\ell=1}^{n-2}\|z^{i,j}_\ell(x,\tau)-(\tan\theta_\ell(\taup+j)) x\|_{C^2(\mr_{4,2}^{\tau^\prime+j})}\leq H
    \end{equation}
for $i=1,2$, which denotes the upper and lower branches. 

We denote by $\Gamma^{i,j}$ the graph of the vector-valued function $(y^{i,j},z^{i,j}_1,\cdots,z^{i,j}_{n-2})$.

In the overlapping interval $[\taup+j,\taup+j+1]$ (See Figure \ref{fig:gluing}), 
\begin{equation}
\label{the equation on overlapping interval for different rotations coincide}
    S_{\tau^\prime+j}^{-1}\Gamma^{i,j}=S_{\tau^\prime+j-1}^{-1}\Gamma^{i,j-1}
\end{equation}
is part of the rescaled CSF $\Gamma$.
\begin{clm}
\label{the claim on difference of subsequent rotations}
For each $j=1,\cdots,4L-2$, one has $|S_{\tau^\prime+j}-S_{\tau^\prime+j-1}|\leq 4\sqrt{2}H$.
\end{clm}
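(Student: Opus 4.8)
The plan is to reduce the whole statement to a bound on a single angle. Write $R:=S_{\taup+j}S_{\taup+j-1}^{-1}$; this is a horizontal rotation, so it is the identity on the $z$-coordinates and a planar rotation $S(\beta)$ through some angle $\beta$ on the $xy$-plane. The a priori estimate $|S_{\taup+\sigma}-S_{\taup+\sigma+1}|\le\frac1{100}$ recorded above (applied with $\sigma=j-1$) already guarantees $\beta\in(-\tfrac\pi2,\tfrac\pi2)$; the content of the Claim is only to improve $\frac1{100}$ to $4\sqrt2\,H$. Since the Frobenius norm is unchanged by right multiplication by the orthogonal matrix $S_{\taup+j-1}$, one has $|S_{\taup+j}-S_{\taup+j-1}|=|R-I|=|S(\beta)-S(0)|$, and Lemma \ref{the lemma on the norm of the difference of the rotations} gives $|S(\beta)-S(0)|=2\sqrt2\,|\sin(\beta/2)|\le\sqrt2\,|\beta|$. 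Thus it suffices to prove $|\beta|\le 2H$, which then yields $|S_{\taup+j}-S_{\taup+j-1}|\le 2\sqrt2\,H\le 4\sqrt2\,H$.

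To estimate $|\beta|$ I would work at the time $\tau=\taup+j$, which lies in both intervals $[\taup+j-1,\taup+j+1]$ and $[\taup+j,\taup+j+2]$ on which the two graphical descriptions hold, and fix the upper branch $i=1$. Let $P$ be the point of $S_{\taup+j}\Gamma(\cdot,\taup+j)$ whose $x$-coordinate in the frame of $S_{\taup+j}$ equals $0$; by \eqref{the equation on delta linear at tau prime plus j} the $xy$-part of $P$ in that frame has norm $|y^{1,j}(0,\taup+j)|\le H$, and since the $xy$-norm is rotation invariant, the $x$-coordinate of $P$ in the frame of $S_{\taup+j-1}$ has absolute value at most $H<4$; hence, consistently with \eqref{the equation on overlapping interval for different rotations coincide}, $P$ also lies on the graph of $\Gamma^{1,j-1}$. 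Because $P_{xy}$ conjugates the horizontal rotation $R$ into the planar rotation $S(\beta)$, the projected curve $P_{xy}\Gamma(\cdot,\taup+j)$ seen in the frame of $S_{\taup+j}$ is $S(\beta)$ applied to the same curve seen in the frame of $S_{\taup+j-1}$; consequently its tangent direction at $P$ makes an angle $\phi_j$ with the $x$-axis of the first frame and an angle $\phi_{j-1}$ with the $x$-axis of the second, and these satisfy $\phi_j=\phi_{j-1}+\beta\pmod{2\pi}$. By \eqref{the equation on delta linear at tau prime plus j}, $\phi_j=\arctan\big(y^{1,j}_x(0,\taup+j)\big)$ and $\phi_{j-1}=\arctan\big(y^{1,j-1}_x(\,\cdot\,,\taup+j)\big)$ at the corresponding point, so $|\phi_j|,|\phi_{j-1}|\le\arctan H\le H$; since $|\phi_j-\phi_{j-1}|\le 2H<\pi$ and $\beta\in(-\tfrac\pi2,\tfrac\pi2)$, it follows that $\beta=\phi_j-\phi_{j-1}$ and $|\beta|\le 2H$, which completes the argument.

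This is essentially a bookkeeping argument, and I do not anticipate a genuine obstacle. The only points that call for care are: (i) verifying that one physical point $P$ lies in both graphical charts at once, which is handled by noting that $P$ has small $xy$-norm and hence small $x$-coordinate in either frame; (ii) the $2\pi$-periodicity ambiguity in reading off the rotation angle $\beta$, which is removed by invoking the a priori bound $|S_{\taup+\sigma}-S_{\taup+\sigma+1}|\le\frac1{100}$; and (iii) tracking that $P_{xy}$ turns the horizontal rotation $R$ into the planar rotation $S(\beta)$, so that the slopes in the two frames obey the $\arctan$ addition law producing $|\beta|\le 2H$.
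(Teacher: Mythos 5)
Your proof is correct and follows essentially the same structure as the paper's: pick a physical reference point lying in both graphical charts, read off the rotation angle $\beta$ as the difference of the slope angles of the projected curve in the two frames, and convert to a Frobenius-norm bound via Lemma~\ref{the lemma on the norm of the difference of the rotations}. The paper chooses the minimum point of $|\og|^2$ on the upper branch rather than the point with $x=0$ in frame $j$, and it bounds $|\arctan a-\arctan b|$ by the roundabout route $|\alpha|\le|\tan\alpha|$ followed by Lemma~\ref{an elementary lemma on tan} (losing a factor of $2$ and landing exactly at $4\sqrt2\,H$), whereas your direct triangle inequality $|\arctan a-\arctan b|\le|a|+|b|$ gives the sharper $2\sqrt2\,H$; neither difference is material.
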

\begin{proof}[Proof of the Claim]
Recall that we denote by $\og$ the projection of the rescaled CSF onto the $xy$-plane. We keep track of the minimum point of the function $|\og|^2$ on the upper branch in the ball $B_1(0)$. We use $(\sigma_{\min}(\tau),\tau)$ to label the minimum point at time $\tau$.

The slopes at the minimum point about two rotations $S_{\tau^\prime+j}$ and $S_{\tau^\prime+j-1}$ are $y^{i,j}_x(\sigma_{\min}(\tau),\tau)$ and $y^{i,j-1}_x(\sigma_{\min}(\tau),\tau)$ respectively.

Say $S_{\tau^\prime+j}$ is a horizontal rotation by angle $\alpha_j$.

By inequality \eqref{equation on bound of angles},
\begin{equation*}
    |S_{\tau^\prime+j}-S_{\tau^\prime+j-1}|\leq \sqrt{2}|\alpha_j-\alpha_{j-1}|.
\end{equation*}
Equation (\ref{the equation on overlapping interval for different rotations coincide}) and equation (\ref{the equation on rotation times inverse of some other rotation}) imply that, for $\tau\in[\taup+j,\taup+j+1]$,
\begin{equation*}
    |\alpha_j-\alpha_{j-1}|=|\arctan y^{i,j}_x(\sigma_{\min}(\tau),\tau)-\arctan y^{i,j-1}_x(\sigma_{\min}(\tau),\tau)|.
\end{equation*}
Because $|\alpha|\leq|\tan\alpha|$ for $|\alpha|<\frac{\pi}{2}$,
\begin{align*}
    |\alpha_j-\alpha_{j-1}|&\leq |\tan\left(|\arctan y^{i,j}_x(\sigma_{\min}(\tau),\tau)-\arctan y^{i,j-1}_x(\sigma_{\min}(\tau),\tau)|\right)|\\
    &= |\tan\left(\arctan y^{i,j}_x(\sigma_{\min}(\tau),\tau)-\arctan y^{i,j-1}_x(\sigma_{\min}(\tau),\tau)\right)|.
\end{align*}
Combined with Lemma \ref{an elementary lemma on tan},
\begin{equation}
    |S_{\tau^\prime+j}-S_{\tau^\prime+j-1}|\leq 2\sqrt{2}\left[|y^{i,j}_x(\sigma_{\min}(\tau),\tau)|+|y^{i,j-1}_x(\sigma_{\min}(\tau),\tau)|\right].
\end{equation}
Because the gradients of the functions $y^{i,j},y^{i,j-1}$ are bounded  by $H$ in equation (\ref{the equation on delta linear at tau prime plus j}),
\begin{equation*}
     |S_{\tau^\prime+j}-S_{\tau^\prime+j-1}|\leq 4\sqrt{2}H.
\end{equation*}
\end{proof}
Claim \ref{the claim on difference of subsequent rotations} implies that for all $j=0,1,\cdots,4L-2$,
\begin{equation}
\label{inequality on rotation so small still graphical over 4L interval}
    |S_{\tau^\prime}-S_{\tau^\prime+j}|\leq 16\sqrt{2}H L\leq\frac{16\sqrt{2}}{1000}\leq\frac{3}{100},
\end{equation}
where we used the assumption $H\leq\frac{1}{1000L}$.

Thus, for any $\tau\in[\taup,\taup+4L]$, $S_\taup\Gamma(\cdot,\tau)\cap \{(x,y,z_1,\cdots,z_{n-2})||x|\leq 2\}$ consists of graphs of functions, which we denote by $y^1,y^2,z^1_\ell,z^2_\ell$ $(1\leq\ell\leq n-2)$.

Recall that $S_{\tau^\prime+j}$ is a horizontal rotation by angle $\alpha_j$.

For $|x|\leq2$ and $\tau\in[\taup,\taup+4L]$, by Lemma \ref{an elementary lemma on tan}, $|\tan\alpha|\leq2|\alpha|$ for $|\alpha|\leq\frac{\pi}{4}$ and equation (\ref{the equation on delta linear at tau prime plus j}),
\begin{equation}
       |y^i_x(x,\tau)|
       \leq 2\tan|\alpha_j-\alpha_0|+2\|y^{i,j}\|_{C^1}
       \leq 4|\alpha_j-\alpha_0|+2H.
    \end{equation}
By inequality \eqref{equation on bound of angles} and inequality \eqref{inequality on rotation so small still graphical over 4L interval},
\begin{equation*}
    |y^i_x(x,\tau)|
       \leq32\pi HL+2H\leq200HL.
\end{equation*}
Combined with $|y^i(\sigma_{\min}(\tau),\tau)|\leq H$ for all $\tau\in[\taup,\taup+4L]$, the estimate
\begin{equation}
       |y^i(x,\tau)|\leq H+4(200HL)\leq1000HL
\end{equation}
holds for $|x|\leq2$.

The $C^1$ estimates for the functions $z^i_\ell$ $(1\leq\ell\leq n-2)$ can be obtained by repeating the argument in \S \ref{the subsection on deriving estimates for z from estimates of y}. Loosely speaking, the estimates we have derived for $y$ also apply to the function $y^\alpha=\cos\alpha y+\sin\alpha\frac{( z_\ell-\tan\theta_\ell x)}{\sqrt{1+\tan^2\theta_\ell}}$ for some fixed $\alpha>0$ by Lemma \ref{the lemma on stability of one-to-one convex projection}, then we have $C^1$ estimates for $\frac{( z_\ell-\tan\theta_\ell x)}{\sqrt{1+\tan^2\theta_\ell}}=\frac{y^\alpha-\cos\alpha y}{\sin\alpha}$, which is a linear combination of the functions $y,y^\alpha$. The constant $C_0$ in Proposition \ref{the proposition gluing the domains to larger time interval} depends on chosen $\alpha$, which is independent of time $\tau$.

\subsection{\texorpdfstring{$C^2$ estimates at linear scales}{Estimates at linear scales}}

Let constants $\lambda_0$, $H_0$, $\tau_0$ be as in Proposition \ref{the proposition on estimates of linear scales}. Let $\delta_0$ be chosen as at the start of \S\ref{the section on linear scales}. We define $\delta_1:=\frac{1}{8}\lambda_0\delta_0$ and by the definition of the linear scale (Definition \ref{definition of the linear scale rho}),
\begin{equation}
\label{the equation on definition of rho 1}
   \rho^{\delta_1}(H)=\frac{\delta_1}{20 H}.
\end{equation}
\begin{notation}
\label{notation of rho delta 1}
In this section, $\rho=\rho(H)$ always refers to $\rho^{\delta_1}(H)$, which differs from $\rho^{\delta_0}$ by a time-independent coefficient $\frac{\lambda_0}{8}$. Compare with Notation \ref{the remark on rho short for rho delta 0}.
\end{notation}
Recall Definition \ref{definition on condition H taup and T}. Let us restate what has been established at scale $2\rho^{\delta_1}$ in Proposition \ref{the proposition on estimates of linear scales} and Proposition \ref{the proposition on C2 estimates over time intervals}. 
\begin{lem}
\label{the lemma on graphical lower bound on time interval, before the change of variables}
If the rescaled CSF $\Gamma$ is $({H,\taup,\mathcal{T}})$-flat at $\vec\theta=\vec\theta(\taup)$ for some vector $\vec\theta(\taup)=(\theta_1(\taup),\cdots,\theta_{n-2}(\taup))$, then for $\tau\in[\taup,\taup+\mathcal{T}]$, 
\begin{equation*}
    \Gamma(\cdot,\tau)\cap \{(x,y,z_1,\cdots,z_{n-2})||x|\leq 2\rho^{\delta_1}(H)\}
\end{equation*}
is a union of the graphs of functions $y^i(\cdot,\tau),z^i_\ell(\cdot,\tau)$ for $i=1,2,1\leq\ell\leq n-2$. In addition, the estimates
\begin{equation*}
        |y^i|\leq CH(|x|+1),  \quad |y^i_x|\leq CH
\end{equation*}
and
\begin{equation*}
        |z^i_\ell-(\tan\theta_\ell) x|\leq CH(|x|+1),  \quad |z^i_{\ell x}-\tan\theta_\ell|\leq CH
\end{equation*}
hold in $\mr^{\tau^\prime}_{2\rho^{\delta_1}(H),\mathcal{T}}$. Moreover, the estimates
\begin{equation*}
    |y^i_{xx}|\leq CH,\quad |z^i_{\ell xx}|\leq CH
\end{equation*}
hold in $\mr^{\tau^\prime+\frac{1}{2}}_{2\rho^{\delta_1}(H),\mathcal{T}-\frac{1}{2}}$.
\end{lem}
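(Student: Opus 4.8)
The plan is to obtain this lemma as an immediate consequence of Proposition \ref{the proposition on estimates of linear scales} and Proposition \ref{the proposition on C2 estimates over time intervals}, the only genuine work being to match the two linear scales $\rho^{\delta_0}$ and $\rho^{\delta_1}$. Recall that $\delta_1=\frac{\lambda_0\delta_0}{8}$, so $\rho^{\delta_1}(H)=\frac{\delta_1}{20H}=\frac{\lambda_0}{8}\rho^{\delta_0}(H)$ and hence $2\rho^{\delta_1}(H)=\frac{\lambda_0}{4}\rho^{\delta_0}(H)$. Since $\lambda_0\in(0,1)$, this yields $2\rho^{\delta_1}(H)\le\frac14\rho^{\delta_0}(H)\le\frac34\rho^{\delta_0}(H)$ and $2\rho^{\delta_1}(H)\le\frac12\lambda_0\rho^{\delta_0}(H)$, while $2\rho^{\delta_1}(H)=\frac14\lambda_0\rho^{\delta_0}(H)$ \emph{exactly}.

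First I would verify the hypotheses of Proposition \ref{the proposition on estimates of linear scales} at each fixed time $\tau\in[\taup,\taup+\mathcal{T}]$: by Definition \ref{definition on condition H taup and T}, $(H,\taup,\mathcal{T})$-flatness of $\Gamma$ at $\vec\theta(\taup)$ supplies, with the \emph{identity} horizontal rotation $S_\tau=\mathrm{id}$ and the fixed vector $\vec\theta(\taup)$, the required $C^1$ bound on $\{|x|\le2\}$, and it also guarantees $H\le H_0$ and $\taup\ge\tau_0$, hence $\tau\ge\tau_0$. Applying Proposition \ref{the proposition on estimates of linear scales} at each such $\tau$ then gives graphicality on $\{|x|\le\frac34\rho^{\delta_0}(H)\}\supset\{|x|\le2\rho^{\delta_1}(H)\}$, the bounds $|y^i|\le CH(|x|+1)$ and $|y^i_x|\le CH$ on $\{|x|\le\frac12\rho^{\delta_0}(H)\}\supset\{|x|\le2\rho^{\delta_1}(H)\}$, and the bounds $|z^i_\ell-(\tan\theta_\ell)x|\le CH(|x|+1)$ and $|z^i_{\ell x}-\tan\theta_\ell|\le CH$ on $\{|x|\le\frac12\lambda_0\rho^{\delta_0}(H)\}\supset\{|x|\le2\rho^{\delta_1}(H)\}$. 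Because the constants $C$, $\lambda_0$, $\delta_0$ in Proposition \ref{the proposition on estimates of linear scales} do not depend on $\tau$, all these estimates hold uniformly over $\mr^{\taup}_{2\rho^{\delta_1}(H),\mathcal{T}}$.

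Second, for the $C^2$ bounds I would invoke Proposition \ref{the proposition on C2 estimates over time intervals} with $S_{\taup}=\mathrm{id}$, the same vector $\vec\theta(\taup)$, and the same $\mathcal{T}$; its hypotheses are again furnished verbatim by $(H,\taup,\mathcal{T})$-flatness. Its conclusion gives $|y^i_{xx}|\le CH$ and $|z^i_{\ell xx}|\le CH$ on $\{|x|\le\frac14\lambda_0\rho^{\delta_0}(H)\}=\{|x|\le2\rho^{\delta_1}(H)\}$ for $\tau\in[\taup+\frac12,\taup+\mathcal{T}]$, i.e.\ on $\mr^{\taup+\frac{1}{2}}_{2\rho^{\delta_1}(H),\mathcal{T}-\frac{1}{2}}$, which is precisely the claimed region.

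The argument is essentially bookkeeping, so there is no serious obstacle; the only point that needs care is the choice of $\delta_1$ (and hence of $\rho^{\delta_1}$), which has been arranged exactly so that $2\rho^{\delta_1}(H)$ simultaneously lies inside all three nested regions of Proposition \ref{the proposition on estimates of linear scales} and coincides with the region of Proposition \ref{the proposition on C2 estimates over time intervals}, together with the observation that taking $S_\tau$ to be the identity is legitimate because the flatness hypothesis is already phrased relative to $\Gamma$ itself rather than to a rotated copy of it.
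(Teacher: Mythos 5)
Your proposal is correct and coincides with the paper's intent: the paper presents this lemma explicitly as a ``restatement'' of Proposition \ref{the proposition on estimates of linear scales} and Proposition \ref{the proposition on C2 estimates over time intervals} at the rescaled linear scale $\rho^{\delta_1}=\frac{\lambda_0}{8}\rho^{\delta_0}$, and gives no separate proof, so your bookkeeping (verifying $2\rho^{\delta_1}(H)=\frac{1}{4}\lambda_0\rho^{\delta_0}(H)$ lies in all the nested regions, taking $S_\tau=\mathrm{id}$ because $(H,\taup,\mathcal T)$-flatness is phrased for $\Gamma$ itself, and noting the time-uniformity of the constants) is exactly the intended argument.
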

\subsection{Change of variables}
In this subsection, we always assume the rescaled CSF $\Gamma$ is $({H,\taup,\mathcal{T}})$-flat at a vector $\vec\theta=\vec\theta(\taup)=(\theta_1(\taup),\cdots,\theta_{n-2}(\taup))\in[0,\frac{\pi}{2})^{n-2}$.

We define an angle $\theta=\theta(\taup)\in[0,\frac{\pi}{2})$ by
\begin{equation}
\label{the equation on definition of theta}
    \frac{1}{\cos^2\theta}=1+\sum_{\ell=1}^{n-2}\tan^2\theta_\ell.
\end{equation}
By Remark \ref{the remark on apriori bound of possible angle of the limit line}, there is an angle $\theta_0\in[0,\frac{\pi}{2})$, independent of time $\taup$, such that $\theta(\taup)\in[0,\theta_0]$.

We choose new coordinates for $\tau\in[\tau^\prime,\tau^\prime+\mathcal{T}]$:
\begin{equation}
\label{equation on definition of coordinates with tilde}
    \tilde x=\frac{ x}{\cos\theta},\quad \tilde y^i=y^i, \quad \tilde z^i_\ell=z^i_\ell-\tan\theta_\ell x,
\end{equation}
where $\theta=\theta(\taup)$ and $\theta_\ell=\theta_\ell(\taup)$ are independent of $\tau$.

The reason for this change of variables, instead of an orthonormal transformation, is that this choice rescales the $x$-direction while leaving the $y$-direction unchanged. Thus the rescaled CSF still has a one-to-one convex projection onto the $\tx\ty$-plane. This will be important in the proof of Proposition \ref{the prop on finding new slopes}, particularly equation \eqref{the equation on y alpha 1 strictly bigger than y alpha 2}.
\begin{notation}
    We sometimes omit the superscripts $i$ in the functions $\ty^i,\tz_\ell^i$ for simplicity when there is no confusion. Throughout the rest of this paper, we denote by $u$ any one of the functions $\ty^i,\tz^i_\ell $, where $i=1,2 $ and $ 1\leq\ell\leq n-2$.
\end{notation}

\begin{lem}
\label{the lemma on the evolution equation of u, tilde y and z}
 The function $u$ satisfies the following evolution equation:
    \begin{equation}
        u_\tau=\frac{u_{\tilde{x}\tilde{x}}}{1+\ty_{\tilde{x}}^{2}
    +\sum\limits_{\ell=1}^{n-2}\left(\tilde{z}^2_{\ell\tilde{x}}+2 \tilde{z}_{\ell\tilde{x}}\cos\theta\tan\theta_\ell\right)}
    -\tilde{x}u_{\tilde{x}}+u.
    \end{equation}
\end{lem}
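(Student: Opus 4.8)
The plan is to obtain the equation by a direct change of variables from the graphical rescaled CSF equations of Lemma~\ref{equations of the graphical rescaled CSF}. First I would record the elementary chain-rule identities coming from \eqref{equation on definition of coordinates with tilde}. Since $\theta=\theta(\taup)$ is determined by the reference time $\taup$ and is therefore a \emph{constant} on the whole interval $[\taup,\taup+\mathcal T]$, the change of variables carries no time dependence, so $\partial_\tau$ at fixed $\tx$ agrees with $\partial_\tau$ at fixed $x$; in particular $\ty_\tau=y_\tau$, and because the extra term $(\tan\theta_\ell)x$ in $\tz_\ell=z_\ell-(\tan\theta_\ell)x$ is $\tau$-independent, $\tz_{\ell\tau}=z_{\ell\tau}$. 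From $\partial_x=\tfrac1{\cos\theta}\partial_\tx$ one gets $y_x=\tfrac1{\cos\theta}\ty_\tx$, $y_{xx}=\tfrac1{\cos^2\theta}\ty_{\tx\tx}$, $z_{\ell x}=\tfrac1{\cos\theta}\tz_{\ell\tx}+\tan\theta_\ell$ and $z_{\ell xx}=\tfrac1{\cos^2\theta}\tz_{\ell\tx\tx}$.

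Next I would substitute these into the common denominator $D:=1+y_x^2+\sum_{\ell}z_{\ell x}^2$. Expanding the squares and using the defining relation \eqref{the equation on definition of theta}, namely $1+\sum_{\ell}\tan^2\theta_\ell=\tfrac1{\cos^2\theta}$, to absorb the constant $1+\sum_\ell\tan^2\theta_\ell$, one finds
\[
D=\frac{1}{\cos^2\theta}\Big(1+\ty_\tx^{2}+\sum_{\ell=1}^{n-2}\big(\tz_{\ell\tx}^{2}+2\tz_{\ell\tx}\cos\theta\tan\theta_\ell\big)\Big).
\]
Since $y_{xx}=\tfrac1{\cos^2\theta}\ty_{\tx\tx}$ and $z_{\ell xx}=\tfrac1{\cos^2\theta}\tz_{\ell\tx\tx}$ carry the same factor $\tfrac1{\cos^2\theta}$, this factor cancels in the quotients $\tfrac{y_{xx}}{D}$ and $\tfrac{z_{\ell xx}}{D}$, producing exactly the diffusion term in the asserted equation — the same one for every $u\in\{\ty^i,\tz^i_\ell\}$.

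It then remains to track the transport and zeroth-order terms. For $u=\ty=y$ one has $-xy_x=-(\tx\cos\theta)\tfrac1{\cos\theta}\ty_\tx=-\tx\ty_\tx$ and $y=\ty$, which already gives the equation for $\ty$. For $u=\tz_\ell$ one computes $-xz_{\ell x}=-\tx\tz_{\ell\tx}-\tx\cos\theta\tan\theta_\ell$ and $z_\ell=\tz_\ell+\tx\cos\theta\tan\theta_\ell$; the two linear terms $\mp\,\tx\cos\theta\tan\theta_\ell$ cancel, leaving $-\tx\tz_{\ell\tx}+\tz_\ell$. Collecting the pieces yields the stated evolution equation simultaneously for $\ty^i$ and $\tz^i_\ell$.

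There is no essential difficulty here; the computation is routine once the bookkeeping is organized, and the only points that genuinely need checking are (i) that $\theta(\taup)$ is constant along the flow on $[\taup,\taup+\mathcal T]$ so that no spurious term involving its time derivative arises, (ii) that identity \eqref{the equation on definition of theta} forces the $\ty$- and $\tz_\ell$-equations to share the same denominator $D$, and (iii) the cancellation of the $\tx\cos\theta\tan\theta_\ell$ terms in the $\tz_\ell$-equation. All manipulations are valid on the spacetime region $\mr^{\taup}_{2\rho^{\delta_1}(H),\mathcal T}$, where Lemma~\ref{the lemma on graphical lower bound on time interval, before the change of variables} guarantees the graphical representation underlying \eqref{equation on definition of coordinates with tilde}.
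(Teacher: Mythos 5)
Your proposal is correct and follows essentially the same route as the paper: substitute $y_x=\tfrac{1}{\cos\theta}\ty_\tx$ and $z_{\ell x}=\tfrac{1}{\cos\theta}\tz_{\ell\tx}+\tan\theta_\ell$ into the denominator, absorb the constant $1+\sum_\ell\tan^2\theta_\ell=\tfrac{1}{\cos^2\theta}$ so the overall factor $\tfrac{1}{\cos^2\theta}$ cancels against $u_{xx}=\tfrac{1}{\cos^2\theta}u_{\tx\tx}$, and observe that the transport and zeroth-order terms are invariant. The only cosmetic difference is that the paper first checks the algebraic identity $-x(z_\ell-\tan\theta_\ell x)_x+(z_\ell-\tan\theta_\ell x)=-xz_{\ell x}+z_\ell$ to reduce to a single equation for every $u$ before changing coordinates, whereas you display the cancellation of the $\tx\cos\theta\tan\theta_\ell$ terms directly in the $\tilde x$-variable; both are the same computation.
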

\begin{proof}
By direct computations, based on $\partial_{\tilde x}=\cos\theta \partial_x$ and equation \eqref{equation on definition of coordinates with tilde}, one has
\begin{equation}
\label{zlx to z tilde l x tilde}
   z_{\ell x}=  \tilde{z}_{\ell x}+\tan\theta_\ell
   = \frac{ \tilde{z}_{\ell\tilde{x}}}{\cos\theta}+\tan\theta_\ell.
\end{equation}
Because $-x(z_\ell-\tan\theta_\ell x)_x+(z_\ell-\tan\theta_\ell x)=-xz_{\ell x}+z_\ell$, by Lemma \ref{equations of the graphical rescaled CSF}, the function $u$ satisfies the following linear equation:
\begin{equation}
\label{eq:evolution equation after appropriate change of variables}
    u_\tau=\frac{u_{xx}}{1+y_x^2+z_{1x}^2+\cdots+z_{(n-2)x}^2}-xu_x+u.
\end{equation}
Because $\partial_{\tilde x}=\cos\theta \partial_x$,
\begin{equation*}
     u_\tau=\frac{\frac{1}{\cos^2\theta}u_{\tilde{x}\tilde{x}}}{1+\frac{1}{\cos^2\theta}y_{\tilde{x}}^2+\sum\limits_{\ell=1}^{n-2}z_{\ell x}^2}-\tilde{x}u_{\tilde{x}}+u.
\end{equation*}
By equation \eqref{zlx to z tilde l x tilde},
\begin{align*}
   u_\tau &=\frac{\frac{1}{\cos^2\theta}u_{\tilde{x}\tilde{x}}}{1+\frac{1}{\cos^2\theta}y_{\tilde{x}}^2+\sum\limits_{\ell=1}^{n-2}\left(\frac{ \tilde{z}_{\ell\tilde{x}}}{\cos\theta}+\tan\theta_\ell\right)^2}-\tilde{x}u_{\tilde{x}}+u\\
    &=\frac{\frac{1}{\cos^2\theta}u_{\tilde{x}\tilde{x}}}{1+\frac{1}{\cos^2\theta}y_{\tilde{x}}^2
    +\sum\limits_{\ell=1}^{n-2}\left(\frac{ \tilde{z}^2_{\ell\tilde{x}}}{\cos^2\theta}+2\frac{ \tilde{z}_{\ell\tilde{x}}}{\cos\theta}\tan\theta_\ell\right)+\sum\limits_{\ell=1}^{n-2}\tan^2\theta_\ell}-\tilde{x}u_{\tilde{x}}+u.
\end{align*}
By definition of $\theta$ (equation \eqref{the equation on definition of theta}),
\begin{align*}
    u_\tau&=\frac{\frac{1}{\cos^2\theta}u_{\tilde{x}\tilde{x}}}{\frac{1}{\cos^2\theta}+\frac{1}{\cos^2\theta}y_{\tilde{x}}^2
    +\sum\limits_{\ell=1}^{n-2}\left(\frac{ \tilde{z}^2_{\ell\tilde{x}}}{\cos^2\theta}+2\frac{ \tilde{z}_{\ell\tilde{x}}}{\cos\theta}\tan\theta_\ell\right)}-\tilde{x}u_{\tilde{x}}+u\\
    &=\frac{u_{\tilde{x}\tilde{x}}}{1+y_{\tilde{x}}^2
    +\sum\limits_{\ell=1}^{n-2}\left(\tilde{z}^2_{\ell\tilde{x}}+2 \tilde{z}_{\ell\tilde{x}}\cos\theta\tan\theta_\ell\right)}-\tilde{x}u_{\tilde{x}}+u.
\end{align*}
By equation \eqref{equation on definition of coordinates with tilde}, $y=\ty$, this lemma is proved.
\end{proof}
By Lemma \ref{the lemma on graphical lower bound on time interval, before the change of variables}, because $|\tilde x|=\left|\frac{x}{\cos\theta}\right|\geq |x|$, we have the following estimates based on $\rho=\rho^{\delta_1}=\frac{\delta_1}{20 H}$ (equation \eqref{the equation on definition of rho 1}).
\begin{lem}
   \label{the lemma on graphical lower bound on time interval}
If the rescaled CSF $\Gamma$ is $({H,\taup,\mathcal{T}})$-flat at $\vec\theta$, then the estimates
\begin{equation}
        |u|\leq CH(|\tx |+1)\leq C,  \quad |u_\tx|\leq CH
\end{equation}
hold for $|\tx|\leq2\rho$ and $\tau\in[\taup,\taup+\mathcal{T}]$. In addition, the estimate
\begin{equation}
    |u_{\tx\tx}|\leq CH
\end{equation}
holds for $|\tx|\leq2\rho$ and $\tau\in[\taup+\frac{1}{2},\taup+\mathcal{T}]$.
\end{lem}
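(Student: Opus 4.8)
The plan is to obtain this lemma as an immediate consequence of Lemma \ref{the lemma on graphical lower bound on time interval, before the change of variables} by pushing the estimates there through the change of variables \eqref{equation on definition of coordinates with tilde}. Recall that $u$ denotes one of the functions $\ty^i=y^i$ or $\tz^i_\ell=z^i_\ell-(\tan\theta_\ell)x$, that $\tx=x/\cos\theta$, and that by Remark \ref{the remark on apriori bound of possible angle of the limit line} the angle $\theta=\theta(\taup)$ lies in $[0,\theta_0]$ with $\theta_0<\tfrac{\pi}{2}$ independent of $\taup$, so that $\cos\theta_0\le\cos\theta\le 1$ uniformly. Since $\theta$ and the $\theta_\ell$ are fixed on the whole interval $[\taup,\taup+\mathcal T]$, the substitution is time-independent there.

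First I would check that the domains match up: since $\rho=\rho^{\delta_1}(H)$ in this section (Notation \ref{notation of rho delta 1}) and $|\tx|\le 2\rho$ forces $|x|=|\tx|\cos\theta\le 2\rho=2\rho^{\delta_1}(H)$, the region $\{|\tx|\le 2\rho\}$ is contained, in the $x$-coordinate, in $\{|x|\le 2\rho^{\delta_1}(H)\}$, where all conclusions of Lemma \ref{the lemma on graphical lower bound on time interval, before the change of variables} hold, and the time windows agree. Next, from $\partial_{\tx}=\cos\theta\,\partial_x$ one has $u_{\tx}=\cos\theta\,u_x$ and $u_{\tx\tx}=\cos^2\theta\,u_{xx}$; for $u=\ty^i$ this reads $u_x=y^i_x$, $u_{xx}=y^i_{xx}$, and for $u=\tz^i_\ell$ it reads $u_x=z^i_{\ell x}-\tan\theta_\ell$, $u_{xx}=z^i_{\ell xx}$. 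In every case the right-hand sides are bounded by $CH$ by Lemma \ref{the lemma on graphical lower bound on time interval, before the change of variables} (the first-derivative bounds on $\mr^{\taup}_{2\rho^{\delta_1}(H),\mathcal T}$, the second-derivative bounds on $\mr^{\taup+\frac12}_{2\rho^{\delta_1}(H),\mathcal T-\frac12}$). Hence $|u_{\tx}|\le|u_x|\le CH$ for $|\tx|\le 2\rho$, $\tau\in[\taup,\taup+\mathcal T]$, and $|u_{\tx\tx}|\le|u_{xx}|\le CH$ for $|\tx|\le 2\rho$, $\tau\in[\taup+\tfrac12,\taup+\mathcal T]$.

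For the zeroth-order bound I would use that the substitution \eqref{equation on definition of coordinates with tilde} only relabels the argument of the scalar function $u$ without changing its value, so Lemma \ref{the lemma on graphical lower bound on time interval, before the change of variables} gives $|u|\le CH(|x|+1)$; inserting $|x|=|\tx|\cos\theta\le|\tx|$ yields $|u|\le CH(|\tx|+1)$ on the same region. Collecting the three bounds gives the statement, with $C$ depending only on the constant of the previous lemma and on $\theta_0$, hence independent of $\taup$ and $\mathcal T$. There is no genuine obstacle here — the only points requiring care are that the spacetime region must be shown to shrink the right way under the substitution (so one always stays inside the domain of the pre-substitution estimates) and that the factor $\cos\theta$, uniformly bounded in $[\cos\theta_0,1]$, is absorbed into $C$ without introducing any time dependence.
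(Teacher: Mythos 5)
Your proof is correct and takes essentially the same approach as the paper, which disposes of this lemma in a single line invoking the pre–change-of-variables estimates together with the observation $|\tx|\geq|x|$; you have simply spelled out the chain rule computations and the domain inclusion that the paper leaves implicit. One minor remark: since $\cos\theta\leq 1$, the factors $\cos\theta$ and $\cos^2\theta$ from the chain rule only improve the bounds, so there is nothing to absorb into $C$ and no dependence on $\theta_0$ actually arises.
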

Recall the cut-off function $\eta=\eta(\tx)$ defined in equation \eqref{definition of the cutoff function eta} and recall the cut-off $\hat{u}$ of the function $u$ (Definition \ref{def:cut-off profile}).
\begin{lem}
       \label{the lemma on H norm of u hat}
       The cut-off $\hat{u}$ satisfies      
        \begin{equation}
        \|\hat u\|_\mh\leq CH
        \end{equation}
    for $\tau\in[\tau^\prime,\tau^\prime+\mathcal{T}]$. 
\end{lem}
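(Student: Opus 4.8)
The plan is to reduce the weighted $L^2$ bound to the pointwise linear-growth estimate already established. First I would recall from Lemma~\ref{the lemma on graphical lower bound on time interval} that, under the $({H,\taup,\mathcal{T}})$-flatness hypothesis, each of the functions $u\in\{\ty^i,\tz^i_\ell\}$ satisfies $|u(\tx,\tau)|\le CH(|\tx|+1)$ on the region $|\tx|\le 2\rho$, $\tau\in[\taup,\taup+\mathcal{T}]$. Since the cut-off $\eta(\tx/\rho)$ vanishes for $|\tx|\ge 2\rho$ and takes values in $[0,1]$, the product $\hu=\eta(\frac{\tx}{\rho})u$ is supported in $\{|\tx|\le 2\rho\}$ and, on that set, $|\hu(\tx,\tau)|\le|u(\tx,\tau)|\le CH(|\tx|+1)$; hence $|\hu(\tx,\tau)|\le CH(|\tx|+1)$ holds for all $\tx\in\mathbb R$ and $\tau\in[\taup,\taup+\mathcal{T}]$.

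Next I would insert this pointwise bound directly into the definition of the Gaussian norm:
\begin{equation*}
\|\hu\|_\mh^2=\frac{1}{\sqrt{2\pi}}\int_{\mathbb R}\hu^2\,e^{-\frac{\tx^2}{2}}\,d\tx
\le C^2H^2\cdot\frac{1}{\sqrt{2\pi}}\int_{\mathbb R}(|\tx|+1)^2\,e^{-\frac{\tx^2}{2}}\,d\tx.
\end{equation*}
The remaining integral is a fixed finite number, a combination of the zeroth, first, and second Gaussian moments, independent of $H$, $\tau$, $\rho$, $\taup$, and $\mathcal{T}$; call it $C_0$. Taking square roots yields $\|\hu\|_\mh\le C\sqrt{C_0}\,H$, which is the asserted estimate after renaming the constant.

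I do not expect any real obstacle here: the only nontrivial input beyond elementary integration is the linear-growth bound $|u|\le CH(|\tx|+1)$, which is exactly the $C^0$ content of Lemma~\ref{the lemma on graphical lower bound on time interval} (itself derived from Proposition~\ref{the proposition on estimates of linear scales}). The single point requiring a word of care is that the bound on $u$ is a priori valid only on $|\tx|\le 2\rho$, but since $\hu$ is supported precisely on that interval the restriction is harmless, and the Gaussian weight makes the contribution of the region $|\tx|\in[\rho,2\rho]$ negligible; in particular the final constant $C$ does not depend on $\rho$ (nor on $\taup$ or $\mathcal{T}$).
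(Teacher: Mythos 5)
Your proof is correct and follows essentially the same route as the paper: both invoke the pointwise linear-growth bound $|u|\le CH(|\tx|+1)$ from Lemma~\ref{the lemma on graphical lower bound on time interval}, use that $\eta$ is supported in $|\tx|\le 2\rho$ with values in $[0,1]$, and then integrate against the Gaussian weight. The only cosmetic difference is that you extend the bound to all of $\mathbb R$ before integrating, whereas the paper simply restricts the integral to $|\tx|\le 2\rho$.
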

\begin{proof}
    By definition,
    \begin{equation*}
        \|\hat u\|_\mh^2=\frac{1}{\sqrt{2\pi}}\int_{|\tx|\leq2\rho}\left(\eta(\frac{\tx}{\rho})u\right)^2e^{-\frac{\tx^2}{2}}d\tx
        \leq\int_{|\tx|\leq2\rho}u^2e^{-\frac{\tx^2}{2}}d\tx.
\end{equation*}
By Lemma \ref{the lemma on graphical lower bound on time interval},
\begin{equation*}
    \|\hat u\|_\mh^2\leq C\int_{|\tx|\leq2\rho} H^2(|\tx|+1)^2e^{-\frac{\tx^2}{2}}d\tx
    \leq CH^2.
\end{equation*}
\end{proof}
\begin{lem}
\label{the lemma on y tilde hat x square}
The following estimates hold for $\hat u_\tx^2=\left[\left(\hat u\right)_\tx\right]^2$: 
    \begin{equation}
    \label{the equation on y tilde hat x square, computations}
        \hat u_\tx^2=u_\tx^2\eta^2+2uu_\tx\eta\eta^\prime\frac{1}{\rho}+u^2(\eta^\prime)^2\frac{1}{\rho^2}
        \leq CH^2.
    \end{equation}  
    As a result,
    \begin{equation}
          \|\hat u_\tx\|_\mh\leq CH.
    \end{equation}
\end{lem}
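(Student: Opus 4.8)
The identity in \eqref{the equation on y tilde hat x square, computations} is just the product rule: since $\hat u=\eta(\tfrac{\tx}{\rho})u$ and $\partial_\tx\big[\eta(\tfrac{\tx}{\rho})\big]=\tfrac{1}{\rho}\eta^\prime(\tfrac{\tx}{\rho})$, one has $\hat u_\tx=u_\tx\,\eta(\tfrac{\tx}{\rho})+\tfrac{1}{\rho}u\,\eta^\prime(\tfrac{\tx}{\rho})$, and squaring gives the three displayed terms. So the content is the pointwise bound $\hat u_\tx^2\le CH^2$, after which the $\mh$-norm bound is immediate.

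The key observation is that $\eta(\tfrac{\tx}{\rho})$ and $\eta^\prime(\tfrac{\tx}{\rho})$ are supported in $\{|\tx|\le 2\rho\}$ by \eqref{definition of the cutoff function eta}, which is precisely the region where Lemma \ref{the lemma on graphical lower bound on time interval} applies (for $\tau$ in the range on which $\Gamma$ is $({H,\taup,\mathcal{T}})$-flat). On that region we have $|u|\le CH(|\tx|+1)$, $|u_\tx|\le CH$, together with $|\eta|,|\eta^\prime|\le 2$. Moreover, since $\rho=\rho^{\delta_1}(H)=\tfrac{\delta_1}{20H}$ is large when $H$ is small, on the support of the cut-off we have $\tfrac{|\tx|+1}{\rho}\le 2+\tfrac1\rho\le 3$. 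Plugging these in: the first term is $\le (CH)^2\cdot 4$; the second is $\le 2\cdot CH(|\tx|+1)\cdot CH\cdot 4\cdot\tfrac1\rho\le CH^2\cdot\tfrac{|\tx|+1}{\rho}\le CH^2$; the third is $\le C^2H^2(|\tx|+1)^2\cdot 4\cdot\tfrac1{\rho^2}\le CH^2\big(\tfrac{|\tx|+1}{\rho}\big)^2\le CH^2$. Outside $\{|\tx|\le 2\rho\}$ the function $\hat u_\tx$ vanishes, so $\hat u_\tx^2\le CH^2$ holds on all of $\mathbb R$.

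Finally, integrating against the Gaussian weight,
\begin{equation*}
    \|\hat u_\tx\|_\mh^2=\frac{1}{\sqrt{2\pi}}\int_{\mathbb R}\hat u_\tx^2\,e^{-\frac{\tx^2}{2}}\,d\tx
    \le CH^2\cdot\frac{1}{\sqrt{2\pi}}\int_{\mathbb R}e^{-\frac{\tx^2}{2}}\,d\tx=CH^2,
\end{equation*}
which gives $\|\hat u_\tx\|_\mh\le CH$. There is no real obstacle here; the only point requiring a moment's care is matching the support of $\eta(\tfrac{\cdot}{\rho})$ and its derivative with the domain of validity of the $C^1$ estimates in Lemma \ref{the lemma on graphical lower bound on time interval}, and noting that $\rho$ being large makes the cut-off derivative terms harmless rather than problematic.
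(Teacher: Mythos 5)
Your proof is correct and essentially the same as the paper's: both apply the product rule, the support of $\eta(\cdot/\rho)$, the $C^1$ estimates from Lemma \ref{the lemma on graphical lower bound on time interval}, and the fact that $|\tx|/\rho\le 2$ on the support to bound $\hat u_\tx^2$ pointwise, then integrate against the Gaussian. The only cosmetic difference is that the paper bounds the square via $(a+b)^2\le 2a^2+2b^2$ while you keep and estimate the cross term directly, which if anything matches the displayed identity more closely.
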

\begin{proof}
  By direct computations,
  \begin{equation}
    \label{the equation of y tilde hat derivative}
        \hat u_\tx=(u\eta(\frac{\tx}{\rho}))_\tx=u_\tx\eta+u\eta^\prime\frac{1}{\rho}.
    \end{equation}
By the definition of the cut-off function $\eta$ (equation \eqref{definition of the cutoff function eta}) and  Lemma \ref{the lemma on graphical lower bound on time interval}, for $|\tx|\leq2\rho$,
\begin{align}
\label{the equation on y tilde hat x square}
    \hat u_\tx^2
    &=\left(u_\tx\eta+u\eta^\prime\frac{1}{\rho}\right)^2
    \leq2u_\tx^2\eta^2+2u^2(\eta^\prime)^2\frac{1}{\rho^2}
    \leq2u_\tx^2+8u^2\frac{1}{\rho^2}\\
    &\leq C\left( H^2+\frac{1}{\rho^2}\right)
   \leq CH^2.
\end{align}
where we used $\rho=\rho^{\delta_1}=\frac{\delta_1}{20 H}$ (equation \eqref{the equation on definition of rho 1}).
\end{proof}
\subsection{Estimates along evolution}
In this subsection, we always assume the rescaled CSF $\Gamma$ is $({H,\taup,\mathcal{T}})$-flat at $\vec\theta$. Let $u$ be any one of the functions $\ty^i,\tz^i_\ell $, where $ i=1,2$ and $1\leq\ell\leq n-2$. 

Recall the operator $\ml= \partial_\tx^2-\tx\partial_\tx+1$ and the error term $E=\hat u_\tau-\ml\hat u$ introduced in \S\ref{uniqueness-set up}.
\begin{lem}
\label{estimates of projection of E and F tilde}
 The estimates 
\begin{equation}
    \|E\|_\mh\leq CH^2
\end{equation}
hold for $\tau\in[\taup+\frac{1}{2},\taup+\mathcal{T}]$.
\end{lem}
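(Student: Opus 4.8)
The plan is to compute the error term $E=\hu_\tau-\ml\hu$ explicitly from the evolution equation for $u$ and then bound each resulting piece separately. A useful preliminary observation is that the cut-off scale $\rho=\rho^{\delta_1}(H)$ depends only on $H$, hence is constant in $\tau$ over the spacetime region, so that $\eta(\tfrac{\tx}{\rho})$ commutes with $\partial_\tau$. First I would rewrite Lemma \ref{the lemma on the evolution equation of u, tilde y and z} in the form $u_\tau=\ml u+\big(\tfrac{1}{Q}-1\big)u_{\tx\tx}$, where
\[
Q:=1+\ty_\tx^{2}+\sum_{\ell=1}^{n-2}\big(\tz_{\ell\tx}^{2}+2\,\tz_{\ell\tx}\cos\theta\tan\theta_\ell\big)
\]
is the denominator appearing in that lemma. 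Then a direct application of the product rule (using $\partial_\tx\eta(\tfrac{\tx}{\rho})=\tfrac{1}{\rho}\eta'(\tfrac{\tx}{\rho})$ and $\partial_\tx^{2}\eta(\tfrac{\tx}{\rho})=\tfrac{1}{\rho^{2}}\eta''(\tfrac{\tx}{\rho})$) gives
\[
E=\eta(\tfrac{\tx}{\rho})\Big(\tfrac{1}{Q}-1\Big)u_{\tx\tx}
-\tfrac{2}{\rho}\,\eta'(\tfrac{\tx}{\rho})\,u_{\tx}
-\tfrac{1}{\rho^{2}}\,\eta''(\tfrac{\tx}{\rho})\,u
+\tfrac{\tx}{\rho}\,\eta'(\tfrac{\tx}{\rho})\,u .
\]

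For the first (main) term, on the support $\{|\tx|\le 2\rho\}$ of $\eta(\tfrac{\tx}{\rho})$, Lemma \ref{the lemma on graphical lower bound on time interval} supplies, for $\tau\in[\taup+\tfrac12,\taup+\mathcal{T}]$, the bounds $|u_{\tx\tx}|\le CH$, $|\ty_\tx|\le CH$ and $|\tz_{\ell\tx}|\le CH$; combined with the time-independent a priori bound on $\tan\theta_\ell$ from Remark \ref{the remark on apriori bound of possible angle of the limit line}, this yields $|Q-1|\le CH$ (the dominant contribution being the cross terms $2\tz_{\ell\tx}\cos\theta\tan\theta_\ell$, which are only $O(H)$), hence $Q\ge\tfrac12$ for $H$ small and $\big|\tfrac{1}{Q}-1\big|=\tfrac{|Q-1|}{Q}\le CH$. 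Therefore this term is pointwise $O(H^{2})$ on $\{|\tx|\le 2\rho\}$, and integrating its square against the Gaussian weight gives $\big\|\eta(\tfrac{\tx}{\rho})(\tfrac{1}{Q}-1)u_{\tx\tx}\big\|_\mh\le CH^{2}$.

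The three remaining terms each carry a factor $\eta'(\tfrac{\tx}{\rho})$ or $\eta''(\tfrac{\tx}{\rho})$, so they are supported in the annulus $\{\rho\le|\tx|\le 2\rho\}$, where $e^{-\tx^{2}/2}\le e^{-\rho^{2}/2}$. Using $|\eta'|,|\eta''|\le 2$, $\tfrac{1}{\rho}=\tfrac{20H}{\delta_1}$, and the bounds $|u_\tx|\le CH$, $|u|\le CH(|\tx|+1)\le CH\rho$ valid on $\{|\tx|\le 2\rho\}$, each of these terms is bounded on its support by a fixed constant, so its $\mh$-norm is $\le Ce^{-\rho^{2}/4}$ (absorbing the polynomial factor from the measure of the annulus); since $\rho=\rho^{\delta_1}(H)=\tfrac{\delta_1}{20H}\to\infty$ as $H\to0$, this is $\le CH^{2}$ once $H$ is small. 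Summing the four contributions yields $\|E\|_\mh\le CH^{2}$ on $[\taup+\tfrac12,\taup+\mathcal{T}]$. I do not anticipate a genuine analytic obstacle here; the one point deserving attention — and the reason the statement is restricted to $[\taup+\tfrac12,\taup+\mathcal{T}]$ — is that $\tfrac{1}{Q}-1$ is only of size $O(H)$, so one must use the interior second-order bound $|u_{\tx\tx}|\le CH$, available only after the half-unit time delay, to close the estimate for the main term; everything else is bookkeeping, ensuring each factor $\rho^{-1}$ is paired with a gradient bound of size $O(H)$ or with the superpolynomial Gaussian decay on the cut-off annulus.
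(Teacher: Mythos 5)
Your proof is correct, and it takes a genuinely simpler route than the paper's. The paper decomposes $E = E_1 + E_2$, where $E_1$ is obtained by replacing the gradients $\ty_\tx$, $\tz_{\ell\tx}$ in the numerator by the cut-off gradients $\hat\ty_\tx$, $\hat\tz_{\ell\tx}$, and then estimates $\|E_1\|_\mathcal{H}^2$ by combining the pointwise bound $|u_{\tx\tx}|\le CH$ with the weighted $L^2$ bounds $\|\hat u_\tx\|_\mathcal{H}\le CH$ and $\hat u_\tx^2\le CH^2$ from Lemma~\ref{the lemma on y tilde hat x square}; the discrepancy between the cut-off and non-cut-off gradients is then absorbed into $E_2$, together with the explicit $\eta', \eta''$ terms, and $E_2$ is controlled by Gaussian decay on the annulus $\{\rho\le|\tx|\le2\rho\}$. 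You instead leave the main term as $\eta(\tfrac{1}{Q}-1)u_{\tx\tx}$ and bound it pointwise by $CH^2$ on $\{|\tx|\le2\rho\}$ using the $L^\infty$ bounds $|\ty_\tx|,|\tz_{\ell\tx}|,|u_{\tx\tx}|\le CH$ from Lemma~\ref{the lemma on graphical lower bound on time interval} together with the uniform angle bound of Remark~\ref{the remark on apriori bound of possible angle of the limit line}; your annulus estimate then handles exactly the same $\eta', \eta''$ boundary terms. Both arguments rely on the same structural facts — the cross term $2\tz_{\ell\tx}\cos\theta\tan\theta_\ell$ is only $O(H)$, so $|Q-1|\le CH$ rather than $O(H^2)$, and closing the estimate requires the interior second-order bound $|u_{\tx\tx}|\le CH$, which explains the half-unit time delay. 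Your version avoids the hatted/unhatted bookkeeping entirely; the paper's version more closely mirrors the $\mathcal{H}$-norm framework of \cite{choi2025uniqueness}, where purely pointwise control may not be available, but here the pointwise estimates are already in hand and your shortcut is legitimate.
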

\begin{proof}
    By the evolution equation of $u$ (Lemma \ref{the lemma on the evolution equation of u, tilde y and z}), for $|\tx|\leq2\rho$,
    \begin{align*}
        E&= \frac{d}{d\tau}\left(u\eta(\frac{\tx}{\rho})\right)-(\partial_\tx^2-\tx\partial_\tx+1)\left(u\eta(\frac{\tx}{\rho})\right)\\
        &=-\frac{\ty_{\tilde{x}}^2
    +\sum\limits_{\ell=1}^{n-2}\left(\tilde{z}^2_{\ell\tilde{x}}+2 \tilde{z}_{\ell\tilde{x}}\cos\theta\tan\theta_\ell\right)}{1+\ty_{\tilde{x}}^2
    +\sum\limits_{\ell=1}^{n-2}\left(\tilde{z}^2_{\ell\tilde{x}}+2 \tilde{z}_{\ell\tilde{x}}\cos\theta\tan\theta_\ell\right)}u_{\tx\tx}\eta
        -2u_\tx\eta^\prime\frac{1}{\rho}-u\eta^{\prime\prime}\frac{1}{\rho^2}+\tx u\eta^\prime\frac{1}{\rho}.
    \end{align*}
We want to decompose $E$ into two quantities.

We define
\begin{equation*}
    E_1:=-\frac{\hat\ty_{\tilde{x}}^2
    +\sum\limits_{\ell=1}^{n-2}\left(\hat{\tilde{z}}^2_{\ell\tilde{x}}+2 \hat{\tilde{z}}_{\ell\tilde{x}}\cos\theta\tan\theta_\ell\right)}{1+\ty_{\tilde{x}}^2
    +\sum\limits_{\ell=1}^{n-2}\left(\tilde{z}^2_{\ell\tilde{x}}+2 \tilde{z}_{\ell\tilde{x}}\cos\theta\tan\theta_\ell\right)}u_{\tx\tx},
\end{equation*}   
where $\hat\ty_{\tilde{x}}=(\ty\eta)_{\tilde{x}}$ and $\hat\tz_{\ell\tilde{x}}=(\tz_\ell\eta)_{\tilde{x}}$.

We also define 
\begin{align*}
    E_2&:=-2u_\tx\eta^\prime\frac{1}{\rho}-u\eta^{\prime\prime}\frac{1}{\rho^2}+\tx u\eta^\prime\frac{1}{\rho}
    +\frac{1}{1+\ty_{\tilde{x}}^2
    +\sum\limits_{\ell=1}^{n-2}\left(\tilde{z}^2_{\ell\tilde{x}}+2 \tilde{z}_{\ell\tilde{x}}\cos\theta\tan\theta_\ell\right)}\\
    &\cdot u_{\tx\tx}
    \Bigg[(\ty_\tx^2+\sum\limits_{\ell=1}^{n-2}\tz_{\ell\tx}^2)(\eta^2-\eta)+2(\ty\ty_\tx+\sum\limits_{\ell=1}^{n-2}\tz_\ell\tz_{\ell\tx})\eta\eta^\prime\frac{1}{\rho}\\
    &+(\ty^2+\sum\limits_{\ell=1}^{n-2}\tz_\ell^2)(\eta^\prime)^2\frac{1}{\rho^2}
    +\sum\limits_{\ell=1}^{n-2}2\cos\theta\tan\theta_\ell\tz_\ell\eta^\prime\frac{1}{\rho}\Bigg].
\end{align*}

By equation \eqref{the equation on y tilde hat x square, computations}, one has
\begin{equation*}
    E=E_1+E_2.
\end{equation*}
Based on Lemma \ref{the lemma on graphical lower bound on time interval}, by taking $H$ small enough, for $|\tx|\leq2\rho$, we have 
\begin{equation}
\label{the equation on the denominator after change of variables is bounded from above and below}
    \frac{1}{2}\leq1+\ty_{\tilde{x}}^2
    +\sum\limits_{\ell=1}^{n-2}\left(\tilde{z}^2_{\ell\tilde{x}}+2 \tilde{z}_{\ell\tilde{x}}\cos\theta\tan\theta_\ell\right)\leq 2.
\end{equation}
Lemma \ref{the lemma on graphical lower bound on time interval}, equation \eqref{the equation on the denominator after change of variables is bounded from above and below} and the definition of the cut-off function $\eta$ (equation \eqref{definition of the cutoff function eta}) implies that
\[
|E_2| \le
\begin{cases}
  CH(|\tx| + 1), & \text{if } \rho \le |\tx| \le 2\rho, \\[6pt]
  0,             & \text{if } |\tx| < \rho \text{ or } |\tx| > 2\rho.
\end{cases}
\]
Thus, 
\begin{align*}
    \|E_2\|_\mh^2&\leq C \int_{\rho}^{2\rho} H^2(|\tx|+1)^2e^{-\frac{\tx^2}{2}} d \tx
    \leq C \frac{H^2}{\rho^8}\int_{\rho}^{2\rho} |\tx|^8(|\tx|+1)^2e^{-\frac{\tx^2}{2}} d \tx\\
    &\leq C \frac{H^2}{\rho^8}\leq CH^{10}.
\end{align*}
In addition, by equation \eqref{the equation on the denominator after change of variables is bounded from above and below} and definition of $E_1$,
\begin{align*}
     \|E_1\|_\mh^2
    & \leq C\int_{-2\rho}^{2\rho} E_1^2e^{-\frac{\tx^2}{2}}d\tx
     \leq C\int_{-2\rho}^{2\rho} |u_{\tx\tx}|^2\left(\hat \ty_\tx^4+\sum\limits_{\ell=1}^{n-2}(\hat \tz_{\ell\tx}^4+\hat\tz_{\ell\tx}^2)\right) e^{-\frac{\tx^2}{2}}d\tx\\
       &\leq CH^2\int_{-2\rho}^{2\rho}\left(\hat \ty_\tx^4+\sum\limits_{\ell=1}^{n-2}(\hat \tz_{\ell\tx}^4+\hat\tz_{\ell\tx}^2)\right)e^{-\frac{\tx^2}{2}}d\tx,
\end{align*}
where we used Lemma \ref{the lemma on graphical lower bound on time interval}.

Combined with Lemma \ref{the lemma on y tilde hat x square}, particularly inequality \eqref{the equation on y tilde hat x square, computations},
    \begin{equation}
    \label{the equation on estimates of $E_1$ tilde}
        \|E_1\|_\mh^2\leq CH^2(H^4+H^4+H^2)\leq CH^4.
    \end{equation}
    As a result,
    \begin{equation}
        \|E\|_\mh\leq\|E_1\|_\mh+\|E_2\|_\mh
        \leq CH^2+CH^{10}\leq CH^2.
    \end{equation}
\end{proof}
Recall that $\mathcal{L}1=1$, $\mathcal{L}\tx=0$ and recall the projection $P_{\geq1}$ defined in equation \eqref{the equation on definition of the projections}.
\begin{lem}
\label{the lemma on the estimates of the evolution of the projections tilde}
One has the following estimates for $\tau\in[\taup+\frac{1}{2},\taup+\mathcal{T}]$,
    \begin{equation}
    \label{evolution equation of eigenvalues nonpositive}
        \left|\frac{d}{d\tau} \langle \hat u,1\rangle_\mh-\langle  \hat u,1\rangle_\mh\right|
        +\left| \frac{d}{d\tau} \langle \hat u,\tx\rangle_\mh\right|
        \leq CH^2,
    \end{equation}
    \begin{equation}
     \label{evolution equation of eigenvalues positive}
        \frac{d}{d\tau}\|P_{\geq1}\hat u\|_\mh^2
         \leq -\|P_{\geq1}\hat u\|_\mh^2+ C H^4.
    \end{equation}
\end{lem}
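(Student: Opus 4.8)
The plan is to differentiate each Gaussian inner product in $\tau$, substitute the evolution identity $\hat u_\tau=\ml\hat u+E$ which is just the definition of the error term, and then exploit that $\ml$ is self-adjoint with respect to $\langle\cdot,\cdot\rangle_\mh$ and that $\varphi_1,\varphi_2$ are eigenfunctions, with $\ml\varphi_1=\varphi_1$ and $\ml\varphi_2=0$. Since $\hat u=\eta(\tfrac{\tx}{\rho})u$ is supported in $\{|\tx|\le 2\rho\}$ with $\rho=\rho^{\delta_1}(H)$ independent of $\tau$, and by Lemma \ref{the lemma on graphical lower bound on time interval} is $C^2$ in $\tx$ and $C^1$ in $\tau$ on $[\taup+\tfrac12,\taup+\mathcal T]$, differentiation under the integral sign and the integration by parts needed for self-adjointness are both legitimate (the boundary terms vanish because $\hat u$ is compactly supported).

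For the first-order modes, self-adjointness gives
\[
\tfrac{d}{d\tau}\langle\hat u,\varphi_1\rangle_\mh=\langle\ml\hat u+E,\varphi_1\rangle_\mh=\langle\hat u,\ml\varphi_1\rangle_\mh+\langle E,\varphi_1\rangle_\mh=\langle\hat u,\varphi_1\rangle_\mh+\langle E,\varphi_1\rangle_\mh,
\]
so that $\bigl|\tfrac{d}{d\tau}\langle\hat u,1\rangle_\mh-\langle\hat u,1\rangle_\mh\bigr|=|\langle E,\varphi_1\rangle_\mh|\le\|E\|_\mh$; the same computation with $\varphi_2$ and $\ml\varphi_2=0$ yields $\bigl|\tfrac{d}{d\tau}\langle\hat u,\tx\rangle_\mh\bigr|=|\langle E,\varphi_2\rangle_\mh|\le\|E\|_\mh$. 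Combining with $\|E\|_\mh\le CH^2$ from Lemma \ref{estimates of projection of E and F tilde} proves inequality \eqref{evolution equation of eigenvalues nonpositive}.

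For the higher modes, set $w:=P_{\ge1}\hat u$. Because $P_{\ge1}$ is time-independent and commutes with $\ml$ (the $\varphi_i$ are eigenfunctions), $w$ satisfies $w_\tau=\ml w+P_{\ge1}E$, whence
\[
\tfrac{d}{d\tau}\|w\|_\mh^2=2\langle\ml w,w\rangle_\mh+2\langle P_{\ge1}E,w\rangle_\mh\le-2\|w\|_\mh^2+2\|E\|_\mh\|w\|_\mh,
\]
using the spectral gap $\langle-\ml w,w\rangle_\mh\ge\|w\|_\mh^2$ on $P_{\ge1}\mh$ recorded in \S\ref{uniqueness-set up}, Cauchy--Schwarz, and $\|P_{\ge1}E\|_\mh\le\|E\|_\mh$. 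Absorbing the cross term by Young's inequality, $2\|E\|_\mh\|w\|_\mh\le\|w\|_\mh^2+\|E\|_\mh^2$, gives $\tfrac{d}{d\tau}\|w\|_\mh^2\le-\|w\|_\mh^2+\|E\|_\mh^2\le-\|w\|_\mh^2+CH^4$, which is \eqref{evolution equation of eigenvalues positive}.

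The argument is essentially bookkeeping once the error bound $\|E\|_\mh\le CH^2$ is in hand, so the genuine work sits in the preceding Lemma \ref{estimates of projection of E and F tilde} rather than here. The only points needing care are the ones flagged above: that the weighted integration by parts against the polynomial eigenfunctions $\varphi_1,\varphi_2$ produces no boundary terms (immediate from compact support of $\hat u$), and that $\hat u$ is regular enough for $\hat u_\tau=\ml\hat u+E$ to hold pointwise — which is precisely why the estimates are stated only on $[\taup+\tfrac12,\taup+\mathcal T]$, where the $C^2$ bounds of Lemma \ref{the lemma on graphical lower bound on time interval} apply.
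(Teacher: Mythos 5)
Your proposal is correct and follows the paper's own proof essentially line for line: differentiate, substitute $\hat u_\tau=\ml\hat u+E$, use self-adjointness and the eigenfunction relations for the low modes, and for the high modes use commutation $P_{\ge1}\ml=\ml P_{\ge1}$, the spectral gap, Cauchy--Schwarz, and Young's inequality, closing with $\|E\|_\mh\le CH^2$ from the preceding lemma. The extra remarks you add about compact support killing boundary terms and about why the time interval starts at $\taup+\tfrac12$ are accurate but do not change the route.
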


\begin{proof}
\textbf{Proof of inequality \eqref {evolution equation of eigenvalues nonpositive}:} by equation \eqref{the operator L is self-adjoint},
     \begin{align*}
       \frac{d}{d\tau} \langle \hat u,1\rangle_\mh
      & =\langle \hat u_\tau,1\rangle_\mh
       =\langle \ml \hat u,1\rangle_\mh+\langle E,1\rangle_\mh
       =\langle  \hat u,\ml1\rangle_\mh+\langle E,1\rangle_\mh\\
       &=\langle  \hat u,1\rangle_\mh+\langle E,1\rangle_\mh.
    \end{align*}
    Combined with $\|1\|_\mh=1$,
    \begin{equation*}
        \left|\frac{d}{d\tau} \langle \hat u,1\rangle_\mh-\langle  \hat u,1\rangle_\mh\right|
        \leq|\langle E,1\rangle_\mh|\leq\|E\|_\mh.
    \end{equation*}
    In addition,
\begin{align*}
       \frac{d}{d\tau} \langle \hat u,\tx\rangle_\mh
       &= \langle \hat u_\tau,\tx\rangle_\mh
       =\langle \ml\hat u_,\tx\rangle_\mh+\langle E,\tx\rangle_\mh\\
      & =\langle \hat u_,\ml\tx\rangle_\mh+\langle E,\tx\rangle_\mh
       =0+\langle E,\tx\rangle_\mh.
    \end{align*}
    Combined with $\|\tx\|_\mh=1$,
    \begin{equation*}
        \left| \frac{d}{d\tau} \langle \hat u,\tx\rangle_\mh\right|
        \leq|\langle E,\tx\rangle_\mh|\leq\|E\|_\mh.
    \end{equation*}
    Inequality \eqref {evolution equation of eigenvalues nonpositive} then follows from Lemma \ref{estimates of projection of E and F tilde}.
    
\textbf{Proof of inequality \eqref {evolution equation of eigenvalues positive}:}
By direct computations,
\begin{align*}
        \frac{d}{d\tau}\|P_{\geq1}\hat u\|_\mh^2
        &=2\langle P_{\geq1}\hat u,\frac{d}{d\tau}P_{\geq1}\hat u\rangle
         =2\langle P_{\geq1}\hat u,P_{\geq1}\frac{d}{d\tau}\hat u\rangle\\
         &=2\langle P_{\geq1}\hat u,P_{\geq1}\ml\hat u+P_{\geq1}E\rangle.
    \end{align*}
Because $ P_{\geq1}\ml=\ml P_{\geq1}$,
\begin{align*}
    \frac{d}{d\tau}\|P_{\geq1}\hat u\|_\mh^2
    &=2\langle P_{\geq1}\hat u,\ml P_{\geq1}\hat u+P_{\geq1}E\rangle\\
    &\leq2\langle P_{\geq1}\hat u,\ml P_{\geq1}\hat u\rangle+2\|P_{\geq1}\hat u\|_\mh\|P_{\geq1}E\|_\mh.
\end{align*}
Because the smallest positive eigenvalue of the operator $-\ml$ is $1$ and by the definition of $P_{\geq1}$ (equation (\ref{the equation on definition of the projections})),
\begin{align*}
      \frac{d}{d\tau}\|P_{\geq1}\hat u\|_\mh^2
      &\leq-2\|P_{\geq1}\hat u\|_\mh^2+2\|P_{\geq1}\hat u\|_\mh\|P_{\geq1}E\|_\mh\\
     & \leq-2\|P_{\geq1}\hat u\|_\mh^2
      +\|P_{\geq1}\hat u\|_\mh^2+\|P_{\geq1}E\|_\mh^2\\
      & \leq-\|P_{\geq1}\hat u\|_\mh^2
      +\|P_{\geq1}E\|_\mh^2.
\end{align*}
      Inequality \eqref {evolution equation of eigenvalues positive} then follows from Lemma \ref{estimates of projection of E and F tilde}.
\end{proof}

\begin{lem}
\label{the lemma on difference between b at a later time}
For any integer $L\geq2$ and $\mathcal{T}>2L$, one has the following estimates for $\tau\in[\taup+L,\taup+\mathcal{T}-L]$, 
    \begin{align}
       \|\hat u(\tx,\tau)-\langle\hat u(\tx,\taup+L),\tx\rangle \tx\|_\mh^2
       \leq C\left(e^{-L} H^2+\mathcal{T}H^3+\mathcal{T}^2H^4\right).
    \end{align}
\end{lem}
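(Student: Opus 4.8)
The plan is to decompose the cut-off profile $\hat u$ into its projections onto the eigenspaces of $-\ml$ and control each piece separately using the differential inequalities established in Lemma \ref{the lemma on the estimates of the evolution of the projections tilde}, together with the uniform bound $\|\hat u\|_\mh\le CH$ from Lemma \ref{the lemma on H norm of u hat}. Writing $\hat u=P_{-1}\hat u+P_0\hat u+P_{\geq1}\hat u$, the target quantity is essentially $\|P_{-1}\hat u(\cdot,\tau)\|_\mh^2+\|P_{\geq1}\hat u(\cdot,\tau)\|_\mh^2$ plus the drift of the $P_0$-component away from its value at $\taup+L$; so I would estimate those three contributions and then reassemble.

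First I would handle the $P_{\geq1}$ (stable) part. Inequality \eqref{evolution equation of eigenvalues positive} says $\frac{d}{d\tau}\|P_{\geq1}\hat u\|_\mh^2\le -\|P_{\geq1}\hat u\|_\mh^2+CH^4$, so by Grönwall, starting from $\|P_{\geq1}\hat u(\cdot,\taup+\tfrac12)\|_\mh^2\le\|\hat u(\cdot,\taup+\tfrac12)\|_\mh^2\le CH^2$, we get $\|P_{\geq1}\hat u(\cdot,\tau)\|_\mh^2\le CH^2e^{-(\tau-\taup-1/2)}+CH^4$ for $\tau\ge\taup+\tfrac12$. At $\tau\ge\taup+L$ this is $\le Ce^{-L}H^2+CH^4$. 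Second, the $P_{-1}$ (unstable) part: $\langle\hat u,1\rangle_\mh$ satisfies $|\frac{d}{d\tau}\langle\hat u,1\rangle_\mh-\langle\hat u,1\rangle_\mh|\le CH^2$, i.e. $\langle\hat u,1\rangle_\mh$ grows at rate roughly $1$ up to an $O(H^2)$ error; since $|\langle\hat u,1\rangle_\mh|\le\|\hat u\|_\mh\le CH$ stays bounded by $CH$ on the whole interval $[\taup+\tfrac12,\taup+\mathcal T]$, running this backward from any later time shows $|\langle\hat u(\cdot,\tau),1\rangle_\mh|\le Ce^{-(\taup+\mathcal T-\tau)}H+CH^2\le CH^2$ on $[\taup+L,\taup+\mathcal T-L]$ once $\mathcal T-\tau\ge L$ and $L$ is large enough (more precisely, integrating $\frac{d}{d\tau}(e^{-\tau}\langle\hat u,1\rangle_\mh)$ over $[\tau,\taup+\mathcal T]$ and using the bound at $\taup+\mathcal T$). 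So $\|P_{-1}\hat u(\cdot,\tau)\|_\mh^2\le CH^4$ there; this is where the requirement $\tau\le\taup+\mathcal T-L$ enters.

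Third, the $P_0$ (neutral) drift: from \eqref{evolution equation of eigenvalues nonpositive}, $|\frac{d}{d\tau}\langle\hat u,\tx\rangle_\mh|\le CH^2$, so integrating from $\taup+L$ to $\tau$ over an interval of length at most $\mathcal T$ gives $|\langle\hat u(\cdot,\tau),\tx\rangle_\mh-\langle\hat u(\cdot,\taup+L),\tx\rangle_\mh|\le C\mathcal T H^2$, hence $\|(\langle\hat u(\cdot,\tau),\tx\rangle-\langle\hat u(\cdot,\taup+L),\tx\rangle)\tx\|_\mh^2\le C\mathcal T^2H^4$. Now I would write
\begin{align*}
  \hat u(\tx,\tau)-\langle\hat u(\tx,\taup+L),\tx\rangle\tx
  &= P_{-1}\hat u(\tx,\tau)+P_{\geq1}\hat u(\tx,\tau)\\
  &\quad+\big(\langle\hat u(\cdot,\tau),\tx\rangle-\langle\hat u(\cdot,\taup+L),\tx\rangle\big)\tx,
\end{align*}
and since the three summands lie in mutually orthogonal subspaces of $\mh$ (or at worst by the triangle inequality for the norm), the squared $\mh$-norm of the left side is bounded by the sum of the three squared norms, i.e. by $C(e^{-L}H^2+H^4+\mathcal T^2H^4)\le C(e^{-L}H^2+\mathcal T H^3+\mathcal T^2H^4)$, absorbing the bare $H^4$ into $\mathcal T H^3$ (using $H\le 1$ and $\mathcal T\ge 1$). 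This gives the claimed estimate.

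The main obstacle I anticipate is the bookkeeping for the unstable mode $P_{-1}\hat u$: one cannot simply run Grönwall forward (the mode is expanding), so the argument must exploit that $\|\hat u\|_\mh\le CH$ holds on the entire flatness interval $[\taup+\tfrac12,\taup+\mathcal T]$ and propagate the smallness backward in time from $\taup+\mathcal T$, which is precisely why the conclusion is only asserted up to time $\taup+\mathcal T-L$. Keeping the constants genuinely independent of $\taup$, $\rho$, and $\mathcal T$ — relying on the uniformity already built into Lemma \ref{the lemma on graphical lower bound on time interval} and Lemma \ref{estimates of projection of E and F tilde} — is the other point requiring care, but all the needed uniform bounds are in place from the previous subsections.
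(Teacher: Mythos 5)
Your approach is essentially the same as the paper's: decompose $\hat u$ into the $P_{-1}$, $P_0$, $P_{\geq1}$ eigenmodes, apply Gr\"onwall forward in time for the stable mode $P_{\geq1}\hat u$, propagate the unstable mode $P_{-1}\hat u$ backward from $\taup+\mathcal T$ using the uniform bound $\|\hat u\|_\mh\leq CH$, and integrate the small drift of the neutral $P_0$-coefficient directly.

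One slip: you write $|\langle\hat u(\cdot,\tau),1\rangle_\mh|\leq Ce^{-(\taup+\mathcal T-\tau)}H+CH^2\leq CH^2$ ``once $L$ is large enough,'' but the lemma is asserted for \emph{every} integer $L\geq2$, while dropping the first term requires $e^{-L}\lesssim H$, i.e.\ $L\gtrsim\log(1/H)$, which is not available. The cure is simply to keep the term: backward integration gives $|\langle\hat u(\cdot,\tau),1\rangle_\mh|\leq Ce^{-L}H+CH^2$ on $[\taup+L,\taup+\mathcal T-L]$, hence $\|P_{-1}\hat u(\cdot,\tau)\|_\mh^2\leq Ce^{-2L}H^2+CH^4$, and this is absorbed into the stated right-hand side $C(e^{-L}H^2+\mathcal T H^3+\mathcal T^2H^4)$ without any constraint relating $L$ and $H$. (The paper runs the same backward argument on $\beta=\langle\hat u,1\rangle_\mh^2$ and arrives at $Ce^{-2L}H^2+C\mathcal T H^3$; either form suffices.) With that one correction your proof goes through.
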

\begin{proof}
    By Lemma \ref{the lemma on the estimates of the evolution of the projections tilde}, particularly inequality \eqref{evolution equation of eigenvalues positive}, for $\tau\in[\taup+\frac{1}{2},\taup+\mathcal{T}]$,
    \begin{align*}
        \frac{d}{d\tau}\|P_{\geq1}\hat u\|_\mh^2
         &\leq -\|P_{\geq1}\hat u\|_\mh^2
         + CH^4.
    \end{align*}
 We set $\alpha(\tau):=\|P_{\geq1}\hat u(\cdot,\tau)\|_\mh^2$, then
 \begin{equation*}
     (e^\tau\alpha)_\tau
     =e^\tau(\alpha_\tau+\alpha)
     \leq C e^\tau H^4.
 \end{equation*}
By Lemma \ref{the lemma on H norm of u hat}, for $\tau\in[\taup+\frac{1}{2},\taup+\mathcal{T}]$,
 \begin{align*}
     e^\tau\alpha(\tau) 
     \leq e^{\taup+\frac{1}{2}}\alpha(\taup+\frac{1}{2})
    + C \mathcal{T} e^\tau H^4
     \leq Ce^\taup H^2
   + C \mathcal{T} e^\tau H^4.
 \end{align*}
Thus,  for $\tau\in[\taup+L,\taup+\mathcal{T}]$,
 \begin{align*}
     \alpha(\tau) 
     \leq Ce^{\taup-\tau} H^2+ C \mathcal{T} H^4
     \leq Ce^{-L} H^2+ C \mathcal{T} H^4. 
 \end{align*}
Next, we set $\beta:=\langle \hat u,1\rangle_\mh^2$, by Lemma \ref{the lemma on the estimates of the evolution of the projections tilde} (particularly inequality \eqref {evolution equation of eigenvalues nonpositive}) and Lemma \ref{the lemma on H norm of u hat}, for $\tau\in[\taup+\frac 12,\taup+\mathcal{T}]$,
 \begin{equation*}
     \beta_\tau\geq2\beta-CH(H^2).
 \end{equation*}
Then one has
 \begin{equation*}
     (e^{-2\tau}\beta)_\tau=e^{-2\tau}(\beta_\tau-2\beta)
     \geq -Ce^{-2\tau}H^3.
 \end{equation*}
 Thus, for $\tau\in[\taup+\frac 12,\taup+\mathcal{T}]$,
 \begin{equation*}
     e^{-2(\taup+\mathcal{T})}\beta(\taup+\mathcal{T})-e^{-2\tau}\beta(\tau)
     \geq -C\mathcal{T}e^{-2\tau}H^3.
 \end{equation*}
By Lemma \ref{the lemma on H norm of u hat}, for  $\tau\in[\taup+\frac 12,\taup+\mathcal{T}-L]$,
\begin{align*}
\beta(\tau)\leq e^{2\tau-2(\taup+\mathcal{T})}\beta(\taup+\mathcal{T})+C\mathcal{T}H^3
\leq C e^{-2L}H^2+C\mathcal{T}H^3.
\end{align*}
Finally, we set $\lambda(\tau):=\langle \hat u(\cdot,\tau),\tx\rangle_\mh-\langle \hat u(\cdot,\taup+L),\tx\rangle_\mh$. 

 By Lemma \ref{the lemma on the estimates of the evolution of the projections tilde}, for $\tau\in[\taup+\frac 12,\taup+\mathcal{T}]$,
\begin{equation}
    |\lambda_\tau| \leq CH^2.
\end{equation}
By definition of $\lambda$, $\lambda(\taup+L)=0$. Thus, for $\tau\in[\taup+L,\taup+\mathcal{T}-L]$, 
\begin{equation}
    |\lambda(\tau)|=  |\lambda(\tau)-\lambda(\taup+L)|
    \leq C(\mathcal{T}-2L)H^2
     \leq C\mathcal{T}H^2.
\end{equation}

Combine previous estimates, for $\tau\in[\taup+L,\taup+\mathcal{T}-L]$, 
 \begin{align*}
        &\|\hat u(\tx,\tau)-\langle\hat u(\tx,\taup+L),\tx\rangle \tx\|_\mh^2
        \leq\alpha(\tau)+\beta(\tau)+\lambda^2(\tau)\\
        &\leq C\left(e^{-L} H^2+ \mathcal{T} H^4
        +e^{-2L}H^2+\mathcal{T}H^3
        +\mathcal{T}^2H^4
    \right).
    \end{align*}
\end{proof}

\begin{prop}
\label{the prop on finding new slopes}
There exist slopes $K_\ty=K_\ty(\taup,L),K_{\tz_\ell}=K_{\tz_\ell}(\taup,L)\in\mathbb R$ with $|K_\ty|,|K_{\tz_\ell}|\leq CH$ such that for $i=1,2$,
     \begin{equation}
         \|\hat\ty^i(\tx,\tau)-K_\ty\tx\|_\mh+ \sum_{\ell=1}^{n-2}\|\hat\tz^i_\ell(\tx,\tau)-K_{\tz_\ell}\tx\|_\mh
         \leq C( e^{-\frac{L}{2}} H+\mathcal{T}^\frac{1}{2}H^\frac{3}{2}+\mathcal{T}H^2).
     \end{equation}
holds for $\tau\in[\taup+L,\taup+\mathcal{T}-L]$. 
\end{prop}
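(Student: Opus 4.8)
The plan is to read off the slopes from the upper branch at the base time: I would set $K_\ty:=\langle\hat\ty^1(\cdot,\taup+L),\tx\rangle_\mh$ and, for $1\le\ell\le n-2$, $K_{\tz_\ell}:=\langle\hat\tz^1_\ell(\cdot,\taup+L),\tx\rangle_\mh$. By Cauchy--Schwarz (using $\|\tx\|_\mh=1$) and Lemma \ref{the lemma on H norm of u hat}, $|K_\ty|,|K_{\tz_\ell}|\le CH$, as required. For the upper branch $i=1$, applying Lemma \ref{the lemma on difference between b at a later time} to each of $u=\ty^1$ and $u=\tz^1_\ell$ (whose $\langle\,\cdot\,,\tx\rangle_\mh$ at time $\taup+L$ is precisely $K_\ty$, resp. $K_{\tz_\ell}$) and taking square roots immediately gives, for $\tau\in[\taup+L,\taup+\mathcal{T}-L]$, a bound of the form $C\big(e^{-L/2}H+\mathcal{T}^{1/2}H^{3/2}+\mathcal{T}H^2\big)$. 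For the lower branch $i=2$, write $b_{\ty^2}:=\langle\hat\ty^2(\cdot,\taup+L),\tx\rangle_\mh$ and likewise $b_{\tz^2_\ell}$; Lemma \ref{the lemma on difference between b at a later time} bounds $\|\hat\ty^2(\cdot,\tau)-b_{\ty^2}\tx\|_\mh$ and $\|\hat\tz^2_\ell(\cdot,\tau)-b_{\tz^2_\ell}\tx\|_\mh$ by the same quantity, so by the triangle inequality it suffices to show that the linear modes of the two branches agree up to $CH^2$, i.e. $|b_{\ty^2}-K_\ty|\le CH^2$ and $|b_{\tz^2_\ell}-K_{\tz_\ell}|\le CH^2$.

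This is the step where the convexity of the projection enters. The point is that the two branches are not merely each $H$-flat but have \emph{nearly equal slope}: Lemma \ref{the lemma on $C^1$ estimates for time dependent rotations}, applied at time $\taup+L$ with the identity rotation (legitimate since $\Gamma$ is $(H,\taup,\mathcal{T})$-flat, so its $xy$-projection is graphical with $C^1$ norm $\le H$ over $|x|\le2$), produces an angle $\lambda$ with $|\ty^i_\tx-\tan\lambda|\le CH^2$ for both $i=1,2$ on $|\tx|\le\rho/2$, hence $|\ty^1_\tx-\ty^2_\tx|\le CH^2$ there; repeating the argument of \S\ref{the subsection on deriving estimates for z from estimates of y} for the convex embedded projection onto the $2$-plane $P_{\alpha}$ (Lemma \ref{the lemma on stability of one-to-one convex projection}, together with the resulting ordering of the two branches of that projection) and subtracting the $\ty$-estimate yields $|\tz^1_{\ell\tx}-\tz^2_{\ell\tx}|\le CH^2$ on $|\tx|\le\rho/2$ as well; here the scale $\rho=\rho^{\delta_1}$ was chosen with $\delta_1=\tfrac{\lambda_0\delta_0}{8}$ exactly so that $|\tx|\le\rho/2$ lies inside the region where these refined estimates hold. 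Now set $w:=u^1-u^2$ for $u\in\{\ty,\tz_\ell\}$ and note $b_{u^2}-K_u=-\langle\eta(\tx/\rho)\,w(\cdot,\taup+L),\tx\rangle_\mh$. On $|\tx|\le\rho/2$ we have $\eta(\tx/\rho)=1$ and $w(\tx,\taup+L)=w(0,\taup+L)+O(H^2|\tx|)$; the constant part integrates against $\tx e^{-\tx^2/2}$ to zero by oddness over the symmetric interval, while the $O(H^2|\tx|)$ part contributes $\le CH^2$, and the region $\rho/2\le|\tx|\le2\rho$ contributes only $\le CH\rho^{-200}\le CH^{201}$ by the crude bound $|w|\le CH(|\tx|+1)$ from Lemma \ref{the lemma on graphical lower bound on time interval} times Gaussian decay. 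Hence $|b_{u^2}-K_u|\le CH^2$.

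Combining these, each of the $1+(n-2)$ summands on the left-hand side (for either $i$) is bounded by $C\big(e^{-L/2}H+\mathcal{T}^{1/2}H^{3/2}+\mathcal{T}H^2\big)$ — the extra $CH^2$ arising for $i=2$ being absorbed into $C\mathcal{T}H^2$ since $\mathcal{T}\ge2L\ge4$ — and summing over the finitely many indices $\ell$ only enlarges the constant, giving the proposition. I expect the main obstacle to be the $O(H^2)$ (rather than $O(H)$) bound on the slope difference of the two branches: the pointwise $C^1$ estimates of Lemma \ref{the lemma on graphical lower bound on time interval} by themselves only give $O(H)$, and the improvement genuinely uses the convex embeddedness of the projections onto the $2$-planes $P_\alpha$ (for $\alpha=0$ and a fixed small $\alpha>0$ per coordinate $z_\ell$) via Lemma \ref{the lemma on $C^1$ estimates for time dependent rotations}, together with the observation that a small horizontal rotation distorts a slope difference only by a factor $1+O(H)$, so that passing between the ambient frame and the optimally rotated frame in that lemma costs nothing.
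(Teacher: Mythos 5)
Your proof follows the same skeleton as the paper's: set $K_\ty,K_{\tz_\ell}$ from the linear mode of the upper branch at time $\taup+L$, control each branch via Lemma \ref{the lemma on difference between b at a later time}, and then show the two branches' linear modes nearly agree. The two proofs diverge at this last comparison step. The paper introduces $y^\alpha=\Gamma\cdot\vec e_\alpha$ and $\beta_i^\alpha=\langle\hat y^{\alpha,i}(\cdot,\taup+L),\tx\rangle_\mh$, and bounds $|\beta_1^\alpha-\beta_2^\alpha|$ using only the sign information $y^{\alpha,1}>y^{\alpha,2}$ together with a one-sided $L^2$ estimate over the half-interval $[-2,0]$ (or $[0,2]$, depending on the sign of $\beta_1^\alpha-\beta_2^\alpha$); this yields a bound of the same order as Claim \ref{the claim on linear combination of estimates on deviation from a line} and closes the argument with minimal machinery. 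You instead invoke the refined gradient estimate $|y_x^i-\tan\lambda|\le CH^2$ of Lemma \ref{the lemma on $C^1$ estimates for time dependent rotations}, linearize $w=u^1-u^2$ about $\tx=0$, and kill the constant part of $w$ by oddness of $\tx e^{-\tx^2/2}$, obtaining the (slightly stronger) $|b_{u^2}-K_u|\le CH^2$. Both routes ultimately rest on the embeddedness of the $P_\alpha$-projection, so this is a genuine alternative rather than a wholly different idea; yours buys a sharper intermediate bound at the cost of requiring the $CH^2$-level gradient statement.

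The $z_\ell$-step is, however, more delicate than your phrasing suggests. Lemma \ref{the lemma on $C^1$ estimates for time dependent rotations} gives the refined $CH^2$ bound only for the $xy$-projection; to get $|\tz^1_{\ell\tx}-\tz^2_{\ell\tx}|\le CH^2$ you need the analogous refined estimate for the projection onto $P_\alpha$ (which the paper never states, only the coarser $3H$ bound in \eqref{gradient estimates for alpha projection}), and then to translate it back to $z_\ell$ through the rotation $S_\alpha(\beta)$ of \S\ref{the subsection on deriving estimates for z from estimates of y}. That rotation acts inside the $2$-plane $P_\alpha$ — it is not a horizontal rotation — and its angle $\beta$ with $\tan\beta=\sin\alpha\tan\theta_\ell$ is of order one, not of order $H$, so the remark that ``a small horizontal rotation distorts a slope difference only by a factor $1+O(H)$'' does not directly apply. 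Concretely, writing $\frac{\Gamma^i_\sigma\cdot\vec e_\alpha^{\,\prime}}{\Gamma^i_\sigma\cdot\vec e_1}=a^i b^i$ with $a^i=\frac{\Gamma^i_\sigma\cdot\vec e_\alpha^{\,\prime}}{\Gamma^i_\sigma\cdot\vec e_1^{\,\prime}}$ and $b^i=\frac{\Gamma^i_\sigma\cdot\vec e_1^{\,\prime}}{\Gamma^i_\sigma\cdot\vec e_1}$, the $P_\alpha$-version of Lemma \ref{the lemma on $C^1$ estimates for time dependent rotations} gives $|a^1-a^2|\le CH^2$, but $|b^1-b^2|$ is a priori only $O(H)$ because it depends on $z^1_{\ell x}-z^2_{\ell x}$, the very quantity being estimated. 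The desired $CH^2$ bound is recovered only because the $O(H)$ discrepancy in $b^i$ is multiplied by the $O(H)$ size of $a^i$; that cancellation needs to be written out. This is precisely the bookkeeping the paper's one-sided $L^2$-half-interval argument sidesteps, which is presumably why it was chosen.
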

\begin{proof}[Proof of Proposition \ref{the prop on finding new slopes}]
For $\alpha\in[0,\frac{\pi}{2})$, recall the vector $\vec e_\alpha$ (equation \eqref{definition of the vector e alpha}) and the $2$-plane $P_\alpha$ (Definition \ref{the definition of the 2 plane p alpha}), we define
\begin{equation*}
    y^\alpha:=\Gamma(\cdot,\tau)\cdot \vec e_{\alpha},
\end{equation*}
where the dependence on $\ell$ will remain implicit.

Thus, by definition of $\ty,\tz_\ell$ (equation \eqref{equation on definition of coordinates with tilde}), 
\begin{align}
\label{definition of y alpha}
    y^\alpha=(\cos\alpha) y+(\sin\alpha) z_\ell
    =(\cos\alpha) \ty+\sin\alpha (\tz_\ell+\cos\theta\tan\theta_\ell\tx).
\end{align}
Let $\alpha_0\in(0,\frac{\pi}{2}),\tau_{\alpha_0}$ be as in Lemma \ref{the lemma on stability of one-to-one convex projection}. We denote by $y^{\alpha,1},y^{\alpha,2}$ the values of $y^\alpha$ on the upper and lower branches. For $\alpha\in[0,\alpha_0]$, we have 
\begin{equation}
\label{the equation on y alpha 1 strictly bigger than y alpha 2}
    y^{\alpha,1}(\tx,\tau)>y^{\alpha,2}(\tx,\tau)
\end{equation}
for $\tau\geq\tau_{\alpha_0}$.

Let $\beta_i^\alpha$ denote $\langle \hat y^{\alpha,i}(\cdot,\taup+L),\tx\rangle$ for $i=1,2$, where the cut-off
\[
\hat y^{\alpha,i}(\tx,\taup+L)=\left\{\begin{array}{cl}
\eta(\frac{\tx}{\rho}) y^{\alpha,i}(\tx,\taup+L), & |\tx| \leq2\rho, \\
0, & |\tx| >2\rho ,
\end{array}\right.
\]
is defined as in Definition \ref{def:cut-off profile}.
\begin{clm}
\label{the claim on linear combination of estimates on deviation from a line}
   For $i=1,2$ and $\alpha\in[0,\alpha_0]$, one has
    \begin{equation*}
        \|\hat y^{\alpha,i}(\tx,\tau)-\beta_i^\alpha\tx\|_\mh
        \leq C( e^{-\frac{L}{2}} H+\mathcal{T}^\frac{1}{2}H^\frac{3}{2}+\mathcal{T}H^2)
    \end{equation*}
    for $\tau\in[\taup+L,\taup+\mathcal{T}-L]$.
\end{clm}
\begin{proof}
  One has,
  \begin{align*}
      &\|\eta(\frac{\tx}{\rho})\tx-\langle\eta(\frac{\tx}{\rho})\tx,\tx\rangle\tx\|_\mh
      =  \|(\eta\tx-\tx)+\langle\tx,\tx\rangle\tx-\langle\eta\tx,\tx\rangle\tx\|_\mh\\
      &\leq \|\eta\tx-\tx\|_\mh+\|\langle\tx-\eta\tx,\tx\rangle\tx\|_\mh
      =\|\eta\tx-\tx\|_\mh+|\langle\tx-\eta\tx,\tx\rangle|\cdot1\\
      &\leq2\|\eta\tx-\tx\|_\mh.
  \end{align*}

   Because $y^\alpha$ is a linear combination of $\ty,\tz_\ell,\tx$ (equation \eqref{definition of y alpha}),
   \begin{align*}
       \|\hat y^{\alpha,i}(\tx,\tau)-\beta_i^\alpha\tx\|_\mh
       &\leq \|\hat\ty^i(\tx,\tau)-\langle\hat \ty^i(\tx,\taup+L),\tx\rangle \tx\|_\mh\\
       &+C\|\hat\tz_\ell^i(\tx,\tau)-\langle\hat \tz_\ell^i(\tx,\taup+L),\tx\rangle \tx\|_\mh
       +C \|\eta(\frac{\tx}{\rho})\tx-\tx\|_\mh.
   \end{align*}
   By Lemma \ref{the lemma on difference between b at a later time} and Lemma \ref{the claim on difference of x tilde and hat x tilde}, 
   \begin{equation*}
        \|\hat y^{\alpha,i}(\tx,\tau)-\beta_i^\alpha\tx\|_\mh \leq  C\sqrt{\left(e^{-L} H^2+\mathcal{T}H^3+\mathcal{T}^2H^4\right)+\frac{1}{\rho^{200}}}.
   \end{equation*}
   By $\rho=\rho^{\delta_1}=\frac{\delta_1}{20 H}$ (equation \eqref{the equation on definition of rho 1}), $\frac{1}{\rho^{200}}\leq CH^{200}\leq \mathcal{T}H^3$ because $\mathcal{T}>2$.
\end{proof}
\begin{clm}
   \label{the claim on difference of b for different branches}
    For $\alpha\in[0,\alpha_0]$, one has
    \begin{equation}
        |\beta_1^\alpha-\beta_2^\alpha|
         \leq C( e^{-\frac{L}{2}} H+\mathcal{T}^\frac{1}{2}H^\frac{3}{2}+\mathcal{T}H^2).
    \end{equation}
\end{clm}
\begin{proof}[Proof of Claim \ref{the claim on difference of b for different branches}]
  If $\beta_1^\alpha\geq\beta_2^\alpha$, then by $1<\|\tx\|_{L^2[-2,0]}$ and $y^{\alpha,1}>y^{\alpha,2}$, one has
\begin{align*}
|\beta_1^\alpha-\beta_2^\alpha|
      &\leq\|(\beta_1^\alpha-\beta_2^\alpha)\tx\|_{L^2[-2,0]}\\
      &\leq\|y^{\alpha,1}(\tx,\tau)-y^{\alpha,2}(\tx,\tau)-(\beta_1^\alpha-\beta_2^\alpha)\tx\|_{L^2[-2,0]},
 \end{align*}
where we used $-(\beta_1^\alpha-\beta_2^\alpha)\tx\geq0$ for $\tx\in[-2,0]$.

Thus, for $\rho$ large enough, by Lemma \ref{the lemma on L2 norm bounded by H norm}, 
  \begin{align*}
       |\beta_1^\alpha-\beta_2^\alpha|
       &\leq\|y^{\alpha,1}-\beta_1^\alpha\tx\|_{L^2[-2,0]}+\|y^{\alpha,2}-\beta_2^\alpha\tx\|_{L^2[-2,0]}\\
        &=\|\hat y^{\alpha,1}-\beta_1^\alpha\tx\|_{L^2[-2,0]}+\|\hat y^{\alpha,2}-\beta_2^\alpha\tx\|_{L^2[-2,0]}\\
        &\leq C\|\hat y^{\alpha,1}-\beta_1^\alpha\tx\|_\mh+C\|\hat y^{\alpha,2}-\beta_2^\alpha\tx\|_\mh.
  \end{align*}
Combined with Claim \ref{the claim on linear combination of estimates on deviation from a line}, one has
\begin{equation}
     |\beta_1^\alpha-\beta_2^\alpha| \leq C( e^{-\frac{L}{2}} H+\mathcal{T}^\frac{1}{2}H^\frac{3}{2}+\mathcal{T}H^2).
\end{equation}
 If $\beta_1^\alpha\leq\beta_2^\alpha$, then the same process works for $L^2[0,2]$ in place of $L^2[-2,0]$.
\end{proof}
As a result, for $i=1,2$, by Claim \ref{the claim on linear combination of estimates on deviation from a line} and Claim \ref{the claim on difference of b for different branches},
\begin{align*}
    \|\hat y^{\alpha,i}-\beta_1^\alpha\tx\|_\mh
    &\leq\|\hat y^{\alpha,i}-\beta_i^\alpha\tx\|_\mh+\|(\beta_i^\alpha-\beta_1^\alpha)\tx\|_\mh\\
    &\leq C( e^{-\frac{L}{2}} H+\mathcal{T}^\frac{1}{2}H^\frac{3}{2}+\mathcal{T}H^2).
\end{align*}
Combined with definition of $y^\alpha$ (equation \eqref{definition of y alpha}),
\begin{align}
\label{y is close to line, verified}
    &\|\hat\ty^i-\beta_1^0\tx\|_\mh
    =\|\hat y^i-\beta_1^0\tx\|_\mh
    =\|\hat y^{0,i}-\beta_1^0\tx\|_\mh
    \leq C( e^{-\frac{L}{2}} H+\mathcal{T}^\frac{1}{2}H^\frac{3}{2}+\mathcal{T}H^2).
\end{align}
and 
\begin{equation*}
\|(\cos\alpha) \hat\ty^i+\sin\alpha (\hat\tz_\ell^i+\cos\theta\tan\theta_\ell\hat\tx)-\beta_1^\alpha\tx\|_\mh
    =\|\hat y^{\alpha,i}-\beta_1^\alpha\tx\|_\mh.
\end{equation*}
As a result, combined with Lemma \ref{the claim on difference of x tilde and hat x tilde}, 
\begin{align}
\label{z is close to line, verified}
   \|\hat\tz^i_\ell-\frac{1}{\sin\alpha_0}(\beta_1^{\alpha_0}-\beta_1^0\cos\alpha_0-\sin\alpha_0\cos\theta\tan\theta_\ell)\tx\|_\mh
\leq C( e^{-\frac{L}{2}} H+\mathcal{T}^\frac{1}{2}H^\frac{3}{2}+\mathcal{T}H^2).
\end{align}

We pick $ K_\ty=\beta_1^0$ and
\begin{align*}
   K_{\tz_\ell}=\frac{1}{\sin\alpha_0}(\beta_1^{\alpha_0}-\beta_1^0\cos\alpha_0-\sin\alpha_0\cos\theta\tan\theta_\ell).
\end{align*}
\begin{clm}
   \label{the claim on ky kz is small}
    One has,
    \begin{equation*}
        |K_\ty|, |K_{\tz_\ell}|\leq CH.
    \end{equation*}
\end{clm}
\begin{proof}
    By the definition of $\beta_1^\alpha$ and Lemma \ref{the lemma on H norm of u hat},
\begin{equation*}
    |\beta_1^0|=|\langle \hat \ty^1(\cdot,\taup+L),\tx\rangle|
    \leq\|\hat \ty^1\|_\mh\leq CH.
\end{equation*}
By Lemma \ref{the lemma on H norm of u hat} and Lemma \ref{the claim on difference of x tilde and hat x tilde}, 
\begin{align*}
    &|\beta_1^{\alpha_0}-\sin\alpha_0\cos\theta\tan\theta_\ell|
    =|\langle \hat y^{\alpha_0,1}(\cdot,\taup+L),\tx\rangle-\sin\alpha_0\cos\theta\tan\theta_\ell\langle\tx,\tx\rangle|\\
    &\leq\left|\langle(\cos\alpha_0) \hat\ty^1+\sin\alpha_0 \hat\tz^1_\ell,\tx\rangle\right|+|\sin\alpha_0\cos\theta\tan\theta_\ell||\langle\eta\tx-\tx,\tx\rangle|\\
    &\leq\left|\langle(\cos\alpha_0) \hat\ty^1+\sin\alpha_0 \hat\tz^1_\ell,\tx\rangle\right|+C\frac{1}{\rho^{100}}
    \leq CH+CH^{100}.
\end{align*}
\end{proof}
Claim \ref{the claim on ky kz is small}, together with equation \eqref{y is close to line, verified} and equation \eqref{z is close to line, verified}, proves Proposition \ref{the prop on finding new slopes}.
\end{proof}
Next, we establish a PDE lemma which will be used to control $C^2$ norms of the rotated rescaled CSF. 
\begin{lem}
\label{the lemma on PDE bound C2 bounded by L2}
For a constant $M>0$, there exists $\epsilon_0>0$ such that the following holds. Suppose the graph $\Gamma(\cdot,\tau)$ of a vector-valued function $(y,z_1,\cdots,z_{n-2})\in C^\infty(\mr_{8,4}^{-4})$ is a rescaled CSF with
\begin{equation}
\label{an inequality in the pde lemma guaranteed because linear scale is large}
       \| y(x,\tau)\|_{C^2(\mr_{8,4}^{-4})}+\sum\limits_{\ell=1}^{n-2}\|z_\ell(x,\tau)-(\tan\theta_\ell) x\|_{C^2(\mr_{8,4}^{-4})}\leq \epsilon\leq \epsilon_0
    \end{equation}
for $\theta_1,\cdots,\theta_\ell\in[0,\frac{\pi}{2})$. Then given $\phi\in(-M\epsilon,M\epsilon)$ and $\theta_\ell^\prime\in(\theta_\ell-M\epsilon,\theta_\ell+M\epsilon)$, denoting by $S_{-\phi}$ the horizontal rotation (Definition \ref{the definition of the rotation of the xy plane}) by angle $-\phi$ (in the sense of equation \eqref{rotation by an angle beta}) and $x^\prime:=(\cos\phi) x+(\sin\phi) y$, the profile $(y^\prime,z_1^\prime,\cdots,z_{n-2}^\prime)$ of the rotated flow $S_{-\phi}\Gamma(\cdot,\tau)$ is well defined in $\mr_{6,4}^{-4}$ and the following holds: 
    \begin{align*}
         &\|y^\prime(x^\prime,\tau)\|_{C^2(\mr_{4,2}^{-2})}+\sum_{\ell=1}^{n-2}\|z_\ell^\prime(x^\prime,\tau)-(\tan\theta_\ell^\prime) x^\prime\|_{C^2(\mr_{4,2}^{-2})}\\
         &\leq C\sup_{\tau\in[-4,0]} \left[\| y(x,\tau)-(\tan\phi)x\|_{L^2[-8,8]}+\sum_{\ell=1}^{n-2}\|z_\ell(x,\tau)-(\tan\theta_\ell^\prime) x\|_{L^2[-8,8]}+C\epsilon^2\right],
    \end{align*}
    where the constant $C$ depends on $M$ and $\max\limits_{1\leq\ell\leq n-2}\tan\theta_\ell$.
\end{lem}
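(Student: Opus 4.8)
The plan is to treat the rotated graph functions as solutions of a uniformly parabolic quasilinear equation and run a standard bootstrap (De Giorgi–Nash–Moser, then Schauder), with the rotation angles introduced via a bounded change of coordinates. First I would record that, since $\phi$ and $\theta_\ell'-\theta_\ell$ are $O(\epsilon)$, the horizontal rotation $S_{-\phi}$ distorts the graph only slightly, so for $\epsilon_0$ small the rotated curve $S_{-\phi}\Gamma(\cdot,\tau)$ remains graphical over $\{|x|\le 6\}$, with profile $(y',z_1',\dots,z_{n-2}')$ satisfying bounds like $\|y'\|_{C^1}+\sum_\ell\|z_\ell'-(\tan\theta_\ell')x\|_{C^1}\le C\epsilon$ on $\mathcal{P}^{-4}_{6,4}$; this follows by the implicit function theorem applied to the projection map and the hypothesis \eqref{an inequality in the pde lemma guaranteed because linear scale is large}. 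The rotated profile still solves the graphical rescaled CSF system of Lemma \ref{equations of the graphical rescaled CSF} (the rescaled CSF equation is invariant under ambient rotations, and horizontal rotations preserve the splitting $\mathbb R^2\times\mathbb R^{n-2}$), so each of $v:=y'$ and $v:=z_\ell'-(\tan\theta_\ell')x$ satisfies
\begin{equation*}
v_\tau=a(x,\tau)v_{xx}-xv_x+v,\qquad a=\frac{1}{1+|y'_x|^2+\sum_\ell|z'_{\ell x}|^2},
\end{equation*}
where by the $C^1$ bound just established $\tfrac12\le a\le 1$ and the coefficient $a$ is, a priori, only bounded measurable.

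Next I would differentiate in $x$ to put $v_x$ into divergence form, $(v_x)_\tau=(a(v_x)_x)_x-x(v_x)_x$, a uniformly parabolic equation in divergence form with bounded coefficients and a bounded lower-order term on $\mathcal{P}^{-4}_{6,4}$. De Giorgi–Nash–Moser estimates (e.g. \cite[Theorem 1.1, Chapter V, \S1]{ladyzhenskaia1968linear}) then give that $v_x$ — hence $y'_x$ and each $z'_{\ell x}$ — is Hölder continuous on a slightly smaller cylinder, with Hölder norm controlled by its $L^\infty$ (hence $L^2$) bound, uniformly. This upgrades $a$ to a Hölder continuous coefficient, after which Schauder estimates applied to the non-divergence equation $v_\tau=av_{xx}-xv_x+v$ yield interior $C^{2,\beta}$ bounds on $\mathcal{P}^{-2}_{4,2}$ with a constant depending only on the ellipticity and the Hölder norm of $a$, i.e. ultimately on $M$. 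Crucially, the interior Schauder/parabolic estimate is of the form: $C^2$-norm on the inner cylinder $\le C(\sup_\tau\|v\|_{L^2[-8,8]}+\text{(data)})$, and here the only "data" is the lower-order term $+v$, so one obtains
\begin{equation*}
\|v\|_{C^2(\mathcal{P}^{-2}_{4,2})}\le C\sup_{\tau\in[-4,0]}\|v\|_{L^2[-8,8]}.
\end{equation*}
Summing over $v=y'$ and $v=z_\ell'-(\tan\theta_\ell')x$, and absorbing the quadratic contributions from rewriting $a$ and from the rotation distortion (which are $O(\epsilon^2)$ because the linearization of $S_{-\phi}$ at $\phi=0$ is the identity and the curvature terms are quadratic in first derivatives), gives exactly the claimed inequality, with the $L^2$ norms taken for the unrotated functions $y(x,\tau)-(\tan\phi)x$ and $z_\ell(x,\tau)-(\tan\theta_\ell')x$ after noting that rotation changes these $L^2$ quantities only by $O(\epsilon^2)$ plus a bounded linear factor on $[-8,8]$.

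The main obstacle I expect is bookkeeping the domains: the De Giorgi–Nash–Moser step costs a shrink in both space and time, the Schauder step costs another, and the rotation must be defined on a domain large enough that after these shrinks one still controls $\mathcal{P}^{-2}_{4,2}$ — this is why the hypothesis is posed on the generously large cylinder $\mathcal{P}^{-4}_{8,4}$. A secondary technical point is verifying that the $O(\epsilon^2)$ error terms genuinely appear with the right power: the difference between the rotated $L^2$ deviation and the stated unrotated one, and the difference between the true coefficient $a$ and the constant $1$, must both be seen to be quadratic in $\epsilon$ rather than linear, which follows because $\tan$ and the rotation matrices are analytic with vanishing first-order correction to the relevant leading terms, but requires care to state cleanly. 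Once the domains and the $\epsilon^2$ errors are pinned down, the estimate is a routine consequence of standard linear parabolic theory applied branch by branch.
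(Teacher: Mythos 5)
Your proposal is essentially the paper's argument: compare the rotated profile $(y',z'_\ell-(\tan\theta'_\ell)x)$ to the unrotated deviations in $L^1$ or $L^2$, then upgrade to $C^2$ via local $L^\infty$ estimates for the divergence-form parabolic equation followed by Schauder. The paper handles the comparison step by an exact area invariance under rotation (the area between the graph of $y'$ and the $x$-axis equals that between the graph of $y$ and the line $(\tan\phi)x$) rather than your direct change-of-variables estimate, and it skips the De Giorgi--Nash--Moser step since the $C^2$ hypothesis already gives uniform H\"older control of the coefficient $a$, but neither difference is substantive.
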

\begin{proof}
The area between the graph of $y^\prime$ and the $x$-axis in a ball $B_r(0)$ equals the area between the graph of $y$ and $(\tan\phi)x$ in $B_r(0)$. Thus, by Hölder's inequality,
\begin{equation}
\label{the equation on rotated profile of y}
    \|y^\prime\|_{L^1[-6,6]}
    \leq \|y-(\tan\phi)x\|_{L^1[-8,8]}
     \leq 4\|y-(\tan\phi)x\|_{L^2[-8,8]}.
\end{equation}

Then we have the estimates:
\begin{align*}
    &\int_{-6}^6|z_\ell^\prime(x^\prime,\tau)-(\tan\theta_\ell^\prime) x^\prime|dx^\prime\\
    &\leq \int_{-8}^8|z_\ell(x,\tau)-\tan\theta_\ell^\prime (\cos\phi x+\sin\phi y)|\cdot|\cos\phi +\sin\phi y_x|dx\\
    &= \int_{-8}^8|z_\ell(x,\tau)-\tan\theta_\ell^\prime (x+(\cos\phi-1) x+\sin\phi y)|\cdot|1+(\cos\phi-1)+\sin\phi y_x|dx\\
    &\leq2\int_{-8}^8|z_\ell(x,\tau)-(\tan\theta_\ell^\prime)x|dx+C\epsilon^2,
\end{align*}
where we use the assumption that $|\phi|<M\epsilon$ and $\|y\|_{C^1}\leq \epsilon$.

Thus, by Hölder's inequality,
\begin{equation}
\label{the equation on rotated profile of z_l}
    \|z_\ell^\prime(x^\prime,\tau)-(\tan\theta_\ell^\prime) x^\prime\|_{L^1[-6,6]}
     \leq 8\|z_\ell(x,\tau)-(\tan\theta_\ell^\prime)x\|_{L^2[-8,8]}+C\epsilon^2.
\end{equation}
Let $u$ be any one of the functions $y^\prime,z_\ell^\prime-(\tan\theta_\ell^\prime)x^\prime$. By equation \eqref{eq:evolution equation after appropriate change of variables}, the function $u$ satisfies the following evolution equation in divergence form:
\[
u_\tau=(au_{x^\prime })_{x^\prime}-a_{x^\prime}u_{x^\prime }-x^\prime u_{x^\prime}+u,
\]
where
\[
a=\frac{1}{1+\left(y_{x^\prime}^\prime\right)^2+\left(z_{1x^\prime}^\prime\right)^2+\cdots+\left(z_{(n-2)x^\prime}^\prime\right)^2}.
\]
 By applying the local $L^\infty$ estimates (\cite[Theorem 6.17, taking $k=0$ and $m=1$]{lieberman1996second}) to $u$ and $-u$, based on the bounds on the second order derivatives, which is bounded by $C\epsilon$ after rotation, one has
\begin{align*}
   & \| y^\prime(x^\prime,\tau)\|_{L^\infty(\mr_{5,3}^{-3})}+\sum_{\ell=1}^{n-2}\|z_\ell^\prime(x^\prime,\tau)-(\tan\theta_\ell^\prime) x^\prime\|_{L^\infty(\mr_{5,3}^{-3})}\\
    & \leq C\int_{-4}^0 \left[\|y^\prime(x^\prime,\tau)\|_{L^1[-6,6]}+\sum_{\ell=1}^{n-2}\|z_\ell^\prime(x^\prime,\tau)-(\tan\theta_\ell^\prime) x^\prime\|_{L^1[-6,6]}\right]d \tau.\\
   & \leq C\sup_{\tau\in[-4,0]} \left[\|y^\prime(x^\prime,\tau)\|_{L^1[-6,6]}+\sum_{\ell=1}^{n-2}\|z_\ell^\prime(x^\prime,\tau)-(\tan\theta_\ell^\prime) x^\prime\|_{L^1[-6,6]}\right].
\end{align*}
Then this lemma follows from combining the Schauder estimates, equation \eqref{the equation on rotated profile of y} and equation \eqref{the equation on rotated profile of z_l}.
\end{proof}
\subsection{Improvement of flatness (Proof of Proposition \ref{the prop on improvement of flatness})}
Recall our assumption that for some horizontal rotation $S_\taup$ and some vector $\vec\theta=\vec\theta(\taup)$, the rescaled CSF $S_\taup\Gamma$ is $({H,\taup,\mathcal{T}})$-flat at $\vec\theta$.

By Lemma \ref{the lemma on graphical lower bound on time interval, before the change of variables}, together with Nash-Moser and Schauder estimates (see proof of Proposition \ref{the proposition on C2 estimates over time intervals} for more details),  
\begin{equation}
\label{Schauder estimates in a fixed domain to be used}
\|y(x,\tau)\|_{C^2(\mr_{8,4}^{\taup+L})}+\sum\limits_{\ell=1}^{n-2}\|z_\ell(x,\tau)-(\tan\theta_\ell) x\|_{C^2(\mr_{8,4}^{\taup+L})}\leq CH.
\end{equation}

By Proposition \ref{the prop on finding new slopes}, there exist slopes $K_\ty=K_\ty(\taup,L),K_{\tz_\ell}=K_{\tz_\ell}(\taup,L)\in\mathbb R$ with $|K_\ty|,|K_{\tz_\ell}|\leq CH$ such that for $i=1,2$,
     \begin{equation}
         \|\hat\ty^i(\tx,\tau)-K_\ty\tx\|_\mh+ \sum_{\ell=1}^{n-2}\|\hat\tz^i_\ell(\tx,\tau)-K_{\tz_\ell}\tx\|_\mh
         \leq C( e^{-\frac{L}{2}} H+\mathcal{T}^\frac{1}{2}H^\frac{3}{2}+\mathcal{T}H^2).
     \end{equation}
holds for $\tau\in[\taup+L,\taup+\mathcal{T}-L]$. 

By Lemma \ref{the lemma on L2 norm bounded by H norm} and definition of $\tx,\tilde y^i,  \tilde z^i_\ell$ (equation \eqref{equation on definition of coordinates with tilde}), because $\eta(\frac{\tx}{\rho})=1$ for $|\tx|\leq8\ll\rho$, 
\begin{align*}
&\frac{1}{\cos^{\frac{1}{2}}\theta}\left[\|y^i-K_\ty\frac{x}{\cos\theta}\|_{L^2[-8,8]}+ \sum_{\ell=1}^{n-2}\|z^i_\ell-\tan\theta_\ell x-K_{\tz_\ell}\frac{x}{\cos\theta}\|_{L^2[-8,8]}\right]\\
&\leq\|\ty^i(\tx,\tau)-K_\ty\tx\|_{L^2[-\frac{8}{\cos\theta},\frac{8}{\cos\theta}]}+ \sum_{\ell=1}^{n-2}\|\tz^i_\ell(\tx,\tau)-K_{\tz_\ell}\tx\|_{L^2[-\frac{8}{\cos\theta},\frac{8}{\cos\theta}]}\\
&\leq C \left[\|\hat\ty^i(\tx,\tau)-K_\ty\tx\|_\mh+ \sum_{\ell=1}^{n-2}\|\hat\tz^i_\ell(\tx,\tau)-K_{\tz_\ell}\tx\|_\mh\right].
\end{align*}
By Lemma \ref{the lemma on PDE bound C2 bounded by L2}, picking $\phi,\theta_\ell^\prime$ 
according to 
\[
\tan\phi=\frac{K_\ty}{\cos\theta}, \text{ and } \tan\theta_\ell^\prime=\tan\theta_\ell +\frac{K_{\tz_\ell}}{\cos\theta},
\]
because inequality \eqref{an inequality in the pde lemma guaranteed because linear scale is large} is ensured by equation \eqref{Schauder estimates in a fixed domain to be used},
\begin{align*}
         &\|y^\prime(x^\prime,\tau)\|_{C^2(\mr_{4,2}^{\taup+L+2})}+\sum_{\ell=1}^{n-2}\|z_\ell^\prime(x^\prime,\tau)-(\tan\theta_\ell^\prime) x^\prime\|_{C^2(\mr_{4,2}^{\taup+L+2})}\\
         &\leq C\sup_{\tau\in[\taup+L,\taup+L+4]}\Biggl[ \| y(x,\tau)-(\tan\phi)x\|_{L^2[-8,8]}\Biggr.\\
         &\Biggl.+\sum_{\ell=1}^{n-2}\|z_\ell(x,\tau)-(\tan\theta_\ell^\prime) x\|_{L^2[-8,8]}+CH^2\Biggr].
    \end{align*}

Combining previous estimates, there is a horizontal rotation $ S^\prime=S_{-\phi}S_{\taup}$ and a vector $\vec\theta^\prime=(\theta_1^\prime,\cdots,\theta_{n-2}^\prime)$ such that $S^\prime\Gamma$ is $\bar H$-linear at $\vec\theta^\prime$ at time $\taup+L+2$, where
\begin{align*}
   \bar H&= C\left(e^{-\frac{L}{2}} H+\mathcal{T}^\frac{1}{2}H^\frac{3}{2}+\mathcal{T}H^2
    \right)+CH^2\\
    &= C\left(e^{-\frac{L}{2}} +\mathcal{T}^\frac{1}{2}H^\frac{1}{2}+\mathcal{T}H+H
    \right)H.
\end{align*}
The estimates $|S^\prime-S_\taup|\leq CH$ and $|\vec\theta^\prime-\vec{\theta}|\leq CH$ follow from $|K_\ty|,|K_{\tz_\ell}|\leq CH$. 

The goal is $\bar H\leq\frac {H}{2C_0L} $. 

First pick $L$ large so that $Ce^{-\frac{L}{2}}\leq\frac{1}{10C_0L}$.

Then pick $H_1=H_1(L,\mathcal{T})\in(0,\frac{C_0}{1000}]$ small so that 
\begin{equation*}
    C\mathcal{T}^\frac{1}{2}H_1^\frac{1}{2}\leq\frac{1}{10C_0L} \text{ and } C\mathcal{T}H_1\leq\frac{1}{10C_0L}
   .
\end{equation*} 

Thus for $H\leq H_1$,
\[
\bar H\leq\frac{4}{10C_0L}H\leq\frac{H}{2C_0L}.
\]

\bibliographystyle{alpha}
\bibliography{main}
\end{document}